\newcommand{\Dd}{\mathcal D^{L, (b)}}
\newcommand{\Dc}{\mathbb D^{(b)}}
\newcommand{\Cd}{\mathcal C^{L}}
\newcommand{\Cc}{\mathbb C}
\newcommand{\FDd}{\partial \Dd}
\newcommand{\FDc}{\partial \Dc}
\newcommand{\FCdb}{\partial_1\Cd}
\newcommand{\FCda}{\partial_0\Cd}
\newcommand{\FCcb}{\partial_1\Cc}
\newcommand{\FCca}{\partial_0\Cc}
\newcommand{\Cdd}{\mathcal C_{\delta}^{L}}
\newcommand{\Ccd}{\mathbb C_{\delta}}
\newcommand{\tDd}{\tilde{\mathcal D}^{L, (a)}}
\newcommand{\tDc}{\tilde{\mathbb D}^{(a)}}
\def\bx{\mathbf{x}}
\def\bm{\mathbf{m}}
\def\br{\mathbf{r}}
\def\N{\mathbb{N}}
\def\z{\mathcal{Z}}
\def\D{\mathbb{D}}
\def\C{\mathbb{C}}
\def\P{\mathbb{P}}
\def\E{\mathbb{E}}
\def\R{\mathbb{R}}
\def\wh{\widehat}
\def\ve{\varepsilon}
\def\dd{\mathrm{d}}
\def\lbracket {[\hspace{-.10em} [ }
\def\rrbracket{ ] \hspace{-.10em}]}
\theoremstyle{theorem}
\newtheorem{theorem}{Theorem}[section]
\newtheorem{lemma}[theorem]{Lemma}
\newtheorem{definition}[theorem]{Definition}
\newtheorem{proposition}[theorem]{Proposition}
\newtheorem{corollaire}[theorem]{Corollary}
\title{Drilling holes in the Brownian disk: \\The Brownian annulus}
\author{Jean-Fran\c cois Le Gall\footnote{jean-francois.le-gall@universite-paris-saclay.fr} \ and Alexis Metz--Donnadieu\footnote{alexis.metz-donnadieu@ens.fr}}
\begin{document}
	\maketitle
	\begin{abstract}
		We give a new construction of the Brownian annulus based on removing a hull
		centered at the distinguished point in the free Brownian disk. We use this construction 
		to prove that the Brownian annulus is the scaling limit of
		Boltzmann triangulations with two boundaries. We also prove that the space
		obtained by removing hulls centered at the two distinguished points of 
		the Brownian sphere is a Brownian annulus. Our proofs rely on a detailed
		analysis of the peeling by layers algorithm for Boltzmann  triangulations with a boundary.
	\end{abstract}
	
\tableofcontents
	
\section{Introduction}
	
	Brownian surfaces are basic models of random geometry that have been the subject of intensive research in the recent years. They arise as scaling limits of large classes of random planar random maps 
	viewed as random metric spaces, for the Gromov-Hausdorff topology. The first result in this direction was the convergence to the Brownian sphere 
	\cite{Le_Gall_2013,miermont2011brownian}, which is a Brownian surface in genus $0$ with no boundary. This convergence has been extended to many
	different classes of random planar maps by several authors.
	The recent paper of Bettinelli et Miermont \cite{BrownianSurfacesII} constructs general Brownian surfaces 
	in arbirary genus $g$ and with a finite number of boundaries of given sizes, as the scaling limit of large random quadrangulations with boundaries (boundaries of quadrangulations are distinguished 
	faces with arbitrary degrees, whereas the other faces have degree $4$). The construction of \cite{BrownianSurfacesII}  applies to the case where the volume of the surface is fixed as well as the 
	boundary sizes, and it is also of interest to consider ``free'' models where this volume is not fixed, which appear as scaling limits of planar maps distributed according to
	Boltzmann weights. The special case where there is only one boundary in genus $0$ corresponds to the so-called Brownian disk, which has been studied extensively (see in
	particular \cite{Bet0,BrownianDiskBettineli,GM0,albenque2020scaling,BrowDiskandtheBrowSnake}). 
	
Our object of interest in this work is the free Brownian annulus, which is a Brownian surface in genus $0$ with two boundaries. As noted in \cite[Section 1.5]{BrownianSurfacesII}, the free Brownian annulus 
is one of the very few Brownian surfaces (together with the Brownian disk and the pointed Brownian disk) for which the free model makes sense under a probability measure --- for instance,
the free Brownian sphere is defined under an infinite measure. One motivation for the present work came from the recent paper of Ang, R\'emy and Sun \cite{ang2022moduli}, which studies the
modulus of Brownian annuli in random conformal geometry. The definition of the Brownian annulus in \cite[Definition 1.1]{ang2022moduli} is based on considering the complement of a hull 
in the free pointed Brownian disk conditionally on the event that the hull boundary has a fixed size. As the authors of \cite{ang2022moduli} observe, this definition leads certain technical difficulties due to 
conditioning on an event of probability zero. In this work, we give a slightly different construction of the Brownian annulus
which involves only conditioning on an event of positive probability. We show that this definition is equivalent to the one in \cite{ang2022moduli}, and 
we also relate our construction to the scaling limit approach of Bettinelli et Miermont \cite{BrownianSurfacesII} by showing that the Brownian annulus is the scaling
limit of large random triangulations with two boundaries --- this was asserted without proof in \cite{ang2022moduli}. 

Let us give a more precise description of our main results. We start from a free pointed Brownian disk $(\mathbb{D},D)$ with boundary size $a>0$. As usual, $\partial \mathbb{D}$
denotes the boundary of $\mathbb{D}$. Then, $\mathbb{D}$ has a 
distinguished interior point denoted by $x_*$. For every $r\in (0,D(x_*,\partial \mathbb{D}))$,
we denote the closed ball of radius $r$ centered at $x_*$ by $B_r(x_*)$, and the hull $H_r$ is obtained by
``filling in the holes'' of $B_r(x_*)$. In more precise terms,  $\mathbb{D}\setminus H_r$
is the connected component of $\mathbb{D}\setminus B_r(x_*)$ that contains the boundary $\partial\mathbb{D}$. The perimeter or boundary size of $H_r$ may then be defined
as 
\begin{equation}\label{peri-def}
\mathcal{P}_r=\lim_{\ve\to 0} \ve^{-2} \mathbf{V}(\{x\in\mathbb{D}\setminus H_r:D(x,H_r)<\ve\}),
\end{equation}
where $\mathbf{V}$ is the volume mesure of $\mathbb{D}$.
The process $(\mathcal{P}_r)_{0<r<D(x_*,\partial \mathbb{D})}$ has a modification with c\`adl\`ag sample paths
and no positive jumps, and, for every
$b>0$, we set 
$$r_b=\inf\{r\in (0,D(x_*,\partial \mathbb{D})):\mathcal{P}_r=b\},$$
where $\inf\varnothing=\infty$.  Then $\P(r_b<\infty)=a/(a+b)$ (Lemma \ref{proba-condit}), and, on the event $\{r_b<\infty\}$, $r_b$ is the radius of the first hull 
of boundary size $b$. Under the conditional probability $\P(\cdot\mid r_b<\infty)$, we define the Brownian annulus of boundary sizes $a$ and $b$, denoted by $\mathbb{C}_{(a,b)}$, as the 
closure of $\mathbb{D}\setminus H_{r_b}$, which is equipped with the continuous extension $d^\circ$ of the intrinsic metric on $\mathbb{D}\setminus H_{r_b}$ and with the restriction of the volume
measure of $\mathbb{D}$ (Theorem \ref{Br-Annulus}). It is convenient to view $\mathbb{C}_{(a,b)}$ as a measure metric space marked with two compact subsets (the ``boundaries'') which are here $\partial\mathbb{D}$ and
$\partial H_{r_b}$. 

Much of the present work is devoted to proving that the space $\mathbb{C}_{(a,b)}$ is the Gromov-Hausdorff limit of rescaled triangulations with two boundaries. More precisely,
for every sufficiently large  integer $L$, let $\mathcal{C}^L$ be a random planar triangulation
with two simple boundaries of respective sizes $\lfloor a L\rfloor$ and $\lfloor b L\rfloor$ (see \cite{bernardiFusy} for precise definitions of triangulations with boundaries). Assume that
$\mathcal{C}^L$ is distributed according to Boltzmann weights, meaning that the probability of a given triangulation $\tau$ is proportional to $(12\sqrt{3})^{-k(\tau)}$ where $k(\tau)$ is the number of internal vertices of $\tau$. 
We equip the vertex set $V(\mathcal{C}^L)$ with the graph distance rescaled by the factor $\sqrt{3/2} \,L^{-1/2}$, which we denote by $d^\circ_L$. Then, Theorem \ref{PropPrinc} states that
\begin{equation}
\label{conv-triangu}
(V(\mathcal{C}^L),d^\circ_L) \overset{(d)}{\underset{L\to \infty}{\longrightarrow}} (\mathbb{C}_{(a,b)},d^\circ),
\end{equation}
in distribution in the Gromov-Hausdorff sense. Theorem \ref{main-extend} gives a stronger version of this convergence by considering the Gromov-Hausdorff-Prokhorov 
distance on measure metric spaces marked with two boundaries, in the spirit of \cite{BrownianSurfacesII} (see Section \ref{sec:conv} below). 

The proof of the convergence \eqref{conv-triangu} relies on two main ingredients. The first one is a result of Albenque, Holden and Sun \cite{albenque2020scaling} 
showing that the free Brownian disk is the scaling limit of Boltzmann triangulations with a simple boundary, when the boundary size tends to $\infty$. The second ingredient
is the peeling by layers algorithm for Boltzmann triangulations with a simple boundary, which was already investigated in the recent paper \cite{MarkovSpatial} in view of studying 
the spatial Markov property of Brownian disks. Roughly speaking, a peeling algorithm ``explores'' a Boltzmann triangulation $\mathcal{D}^L$
with boundary size $\lfloor a L\rfloor$ step by step, starting from a distinguished 
interior vertex, and, in the special case of the peeling by layers, the explored region at every step is close to a (discrete) hull centered at the distinguished vertex. At the first time when the boundary size of the explored region becomes equal to $\lfloor b L\rfloor$ (conditionally on the event 
that this time exists), the unexplored region is a Boltzmann triangulation  with two simple boundaries of sizes $\lfloor a L\rfloor$ and $\lfloor b L\rfloor$, and is therefore
distributed as $\mathcal{C}^L$. One can then use the main result of \cite{albenque2020scaling}  giving the scaling limit of $\mathcal{D}^L$ to derive 
the convergence \eqref{conv-triangu}. Making this argument precise requires a number of preliminary results, and in particular a detailed study of 
asymptotics for the peeling process of Boltzmann triangulations with a boundary, which is of independent interest (see Section \ref{sec:peeling-asymp} below).
These asymptotics are closely related to the similar results for the peeling process of the UIPT obtained in \cite{ScalingUIPT}.

As a by-product of our construction, we obtain several other results relating the Brownian annulus to the Brownian disk or the Brownian sphere. Consider again the free pointed Brownian disk $(\mathbb{D},D)$
of perimeter $a$, but now fix $r>0$. Conditionally on $\{D(x_*,\partial\mathbb{D})>r, \mathcal{P}_r=b\}$ the closure of  $\mathbb{D}\setminus H_{r}$ equipped with an (extended) intrinsic metric
has the same distribution as $\mathbb{C}_{(a,b)}$ (Proposition \ref{key-result}) and furthermore is independent of the hull $H_r$
also viewed a random metric space for the appropriate intrinsic metric. This result in fact corresponds to the definition of the Brownian annulus in  \cite[Definition 1.1]{ang2022moduli}. Another related
result involves removing two disjoint hulls in the free Brownian sphere. Write $(\bm_\infty, \mathbf{D})$ for the free Brownian sphere, which has two distinguished points denoted by $\bx_*$
and $\bx_0$  that play symmetric roles. For every $r>0$ and $x\in\bm_\infty$,  let $B^\infty_r(x)$ be the closed ball of radius $r$ centered at $x$ in $\bm_\infty$. 
Then, for $r\in(0,\mathbf{D}(\bx_*,\bx_0))$, 
let the hull $B^\bullet_r(\bx_*)$ be the complement of the connected component of $\bm_\infty\setminus B^\infty_r(\bx_*)$ that contains $\bx_0$,
and define $B^\bullet_r(\bx_0)$ by interchanging the roles of $\bx_*$ and $\bx_0$. Let $r,r'>0$. Then, conditionally on the event $\{\mathbf{D}(\bx_*,\bx_0)>r+r'\}$, the three spaces
$B^\bullet_r(\bx_*)$, $B^\bullet_{r'}(\bx_0)$ and $\bm_\infty\setminus (B^\bullet_r(\bx_*)\cup B^\bullet_{r'}(\bx_0))$ are independent conditionally on the perimeters $|\partial B^\bullet_{r}(\bx_*)|$ and $|\partial B^\bullet_{r'}(\bx_0)|$
(these perimeters are defined by a formula analogous to \eqref{peri-def}),
and $\bm_\infty\setminus (B^\bullet_r(\bx_*)\cup B^\bullet_{r'}(\bx_0))$ is a Brownian annulus with boundary sizes $|\partial B^\bullet_{r}(\bx_*)|$ and $|\partial B^\bullet_{r'}(\bx_0)|$
(see Theorem \ref{3-pieces}  below for a more precise statement, and \cite[Lemma 6.3]{ang2022moduli} for a closely related result). 

In addition to our main results, we obtain certain explicit formulas, which are of independent interest. In particular, Proposition \ref{law-perimeter-hull} gives the distribution of $\mathcal{P}_r$
under $\P(\cdot \cap\{ r<D(x_*,\partial\mathbb{D})\})$ (note that the distribution of $D(x_*,\partial\mathbb{D})$ was computed in \cite[Proposition 14]{spine}). We  also 
consider the ``length'' $\mathcal{L}_{(a,b)}$ of $\mathbb{C}_{(a,b)}$, which is the minimal distance between the two boundaries. By combining our 
definition of $\mathbb{C}_{(a,b)}$ with the Bettinelli-Miermont construction of the Brownian disk, one gets that $\mathcal{L}_{(a,b)}$
is distributed as the last passage time at level $b$ for a continuous-state branching process with branching mechanism $\psi(\lambda):=\sqrt{8/3}\,\lambda^{3/2}$
started with initial density $\frac{3}{2}\,a^{3/2}(a+z)^{-5/2}$, and conditioned to hit $b$ (this conditioning event has probability $a/(a+b)$). Unfortunately, we have not
been able to use this description to derive an explicit formula for the distribution of $\mathcal{L}_{(a,b)}$, but Proposition \ref{first-mom} gives a remarkably simple 
formula for its first moment:
$$\E[\mathcal{L}_{(a,b)}]=\sqrt{\frac{3\pi}{2}}(a+b)\left(\sqrt{ a^{-1}}+\sqrt{b^{-1}}-\sqrt{a^{-1}+b^{-1}}\right).$$

The paper is organized as follows. Section \ref{sec:preli} gathers a number of preliminaries, concerning 
in particular the peeling algorithm for random triangulations, the Bettinelli-Miermont construction of the
free pointed Brownian disk, and a useful embedding of the Brownian disk in the Brownian sphere.  
Then, Section \ref{sec:annu-def} presents our construction of the Brownian annulus, and also 
proves a technical lemma that will be used in the proof of the convergence of rescaled triangulations to
the Brownian annulus. In Section \ref{sec:preli-conv}, we recall the key convergence
of rescaled triangulations with a boundary to the Brownian disk, and we use this result to
investigate the convergence of certain explored regions in the peeling algorithm
of Boltzmann triangulations
towards hulls in the Brownian disk. Section \ref{sec:peeling-asymp} is devoted to asymptotics 
for the perimeter process in the peeling by layers algorithm
of Boltzmann triangulations: the ultimate goal of these asymptotics is to verify that the
(suitably rescaled) first radius at which the perimeter of the explored region hits the value $\lfloor bL\rfloor$
converges to $r_b$, and that this convergence takes place jointly with the convergence 
to the Brownian disk (Corollary \ref{ConvBrownianDiskWithRadius}). Section \ref{sec:conv-annulus}
gives the proof of the scaling limit \eqref{conv-triangu}. If $\mathcal{C}^L$ is constructed 
via the peeling algorithm as explained above, a  technical difficulty comes from the fact that
it is not easy to control distances near the boundary of the unexplored region, and, to overcome this problem,
we use approximating spaces obtained by removing a tubular neighborhood of the latter boundary. 
In Section \ref{sec:conv-bound-vol}, we explain how the convergence \eqref{conv-triangu} can be sharpened to hold in the sense of
the Gromov-Hausdorff-Prokhorov 
topology on measure metric spaces marked with two boundaries. 
Section \ref{sec:compl2hulls} explains the relation between our construction of the Brownian annulus and 
the definition of \cite{ang2022moduli}, and also proves Theorem \ref{3-pieces}  showing that
the complement of the union of two hulls centered at the distinguished points of the Brownian sphere is a
Brownian annulus.
Finally, Section
\ref{sec:expli-comp} discusses the distribution of the length $\mathcal{L}_{(a,b)}$ of the Brownian annulus.

\section{Preliminaries}
	\label{sec:preli}
In this section, we recall the basic definitions and the theoretical framework that we will use in this paper. Section \ref{sec:peel} introduces Boltzmann triangulations as well as the peeling 
by layers algorithm, which plays an important role in this work. In Section \ref{sec:conv}, we recall the definition of the Gromov-Hausdorff-Prohorov topology for measure metric spaces, using the formalism of \cite{BrownianSurfacesII}. Section \ref{sec:cons} gives a construction of the free Brownian disk, 
which is the compact metric space arising  as the scaling limit of Boltzmann triangulations with a boundary.

	\subsection{Boltzmann triangulations of the disk and the annulus}
	\label{sec:peel}
	
	For two integers $L\geq 1$ and $k\geq 0$, we let $\mathbb T^{1}(L, k)$ be the set of all pairs $(\tau, e)$, where $\tau$ is a type I planar triangulation with a simple boundary $\partial \tau$ of length $L$ and $k$ internal vertices, and where $e$ is a distinguished edge on $\partial \tau$. Here, type I means that we allow the presence of multiple edges and loops, but the boundary has to remain simple. Each edge $e$ of $\partial\tau$ is oriented so that the outer face lies to the left of $e$ (see Figure 1), and we write $|\partial\tau|=L$ for the boundary size of $\tau$. By convention, we will consider the map consisting of a single oriented (simple) edge $e$ as an element of $\mathbb T^{1}(2, 0)$ and in that special case it is convenient
	to consider that $\partial\tau$ consists of two oriented edges, namely $e$ and $e$ with the reverse orientation.
	
	For integers $L,p\geq 1$ and $k\geq 0$, we let $\mathbb T^2(L, p,k)$ be the set of triplets $(\tau, e_0, e_1)$, where $\tau$ is a planar triangulation of type $I$ having two vertex-disjoint simple boundaries --- namely an outer boundary $\partial_0\tau$ of length $L$ and an inner boundary $\partial_1\tau$ of length $ p$ --- and $k$ internal vertices, and where $e_0$ and $e_1$ are distinguished edges on $\partial_0\tau$ and $\partial_1\tau$ respectively. The edges on the boundaries are again oriented so that the boundary faces lie on their left. See \cite{bernardiFusy} for more precise definitions. We have the following explicit enumeration formulas (cf. \cite{bernardiFusy}):
	\begin{align} 
		\label{EnumT1} &\forall (L, k)\neq (1, 0), \ \ \ \text{Card} \ \mathbb T^{1}(L, k)=4^{k-1}\frac{(2L+3k-5)!!}{k!(2L+k-1)!!}L{2L\choose L},\\
		\label{EnumT2} 	&\forall L, p\geq 1, k\geq 0, \ \ \text{Card} \ \mathbb T^2(L, p, k)=\frac{4^k(2(L+p)+3k-2)!!}{k!(2(L+p)+k)!!}L{2L\choose L}p{2p\choose p},
	\end{align}
with the convention $(-1)!!=1$. Note that, in the case $(L,k)=(2, 0)$, formula (\ref{EnumT1}) remains valid thanks to the  previous convention making the map composed of a single edge an element of $\mathbb T^1(2,0)$. In the following, we are interested in triangulations for which the number of internal vertices is random, and we set $\mathbb T^{1}(L)=\bigcup_{k\geq 0}\mathbb T^{1}(L, k)$ and $\mathbb T^2(L, p)= \bigcup_{k\geq 0}\mathbb T^2(L, p, k)$. A random triangulation $\mathcal T$  in $\mathbb T^1(L)$ (resp. in $\mathbb T^{2}(L, p)$) is said to be \emph{Boltzmann distributed} if, for every $k\geq 0$ and every $\theta\in \mathbb T^{1}(L, k)$ (resp. $\theta\in \mathbb T^2(L, p , k)$), the probability that $\mathcal T=\theta$ is proportional to $(12\sqrt 3)^{-k}$. More precisely, asymptotics of (\ref{EnumT1}) and (\ref{EnumT2}) show that the quantities
	$$Z^{1}(L):=\sum_{k\geq 0} (12\sqrt 3)^{-k} \text{Card} \ \mathbb T^{1}(L, k), $$
and
$$Z^2(L, p):=\sum_{k\geq 0} (12\sqrt 3)^{-k} \text{Card} \ \mathbb T^2(L,p,k),
$$
are finite. The Boltzmann measure on $\mathbb T^{1}(L)$ gives probability $Z^{1}(L)^{-1}(12\sqrt 3)^{-k}$ to any triangulation $ \theta\in \mathbb T^{1}(L, k)$, where $k\geq 0$. Similarly,
the Boltzmann measure on $\mathbb T^2(L,p)$ gives probability $Z^2( L, p)^{-1}(12\sqrt 3)^{-k}$ to any triangulation $\theta\in \mathbb T^2(L,p, k)$. By \cite{PercOnRandMapsI}, Section 2.2, we have the explicit expression:
\begin{align*}
	\forall L\geq 1, \ \ Z^{1}(L)=\frac{6^L(2L-5)!!}{8\sqrt 3 L!},
\end{align*}
where again $(-1)!!=1$. In the following, it will also be useful to define $Z^1(0):=(24\sqrt 3)^{-1}$.

\begin{figure}[h]
		\centering
		\begin{subfigure}{0.3\textheight}
			\includegraphics[width=0.7\textwidth]{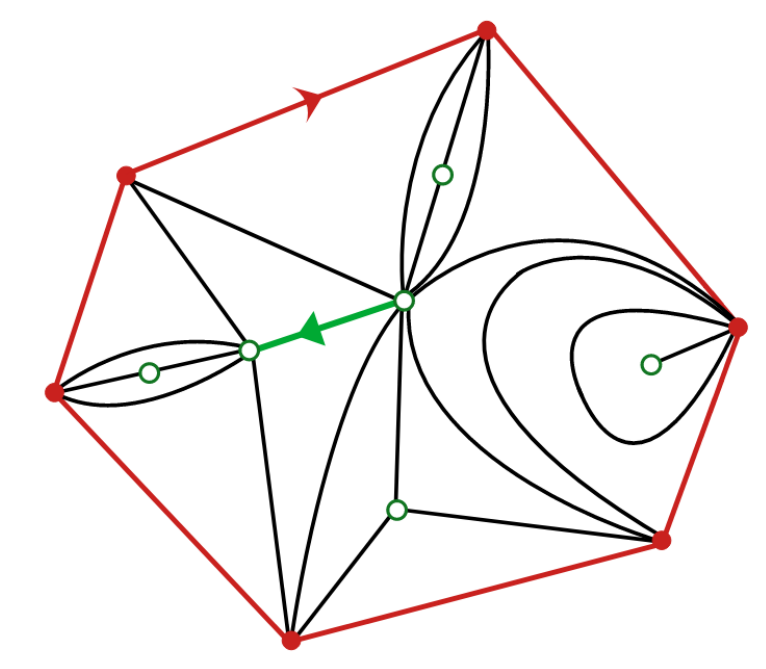}
		\end{subfigure}
		\begin{subfigure}{0.3\textheight}
			\includegraphics[width=0.7\textwidth]{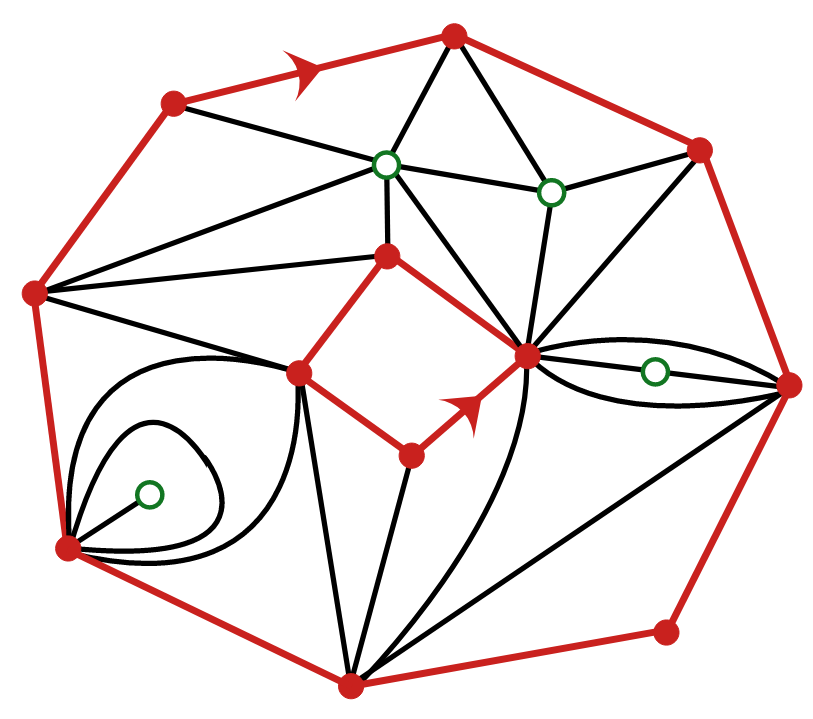}
		\end{subfigure}
		\caption{An element of $\mathbb T^{1, \bullet}(6,5)$ (left) and an element of $\mathbb T^2(8, 4, 4)$ (right), the distinguished edges are marked by an arrow.}
	\end{figure}

	Finally, we let $\mathbb T^{1, \bullet}(L, k)$ be the set of all triangulations in $\mathbb T^{1}(L, k)$ that have (in addition to the distinguished edge on the boundary) another distinguished oriented edge chosen among all edges of the triangulation. This
	second distinguished edge may or may not be part of the boundary, but we will call it the distinguished interior edge with some abuse of terminology. The Boltzmann measure on $\mathbb T^{1,\bullet}(L)=\bigcup_{k\geq 0}\mathbb T^{1,\bullet}(L, k)$ is again the probability measure that gives probability proportional to $(12\sqrt 3)^{-k}$ to any $\tau\in \mathbb T^{1,\bullet}(L, k)$. This makes sense because a triangulation $\tau\in \mathbb T^{1}(L, k)$ has $3k+2L -3$ edges, by Euler's formula, so that the number of ways of choosing an \emph{oriented} edge in $\tau$ is $6k+4L-6$ and we have:
	\begin{align*}
		Z^{1, \bullet}(L):= \sum_{k\geq 0}(6k+4L-6)(12\sqrt 3)^{-k}\text{Card} \ \mathbb T^1( L, k)<\infty,
	\end{align*}
	since $\text{Card} \ \mathbb T^1(L, k)= O((12\sqrt 3)^kk^{-5/2})$ when $k\to\infty$. Note that $\text{Card} \ \mathbb T^{1, \bullet}(2, 0)=2$.
	
	\paragraph{Peeling and the discrete spatial Markov property}
	We now recall the main properties of the so-called \emph{peeling algorithm}. We refer to \cite{peeling} for a more detailed introduction to this algorithm. In 
	the following, it will be convenient to add an isolated point $\dagger$ to the different state spaces that we will consider. The point $\dagger$ will play the role of a cemetery point when the exploration given by the peeling algorithm hits the boundary.
	
	Fix $p\geq 1$, $\gamma\in \mathbb T^2(L, p)$ and let $e$ be an edge of $\partial_1\gamma$ (this edge will be called the peeled edge). 
	Let $u$ be the vertex opposite $e$ in the unique internal face $f$ of $\gamma$ incident to $e$. Three configurations may occur:
	\begin{enumerate}
		\item $u$ is an internal vertex of $\gamma$, in this case we call peeling of $\gamma$ along the edge $e$ the sub-triangulation of $\gamma$ consisting of the internal faces of $ \gamma$ distinct from $f$. We see this triangulation as an element of $\mathbb T^2(L, p+1)$.
		\item $u$ is an element of the inner boundary $\partial_1\gamma$. In this case $f$ splits $\gamma$ into two components, only one of which is incident to the outer boundary $\partial_0\gamma$. We call peeling of $\gamma$ along the edge $e$ the sub-triangulation consisting of the faces of this component, that we see as an element of $\mathbb T^2(L, p')$ for some $1 \leq p'\leq p$.
		\item Finally, if $u$ belongs to the outer boundary of $\gamma$, we say by convention that the ``triangulation'' obtained by peeling $\gamma$ along $e$ is $\dagger$.
	\end{enumerate}
Note that this description is slightly incomplete since  it would be necessary to specify (in the first two cases) how the new distinguished edge on the inner boundary is chosen. In what follows,
we will iterate the peeling algorithm, and it will be sufficient to say that this new distinguished edge is chosen at every step as a deterministic function of the rooted planar map that
is made of the initial inner boundary and of the faces that have been ``removed'' by the peeling algorithm up to this step.
\\

	\noindent Let us fix an algorithm $\mathcal A$ that chooses for any triangulation $\tau\in \bigcup_{p\geq 1}\mathbb T^{1, \bullet}(p)$ an edge $e$ of $\partial\tau$. The peeling of a triangulation according to the algorithm $\mathcal A$ consists in recursively applying the peeling procedure described above, choosing the peeled edge at each step as prescribed by $\mathcal A$. Let us give a more precise description. 
	
	We start with a triangulation $\gamma\in \mathbb T^{1,\bullet}(L)$ and we let $e_0$ be its distinguished interior edge. If $e_0$ is incident to the boundary $\partial \gamma$ of $\gamma$, we set by convention $\gamma_0=\tau_0=\dagger$. Otherwise, if $e_0$ is a loop, we let $\tau_0$ be the triangulation induced by the faces of $\gamma$ inside the loop and we let $\gamma_0$
	be the triangulation that consists of the faces of $\gamma$ outside this loop. We view $\tau_0$ as an element of $\mathbb T^{1, \bullet}(1, k)$ for some $k\geq 0$ (we let both distinguished edges to be the loop $e_0$ oriented clockwise) and we view $\gamma_0$ as an element of $\mathbb T^2(L, 1)$ by seeing the loop as 
	bounding an internal face of degree one. Finally, if $e_0$ is a simple edge (not incident to $\partial \gamma$), we let $\tau_0$ be the unique element of $\mathbb T^{1, \bullet}(2, 0)$ with both distinguished edges oriented in the same direction, and $\gamma_0$ is the element of $\mathbb T^2 (L, 2)$ obtained from $\gamma$ by splitting the edge $e_0$ so as to create an inner boundary face of degree $2$ (cf. figure 2) -- note that our special convention for $\partial\tau_0$ explained at the beginning of
	Section \ref{sec:peel} allows us to identify $\partial_1\gamma_0$ with $\partial\tau_0$ in that case.
	
	 We then build recursively two sequences $(\tau_i)_{i\geq 0}$ (the \emph{explored} part) and $(\gamma_i)_{i\geq 0}$ (the \emph{unexplored} part), in such a way that,
	 for every $i\geq 0$ such that $\tau_i\neq \dagger$, we have $\tau_i\in \mathbb T^{1, \bullet}(p)$ 
	 and $\gamma_i\in \mathbb T^{ 2}(L, p)$, for some $p\geq 1$, and the inner boundary $\partial_1\gamma_i$ is identified with $\partial\tau_i$. Assume that we have constructed $\tau_i$ and 
	 $\gamma_i$ for some $i\geq 0$.
	  If $\tau_i=\dagger$,
	 we set $\tau_{i+1}=\gamma_{i+1}=\dagger$.   
	Otherwise the algorithm $\mathcal A$ applied to $\tau_i$ yields an edge $e$ of $\partial\tau_i=\partial_1\gamma_i$. The triangulation $\gamma_{i+1}$ is obtained by peeling $\gamma_i$ along this edge. If $\gamma_{i+1}\neq \dagger$, we let $\tau_{i+1}$ be the triangulation obtained by adding to $\tau_i$ the faces of $\gamma_i$ that we removed by the peeling of $e$. 
	 The distinguished edge on the boundary of $\tau_{i+1}$ is the one that is identified to the distinguished edge of $\gamma_{i+1}$ on its second boundary, and the other distinguished edge of 
	 $\tau_{i+1}$ is taken to be the same as the one of $\tau_i$.
	 Finally, if $\gamma_{i+1}=\dagger$, we simply take $\tau_{i+1}=\dagger$. \\

	\begin{figure}[h]
		\centering
		\begin{subfigure}{0.31 \textwidth}
			\includegraphics[width=0.8 \textwidth]{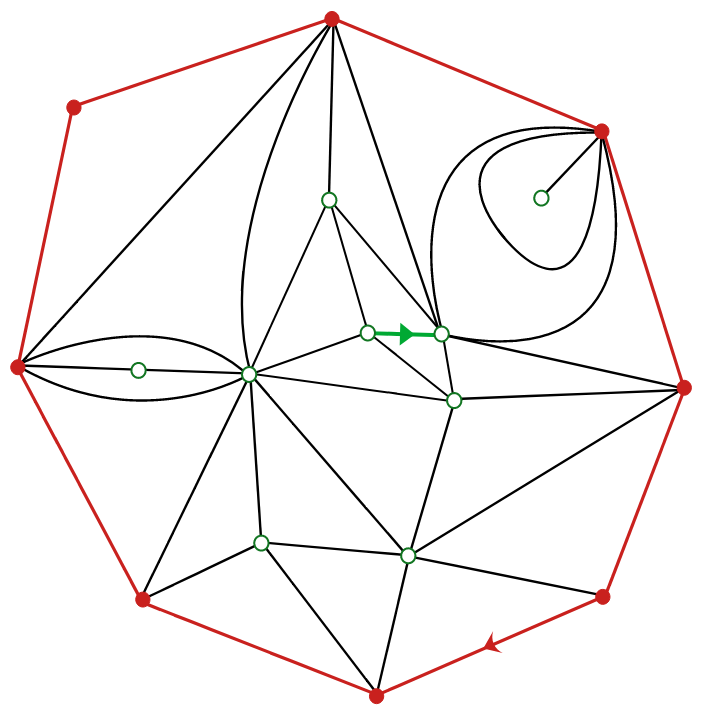}
		\end{subfigure}
		\begin{subfigure}{0.31 \textwidth}
			\includegraphics[width=0.8 \textwidth]{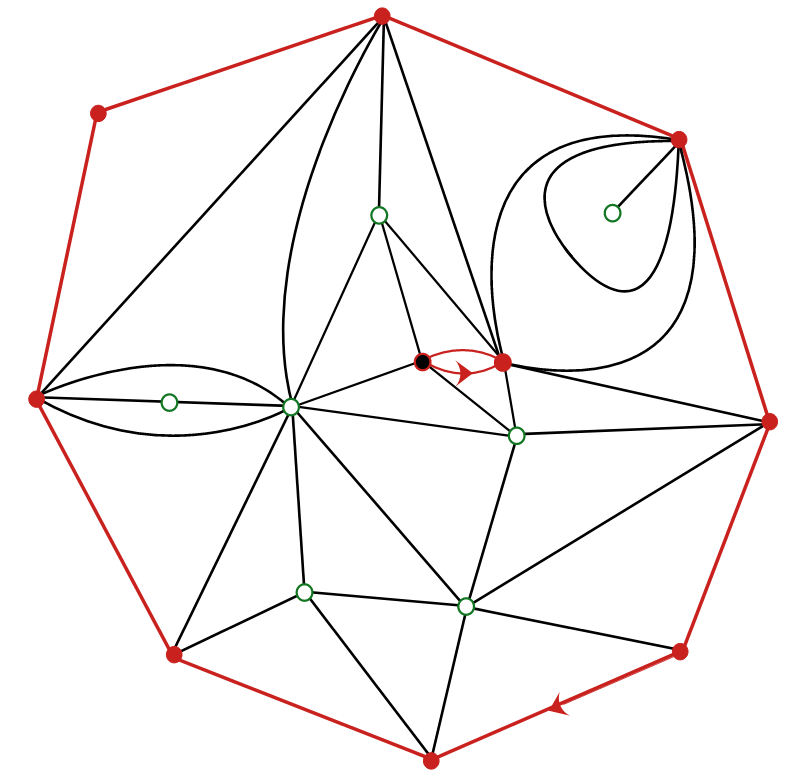}
		\end{subfigure}
		\begin{subfigure}{0.31 \textwidth}
			\includegraphics[width=0.8\textwidth]{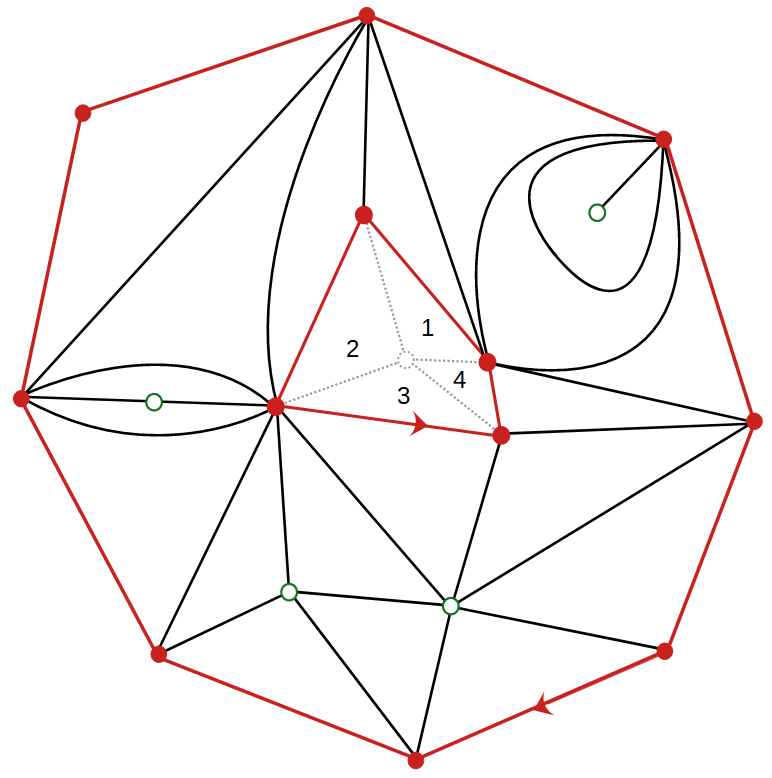}
		\end{subfigure}
		\caption{Starting from a Boltzmann triangulation (left), the interior distinguished edge is split so as to create an inner face of degree $2$ (middle). On the right, the triangulation we get after 4 peeling steps, the explored part is in dotted lines.}
	\end{figure}
	
In the case of Boltzmann triangulations, the peeling is a ``Markovian exploration''. More precisely, we apply the peeling procedure described above to a random triangulation $\mathcal D^L$ distributed according to the Boltzmann measure on $\mathbb T^{1, \bullet}(L)$. This gives rise to two sequences of random triangulations $(T_i^L)_{i\geq 0}$ (explored parts) and $(U_i^L)_{i\geq 0}$ (unexplored parts). Then, conditionally on the event $\{T_i^L\neq\dagger\}$ and on the value $|\partial T_i^L|$, the triangulation $U_i^L$ is distributed according to the Boltzmann measure on $\mathbb T^{2}(L, |\partial T_i^L|)$ and is independent of $T_i^L$. We will call this property the \emph{spatial Markov property for the peeling of Boltzmann triangulations}.
	
	\paragraph{Peeling by layers and perimeter process} Let  $x_*^L$ be the root of the distinguished interior edge of $\mathcal D^L$ and let $\Delta^L$ be the graph distance in $\mathcal D^L$. In the following, we will use a particular peeling algorithm 
	--- that is, a particular choice of $\mathcal{A}$ --- which we call the \emph{peeling by layers}. This algorithm is designed to satisfy the following additional property: 
	for every $i$ such that $T_i^L\neq \dagger$, if we set $h_i^L:=\Delta^L(x_*^L, \partial T_i^L)$, then for every vertex $u$ of $ \partial T^L_i$, we have
	\begin{align*}
		h_i^L\leq \Delta^L(u, x_*^L)\leq h_i^L+1.
	\end{align*}
 In other words, the distances from boundary vertices of $T_i^L$ to $x_*^L$ in $\mathcal D^L$ can only take at most one of two consecutive values at any time. It is easy to choose the 
 peeling algorithm so that this property holds, and we will assume that $(T_i^L)_{i\geq 0}$ and $(U_i^L)_{i\geq 0}$ are obtained by such a peeling algorithm. 
 We refer to \cite{ScalingUIPT} for a more precise description of the peeling by layers algorithm.
	
	An important object for us is the random sequence $(|\partial T_i^L|)_{i\geq 0}$ taking values in $\mathbb N\cup \{\dagger\}$ and recording the evolution of the perimeter of the part explored by the peeling algorithm by layers, where by convention $|\partial T_i^L|=\dagger$ if $T_i^L=\dagger$. By the arguments of \cite{MarkovSpatial}, Section $3$, conditionally on the value of $|T_0^L|\in \{1, 2, \dagger\}$, 
	this perimeter process is a Markov chain on $\mathbb{N}\cup\{\dagger\}$ starting from $|T_0^L| \in\{1,2,\dagger\}$ whose transition kernel $q_L$ is given for every $k\geq 1$ and $m\in \{-1, 0,  \ldots , k-1\}$ by:
	\begin{align*}
		&q_L(k, k-m)=2Z^{1}(m+1)\frac{Z^2(L, k-m)}{Z^2(L, k)},
	\end{align*}
	and $q_L(k, \dagger)=1-\sum_{m=-1}^{k-1} q_L(k, k-m)$ for all $k\geq 1$, $q_L(\dagger, \dagger)=1$. This kernel is closely related to the transition kernel $q_\infty$ of the perimeter process of the UIPT of type $I$ (cf \cite{ScalingUIPT}, Section 6.1) 
	which is defined for every $k\geq 1$ and $m\in \{-1,0, \ldots, k-1\}$ by:
	\begin{align*}
		q_\infty(k, k-m)=
		2Z^{1}(m+1)\frac{C^{(1)}(k-m)}{C^{(1)}(k)}.
	\end{align*}
	where we wrote $C^{(1)}(k):=\frac{3^{k-2}}{4\sqrt {2\pi}} k{2k\choose k}$. As noted in \cite{MarkovSpatial}, the Markov chain associated with the kernel $q_L$ is a Doob $h$-transform of the chain associated to $q_\infty$, for the harmonic function $\mathbf{h}_L(j):=\frac{L}{L+j}, j\geq 1$. More precisely, for every $p\geq 1$, $m\in \{-1,0, \ldots, p-1\}$:
	\begin{align}
		\label{DefqL}
		q_L(p, p-m)=\frac{\mathbf{h}_L(p-m)}{\mathbf{h}_L(p)}q_\infty(p, p-m).
	\end{align}

	\subsection{Convergence of metric spaces}
	\label{sec:conv}

	In order to state the convergence of (rescaled) Boltzmann triangulations with two boundaries towards the Brownian annulus,
we will consider the space $\mathbb{M}$ of all isometry classes of compact metric spaces, and we will write 
$d_{\mathtt{GH}}$ for the usual Gromov-Hausdorff distance on $\mathbb{M}$. Then $(\mathbb{M}, d_{\mathtt{GH}})$ is a Polish space.

	We will use analogs of the Gromov-Hausdorff distance for spaces marked with subspaces and measures, which we present along the lines of 
	 \cite[Section 1.3]{BrownianSurfacesII}. 
	Here and in what follows, if $(E,\Delta)$ is a compact metric space $E$, we will write $\Delta_{\mathtt H}$ and $\Delta_{\mathtt P}$ for the Hausdorff and Prohorov metrics associated with $\Delta$, which are defined respectively on the set of all nonempty compact subsets of $E$ and on the set of all finite Borel measures on $E$. 
	
	For $l\in \mathbb N$, we let $\mathbb M^{l, 1}$ be the set of all isomorphism classes (for an obvious notion of isomorphism) of compact metric spaces marked with $l$ compact subspaces and a finite measure. More precisely, we consider marked spaces of the form $((\mathcal X, d_{\mathcal X}), \mathbf A, \boldsymbol \mu)$ where:
	\begin{itemize}
		\item[\textbullet] $(\mathcal X, d_{\mathcal X})$ is a compact metric space,
		\item[\textbullet] $\mathbf A=(\mathbf A_1,\ldots,\mathbf A_l)$ is an $l$-tuple of compact subsets of $\mathcal X$,
		\item[\textbullet] $\boldsymbol \mu$ is a finite Borel measure on $\mathcal X$.
	\end{itemize} 
	The set $\mathbb M^{l,1}$ is endowed with a metric $d^{l,1}_{\mathtt {GHP}}$, which is defined for any two spaces $\mathbb X=((\mathcal X, d_{\mathcal X}), \mathbf A, \boldsymbol \mu)$ and $\mathbb Y=((\mathcal Y, d_{\mathcal Y}), \mathbf B, \boldsymbol \rho)$ in $\mathbb M^{l,1}$ by:
	\begin{align*}
		d^{l,1}_{\mathtt{GHP}}(\mathbb X, \mathbb Y)=\inf_{\substack{(\mathcal Z, \Delta)\\ \iota_X: \mathcal X\hookrightarrow \mathcal Z\\ \iota_Y: \mathcal Y\hookrightarrow \mathcal Z}} \max\left\{\Delta_{\mathtt H}\left(\iota_{\mathcal X} (\mathcal X), \iota_{\mathcal Y}(\mathcal Y)\right), \max_{1\leq i\leq l}\Delta_{\mathtt H}\left(\iota_{\mathcal X}(\mathbf A_i), \iota_{\mathcal Y}(\mathbf B_i)\right), \Delta_{\mathtt P}\left(\iota_X{}_* \boldsymbol \mu, \iota_Y{}_*\boldsymbol \rho\right)\right\},
	\end{align*}
where the infimum is taken over all compact metric spaces $(\mathcal Z, \Delta)$ and isometric embeddings $\iota_{\mathcal X}:(\mathcal X, d_{\mathcal X})\to (\mathcal Z, \Delta)$ and $\iota_{\mathcal Y}:(\mathcal Y, d_{\mathcal Y})\to (\mathcal Z, \Delta)$.
Then $d_{\mathtt {GHP}}^{l,1}$ is a metric on $\mathbb M^{l, 1}$. Furthermore, $(\mathbb M^{l,1},d^{l,1}_{\mathtt {GHP}})$ is a Polish space. 
In what follows, we will be interested in the case $l=2$: the Brownian annulus comes with a volume measure and with two distinguished 
subsets which are its boundaries.

 
	\subsection{The Bettinelli-Miermont construction of the Brownian disk}
	\label{sec:cons}
	This section presents a variant of the Bettinelli-Miermont construction of the free Brownian disk, which is based on a quotient space defined from a Poisson family of Brownian trees. We borrow the formalism of \cite{MarkovSpatial}.
	\paragraph{The Brownian snake}
	We start with a brief presentation of the Brownian snake, referring to \cite{CSBPRandomSnakes} for more details.
 Let $\mathcal W$ be the set of continuous paths $w:[0, \zeta(w)]\to \mathbb R$, where $\zeta(w)\geq 0$ is a nonnegative real number called the lifetime of $w$. We endow this set with the distance:
	\begin{align*}
		d_{\mathcal W}(w, w')=|\zeta(w)-\zeta(w')|+\sup_{t\geq 0} |w(t\wedge \zeta(w))-w(t\wedge \zeta(w'))|.
	\end{align*}
For every $x\in \mathbb R$, let $\mathcal W_x$ be the set of all $w\in\mathcal W$ such that $w(0)=x$. We identify the unique element of $\mathcal W_x$ having lifetime $0$ to the real number $x$. A snake trajectory starting at $x$ is a continuous function $\omega:\mathbb R_+\to \mathcal W_x$ satisfying:
	\begin{itemize}
		\item[\textbullet] $\omega_0=x$ and $\sigma(\omega):=\sup\{s\geq 0, \ \omega_s\neq x\}<\infty$; 
		\item[\textbullet] for all $0\leq s\leq s'$, $\omega_s(t)=\omega_{s'}(t)$ whenever $t\leq \min_{u\in [s, s']}\zeta(\omega_u)$.
	\end{itemize}
Let $\mathcal S_x$ be the set of snake trajectories starting from $x$, that we endow with the distance:
	\begin{align*}
		d_{\mathcal S_x}(\omega, \omega')=|\sigma(\omega)-\sigma(\omega')|+\sup_{s\geq 0}d_{\mathcal W}(\omega_s,\omega'_s).
	\end{align*}
	If $\omega\in \mathcal S_x$, we let $\zeta_\omega:\mathbb R_+\to \mathbb R_+$ be the function defined by setting $\zeta_\omega(s):=\zeta(\omega_s)$ 
	and we also write $\hat \omega$ for the function called the \emph{head} of the snake trajectory $\omega$ defined by $\hat \omega(s):= \omega_s(\zeta_\omega(s))$. One easily verifies that $\omega$ is entirely 
	determined by the two functions $\zeta_\omega$ and $\hat \omega$. We will also use the notation
	$$W_*(\omega)=\min\{\hat \omega_s:s\in [0,\sigma(\omega)]\}.$$
	
Given a snake trajectory $\omega$, we can define a (labelled compact) $\mathbb R$ tree $T_\omega$, which is called the \emph{genealogical tree of $\omega$}. To construct this tree, we introduce the pseudo-distance $d_\omega$ on $[0, \sigma(\omega)]$ given by:
	\begin{align*}
		\forall s, t\in [0, \sigma(\omega)], \;d_\omega(s, t)= \zeta_\omega(s)+\zeta_\omega(t)-2\min_{u\in [s, t]}\zeta_\omega(u),
	\end{align*}
	and we define $T_\omega$ as the quotient space of $[0, \sigma(\omega)]$ for the equivalence relation $s\sim t$ iff $d_\omega(s, t) =0$, which is equipped with the metric induced by $d_\omega$. 
	We let $p_{\omega}:[0, \sigma(\omega)]\to T_\omega$ be the canonical projection and we write $\rho_\omega:=p_\omega(0)$ for the ``root'' of $T_\omega$. 
	The volume measure on $T_\omega$ is just the pushforward of Lebesgue measure on $[0,\sigma(\omega)]$
	under the projection $p_\omega$.
	By the definition of snake trajectories, the property $p_\omega(s)=p_\omega(t)$ implies that $\hat\omega(s)=\hat\omega(t)$. Thus we can define a natural labelling $\ell_\omega:T_\omega\to \mathbb R$ by
	requiring  that $\hat \omega =\ell_\omega\circ p_\omega$.
	\begin{definition} Let $x\in\R$.
		The Brownian snake excursion measure with initial point $x$ is the $\sigma$-finite measure $\mathbb N_x$ on $\mathcal S_x$ such that the pushforward of $\mathbb N_x$ under the function $\omega\mapsto \zeta_\omega$ is the It\^o measure of positive Brownian excursions, normalized so that $
			\mathbb N_x\left(\sup_{s\geq 0}\zeta_\omega(s)\geq \varepsilon\right)=\frac{1}{2\varepsilon}$,
		and such that, under $\mathbb N_x$ and conditionally on $\zeta_\omega$, the process $(\hat\omega_s)_{s\geq 0}$ is a Gaussian process centered at $x$ with covariance kernel $K(s, s ')=\min_{u\in[s, s']}\zeta_\omega(u)$
		when $s\leq s'$.
	\end{definition}
	
\noindent We will use some properties of exit measures of the Brownian snake. If $w\in \mathcal W$, and $y\in\mathbb R$, we write $\tau_y(w)=\inf\{t\leq \zeta(w):\ w(t)=y\}$ with the convention $\inf\varnothing =+\infty$. 
If $x\in \mathbb R$ and $y\in(-\infty,x)$, the quantity:
\begin{align}
	\label{MesureSortie}
	\mathcal Z_y(\omega):=\lim_{\varepsilon\to 0}\frac{1}{\varepsilon^2}\int_0^{\sigma(\omega)}\mathbf{1}_{\{\tau_y(\omega_s)= \infty, \hat \omega(s)<y+\varepsilon\}}\mathrm ds,
\end{align}
exists  $\mathbb N_x(\mathrm d \omega)$ almost everywhere and is called the exit measure at $y$. The process $(\mathcal Z_y(\omega))_{y\in(-\infty,x)}$ has a c\`adl\`ag modification with no positive jumps, which 
we consider from now on.

\paragraph{The free Brownian sphere}

Let us now recall the construction of the free Brownian sphere under the measure $\mathbb{N}_0(d\omega)$. We start by recalling the definition of 
``intervals'' on the genealogical tree $T_\omega$ of a snake trajectory $\omega$. We use the convention that, if $s,t\in[0,\sigma(\omega)]$ and
$s>t$, the interval $[s,t]$ is defined by $[s,t]=[s,\sigma(\omega)]\cup[0,t]$. Then, if $u,v\in T_\omega$, there is a smallest
interval $[s,t]$, with $s,t\in[0,\sigma(\omega)]$, such that $p_\omega(s)=u$ and $p_\omega(t)=v$, and we define
$$\lbracket  u,v\rrbracket=p_\omega([s,t]).$$
We set, for every $u,v\in T_\omega$,
$$\mathbf{D}^\circ(u, v):=\ell_\omega(u)+\ell_\omega(v)-2\max\left(\min_{w\in \lbracket u, v\rrbracket}\ell_\omega(w), \min_{w\in \lbracket v, u\rrbracket}\ell_\omega(w)\right),$$
and 
$$\mathbf{D}(u, v):= \inf_{u=u_0,u_1, \ldots, u_p=v}\sum_{j=1}^p\mathbf{D}^\circ(u_i, u_{i+1}),
$$
where  the infimum is taken over all choices of the integer $p\geq 1$ and the points $u_0,\ldots, u_p\in T_\omega$
such that $u_0=u$ and $u_p=v$.
Then, $\mathbf{D}$ is a pseudo-metric on $T_\omega$, and the free Brownian sphere is the associated quotient space 
$\bm_\infty=T_\omega /\{\mathbf{D}=0\}$, which is equipped with the metric induced by $\mathbf{D}$, for which we keep the same notation.
We note that the free Brownian sphere is a geodesic space (any two points are linked by at least one geodesic). 

We emphasize that the free Brownian sphere is defined under the {\it infinite} measure $\N_0$, but later
we will consider specific conditionings of $\N_0$
giving rise to finite measures.
We write $\mathbf{\Pi}$ for the canonical projection from $T_\omega$ onto $\bm_\infty$. 
The volume measure $\mathrm{Vol}(\cdot)$ on $\bm_\infty$ is the pushforward of the volume measure on $T_\omega$ under $\mathbf{\Pi}$.

For $u,v\in T_\omega$, the property $\mathbf{D}(u,v)=0$ implies $\ell_\omega(u)=\ell_\omega(v)$, and so we can define $\ell(x)$ for 
every $x\in \bm_\infty$, in such a way that $\ell(x)=\ell_\omega(u)$ whenever $x=\mathbf{\Pi}(u)$. There is a unique point $\bx_*$ of $\bm_\infty$ such that 
$$\ell(\bx_*)=\min_{x\in\bm_\infty} \ell(x),$$
and we have $\mathbf{D}(\bx_*,x)= \ell(x)-\ell(\bx_*)$ for every $x\in\bm_\infty$. We will write $\br_*:=-\ell(\bx_*)$. We also observe that the free Brownian sphere has another
distinguished point, namely $\bx_0:=\mathbf{\Pi}(\rho_\omega)$. Note that $\mathbf{D}(\bx_*,\bx_0)=-\ell(\bx_*)=\br_*$

Let us now turn to hulls. For every $r>0$ and $x\in\bm_\infty$,  we write $B^\infty_r(x)$ for the closed ball of radius $r$ centered at $x$ in $\bm_\infty$. Then, for every
$r\in(0,\br_*)$, the hull $B^\bullet_r(\bx_*)$ is the complement of the connected component of $\bm_\infty\setminus B^\infty_r(\bx_*)$
that contains $\bx_0$ (this makes sense because $\bx_0\notin B^\infty_r(\bx_*)$ when $r<\br_*$). Note that all points of $\partial B^\bullet_r(\bx_*)$ are
at distance $r$ from $\bx_*$. By definition, the perimeter of the hull $B^\bullet_r(\bx_*)$
is the exit measure $\mathbf{P}_r:=\mathcal{Z}_{r-r_*}$. This definition is justified by the property
\begin{equation}
\label{approx-exit}
\mathbf{P}_r=\lim_{\varepsilon\to 0} \frac{1}{\varepsilon^2} \mathrm{Vol}(\{x\in \bm_\infty\setminus B^\bullet_r(\bx_*):\mathbf{D}(x,B^\bullet_r(\bx_*))<\varepsilon\}),
\end{equation}
which can be deduced from \eqref{MesureSortie}.
The process $(\mathbf{P}_r)_{r\in(0,\br_*)}$ has c\`adl\`ag sample paths and no positive jumps.

	\paragraph{The Bettinelli-Miermont construction of the Brownian disk}
	
	We now present a construction of the free pointed Brownian disk, which is the compact metric space that appears as the scaling limit of Boltzmann triangulations in $\mathbb T^{1, \bullet}(L)$. We fix $a>0$ and let $(\mathtt e(t))_{t\in [0, a]}$ be a positive Brownian excursion of duration $a$. Conditionally on $(\mathtt e(t))_{t\in[0,a]}$, let $\mathcal N=\sum_{i\in I} \delta_{(t_i, \omega^i)}$ be a Poisson point measure on $[0, a]\times \mathcal S$ with intensity $
	2\,\dd t \,\mathbb N_{\sqrt 3 \mathtt e(t)}(\dd\omega)$. We let $\mathcal I$ be  the quotient space of
	\begin{align*}
		[0, a]\cup \bigcup_{i\in I} T_{\omega^i}\,,
	\end{align*} for the equivalence relation that identifies $\rho_{\omega^i}$ and $t_i$ for every $i\in I$
	(and no other pair of points is identified). We endow $\mathcal I$ with the maximal distance $d_{\mathcal I}$ whose restriction to each tree $T_{\omega^i}$ coincides with $d_{\omega^i}$, and whose restriction to $[0,a]$
	is the usual distance. More explicitly, the distance between two points $x\in T_{\omega^i}$ and $y\in T_{\omega^j}$, $i\neq j$ is given by $d_{\omega^i}(x, \rho_{\omega^i})+|t_i-t_j|+ d_{\omega^j}(y, \rho_{\omega^ j})$. Then $\mathcal I$ is a compact metric space (in fact, a compact $\R$-tree), and we can consider the labelling $\ell:\mathcal I\to \mathbb R$ defined by:
	\begin{align*}
		\ell(x)=\left\{\begin{array}{l}
			\ell_{\omega^i}(x) \ \ \ \ \text{ if $x\in T_{\omega^i}$, for some $i\in I$},\\
			\noalign{\smallskip}
			\sqrt 3\mathtt e(x) \ \ \ \text{ if $x\in [0, a]$.}
		\end{array}\right..
	\end{align*}
By standard properties of the It\^o measure, one verifies that the quantity $\Sigma:=\sum_{i\in I}\sigma(\omega^i)$ is almost surely finite and it is possible to concatenate the functions $p_{\omega^i}$ to obtain a 
``contour exploration'' $\pi:[0, \Sigma]\to \mathcal I$. Formally, to define $\pi$, let $\mu=\sum_{i\in I}\sigma(\omega^i)\delta_{t_i}$ be the point measure on $[0, a]$ giving weight $ \sigma(\omega^i)$ to $t_i$, for every $i\in I$, and consider the left-continuous inverse $\mu^{-1}$ of its cumulative distribution function, $\mu^{-1}(s):=\inf\{t\in[0,a]: \mu([0, t])\geq s\}$ for every $s\in[0,\Sigma]$.
For every $s\in [0, \Sigma]$, we set $\pi(s)=\omega^{i}(s-\mu([0, \mu^{-1}(s)))) $ if $\mu^{-1}(s)=t_i$ for some $i\in I$ and $\pi(s)=\mu^{-1}(s)$ otherwise.\\
	
This contour exploration $\pi$ allows us to define intervals on $\mathcal I$, in a way similar to what we did on $T_{\omega}$. For every $u, v\in \mathcal I$, there exists a 
smallest interval $[s, t]$ in $[0, \Sigma]$ such that $\pi(s)=u$ and $\pi (t)=v$, where by convention $[s, t]=[s, \Sigma]\cup[0, t]$ if $s> t$, and we write $\lbracket u, v\rrbracket$ for the subset of $\mathcal I$ defined by $\lbracket u, v\rrbracket=\{\pi(b), \ b\in [s, t]\}$. We then set
	\begin{align*}
		\forall u, v\in \mathcal I, \ \ D^\circ(u, v):=\ell(u)+\ell(v)-2\max\left(\min_{w\in \lbracket u, v\rrbracket}\ell(w), \min_{w\in \lbracket v, u\rrbracket}\ell(w)\right),
	\end{align*} 
and we consider the pseudo-metric $D$ on $\mathcal{I}$ defined for $u, v\in \mathcal I$ by:
	\begin{align*}
		D(u, v):= \inf_{u=u_0,u_1, \ldots, u_p=v}\sum_{j=1}^pD^\circ(u_i, u_{i+1}),
	\end{align*}
where  the infimum is taken over all choices of the integer $p\geq 1$ and the points $u_0,\ldots, u_p\in \mathcal I$
such that $u_0=u$ and $u_p=v$.
	The space $\mathbb{D}_{(a)}$ is defined as the quotient space  $\mathcal I/\{D=0\}$, which we equip with the distance induced by $D$, for which we keep
	the notation $D$. Then $\mathbb{D}_{(a)}$ is a compact metric space.
	
Let $\Pi:\mathcal I\to \mathbb{D}_{(a)}$ be the canonical projection. It is easy to verify that $\Pi(a)=\Pi(b)$
implies $\ell(a)=\ell(b)$, and so $\mathbb{D}_{(a)}$ inherits a labelling function, still denoted by $\ell(\cdot)$ from the labelling of $\mathcal I$. 
We can then define:
	\begin{itemize}
		\item[\textbullet] $\mathbf V=(\Pi\circ \pi)_* \lambda_{[0, \Sigma]}$, where $\lambda_{[0, \Sigma]}$ denotes Lebesgue measure on $[0, \Sigma]$. This is a finite Borel measure on $\mathbb{D}_{(a)}$ called the volume measure.
		\item[\textbullet] $\partial \mathbb{D}_{(a)}:=\Pi([0, a])$, which is the ``boundary'' of $\mathbb{D}_{(a)}$.
				\item[\textbullet] $x_*$ is the point of minimal label in $\mathbb{D}_{(a)}$.	\end{itemize}
	\noindent We then view $((\mathbb{D}_{(a)}, D), (x_*, \partial \mathbb{D}_{(a)}), \mathbf{V})$ as a random variable in $\mathbb M^{2, 1} $. This is  the \emph{free pointed Brownian disk of perimeter $a$}. As the (free) Brownian sphere, the (free pointed) Brownian disk is a geodesic space. 
	
	In a way similar to the Brownian sphere, we have $D(x,x_*)=\ell(x)-\ell(x_*)$ for every $x\in\D_{(a)}$. In particular, if we set
 $r_*:= -\ell(x_*) =-\min_{x\in \mathbb{D}_{(a)} }\ell(x)$ we have $r_*=D(x_*, \partial \mathbb{D}_{(a)})$ (note that $\ell(u)=\sqrt{3}\,\mathtt{e}(u)\geq 0$ for every $u\in [0,a]\subset\mathcal{I}$).

	Occasionally (in particular in Proposition \ref{BrDisk-BrSphere} below), we will also say that 
	the space $((\mathbb{D}_{(a)},D),x_*,\mathbf{V})$ --- which is a random element of $\mathbb{M}^{1,1}$ --- is a free pointed Brownian disk of perimeter $a$:
	This makes no real difference, as the boundary $\partial\D_{(a)}$ can be recovered as the closed subset of $\D$ consisting of points that have
	no neighborhood homeomorphic to the open unit disk.
	
	\paragraph{Hulls in the Brownian disk} Consider the Brownian disk $\D_{(a)}$ as defined above. For every $r>0$, let $B_r(x_*)$ stand for the closed ball of radius $r$
	centered at $x_*$ in $\mathbb{D}_{(a)}$. For every $r\in(0,r_*]$, we define the hull $H_r$ as the complement in $\mathbb{D}_{(a)}$ of the 
	connected component of $\mathbb{D}_{(a)}\setminus B_r(x_*)$ that intersects the boundary $\partial\mathbb{D}_{(a)}$
	(in fact, for $r<r_*$, this connected component must contain the whole boundary). Points of $\partial H_r$ are at distance $r$ from $x_*$.  In a way analogous to the definition of 
	$\mathbf{P}_r$ for the Brownian sphere, we define the perimeter
	of $H_r$ by
	\begin{equation}
	\label{def-peri-hull}
	\mathcal{P}_r=\sum_{i\in I} \z_{r-r_*}(\omega^i).
	\end{equation}
	
	Then $\mathcal{P}_r$ satisfies a formula analogous to \eqref{approx-exit} (if $r<r_*$,  there are only finitely many
	indices $i$ such that $\z_{r-r_*}(\omega^i)>0$). We also take $\mathcal{P}_0=0$. The process $(\mathcal{P}_r)_{r\in[0,r_*]}$ has c\`adl\`ag sample paths and no positive jumps.
	
	\begin{proposition}
	\label{law-perimeter-hull}
	Let $r>0$. Then the law of $\mathcal{P}_r$ under $\P(\cdot \cap \{r<r_*\})$ has density
	$$y\mapsto 3\sqrt{\frac{3}{2\pi}}\,r^{-3}\,\frac{a}{a+y}\,\sqrt{y}\,e^{-3y/(2r^2)}$$
	with respect to Lebesgue measure on $(0,\infty)$.
	\end{proposition}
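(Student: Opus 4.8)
The plan is to compute, for every $\lambda\ge 0$, the Laplace transform $\E\big[\exp(-\lambda\mathcal P_r)\,\mathbf 1_{\{r<r_*\}}\big]$ and to recognise it as $\int_0^\infty e^{-\lambda y}\,3\sqrt{3/(2\pi)}\,r^{-3}\,\tfrac{a}{a+y}\,\sqrt y\,e^{-3y/(2r^2)}\,\dd y$, which identifies the sub‑probability law of $\mathcal P_r$ on $\{r<r_*\}$. The data are the formula $\mathcal P_r=\sum_{i\in I}\z_{r-r_*}(\omega^i)$ of \eqref{def-peri-hull} and the identity $r_*=-\min_{i\in I}W_*(\omega^i)$, which follows from $r_*=D(x_*,\partial\mathbb{D}_{(a)})=-\min_{x\in\mathcal I}\ell(x)$ together with $\ell\ge 0$ on $[0,a]$. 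The difficulty, compared with computing an exit measure at a \emph{fixed} level, is that the level $r-r_*$ at which the exit measures are evaluated, as well as the event $\{r<r_*\}$, are both governed by the minimum of the labels over the whole Poisson family $\mathcal N=\sum_i\delta_{(t_i,\omega^i)}$.

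To disentangle this, I would single out the snake $\omega^{i_0}$ realising the overall minimum (a.s.\ unique), using $\mathbf 1_{\{r<r_*\}}=\sum_i\mathbf 1_{\{W_*(\omega^i)<W_*(\omega^j)\ \forall j\neq i\}}\,\mathbf 1_{\{W_*(\omega^i)<-r\}}$ and applying the Mecke/Palm formula for $\mathcal N$; on the minimising snake one has $r_*=-W_*(\omega^{i_0})$, so $\z_{r-r_*}=\z_{r+W_*(\omega^{i_0})}$. This reduces $\E[\exp(-\lambda\mathcal P_r)\mathbf 1_{\{r<r_*\}}]$ to
\[
\E\!\left[\int_0^a\! 2\,\dd t_0\!\int \mathbb N_{\sqrt3\,\mathtt e(t_0)}(\dd\omega^0)\,\mathbf 1_{\{W_*(\omega^0)<-r\}}\,e^{-\lambda\z_{r+W_*(\omega^0)}(\omega^0)}\,\exp\!\Big(-\!\int_0^a\! 2\,\dd t\,\mathbb N_{\sqrt3\,\mathtt e(t)}\big(1-\mathbf 1_{\{W_*>W_*(\omega^0)\}}e^{-\lambda\z_{r+W_*(\omega^0)}}\big)\Big)\right],
\]
the exponential factor arising from the exponential formula for the remaining (independent) snakes, restricted to the event that none of them dips below $W_*(\omega^0)$.

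One then disintegrates $\mathbb N_{\sqrt3\mathtt e(t_0)}(\dd\omega^0)$ according to $\rho:=-W_*(\omega^0)$ (its ``density'' is proportional to $(\sqrt3\mathtt e(t_0)+\rho)^{-3}\dd\rho$, since $\mathbb N_x(W_*<x-z)=\tfrac{3}{2}z^{-2}$), and, conditionally on $\{W_*(\omega^0)=-\rho\}$, uses the Williams spine decomposition of a Brownian snake conditioned on its minimum: $\z_{r-\rho}(\omega^0)$ becomes a Poissonian superposition of exit measures at level $r-\rho$ of ordinary sub‑snakes grafted along the spine, whose label process runs from $\sqrt3\mathtt e(t_0)$ down to $-\rho$. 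All the Laplace functionals that appear — $\mathbb N_\ell(1-e^{-\mu\z_{r-\rho}})$, its analogue under the conditioning $\{W_*\ge-\rho\}$, and $\mathbb N_x(W_*\le-\rho)$ — are given by solutions of the quadratic ODE $u''=4u^2$ on a half‑line with suitable ($\rho$‑dependent) boundary values, and hence have elementary closed forms. Carrying out the spine integral, then the integral over the Brownian excursion $\mathtt e$ of duration $a$ (equivalently, invoking the description of the free pointed Brownian disk through a conditioned Brownian snake), should collapse everything to $\int_0^\infty e^{-\lambda y}\,3\sqrt{3/(2\pi)}\,r^{-3}\,\tfrac{a}{a+y}\,\sqrt y\,e^{-3y/(2r^2)}\,\dd y$; here the factor $\tfrac{a}{a+y}$ is the continuum analogue of the harmonic function $\mathbf h_L(j)=L/(L+j)$ responsible for the $h$‑transform \eqref{DefqL}.

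The step I expect to be the main obstacle is precisely this bookkeeping: matching the Palm formula for the minimising snake with the Williams decomposition, propagating the constraint $\{W_*>W_*(\omega^0)\}$ through all the other snakes, and performing the excursion integration so that the various elementary pieces recombine into $3\sqrt{3/(2\pi)}\,r^{-3}\tfrac{a}{a+y}\sqrt y\,e^{-3y/(2r^2)}$ — and in particular so that the biasing factor $\tfrac{a}{a+y}$ comes out cleanly. A shorter alternative, avoiding the excursion computation, is to use the embedding of the free pointed Brownian disk in the free Brownian sphere (Proposition \ref{BrDisk-BrSphere}): one then identifies $\mathcal P_r$ on $\{r<r_*\}$ with a biased version of the perimeter $\mathbf P_r$ of the hull $B^\bullet_r(\bx_*)$ in the free Brownian sphere, whose law follows from the same Brownian‑snake computation carried out directly under $\mathbb N_0$ (with no Poissonisation), the biasing by $\tfrac{a}{a+y}$ again encoding the passage from the infinite measure $\mathbb N_0$ to the perimeter‑$a$ disk. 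As sanity checks, $y\mapsto 3\sqrt{3/(2\pi)}\,r^{-3}\sqrt y\,e^{-3y/(2r^2)}$ is exactly the $\Gamma(3/2,3/(2r^2))$ density (the unbiased ``sphere/plane'' law), which integrates to $1$ and is thus a bona fide probability density, while the full claimed density integrates in $y$ to the value of $\P(r<r_*)$.
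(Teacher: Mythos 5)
Your reduction via the Mecke/Palm formula is set up correctly (the displayed identity is right, as are the facts $\N_x(W_*<x-z)=\tfrac32 z^{-2}$ and the Gamma$(3/2)$ sanity check), and in spirit it retraces how identities of this type are established in the literature. But the proposal stops exactly where the content of the proposition begins: the statement is an \emph{explicit} density, and neither of your two routes produces it. In the Poissonian route, after the Palm step and the Williams decomposition you are left with an expectation of the form $\E\big[\int_0^a 2\,\dd t_0\,(\cdots)\exp\big(-\int_0^a 2\,\dd t\,v_{\lambda,\rho,r}(\sqrt3\,\mathtt e(t))\big)\big]$ over a Brownian excursion of \emph{fixed} duration $a$; even though $v_{\lambda,\rho,r}$ is explicit, such fixed-duration exponential functionals do not ``collapse'' by elementary manipulations (this is precisely the kind of computation that the known derivations of fixed-perimeter disk formulas avoid, e.g.\ by first obtaining the law of $\mathcal P_{r_*}$ by other means), and you give no indication of how to carry it out, nor how the subsequent $\rho$- and spine-integrations and the Laplace inversion in $\lambda$ would be performed. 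In the sphere-embedding route, the key quantitative point --- why the bias is exactly $\tfrac{a}{a+y}$ --- is asserted by analogy with $\mathbf h_L$ rather than proved; making it precise requires the symmetry of $\bx_*$ and $\bx_0$, the independence/identification statement of \cite[Theorem 8]{Stars}, the value $\N_0(\br_a<\infty)=(2a)^{-1}$, Lemma \ref{proba-condit} giving the conditional probability $y/(y+a)$ that a perimeter-$y$ disk contains a hull of perimeter $a$, \emph{and} the law of the hull perimeter of the sphere under $\N_0(\cdot\mid\br_*>r)$, which is a Gamma$(1/2)$-type law --- not the Gamma$(3/2)$ law you call ``the unbiased sphere/plane law'' (Gamma$(3/2)$ is the Brownian-plane law, i.e.\ the perimeter-biased sphere law). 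None of these steps is spelled out, so the factor $\tfrac{a}{a+y}$, and hence the density itself, is never actually derived.

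For comparison, the paper's proof sidesteps both difficulties: it adapts formula (34) of \cite[Section 5]{BrowDiskandtheBrowSnake}, which converts the expectation at the random level $r-r_*$ on $\{r<r_*\}$ into $3r^{-3}\int_{-\infty}^0\dd s\,\E[\mathcal Y_s\exp(-(\lambda+\tfrac{3}{2r^2})\mathcal Y_s)]$ with $\mathcal Y_s=\sum_i\z_s(\omega^i)$, and then only needs the known density $\tfrac32 a^{3/2}(a+z)^{-5/2}$ of $\mathcal Y_0$, the special Markov property, and the closed form of $\N_0(1-e^{-\mu\z_s})$, followed by elementary calculus and Laplace inversion. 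In other words, the ingredient you propose to rebuild from scratch (decomposition at the minimizing snake) is imported as a ready-made identity, and the fixed-duration excursion never has to be integrated directly. If you want to complete your program, the most economical fix is to flesh out the sphere-embedding route along the lines above (it yields the density in a few lines once the Gamma$(1/2)$ law of the sphere hull perimeter is quoted); as it stands, however, the proposal is a plausible plan with the decisive computations missing.
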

	
	We postpone  the proof to the Appendix, as this result is not really needed in what follows.

It will be useful to describe the hull $H_r$ in terms of the labelled tree $\mathcal{I}$ of the Bettinelli-Miermont construction. 
Let $x\in\mathcal{I}$ and suppose first that  $x\in T_{\omega^i}$ for some $i\in I$. Since $T_{\omega^i}$ is an $\R$-tree, there is a unique continuous injective path linking $x$ to the root $\rho_{\omega^ i}$ of $T_{\omega^i}$, which is  called the ancestral line of $x$. We let $m_x$ be the minimum label along this path.
	If $x\in[0,a]$, we take $m_x=\ell(x)$.  Then we have $m_x=m_y$ if $\Pi(x)=\Pi(y)$,
	and thus the mapping $\mathcal{I}\ni x\mapsto m_x$ induces a continuous function from $\mathbb D_{(a)}$ to $\mathbb R$ which we denote again by $\mathbb D_{(a)}\ni u \mapsto m_u$. Using the cactus bound (see \cite[Proposition 3.1]{CactusBound} for this bound in the setting of the Brownian sphere, 
			which is easily extended), one gets that:
	\begin{align*}
		H_r= \left\{u\in \mathbb D, \ m_u\leq -r_*+r\right\}.
	\end{align*}
Similarly, the boundary $\partial H_r$ of $H_r$ in $\mathbb D$ is the image under $\Pi$ of the set of all points $x\in \mathcal I$ such that 
we have both $\ell(x)=r-r_*$ and 
all points of the ancestral line of $x$ (with the exception of $x$) have a label greater than $r-r_*$. 

	\paragraph{Brownian disks in the Brownian sphere}
	
	We now explain how the free pointed Brownian disk of the
	previous section can be obtained as a subset of the free Brownian sphere
	under a particular conditioning of the measure $\N_0$. We first recall a result from \cite{Stars}.
	Let $r>0$, and argue under the conditional probability measure $\N_0(\cdot\mid \br_*>r)$. 
	We can then consider the hull $B^\bullet_r(\bx_*)$, and we write $\check B^\circ_r(\bx_*)=\bm_\infty\setminus B^\bullet_r(\bx_*)$, and $\check B^\bullet_r(\bx_*)$
	for the closure of $\check B^\circ_r(\bx_*)$.
	We equip the open set $\check B^\circ_r(\bx_*)$ with the intrinsic metric $\mathbf{d}^\circ$:
	for every $x,y\in \check B^\circ_r(\bx_*)$, $\mathbf{d}^\circ(x,y)$ is the infimum of lengths of 
	continuous paths connecting $x$ to $y$ that stay in $\check B^\circ_r(\bx_*)$.
	Then, according to \cite[Theorem 8]{Stars}, under the probability measure $\N_0(\cdot\mid \br_*>r)$, the intrinsic metric on the set $\check B^\circ_r(\bx_*)$
	has a continuous extension to its closure $\check B^\bullet_r(\bx_*)$, which is a metric on $\check B^\bullet_r(\bx_*)$,
	and the random metric space $(\check B^\bullet_r(\bx_*),\mathbf{d}^\circ)$
	equipped with the restriction of the volume measure on $\bm_\infty$ and with the distinguished
	point $\bx_0$ is a free pointed 
	Brownian disk of (random) perimeter $\z_r$. 
	
	For our purposes, it will be useful to have a version of the preceding result when 
	$r$ is replaced by a random radius. For every $a>0$, we define, under $\N_0$,
	$$\br_a:=\inf\{r\in(0,\br_*): \z_{r-\br_*}=a\}$$
	with the usual convention $\inf\varnothing=\infty$. 
	By \cite[Lemma 9]{GrowthFrag},
	we have $\N_0(\br_a<\infty)=(2a)^{-1}$. For future use, we record the following simple 
	fact. If $(a_n)_{n\in\N}$ is a sequence decreasing to $a$, we have  $\br_{a_n}\downarrow \br_a$ as $n\to\infty$, $\N_0$ a.e.
	on the event $\{\mathbf{r}_a<\infty\}$. This follows from the description of the
	law of the process $(\z_r)_{r<0}$ under $\N_0$, as a time change of the excursion of a stable L\'evy process, see \cite[Lemma 12]{GrowthFrag}.
	
	\begin{proposition}
	\label{BrDisk-BrSphere}
	Let $a>0$. Almost surely under the probability measure $\N_0(\cdot \mid \br_a<\infty)$, the intrinsic measure on the set
	$\check B^\circ_{\br_a}(\bx_*)$ has a continuous extension to its closure $\check B^\bullet_{\br_a}(\bx_*)$,
	which is a metric on $\check B^\bullet_{\br_a}(\bx_*)$, and the resulting random metric space equipped with the restriction of the volume measure on $\bm_\infty$ and with the distinguished
	point $\bx_0$ is a free pointed 
	Brownian disk of perimeter $a$.
	\end{proposition}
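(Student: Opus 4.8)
The plan is to deduce Proposition \ref{BrDisk-BrSphere} from the fixed-radius statement recalled just above (i.e. \cite[Theorem 8]{Stars}) by a disintegration/approximation argument, transferring the almost-sure conclusion from deterministic radii $r$ to the random radius $\br_a$. First I would fix a countable dense set of radii $r\in(0,\infty)$, and note that by \cite[Theorem 8]{Stars}, for each such $r$, on the event $\{\br_*>r\}$ the intrinsic metric on $\check B^\circ_r(\bx_*)$ extends continuously to $\check B^\bullet_r(\bx_*)$ and yields a Brownian disk of perimeter $\z_r$; moreover, by the Markov-type structure of the construction, conditionally on $\z_r=b$ this Brownian disk has the law of a free pointed Brownian disk of perimeter $b$, independently of the hull $B^\bullet_r(\bx_*)$. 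The key point is then to pass from a deterministic $r$ to the random $\br_a=\inf\{r\in(0,\br_*):\z_{r-\br_*}=a\}$.

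The main technical device is the approximation by rational radii. On $\{\br_a<\infty\}$, pick a sequence of rationals $(r_n)$ with $r_n\downarrow \br_a$; since $\br_{a_n}\downarrow\br_a$ for $a_n\downarrow a$, one can instead work with the sequence of random radii $\br_{a_n}$ for a rational sequence $a_n\downarrow a$. For each such $a_n$, one has $\N_0(\br_{a_n}<\infty)=(2a_n)^{-1}<\infty$, so one may argue under the (finite after conditioning) measure $\N_0(\cdot\mid\br_{a_n}<\infty)$, and apply a version of the fixed-radius result in which $r$ is replaced by the random time $\br_{a_n}$ --- this is legitimate because $\br_{a_n}$ is a stopping time for the natural filtration of the exit-measure process $(\z_r)_{r<0}$, and the result of \cite{Stars} is compatible with this filtration. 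Then I would establish two convergences: (i) $\check B^\bullet_{\br_{a_n}}(\bx_*)\to\check B^\bullet_{\br_a}(\bx_*)$ as closed subsets (Hausdorff convergence) and the associated intrinsic metrics converge, using that the balls $B^\infty_r(\bx_*)$ vary continuously in $r$ and that the hull process has no positive jumps, so $B^\bullet_{\br_{a_n}}(\bx_*)\uparrow B^\bullet_{\br_a}(\bx_*)$; and (ii) the perimeters $\z_{\br_{a_n}-\br_*}=a_n\to a$. Combining with the continuity of $a\mapsto$ (law of the free pointed Brownian disk of perimeter $a$) in $d^{1,1}_{\mathtt{GHP}}$, one concludes that the limiting space $\check B^\bullet_{\br_a}(\bx_*)$, with its extended intrinsic metric and the restriction of the volume measure, is a free pointed Brownian disk of perimeter $a$.

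The subtle part --- and what I expect to be the main obstacle --- is controlling the intrinsic metric near the hull boundary $\partial B^\bullet_{\br_a}(\bx_*)$ as $n\to\infty$: one must show that $\mathbf{d}^\circ$ on $\check B^\circ_{\br_{a_n}}(\bx_*)$ converges, uniformly enough, to $\mathbf{d}^\circ$ on $\check B^\circ_{\br_a}(\bx_*)$, so that the continuous extension passes to the limit and remains a genuine metric (in particular does not degenerate on the limiting boundary). This requires the no-positive-jumps property of $(\z_r)$ to guarantee that the boundary lengths $\z_{\br_{a_n}-\br_*}$ do not jump down below $a$ as $n$ increases, together with an argument --- modeled on the proof of \cite[Theorem 8]{Stars} --- showing that a short path in $\check B^\circ_{\br_a}(\bx_*)$ near its boundary can be approximated by short paths in the larger open sets $\check B^\circ_{\br_{a_n}}(\bx_*)$. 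An alternative, possibly cleaner route, would be to avoid approximation altogether and instead re-run the proof of \cite[Theorem 8]{Stars} directly with the stopping time $\br_a$ in place of $r$, invoking the strong Markov property of the Brownian snake under $\N_0$ at the time corresponding to $\br_a$; I would likely present whichever of these two approaches leads to the shortest rigorous argument, and I expect the stopping-time version to be the one actually carried out in the body of the paper.
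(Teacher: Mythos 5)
Your main route (approximating $\br_a$ by $\br_{a_n}$ for $a_n\downarrow a$) contains a circularity. The step where you ``apply a version of the fixed-radius result in which $r$ is replaced by the random time $\br_{a_n}$'' is not a version of \cite[Theorem 8]{Stars} at all: it is precisely the statement of Proposition \ref{BrDisk-BrSphere} with $a$ replaced by $a_n$, i.e.\ the proposition you are trying to prove. The fact that $\br_{a_n}$ is a stopping time of the exit-measure filtration does not by itself transfer a fixed-radius distributional identity to a random radius --- that transfer is exactly the content of the proposition, and it requires a genuine argument (a Markov-type identity for the snake trajectory truncated at the random level $\br_{a_n}-\br_*$, together with the identification of the resulting perimeter law). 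The variant in which you approximate by deterministic rational radii $r_n\downarrow\br_a$ fares no better: under $\N_0(\cdot\mid\br_a<\infty)$ the conditioning involves the whole perimeter process, so \cite[Theorem 8]{Stars} cannot be invoked for the complement of $B^\bullet_{r_n}(\bx_*)$ under this conditioned measure, and the perimeter at radius $r_n$ is not $a$. On top of this, even if the conditional laws of the approximating spaces were known, the convergence of the extended intrinsic metrics up to the boundary (which you correctly flag as the main obstacle) is left entirely open, and it is not a soft consequence of the absence of positive jumps of $(\z_r)$.

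The paper avoids approximation altogether. It uses Proposition 10 of \cite{GrowthFrag}, which gives the exact law, under $\N_0(\cdot\mid\br_a<\infty)$, of the snake trajectory truncated at the random level $\br_a-\br_*$, and identifies it with the law of the snake trajectory coding the free pointed Brownian disk of perimeter $a$ in the spine construction of \cite{spine} (equivalent to the Bettinelli--Miermont construction). Since, as in the proof of \cite[Theorem 8]{Stars}, the space $\bm_\infty\setminus B^\bullet_{\br_a}(\bx_*)$ with its (extended) intrinsic metric and volume measure is a measurable function of this truncated trajectory, the conclusion follows immediately. Your second, undeveloped alternative (re-running the argument of \cite{Stars} at the stopping time $\br_a$ via the Markov property of the snake) is in the right spirit, but as written your proposal neither carries it out nor supplies the distributional input that makes the random-level identification work; as it stands, the proof has a genuine gap.
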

	
	The shortest way to prove this proposition is to use Proposition 10 in \cite{GrowthFrag}, which determines the distribution 
	under $\N_0(d\omega\mid \br_a<\infty)$ of the snake trajectory $\omega$ truncated at level $\br_a-\br_*$, which is denoted
	by $\mathrm{tr}_{\br_a-\br_*}(\omega)$ (we refer e.g.
	to \cite[Section 2.2]{GrowthFrag} for a definition of this truncation operation). 
	On one hand, the space $\bm_\infty\setminus B^\bullet_{\br_a}(\bx_*)$  equipped with its intrinsic measure 
	can be obtained as a function of $\mathrm{tr}_{\br_a-\br_*}(\omega)$, as it is explained in the
	proof of \cite[Theorem 8]{Stars}. On the other hand, 
	Proposition 10 in \cite{Stars} shows that this snake trajectory 
	has exactly the distribution of the random snake trajectory that codes the free pointed Brownian disk in the construction of
	\cite[Definition 13]{spine} --- which is known to be equivalent to the Bettinelli-Miermont construction presented above.
	We omit the details, since Proposition \ref{BrDisk-BrSphere} is clearly a variant of Theorem 8 in \cite{Stars}. 
	
	Proposition \ref{BrDisk-BrSphere} allows us to couple Brownian disks with different perimeters. Consider a decreasing sequence
	$(a_n)_{n\in\N}$ that converges to $a$. On the event $\{\br_{a_n}<\infty\}$, $\check B^\bullet_{\br_{a_n}}(\bx_*)$ and
	$\check B^\bullet_{\br_a}(\bx_*)$ are both well defined, and we have trivially $\check B^\bullet_{\br_{a_n}}(\bx_*)\subset \check B^\bullet_{\br_a}(\bx_*)$.
	Furthermore, a.e. on the event $\{\br_a<\infty\}$, we have $\br_{a_n}<\infty$ for all $n$ large enough, $\br_{a_n}\downarrow \br_a$ as $n\to\infty$, and
	\begin{equation}
	\label{tech-point}
	\sup\{\mathbf{D}(x,\partial \check B^\bullet_{\br_a}(\bx_*)):x\in \check B^\bullet_{\br_a}(\bx_*)\setminus \check B^\bullet_{\br_{a_n}}(\bx_*)\}\xrightarrow[n\to\infty]{} 0.
	\end{equation}
	Let us justify \eqref{tech-point}. First note that, for every $x\in  \check B^\circ_{\br_a}(\bx_*)$, there is a path from $x$ to $\bx_*$ that does not hit
	$B^\bullet_{\br_a}(\bx_*)$, and thus stays at positive distance from $\partial \check B^\bullet_{\br_a}(\bx_*)$. Since  $\br_{a_n}\downarrow \br_a$, it follows
	that $x\in \check B^\circ_{\br_{a_n}}(\bx_*)$ for $n$ large enough, and we have proved that, a.e. on the event $\{\br_a<\infty\}$,
	$$\check B^\circ_{\br_{a}}(\bx_*)=\bigcup_{n\in\N,\br_{a_n}<\infty} \check B^\circ_{\br_{a_n}}(\bx_*),$$
	from which \eqref{tech-point} easily follows via a compactness argument.
	
\section{The Brownian annulus}
\label{sec:annu-def}
	
	\subsection{The definition of the Brownian annulus}
	\label{sec:def-annulus}
	
	We again fix $a>0$ and write $(\mathbb{D}_{(a)},D)$ for the free pointed Brownian disk of perimeter $a$ in the Bettinelli-Miermont
	construction described above. Recall the
	notation $x_*$ for the distinguished point of $\mathbb{D}_{(a)}$ and $r_*=D(x_*,\partial \mathbb{D}_{(a)})$. Also recall that
	$\mathcal{P}_r$ stands for the perimeter of the hull $H_r$ of radius $r$.
	We  fix $b>0$, and set
	$$r_b=\inf\{r\in[0,r_*): \mathcal{P}_r=b\},$$
	with again the convention $\inf\varnothing=\infty$. Note that $r_b<\infty$ if and only 
	$b<\mathcal{P}^*$, where $\mathcal{P}^*=\sup\{\mathcal{P}_r:r\in[0,r_*)\}$.

	The next theorem is then an analog of Proposition \ref{BrDisk-BrSphere}.
	
	\begin{theorem}
	\label{Br-Annulus}
	Almost surely under the probability measure $\P(\cdot\mid r_b<\infty)$, 
	the intrinsic metric on $\mathbb{D}_{(a)}\setminus H_{r_b}$
	has a continuous extension to the closure of $\mathbb{D}_{(a)}\setminus H_{r_b}$,
	which is a metric on this set. The resulting random metric space, which we denote 
by $(\C_{(a,b)},d^\circ)$, is called the Brownian annulus with
	perimeters $a$ and $b$.
	\end{theorem}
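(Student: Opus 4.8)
The theorem is an analogue for the Brownian disk of Proposition~\ref{BrDisk-BrSphere}, and the plan is to deduce it from that result together with \cite[Theorem~8]{Stars}, by realizing the Brownian disk inside a Brownian sphere and recognizing $\D_{(a)}\setminus H_{r_b}$ as the complement of two disjoint hulls. I would work under $\P(\cdot\mid r_b<\infty)$, which is a genuine probability measure since $\P(r_b<\infty)=a/(a+b)>0$ by Lemma~\ref{proba-condit}. Using Proposition~\ref{BrDisk-BrSphere}, couple $((\D_{(a)},D),(x_*,\partial\D_{(a)}),\mathbf V)$ with $(\check B^\bullet_{\br_a}(\bx_*),\mathbf d^\circ)$ under $\N_0(\cdot\mid\br_a<\infty)$, in such a way that $x_*=\bx_0$, $\partial\D_{(a)}=\partial B^\bullet_{\br_a}(\bx_*)$, the metric $D$ is the extended intrinsic metric $\mathbf d^\circ$ of $\check B^\circ_{\br_a}(\bx_*)$, and $\mathbf V$ is the restriction of $\mathrm{Vol}$. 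Since $r_b$, and more generally the whole process $(H_r,\mathcal P_r)_{0\le r<r_*}$, is a measurable function of the marked metric measure space, the conditioning is carried faithfully through this coupling.

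The key deterministic input is the claim that, in this coupling, $H_r=B^\bullet_r(\bx_0)$ for every $r<r_*$, where $B^\bullet_r(\bx_0)$ is the hull of radius $r$ around $\bx_0$ in $\bm_\infty$. I would prove this in two steps. First, $r_*=\mathbf D(\bx_0,\partial B^\bullet_{\br_a}(\bx_*))=\br_*-\br_a$, and for $r<r_*$ any $\mathbf D$-geodesic from $\bx_0$ of length $<r_*$ stays in $\check B^\circ_{\br_a}(\bx_*)$, because its points lie at $\mathbf D$-distance $\ge\br_*-r>\br_a$ from $\bx_*$ and are joined to $\bx_0$ within $\bm_\infty\setminus B^\infty_{\br_a}(\bx_*)$; since $\bm_\infty$ is a geodesic space this gives that the closed $D$-ball $B_r(x_*)$ equals $B^\infty_r(\bx_0)$ (here one also uses $\mathbf d^\circ\ge\mathbf D$). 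Second, $B^\bullet_{\br_a}(\bx_*)$ is a connected compact disjoint from $B^\infty_r(\bx_0)$ and containing $\bx_*$, hence lies in the component of $\bm_\infty\setminus B^\infty_r(\bx_0)$ containing $\bx_*$; a short topological argument then identifies the component of $\D_{(a)}\setminus B_r(x_*)$ meeting $\partial\D_{(a)}$ with $\bm_\infty\setminus(B^\bullet_r(\bx_0)\cup B^\bullet_{\br_a}(\bx_*))$, which yields $H_r=B^\bullet_r(\bx_0)$. Since $\partial B^\bullet_r(\bx_0)$ lies at $\mathbf D$-distance $\ge r_*-r>0$ from $\partial\D_{(a)}$, one has $\mathbf d^\circ=\mathbf D$ near $\partial B^\bullet_r(\bx_0)$, so the approximation formula~\eqref{approx-exit} gives $\mathcal P_r=|\partial B^\bullet_r(\bx_0)|$ for $r<r_*$; thus $r_b=\inf\{r<r_*:|\partial B^\bullet_r(\bx_0)|=b\}$ is the first passage time at level $b$ of a process with no positive jumps, so $\mathcal P_{r_b}=b$, and $H_{r_b}=B^\bullet_{r_b}(\bx_0)$ is a hull of exactly the type treated, at a random radius, by Proposition~\ref{BrDisk-BrSphere} (with $\bx_0$ and $b$ in place of $\bx_*$ and $a$). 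Since $r_b<r_*$ on $\{r_b<\infty\}$, applying the claim at $r=r_b$ gives
$$\D_{(a)}\setminus H_{r_b}=\check B^\bullet_{\br_a}(\bx_*)\setminus B^\bullet_{r_b}(\bx_0),$$
with $B^\bullet_{\br_a}(\bx_*)$ and $B^\bullet_{r_b}(\bx_0)$ disjoint, hence at positive $\mathbf D$-distance $\eta>0$; the closure of $\D_{(a)}\setminus H_{r_b}$ is obtained by adjoining $\partial H_{r_b}=\partial B^\bullet_{r_b}(\bx_0)$, and its two boundaries are $\partial\D_{(a)}$ and $\partial H_{r_b}$.

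It remains to analyze the intrinsic metric of $\D_{(a)}\setminus H_{r_b}$. Since $(\D_{(a)}\setminus H_{r_b})\setminus\partial\D_{(a)}\subset\check B^\circ_{\br_a}(\bx_*)$, on which $D=\mathbf d^\circ$ is the intrinsic metric of $\check B^\circ_{\br_a}(\bx_*)$ for $\mathbf D$ and hence has the same path lengths as $\mathbf D$, this intrinsic metric is the same whether computed with $D$ or with $\mathbf D$. I would then prove it is uniformly continuous by examining three regions. (i) Near $\partial\D_{(a)}$: two nearby points are joined by a $D$-geodesic staying within distance $\tfrac12(r_*-r_b)$ of $\partial\D_{(a)}$, hence avoiding $H_{r_b}$, so the intrinsic metric coincides locally with $D$. (ii) Near $\partial B^\bullet_{r_b}(\bx_0)$: two nearby points are joined by a near-geodesic for the intrinsic metric of $\check B^\circ_{r_b}(\bx_0)$, whose length is small by \cite[Theorem~8]{Stars} (in the random-radius form behind Proposition~\ref{BrDisk-BrSphere}), hence which stays in $\mathbf D$-distance $<\eta$ of $\partial B^\bullet_{r_b}(\bx_0)$ and so avoids $B^\bullet_{\br_a}(\bx_*)$; thus the intrinsic metric coincides locally with that of $\check B^\circ_{r_b}(\bx_0)$, which extends continuously by \cite[Theorem~8]{Stars}. (iii) Away from both boundaries: one works in the open connected set $\bm_\infty\setminus(B^\bullet_{\br_a}(\bx_*)\cup B^\bullet_{r_b}(\bx_0))$, where the intrinsic metric is a finite genuine metric because $\bm_\infty$ is a geodesic space with ``thin'' hull boundaries. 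Patching these local descriptions yields uniform continuity of the intrinsic metric on $\D_{(a)}\setminus H_{r_b}$, hence a continuous extension $d^\circ$ to the closure; $d^\circ$ is a genuine metric since it is a limit of quantities $\ge\mathbf D$, so $d^\circ\ge\mathbf D$ and $d^\circ$ separates points, while symmetry and the triangle inequality pass to the limit. One then lets $(\C_{(a,b)},d^\circ)$ be the resulting space.

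The heart of the argument, and the expected main obstacle, is region (ii): one must show that the intrinsic metric of $\D_{(a)}\setminus H_{r_b}$ really agrees, near $\partial H_{r_b}$, with that of $\check B^\circ_{r_b}(\bx_0)$, and that the latter has the asserted continuous extension at the \emph{random} radius $r_b$. This requires, on the one hand, the quantitative local estimates from the proof of \cite[Theorem~8]{Stars} bounding the intrinsic distance of two points near a hull boundary by their ambient distance (so that near-geodesics stay close to $\partial B^\bullet_{r_b}(\bx_0)$ and cannot feel the far-away hull), and, on the other hand, an upgrade of \cite[Theorem~8]{Stars} from a deterministic radius to the first passage time $r_b$, parallel to the way Proposition~\ref{BrDisk-BrSphere} follows from \cite[Theorem~8]{Stars} via the truncation result \cite[Proposition~10]{GrowthFrag}; here one uses that $\mathcal P_{r_b}=b$ (no positive jumps) and that $\partial H_{r_b}$ is no more singular than a typical hull boundary. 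The connectivity and finiteness statements invoked in region (iii) are likewise not entirely formal and should be extracted from the same circle of ideas; once these points are settled, the continuous extension and the metric property follow as indicated above.
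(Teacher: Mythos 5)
Your first half runs parallel to the paper's own proof: you realize $\D_{(a)}$ as $\check B^\bullet_{\br_a}(\bx_*)$ inside the Brownian sphere via Proposition \ref{BrDisk-BrSphere}, identify $H_r$ with the hull $B^\bullet_r(\bx_0)$ (a point the paper asserts and you usefully flesh out), and observe that, since the two hulls lie at positive distance from each other, short paths near $\partial H_{r_b}$ cannot feel $B^\bullet_{\br_a}(\bx_*)$, so the continuity question for the intrinsic metric of $\D_{(a)}\setminus H_{r_b}$ reduces to the same question for $\bm_\infty\setminus B^\bullet_{r_b}(\bx_0)$. That part is sound.

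The genuine gap is in the step you yourself flag as the "main obstacle" and then leave unresolved: you need the continuous extension of the intrinsic metric on the complement of a hull \emph{centered at $\bx_0$}, at the \emph{random} first-passage radius $r_b$ of the $\bx_0$-hull perimeter process, and you never establish it. Invoking "Proposition \ref{BrDisk-BrSphere} with $\bx_0$ and $b$ in place of $\bx_*$ and $a$", or "\cite[Theorem 8]{Stars} in random-radius form", is not a relabeling: in the snake construction $\bx_0=\mathbf{\Pi}(\rho_\omega)$ is the root while $\bx_*$ is the minimal-label point, and every tool you propose to adapt --- the truncation $\mathrm{tr}_{\br_a-\br_*}(\omega)$ of \cite[Proposition 10]{GrowthFrag}, exit measures at label levels --- is tied to labels, i.e.\ to distances from $\bx_*$, so the "upgrade parallel to the truncation argument" does not transfer to hulls around $\bx_0$; nor do you have a source for the "quantitative local estimates" of your regions (ii)--(iii). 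The missing idea, which is precisely how the paper finishes in a few lines, is the invariance of the Brownian sphere under exchanging $\bx_*$ and $\bx_0$ (\cite[Proposition 3]{Stars}): by this symmetry, the extension statement for $\bm_\infty\setminus B^\bullet_{r_b}(\bx_0)$, with $r_b$ the first passage at $b$ of the $\bx_0$-hull perimeters, has the same law as the statement for $\bm_\infty\setminus B^\bullet_{\br_b}(\bx_*)$, which is exactly Proposition \ref{BrDisk-BrSphere} with $b$ in place of $a$ (and the conditioning event $\{\br_a<\infty,\,r_b<\infty\}$ is a positive-measure subevent, so the almost-sure statement restricts to it). With that symmetry inserted, no separate random-radius version of \cite[Theorem 8]{Stars} and none of your patching estimates are needed; without it, your argument is incomplete at its central point.
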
	
	
	The terminology will be justified by forthcoming results showing that the
	Brownian annulus is the Gromov-Hausdorff limit of triangulations 
	with two boundaries. We note that the Brownian annulus $\C_{(a,b)}$
	has two ``boundaries'', namely $\partial_0\C_{(a,b)}=\partial \D_{(a)}$, and
	$\partial_1\C_{(a,b)}=\partial H_{r_b}$. Furthermore, distances in $\C_{(a,b)}$ 
	from the second boundary $\partial_1\C_{(a,b)}$ correspond to labels in the
	Bettinelli-Miermont construction. More precisely, for every $z\in \C_{(a,b)}$,
	\begin{equation}
	\label{dist-cyl}
	D(z,\partial_1\C_{(a,b)})=D(z,x_*)-r_b=\ell(z)-(r_b-r_*).
	\end{equation}
	This follows from the interpretation of labels in terms of distances from $x_*$,
	recalling that all points of $\partial_1\C_{(a,b)}=\partial H_{r_b}$ are at distance $r_b$
	from $x_*$. 
	
	\proof
	We may and will assume that the Brownian disk $\mathbb{D}_{(a)}$ is constructed 
	as the subset $\check B^\bullet_{\br_a}(\bx_*)$ of the free Brownian sphere $\bm_\infty$ under the probability measure
	$\N_0(\cdot\mid \br_a<\infty)$,
	as in Proposition \ref{BrDisk-BrSphere}, and, in particular, the distinguished
	point of $\mathbb{D}_{(a)}$ is the point $\bx_0$ of the Brownian sphere. 
	Furthermore,
	for every $r\in (0,D(\bx_0,\partial\mathbb{D}_{(a)}))$, the hull $H_r$ in the Brownian disk $\mathbb{D}_{(a)}$
	coincides with
	the hull $B^\bullet_r(\bx_0)$ in $\bm_\infty$ (defined as the 
	complement of the connected component of $\bm_\infty\setminus B^\infty_r(\bx_0)$ that contains
	$\bx_*$). In particular, on the event $\{r_b<\infty\}$, we have $r_b=\tilde \br_b$, where
	$\tilde \br_b$ is the hitting time of $b$ by the process of perimeters 
	of the hulls $B^\bullet_r(\bx_0)$, $r\in (0,\mathbf{r}_*)$. Furthermore,
	conditioning $\mathbb{D}_{(a)}$ on the event that $r_b<\infty$ is equivalent 
	to arguing under the conditional probability $\N_0(\cdot \mid\mathbf{D}(\bx_0,\bx_*)>\br_a+\tilde\br_b)$. 
	
	Now note that $\bx_*$ and $\bx_0$ play symmetric roles in the Brownian sphere $\bm_\infty$
	(cf. \cite[Proposition 3]{Stars}), and that proving that the intrinsic metric on $\mathbb{D}_{(a)}\setminus H_{r_b}$
	has a continous extension, which is a metric, to its closure is equivalent to proving that
	the intrinsic metric on $\bm_\infty\setminus B^\bullet_{\tilde\br_b}(\bx_0)$ has a
	continuous extension, which is a metric, to its closure. By symmetry, this equivalent
	to proving that the metric on $\bm_\infty\setminus B^\bullet_{\br_b}(\bx_*)$ has a
	continuous extension, which is a metric, to its closure. But we know from 
	Proposition \ref{BrDisk-BrSphere} that this is true. \endproof
	
	It turns out that the probability of the conditioning event $\{r_b<\infty\}$ has a very simple expression, 
	which will be useful in forthcoming calculations.
	
	\begin{lemma}
	\label{proba-condit}
	We have ${\displaystyle \P(r_b<\infty)=\frac{a}{a+b}}$.
	\end{lemma}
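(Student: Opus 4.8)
The plan is to reduce the identity to a one‑line hitting‑probability computation for the perimeter process $(\mathcal P_r)_{0\le r\le r_*}$, using its Markovian structure and the Doob transform that relates it to the peeling of the UIPT. First observe that $\{r_b<\infty\}=\{r_b<r_*\}$, and that, since $(\mathcal P_r)$ has no positive jumps and $\mathcal P_0=0$, it must pass through every level of $(0,b)$ before it can reach $b$. The structural input I will use is that $(\mathcal P_r)$ is a Markov process which, exactly in the spirit of the discrete identity \eqref{DefqL}, is the Doob $h$-transform --- for the function $h_a(y):=a/(a+y)$ --- of the ``reference'' perimeter process $X$ arising as the scaling limit of the peeling chain $q_\infty$ of the UIPT, i.e.\ of the perimeter process of hulls in the Brownian plane; under this transform the infinite lifetime of $X$ becomes a killing at the radius $r_*$ at which the growing hull first reaches $\partial\mathbb D_{(a)}$. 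This description is the continuum counterpart of \eqref{DefqL}, can be extracted from \cite{ScalingUIPT} and \cite{GrowthFrag}, and is consistent with the factor $a/(a+y)$ appearing in Proposition \ref{law-perimeter-hull}. The features of $X$ that matter are: it starts from $0$, has no positive jumps, and drifts to $+\infty$ --- so it hits every level $b>0$ almost surely, without overshooting it.

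Granting this, let $\sigma_b$ be the hitting time of level $b$ by $X$. The optional‑stopping form of the absolute‑continuity relation defining the $h$-transform gives
\[
\P(r_b<\infty)=\P\bigl(\mathcal P\text{ reaches }b\text{ before being killed}\bigr)=\E^X\!\left[\frac{h_a(X_{\sigma_b})}{h_a(X_0)}\,\mathbf 1_{\{\sigma_b<\infty\}}\right]=\frac{h_a(b)}{h_a(0)}\,\P^X(\sigma_b<\infty),
\]
where I used $X_{\sigma_b}=b$ (no positive jumps) and $X_0=0$. Since $\P^X(\sigma_b<\infty)=1$ and $h_a(0)=1$, the right‑hand side is $h_a(b)=a/(a+b)$, which is the claim. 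If one wishes to avoid starting $X$ exactly at $0$, the same computation applied at the stopping time $r_p$ (via the strong Markov property, noting that $\mathcal P_{r_p}=p$ on $\{r_p<r_*\}$ because there are no positive jumps) yields $\P(r_b<\infty)=\tfrac{a+p}{a+b}\,\P(r_p<\infty)$ for every $0<p<b$; thus $b\mapsto(a+b)\,\P(r_b<\infty)$ is constant on $(0,\infty)$, and letting $p\downarrow0$ forces the constant to equal $a$, because $\P(r_p<\infty)\to1$: indeed there is a.s.\ a unique grafted tree $T_{\omega^{i_*}}$ attaining the global minimal label $-r_*$, so $\z_{r-r_*}(\omega^{i_*})>0$ makes $\mathcal P_r>0$ for all $r\in(0,r_*)$, and hence $\sup_{r<r_*}\mathcal P_r>0$ almost surely.

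The main obstacle is the $h$-transform description of $(\mathcal P_r)$ invoked in the first paragraph: one must verify that $h_a(y)=a/(a+y)$ is the correct function and that the reference process is the spectrally negative, transient Brownian‑plane perimeter process --- the continuum analogue of passing from $q_\infty$ to $q_L$ through $\mathbf h_L$. An alternative route, entirely within the Brownian sphere, is to realize $\mathbb D_{(a)}$ as $\check B^\bullet_{\br_a}(\bx_*)$ under $\mathbb N_0(\cdot\mid\br_a<\infty)$ as in the proof of Theorem \ref{Br-Annulus}: using $\mathbb N_0(\br_a<\infty)=\tfrac1{2a}$ this gives $\P(r_b<\infty)=2a\,\mathbb N_0(\br_a+\tilde\br_b<\br_*)$, and one is then reduced to showing $\mathbb N_0(\br_a+\tilde\br_b<\br_*)=\tfrac1{2(a+b)}$, which again boils down to the same scale‑function computation for the exit‑measure process $(\z_{r-\br_*})$.
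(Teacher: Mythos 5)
Your route is genuinely different from the paper's, but it rests on a premise that you do not prove and that is not available off the shelf. The entire argument is carried by the assertion that $(\mathcal P_r)_{0\le r<r_*}$ is, as a process, the Doob $h$-transform for $h_a(y)=a/(a+y)$ of the Brownian-plane hull-perimeter process, in a form strong enough to be applied at the stopping time $\sigma_b$. You acknowledge this is ``the main obstacle'', but citing \cite{ScalingUIPT} and \cite{GrowthFrag} does not settle it: those references give the discrete relation \eqref{DefqL} and properties of exit-measure/growth-fragmentation processes, not the continuum identification you need. What the present paper itself provides in this direction (Lemma \ref{AbsContinuity}, the bridge decomposition of Section \ref{sec:process-peri-vol}, and Proposition \ref{law-perimeter-hull} --- the latter proved only in the Appendix by a computation of comparable difficulty to the lemma itself) yields at best the fixed-time statement that the law of $(\mathcal P_s)_{s\le r}$ on $\{r<r_*\}$ has density $a/(a+X_r)$ with respect to the Brownian-plane process. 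Upgrading this to your displayed identity at $\sigma_b$ is not a formality: naive optional stopping from the fixed-time identity fails, since $\{r_b<r_*\}$ is contained in no $\{r<r_*\}$ and indeed $\P(r_b\le r,\,r<r_*)\to 0$ as $r\to\infty$ (because $r_*<\infty$ a.s.), while $a/(a+X_r)$ is only a supermartingale. One needs the killed perimeter process to be strong Markov with the $h_a$-transformed sub-Markovian semigroup, and then the standard excessive-function theory; none of this is supplied. The same unproved input underlies your ``scale-invariance'' variant, since the relation $\P(r_b<\infty)=\frac{a+p}{a+b}\P(r_p<\infty)$ is again the $h$-transform identity applied at the stopping time $r_p$.

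Your fallback in the last paragraph (embedding $\D_{(a)}$ in the Brownian sphere and reducing to $\N_0(\br_a+\tilde\br_b<\br_*)=\frac1{2(a+b)}$) is a correct reduction but is not carried out, and as you note it ``boils down to the same scale-function computation'' --- which is exactly what the paper does, by a different and self-contained path: it time-reverses, identifying $(\mathcal P_{r_*-r})_{r\in[0,r_*]}$ as a continuous-state branching process with mechanism $\psi(\lambda)=\sqrt{8/3}\,\lambda^{3/2}$ started with the explicit density $\frac32 a^{3/2}(a+z)^{-5/2}$ (from \cite{MarkovSpatial}), then uses the Lamperti time change and the scale function of the spectrally positive $3/2$-stable L\'evy process (\cite[Theorem VII.8]{Bertoin}), so that $\P(r_b=\infty)=\frac32 a^{3/2}\int_0^b (a+z)^{-5/2}\sqrt{(b-z)/b}\,\dd z=\frac{b}{a+b}$. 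That computation only uses citable inputs, whereas your approach would require first establishing the continuum $h$-transform structure --- a statement at least as strong as the lemma you are trying to prove. As it stands, the proposal has a genuine gap at its central step.
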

	
	\proof Let us set $\check{\mathcal{P}}_r=\mathcal{P}_{r_*-r}$ for $r\in[0,r_*]$, so that
	$$\check{\mathcal{P}}_r=\sum_{i\in I} \z_{-r}(\omega^i),$$
	in the notation of \eqref{def-peri-hull}. From the identification of the law of the exit measure process under $\N_0$
	(see e.g.~Section 2.4 in \cite{GrowthFrag}), it is not hard to verify that $(\check{\mathcal{P}}_r)_{r\in[0,r_*]}$ is
	a continuous-state branching process with branching mechanism $\psi(\lambda):=\sqrt{8/3}\,\lambda^{3/2}$.
	Furthermore, Remark (ii) at the end of \cite[Section 5]{MarkovSpatial} shows that the initial value $\check{\mathcal{P}}_0=\mathcal{P}_{r_*}$ of this continuous-state branching process has density 
	$\frac{3}{2}\,a^{3/2}\,(a+z)^{-5/2}$. The classical Lamperti transformation allows us to write $(\check{\mathcal{P}}_r)_{r\in[0,r_*]}$ 
	as a time change of a (centered) spectrally positive L\'evy process with Laplace exponent $\psi$ and the same initial distribution, which 
	is stopped upon hitting $0$. For this L\'evy process started at $z$, the probability that it never hits $b$
	is equal to $\sqrt{(b-z)^+/b}$ (cf.~\cite[Theorem VII.8]{Bertoin}). From the preceding considerations, we get
	$$\P(r_b=\infty)= \frac{3}{2}\,a^{3/2}\,\int_0^b \frac{dz}{(a+z)^{5/2}}\,\sqrt{\frac{b-z}{b}}= \frac{b}{a+b}.$$
	This completes the proof.\endproof
	
	\subsection{A technical lemma}
	
	We keep the notation of the preceding section.
	In the following lemma,
	lengths of paths refer to the metric on the Brownian disk $\D_{(a)}$.
	
	\begin{lemma}
	\label{avoiding-path}
	Let $\eta>0$. Then, almost surely, for every $x,y\in\C_{(a,b)}\setminus \partial_1\C_{(a,b)}$, for every continuous path $\gamma$
	in $\C_{(a,b)}$ connecting $x$ to $y$ and with finite length $L(\gamma)$, we can find 
	a path $\gamma'$ staying in $\C_{(a,b)}\setminus \partial_1\C_{(a,b)}$ and connecting $x$ to $y$, whose length is
	bounded by $L(\gamma)+\eta$. 
	\end{lemma}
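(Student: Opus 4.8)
\emph{Proof plan.} Fix once and for all an almost sure event on which Theorem~\ref{Br-Annulus} holds, on which the identity \eqref{dist-cyl} is valid, and on which $\partial_1\C_{(a,b)}=\partial H_{r_b}$ is, as a metric space, isometric to the boundary of a free Brownian disk (this comes out of the proof of Theorem~\ref{Br-Annulus} via the embedding in $\bm_\infty$). On this event $d^\circ$ is the continuous extension to the compact space $\C_{(a,b)}$ of the intrinsic metric on $\C_{(a,b)}\setminus\partial_1\C_{(a,b)}=\D_{(a)}\setminus H_{r_b}$; in particular $d^\circ$ is uniformly continuous on $\C_{(a,b)}\times\C_{(a,b)}$, the $d^\circ$-topology coincides with the $D$-topology, and $\C_{(a,b)}\setminus\partial_1\C_{(a,b)}$ is dense in $\C_{(a,b)}$. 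By \eqref{dist-cyl}, $\C_{(a,b)}\setminus\partial_1\C_{(a,b)}=\{z:D(z,x_*)>r_b\}$, so for $\delta\in(0,r_*-r_b)$ the sets
$$\C^{(\delta)}=\{z\in\C_{(a,b)}:D(z,x_*)\ge r_b+\delta\},\qquad N_\delta=\{z\in\C_{(a,b)}:D(z,x_*)<r_b+\delta\}$$
satisfy: $\C^{(\delta)}$ is closed, contained in $\C_{(a,b)}\setminus\partial_1\C_{(a,b)}$, and increases to $\C_{(a,b)}\setminus\partial_1\C_{(a,b)}$ as $\delta\downarrow0$, while $N_\delta\downarrow\partial_1\C_{(a,b)}$. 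Since $x,y\notin\partial_1\C_{(a,b)}$ we have $x,y\in\C^{(\delta_0)}$ for some $\delta_0>0$.

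The strategy is to keep $\gamma$ unchanged wherever it lies in $\C^{(\delta)}$ and to reroute it around the excursions into the collar $N_\delta$ that reach $\partial_1\C_{(a,b)}$. Reparametrize $\gamma$ by $D$-arclength, so $\gamma\colon[0,L]\to\C_{(a,b)}$ is $1$-Lipschitz for $D$ with $L=L(\gamma)$, and set $K=\gamma^{-1}(\partial_1\C_{(a,b)})$, a compact subset of $(0,L)$. For $\delta\le\delta_0$ the open set $\{t:D(\gamma(t),x_*)<r_b+\delta\}$ contains $K$, its connected components are open subintervals of $(0,L)$, and only finitely many of them, say $(c_1,d_1),\dots,(c_m,d_m)$, meet $K$: on such a component the $1$-Lipschitz map $t\mapsto D(\gamma(t),x_*)$ runs from $r_b+\delta$ down to $r_b$ and back up to $r_b+\delta$, so $d_j-c_j\ge 2\delta$ and hence $m\le L/(2\delta)$. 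On $[0,L]\setminus\bigcup_j(c_j,d_j)$ the path $\gamma$ already avoids $\partial_1\C_{(a,b)}$, with total $D$-length $L-\sum_j(d_j-c_j)$; thus it suffices to produce, for each $j$, a path $\gamma_j'$ from $\gamma(c_j)$ to $\gamma(d_j)$ staying inside $\C_{(a,b)}\setminus\partial_1\C_{(a,b)}$ with $L(\gamma_j')\le (d_j-c_j)+\eta/m$, and then concatenate.

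Constructing these economical detours is the crux, and the step I expect to be the main obstacle. The ``radial'' part is cheap: since $\gamma(c_j)$ and $\gamma(d_j)$ lie at $D$-distance exactly $\delta$ from $\partial_1\C_{(a,b)}$, uniform continuity of $d^\circ$ together with density of $\C_{(a,b)}\setminus\partial_1\C_{(a,b)}$ lets one move away from each of them, inside $\C_{(a,b)}\setminus\partial_1\C_{(a,b)}$, by a $D$-length that is $o(1)$ as $\delta\downarrow0$, uniformly. The difficulty is the ``tangential'' part: one must join these two perturbed points inside $\C_{(a,b)}\setminus\partial_1\C_{(a,b)}$ while shadowing the piece $\gamma|_{[c_j,d_j]}$, which runs through the thin collar $N_\delta$ roughly parallel to $\partial_1\C_{(a,b)}$, at a $D$-length cost not exceeding $d_j-c_j$ up to an error tending to $0$ with $\delta$ --- note that a naive point-by-point shadowing is not enough, because the $d^\circ$-length of $\gamma|_{[c_j,d_j]}$ can be far larger than its $D$-length $d_j-c_j$. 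Two routes seem available. The first is a multi-scale argument: recursively apply the same construction inside $N_\delta$ at the scales $\delta,\delta/2,\delta/4,\dots$, using at scale $\delta 2^{-k}$ the modulus of continuity of $d^\circ$ near $\partial_1\C_{(a,b)}$, and check that the successive corrections telescope, yielding a path in $\bigcup_k\C^{(\delta 2^{-k})}=\C_{(a,b)}\setminus\partial_1\C_{(a,b)}$ whose $D$-length exceeds $d_j-c_j$ by an arbitrarily small amount. The second is to exploit the structure of $\partial_1\C_{(a,b)}$: being isometric to the boundary of a free Brownian disk it contains no non-constant rectifiable curve, so $\gamma$ spends zero $D$-length on it, $K$ can be covered by an open set of arbitrarily small Lebesgue measure, and the detours then have only that small measure to make up rather than the full length $d_j-c_j$. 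In either case, once $L(\gamma_j')\le(d_j-c_j)+\eta/m$ is secured, summing over $j$ and over the kept portion gives $L(\gamma')\le L(\gamma)+\eta$; and since every ingredient (the identity \eqref{dist-cyl}, the continuity of $d^\circ$, and the law of $\partial_1\C_{(a,b)}$) holds on a single almost sure event not depending on $x$, $y$ or $\gamma$, the ``almost surely, for every $x,y,\gamma$'' form of the statement follows.
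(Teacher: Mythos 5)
Your argument is not complete: the reduction to local detours is fine (the arclength parametrization, the identification of $\C_{(a,b)}\setminus\partial_1\C_{(a,b)}=\{z:D(z,x_*)>r_b\}$ via \eqref{dist-cyl}, and the bound $m\le L/(2\delta)$ on the number of excursions reaching $\partial_1\C_{(a,b)}$ are all correct), but the step you yourself flag as ``the crux'' --- producing, for each excursion, a path from $\gamma(c_j)$ to $\gamma(d_j)$ inside $\C_{(a,b)}\setminus\partial_1\C_{(a,b)}$ of length at most $(d_j-c_j)+\eta/m$ --- is exactly the content of the lemma, and neither of your two suggested routes delivers it. The points $\gamma(c_j)$ and $\gamma(d_j)$ sit at distance $\delta$ from the hull boundary but may be separated by a ``tentacle'' of $\partial H_{r_b}$, so the cost of going around it is precisely the quantity you are trying to control; the multi-scale telescoping would need a quantitative modulus (at scale $\delta 2^{-k}$) for the extended intrinsic metric near $\partial H_{r_b}$, and no such estimate is available --- the results of \cite{Stars} give only continuity of the extension, not a rate. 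The second route is equally inconclusive: even granting that $\partial H_{r_b}$ carries no non-constant rectifiable curve (which you do not prove or cite), the path can spend a parameter set of full measure in an arbitrarily thin collar around the boundary while touching it only on a null set, and the rerouting cost in that collar is again the unproven point; knowing that $K$ has small measure does not bound the lengths of the detours.

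The paper sidesteps this geometric difficulty entirely with a soft argument that you may find instructive. It defines, for each $\ve>0$ and each value $u$ of the inner perimeter, the failure event $E_\ve(a,u)$, and observes that on this event the function $v\mapsto F(x,y,v)$ (infimal length of paths avoiding the boundaries in $\C_{(a,v)}$) must jump at $v=u$ for all $(x,y)$ in a set of positive $\mathbf{V}\otimes\mathbf{V}$-measure; hence a monotone function $v\mapsto G_n(a,v)$ of the single real parameter $v$ jumps at $u$. Since a monotone function has at most countably many jumps, $\P(E_\ve(a,u))=0$ for Lebesgue-a.e.\ $u$, and the exceptional values are then removed by a continuity-in-law argument: coupling Brownian disks of perimeters $a_n\downarrow a$ inside the Brownian sphere (Proposition \ref{BrDisk-BrSphere} and \eqref{tech-point}) gives $\liminf_n\P(E_\ve(a_n,u))\ge\P(E_\ve(a,u))$, and scaling converts this into left-continuity in $u$, forcing $\P(E_\ve(a,u))=0$ for every $u$. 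No explicit detour is ever constructed, which is why no modulus of continuity near the fractal boundary is needed. To repair your proof you would have to supply such a construction, which amounts to proving the lemma by different means.
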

	
	\proof Let us set $\C^\circ_{(a,b)}=\C_{(a,b)}\setminus(\partial_0\C_{(a,b)}\cup\partial_1\C_{(a,b)})$ 
	which can be viewed as the ``interior'' of 
	$\C_{(a,b)}$. In order to prove Lemma \ref{avoiding-path}, it is enough to consider the case where $x,y\in\C^\circ_{(a,b)}$ and the path 
	$\gamma$ stays in $\C_{(a,b)}\setminus \partial_0\C_{(a,b)}$. If not the case, we can cover the set of times $t$ at which 
	$\gamma(t)$ belongs to $\partial_1\C_{(a,b)}$ by finitely many disjoint closed intervals $I=[s_I,t_I]$ such that $\gamma(t)\in \C_{(a,b)}\setminus \partial_0\C_{(a,b)}$
	for every $t\in I$ and $\gamma(s_I),\gamma(t_I)\notin \partial_1\C_{(a,b)}$, and we consider the restriction of $\gamma$ to each of these intervals. 
	
	Fix $\ve>0$ and, 
	for every $u>0$, let $E_\ve(a,u)$ denote the event where $u<\mathcal{P}^*$ and there exist $x,y\in \C^\circ_{(a,u)}$
	and a path $\gamma_0$ with finite length $L(\gamma_0)$ connecting $x$ to $y$ and staying in $\C_{(a,u)}\setminus\partial_0\C_{(a,u)}$, such that any path
	$\gamma'$ connecting $x$ to $y$ and staying in $\C^\circ_{(a,u)}$ has length at least $L(\gamma_0)+\varepsilon$. 
	Also set, for every  $u\in(0,\mathcal{P}^*)$ and $x,y\in \C^\circ_{(a,u)}$, 
	$$F(x,y,u)=\inf\{L(\gamma):\gamma\hbox{ is a path connecting }x\hbox{ to }y\hbox{ in }\C^\circ_{(a,u)}\}.$$
	If $E_\ve(a,u)$ holds, then clearly there exist $x,y\in \C^\circ_{(a,u)}$ such that the function $v\mapsto F(x,y,v)$ has a (positive) jump at $v=u$ 
	(take $\gamma_0$ as above and note that $F(x,y,v)\leq L(\gamma_0)$ if $0<v<u$). The same then holds for every $x',y'\in \C^\circ_{(a,u)}$
	sufficiently close to $x,y$: 
	To see this, consider the path obtained by concatenating $\gamma_0$ with geodesics from $x$ to $x'$ and from $y$ to $y'$. Hence, if for $n\geq 1$, we consider
	the monotone nonincreasing function
	$$(0,\mathcal{P}^*)\ni v \mapsto G_n(a,v)=\int_{\C^\circ_{(a,v)}} (n-F(x,y,v))^+\,\mathbf{V}(\dd x)\mathbf{V}(\dd y)$$
	we obtain that this function has a jump at $u$ when $E_\ve(a,u)$ holds, at least when $n$ is large enough. It follows that
	$$\mathbf{1}_{E_\ve(a,u)}\leq \liminf_{n\to\infty} \mathbf{1}_{\{G_n(a,u+)<G_n(a,u-)\}},$$
with an obvious notation for the right and left limits of $v \mapsto G_n(a,v)$ at $u$. Hence,
$$\P(E_\ve(a,u))\leq \liminf_{n\to\infty} \P(\{u<\mathcal{P}^*\}\cap \{G_n(a,u+)<G_n(a,u-)\})).$$
Since the function $(0,\mathcal{P}^*)\ni v \mapsto G_n(a,v)$ has at most countably many discontinuities, if follows that
$$\int_0^\infty \P(E_\ve(a,u))\,\dd u=0$$
and therefore $\P(E_\ve(a,u))=0$ for Lebesgue almost all $u$. 

To obtain the statement of the lemma, we need
to prove that $\P(E_\ve(a,u))=0$ for {\it every} $u>0$. Fix $u>0$, and let $(a_n)_{n\geq 0}$ be a sequence of reals decreasing to $a$.
We will verify that
\begin{equation}
\label{avoiding-path-tec}
\liminf_{n\to\infty} \P(E_\ve(a_n,u))\geq \P(E_\ve(a,u)).
\end{equation}
Thanks to Proposition \ref{BrDisk-BrSphere}, we may assume that $\D_{(a)}=\check B^\bullet_{\br_a}(\bx_*)$, resp. $\D_{(a_n)}=\check B^\bullet_{\br_{a_n}}(\bx_*)$,
which is a Brownian disk of perimeter $a$, resp. of perimeter $a_n$, under $\N_0(\cdot\mid \br_a<\infty)$, resp. under $\N_0(\cdot\mid \br_{a_n}<\infty)$. 
If $E_\ve(a,u)$ holds, we can find a path $\gamma_0$ staying in $\C_{(a,u)}\setminus\partial\D_{(a)}$ that
satisfies the properties stated at the beginning of the proof, and this path stays at positive distance from $\partial\D_{(a)}$. On the other hand, by \eqref{tech-point}, we have
$$\sup\{\mathbf{D}(x,\partial \D_{(a)}):x\in \D_{(a)}\setminus \D_{(a_n)}\} \xrightarrow[n\to\infty]{} 0,$$
$\N_0$ a.e. on $\{\br_a<\infty\}$. It follows that the path $\gamma_0$ stays in 
$\C_{(a_n,u)}\setminus \partial\D_{(a_n)}$ when $n$ is large, so that $E_\ve(a_n,u)$ also holds when $n$ is large. Hence, we get
$$\liminf_{n\to\infty} \N_0(E_\ve(a_n,u)\cap\{\br_a<\infty\})\geq \N_0(E_\ve(a,u)\cap\{\br_{a}<\infty\}),$$
and using also the fact that $\N_0(\br_{a_n}<\infty)\longrightarrow \N_0(\br_a<\infty)$ as $n\to\infty$
we get \eqref{avoiding-path-tec}.

From \eqref{avoiding-path-tec} and a scaling argument, we have then
$$\liminf_{u'\uparrow u} \P(E_\ve(a,u'))\geq \P(E_\ve(a,u)).$$
Clearly, this implies that we have $\P(E_\ve(a,u))=0$. Since $\ve>0$ was arbitrary, this completes the proof. \endproof

\section{Preliminary convergence results}
\label{sec:preli-conv}
	\subsection{Convergence towards the Brownian disk}

Let $a>0$. For every integer $L\geq 1/a$, let $\mathcal D^L_{(a)}$ a Boltzmann triangulation in $\mathbb T^{1, \bullet}(\lfloor aL\rfloor)$. Let $\Delta^L$ be the graph distance on $\mathcal D^L_{(a)}$ and consider the rescaled distance  $d_L=\sqrt{3/2}\,L^{-1/2}\Delta^L$. Let $\nu^L$ be the counting measure, rescaled by the factor $(3/4)L^{-2}$, on the vertex set of $\mathcal D^L_{(a)}$. Then,
	\begin{align}
		\label{ConvergenceBrownianDisk}
		((\mathcal D^L_{(a)}, d_L), (x_*^L, \partial \mathcal{D}^L_{(a)}), \nu^L) \xrightarrow[L\to\infty]{(d)}((\mathbb{D}_{(a)}, D), (x_*, \partial \mathbb{D}_{(a)}), \mathbf{V}),
	\end{align}
	where $((\mathbb{D}_{(a)}, D), (x_*, \partial \mathbb{D}_{(a)}), \mathbf{V})$ is the free pointed Brownian disk with perimeter $a$ as constructed 
in Section \ref{sec:cons}, and the convergence holds 
in $\mathbb M^{2,1}$ endowed with the metric $d_{\mathtt {GHP}}^{2,1}$ introduced in Section \ref{sec:conv}. In the last display, we abusively identify $\mathcal D^L_{(a)}$ with its vertex set 
(we will often make this abuse of notation in what follows).

The convergence \eqref{ConvergenceBrownianDisk} follows from \cite[Theorem 1.1]{albenque2020scaling}. Note that Theorem 1.1 in \cite{albenque2020scaling} deals with the so-called 
GHPU convergence including the uniform convergence of the ``boundary curves'', but it is straightforward to verify that this also implies the convergence
\eqref{ConvergenceBrownianDisk}  in $\mathbb M^{2,1}$. Also, \cite{albenque2020scaling} considers Boltzmann triangulations in $\mathbb T^{1}(\lfloor aL\rfloor)$ instead of $\mathbb T^{1, \bullet}(\lfloor aL\rfloor)$,
and the limit is therefore the free (unpointed) Brownian disk. However, as explained in \cite[Section 3.4]{MarkovSpatial}, the convergence for pointed objects easily follows 
from that for pointed ones (since we are here pointing at an edge and not at a point, we also need Lemma 5.1 in \cite{ABS}, stated for quadrangulations but easily extended, to verify that the
degree-biased measure on the vertex set is close to the uniform measure --- we omit the details).

\subsection{The processes of perimeters and volumes of hulls}
\label{sec:process-peri-vol}

We consider the free pointed Brownian disk $((\mathbb{D}_{(a)}, D), (x_*, \partial \mathbb{D}_{(a)}), \mathbf{V})$
as given in the Bettinelli-Miermont construction. Recall that $r_*=D(x_*, \partial \mathbb{D}_{(a)})$. For $r\in(0,r_*]$,
the perimeter $\mathcal{P}_r$ of the hull $H_r$ was defined in formula \eqref{def-peri-hull}, and we set
$$\mathcal{V}_r=\mathbf{V}(H_r).$$
We also define $\mathcal{V}_0=0$. It is not hard to verify that the process 
$(\mathcal{P}_r,\mathcal{V}_r)_{r\in[0,r_*]}$ has c\`adl\`ag sample paths.

Let $r>0$ and let us argue conditionally on the event $\{r_*> r\}$. Recall that 
$\D_{(a)}$ is obtained as a quotient space of the labelled tree $\mathcal{I}$, and that, for $x\in\mathcal{I}$, $m_x$ denotes the minimal label
along the ancestral line of $x$. We can use the restriction of the contour exploration $\pi:[0,\Sigma]\longrightarrow \mathcal{I}$ to every connected component of the open set $\{s\in[0,\Sigma] : m_{\pi(s)}< r-r_*\}$, in order
to define a snake trajectory with initial point $r-r_*$, which we call an excursion away from $r-r_*$. More precisely, if $(\alpha,\beta)$ is such a connected component, there is an index $i\in I$ such that $(\alpha,\beta)\subset (a_i,b_i)$,
where $[a_i,b_i]=\{s\in[0,\Sigma]:\pi(s)\in\ T_{\omega^i}\}$. Then, setting $\alpha'=\alpha-a_i$ and $\beta'=\beta-a_i$,  we have $\omega^i_{\alpha'}=\omega^i_{\beta'}$, $\hat\omega^i_{\alpha'}=\hat\omega^i_{\beta'}=r-r_*$ and $\zeta(\omega^i_s)>\zeta(\omega^i_{\alpha'})$ for every $s\in({\alpha'},{\beta'})$, and we define a snake trajectory 
$\omega$ by taking $\omega_s(t)=\omega^i_{({\alpha'}+s)\wedge{\beta'}}(\zeta(\omega^i_{\alpha'})+t)$ for every $0\leq t\leq \zeta(\omega^i_{({\alpha'}+s)\wedge{\beta'}})-\zeta(\omega^i_{\alpha'})$ (in the 
language of \cite{MarkovSpatial}, $\omega$ is an
excursion of $\omega^i$ away from  $r-r_*$). As a straightforward consequence of Proposition $12$ in \cite{BrowDiskandtheBrowSnake}, the snake trajectories obtained in this way and shifted so that
there initial point is $0$ correspond to the atoms of a  point measure $\mathcal{N}_r$ which conditionally on $\mathcal{P}_r$ is Poisson with intensity $\mathcal P_r\mathbb N_0(\cdot \cap \{W_* >-r\})$ 
and to which we add an extra atom $\omega_*$ distributed 
according to $\mathbb N_0(\cdot\mid W_*=-r)$ (the law of the latter atom is described in \cite{BesselProc} in terms of a Bessel process of dimension $9$). Using formula \eqref{MesureSortie}, 
it is not hard to verify that the process $(\mathcal P_s,\mathcal V_s)_{s\in[0,r]}$ is determined as a function of the point measure $\mathcal{N}_r+\delta_{\omega_*}$ (in particular,
$\mathcal P_s=\mathcal{Z}_{s-r}(\omega_*)+\int \mathcal{N}_r(d\omega)\,\mathcal{Z}_{s-r}(\omega)$ for $0<s<r$). 

Let us now consider the Brownian plane of \cite{Hull}. For the Brownian plane, we can also define the processes of perimeter and volume  of hulls $(\mathcal P_s^\infty, \mathcal V_s^ \infty)_{s\geq 0}$
and the law of this pair of processes is described in \cite{Hull}. It follows from the preceding observations and the construction of \cite{Hull} that, for every $u>0$, the conditional distribution of $(\mathcal P^\infty_s,\mathcal V^\infty_s)_{s\in[0,r]}$
knowing $\mathcal{P}^\infty_r=u$ is the same as the conditional distribution of $(\mathcal P_s,\mathcal V_s)_{s\in[0,r]}$
knowing $\mathcal{P}_r=u$. Since $\mathcal P_r$ and $\mathcal P_r^\infty$ both have a positive density on $(0,\infty)$ (by Proposition \ref{law-perimeter-hull} and \cite[Proposition 1.2]{Hull}),
we arrive at the following lemma.

	\begin{lemma} \label{AbsContinuity} The law of $(\mathcal P_s, \mathcal V_s)_{s\leq r}$ conditionally on the event $\{r_*> r\}$ is absolutely continuous with respect to the law of the pair $(\mathcal P_s^\infty, \mathcal V_s^\infty)_{s\leq r}$.
	\end{lemma}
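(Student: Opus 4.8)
The plan is to prove Lemma~\ref{AbsContinuity} by making explicit the heuristic already laid out in the discussion preceding it, namely that, conditionally on the value of the hull perimeter at radius $r$, the ``interior'' data of the Brownian disk (the point measure of excursions below level $r-r_*$, together with the special atom) coincides in law with the corresponding data for the Brownian plane of \cite{Hull}, and that the pair of processes $(\mathcal{P}_s,\mathcal{V}_s)_{s\le r}$ is a measurable function of this data.

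First I would argue conditionally on $\{r_*>r\}$ and recall from Section~\ref{sec:process-peri-vol} the description of the family of excursions of the contour exploration away from level $r-r_*$: shifted to have initial point $0$, these form a point measure $\mathcal{N}_r$ which, conditionally on $\mathcal{P}_r$, is Poisson with intensity $\mathcal{P}_r\,\mathbb{N}_0(\cdot\cap\{W_*>-r\})$, to which one adds an independent atom $\omega_*$ with law $\mathbb{N}_0(\cdot\mid W_*=-r)$. As recalled there, $(\mathcal{P}_s,\mathcal{V}_s)_{s\le r}$ is a deterministic measurable function $\Phi$ of $\mathcal{N}_r+\delta_{\omega_*}$ (via formula \eqref{MesureSortie} applied to each atom). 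Next I would recall the analogous description for the Brownian plane: by the construction of \cite{Hull}, the hull of radius $r$ in the Brownian plane, seen from outside, is again described by a point measure of excursions below level $r-r_*$ of exactly the same form, Poisson with intensity $\mathcal{P}^\infty_r\,\mathbb{N}_0(\cdot\cap\{W_*>-r\})$ given $\mathcal{P}^\infty_r$, plus the same special atom $\omega_*$ with law $\mathbb{N}_0(\cdot\mid W_*=-r)$, and $(\mathcal{P}^\infty_s,\mathcal{V}^\infty_s)_{s\le r}$ is obtained by the very same function $\Phi$. Consequently, the conditional law of $(\mathcal{P}_s,\mathcal{V}_s)_{s\le r}$ given $\mathcal{P}_r=u$ and the conditional law of $(\mathcal{P}^\infty_s,\mathcal{V}^\infty_s)_{s\le r}$ given $\mathcal{P}^\infty_r=u$ agree, for (Lebesgue-)almost every $u$, since in both cases they equal the pushforward under $\Phi$ of the law of a Poisson measure with intensity $u\,\mathbb{N}_0(\cdot\cap\{W_*>-r\})$ plus the fixed atom.

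Then the absolute continuity is immediate by disintegration: writing $\mu_u$ for this common conditional law, for any nonnegative measurable test functional $F$,
\begin{equation}
\mathbb{E}\big[F\big((\mathcal{P}_s,\mathcal{V}_s)_{s\le r}\big)\,\mathbf{1}_{\{r_*>r\}}\big]=\int_0^\infty \mu_u(F)\,\varphi_r(u)\,\dd u,
\end{equation}
where $\varphi_r$ is the density of $\mathcal{P}_r$ on $(0,\infty)$ given by Proposition~\ref{law-perimeter-hull} (this is exactly where we use that $\mathcal{P}_r$ has a density, so that the conditional laws are uniquely determined up to a Lebesgue-null set of values of $u$), while
\begin{equation}
\mathbb{E}\big[F\big((\mathcal{P}^\infty_s,\mathcal{V}^\infty_s)_{s\le r}\big)\big]=\int_0^\infty \mu_u(F)\,\varphi^\infty_r(u)\,\dd u,
\end{equation}
with $\varphi^\infty_r$ the density of $\mathcal{P}^\infty_r$, which is positive on all of $(0,\infty)$ by \cite[Proposition 1.2]{Hull}. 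Hence the law of $(\mathcal{P}_s,\mathcal{V}_s)_{s\le r}$ conditioned on $\{r_*>r\}$ is absolutely continuous with respect to that of $(\mathcal{P}^\infty_s,\mathcal{V}^\infty_s)_{s\le r}$, with Radon--Nikodym derivative (a version of) $\varphi_r/\varphi^\infty_r$ evaluated at the terminal perimeter.

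The main obstacle is the careful justification of the two structural facts that are only sketched in the text: on the disk side, that the excursions below level $r-r_*$ really do form the stated Poisson measure plus an independent special atom — this rests on Proposition~12 of \cite{BrowDiskandtheBrowSnake} and the identification of the law of $\omega_*$ via \cite{BesselProc} — and, on the plane side, that the hull of radius $r$ in the Brownian plane admits the identical excursion decomposition, which is the content of the construction in \cite{Hull}. Once both are in hand, identifying the common measurable map $\Phi$ producing $(\mathcal{P}_s,\mathcal{V}_s)_{s\le r}$ from the excursion data — essentially $\mathcal{P}_s=\mathcal{Z}_{s-r}(\omega_*)+\int\mathcal{N}_r(\dd\omega)\,\mathcal{Z}_{s-r}(\omega)$ and an analogous (additive) expression for $\mathcal{V}_s$ — is a routine matter, and the disintegration argument above closes the proof. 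I would keep the write-up short by invoking Section~\ref{sec:process-peri-vol} and \cite{Hull} for these two inputs rather than reproving them.
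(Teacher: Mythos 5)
Your proposal is correct and follows essentially the same route as the paper: the paper's argument is precisely the excursion decomposition of Section \ref{sec:process-peri-vol} (Poisson measure of intensity $\mathcal{P}_r\,\mathbb{N}_0(\cdot\cap\{W_*>-r\})$ plus the special atom $\omega_*$), the observation that $(\mathcal P_s,\mathcal V_s)_{s\le r}$ and $(\mathcal P^\infty_s,\mathcal V^\infty_s)_{s\le r}$ are the same measurable functional of this data, so that the conditional laws given the terminal perimeter coincide, and then the densities of $\mathcal P_r$ (Proposition \ref{law-perimeter-hull}) and of $\mathcal P^\infty_r$ (positive on $(0,\infty)$ by \cite[Proposition 1.2]{Hull}) yield the absolute continuity by disintegration. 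Your write-up merely makes the final disintegration step and the Radon--Nikodym derivative $\varphi_r/\varphi^\infty_r$ explicit, which the paper leaves implicit.
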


	We end this section by stating a technical property showing that the perimeter process can be recovered as
	a deterministic function of the volume process. 
	
	\begin{lemma}
		For every $r>0$, we have almost surely on the event $\{r<r_*\}$:
		\begin{align}
			\label{PrFromJumps}
			\mathcal P_r =\lim_{\alpha\to 0^+}\Bigg(\frac{1}{\alpha} \lim_{\epsilon\to 0^+}\phi(\epsilon)^{-1} \mathrm{Card}\left\{s\in [r-\alpha, r]:\Delta\mathcal V_s\geq \epsilon\right\}\Bigg),
		\end{align}
		where $\phi(\epsilon)=c_0\epsilon^{-3/4}$, with $c_0=2^{1/4}\Gamma(4/3)$, and $\Delta\mathcal V_s=\mathcal V_s-\mathcal V_{s-}$.
	\end{lemma}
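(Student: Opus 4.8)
The plan is to reduce the identity to the Brownian plane, where the pair of hull processes $(\mathcal P^\infty_s,\mathcal V^\infty_s)$ is explicitly described, and then to run a law of large numbers on the (infinitely many) small jumps of the volume process. Since $r$ is fixed and the right-hand side of \eqref{PrFromJumps} is a measurable functional of $(\mathcal P_s,\mathcal V_s)_{s\le r}$, Lemma \ref{AbsContinuity} shows that it suffices to establish the analogous identity for $(\mathcal P^\infty_s,\mathcal V^\infty_s)_{s\ge 0}$. So from now on I argue on the Brownian plane.

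The first ingredient is the jump structure of $\mathcal V^\infty$, which I would extract from \cite{Hull}. The jump times of $\mathcal V^\infty$ are exactly the (downward) jump times of $\mathcal P^\infty$, and, writing $M=\sum_s\delta_{(s,\Delta\mathcal V^\infty_s)}$ for the point measure of the jumps of $\mathcal V^\infty$, the measure $M$ has, for the natural filtration of $(\mathcal P^\infty,\mathcal V^\infty)$, a predictable intensity of product form $\mathcal P^\infty_u\,\dd u\otimes\rho(\dd\ell)$, with $\rho$ a $\sigma$-finite measure on $(0,\infty)$. The prefactor $\mathcal P^\infty_u$ and the shape of $\rho$ are dictated by self-similarity: the perimeter jumps have intensity $\mathcal P^\infty_u\,\dd u\otimes c\,\ell^{-5/2}\dd\ell$ (consistently, via Lemma \ref{AbsContinuity}, with the $\psi$-CSBP description of the disk perimeter process obtained in the proof of Lemma \ref{proba-condit}), and, by the scaling of the Brownian plane (perimeter $\sim$ length$^2$, volume $\sim$ length$^4$), a perimeter jump of size $\ell$ produces a volume jump of order $\ell^2$ (indeed the engulfed region is, in law, a free Brownian disk of perimeter $\ell$); hence $\rho((\epsilon,\infty))$ is a constant multiple of $\epsilon^{-3/4}$, and the explicit joint law in \cite{Hull} pins the constant down to $\rho((\epsilon,\infty))=\phi(\epsilon)=c_0\epsilon^{-3/4}$ with $c_0=2^{1/4}\Gamma(4/3)$.

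Granting this, fix $\alpha\in(0,r)$ and set $A_\alpha=\int_{r-\alpha}^r\mathcal P^\infty_u\,\dd u$ and, for $t\in[r-\alpha,r]$, $N_\epsilon^\alpha(t)=M([r-\alpha,t]\times[\epsilon,\infty))$. Then $(N_\epsilon^\alpha(t))_t$ is a counting process with continuous compensator $\phi(\epsilon)\int_{r-\alpha}^t\mathcal P^\infty_u\,\dd u$, so $N_\epsilon^\alpha(\cdot)-\phi(\epsilon)\int_{r-\alpha}^\cdot\mathcal P^\infty_u\,\dd u$ is a martingale having that same compensator as predictable quadratic variation (these are honest integrable martingales since $\E[\mathcal P^\infty_u]<\infty$ by the explicit law of $\mathcal P^\infty_u$; otherwise one localises along $\inf\{u:\mathcal P^\infty_u>K\}$ and lets $K\to\infty$ at the end). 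Evaluating at $t=r$ gives $\E[(N_\epsilon^\alpha(r)-\phi(\epsilon)A_\alpha)^2]=\phi(\epsilon)\,\E[A_\alpha]$, whence $\E[(\phi(\epsilon)^{-1}N_\epsilon^\alpha(r)-A_\alpha)^2]=\E[A_\alpha]/\phi(\epsilon)\to 0$ as $\epsilon\to0$. This yields $L^2$, hence in-probability, convergence of $\phi(\epsilon)^{-1}N_\epsilon^\alpha(r)$ to $A_\alpha$; choosing $\epsilon_n\downarrow0$ with $\phi(\epsilon_n)=n^2$ and applying Borel--Cantelli gives the a.s.\ convergence along $(\epsilon_n)$, and since $\epsilon\mapsto N_\epsilon^\alpha(r)$ is nonincreasing while $\phi(\epsilon_{n+1})/\phi(\epsilon_n)\to 1$, a sandwich argument upgrades this to a.s.\ convergence along all $\epsilon\to0$. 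Intersecting over rational $\alpha$ and using monotonicity in $\alpha$, we obtain that a.s., for every $\alpha\in(0,r)$, the inner limit in \eqref{PrFromJumps} equals $A_\alpha$.

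It remains to let $\alpha\to0$: almost surely $\frac1\alpha A_\alpha=\frac1\alpha\int_{r-\alpha}^r\mathcal P^\infty_u\,\dd u\to\mathcal P^\infty_{r-}=\mathcal P^\infty_r$, the last equality because $\mathcal P^\infty$ is c\`adl\`ag and a.s.\ has no jump at the fixed value $r$. This proves \eqref{PrFromJumps} on the Brownian plane, and Lemma \ref{AbsContinuity} transfers it to the Brownian disk. The genuinely delicate step is the first part of the second paragraph --- extracting from \cite{Hull} that the jumps of $\mathcal V^\infty$ carry a predictable intensity of product form $\mathcal P^\infty_u\,\dd u\otimes\rho(\dd\ell)$ with the precise tail $\rho((\epsilon,\infty))=c_0\epsilon^{-3/4}$; once this is in hand, everything else is a routine second-moment estimate combined with monotonicity.
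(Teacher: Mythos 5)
Your first step --- fixing $r$ and invoking Lemma \ref{AbsContinuity} to reduce \eqref{PrFromJumps} to the corresponding statement for the Brownian plane --- is exactly what the paper does; the difference is that the paper then simply quotes the proof of Proposition 21 in \cite{BrowDiskandtheBrowSnake}, where the Brownian-plane version of the approximation is established, whereas you attempt to re-prove that Brownian-plane statement from scratch via a compensator/second-moment argument. That alternative route could in principle work, but as written it rests on a claim that is false.

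The gap is the asserted \emph{exact} product form of the jump intensity, $\mathcal P^\infty_u\,\mathrm{d}u\otimes\rho(\mathrm{d}\ell)$. In the radius parametrization, the pair $(\mathcal P^\infty,\mathcal V^\infty)$ is obtained (as recalled after Corollary \ref{conv-time-changed}) by time-changing the $3/2$-stable process \emph{conditioned to stay positive}, i.e.\ the Doob $h$-transform with $h(x)=\sqrt{x}$. Consequently the jump kernel of the perimeter from state $x$ is not $c\,\ell^{-5/2}\,\mathrm{d}\ell$ but $c\,\ell^{-5/2}\sqrt{(x-\ell)^+/x}\,\mathrm{d}\ell$ (per unit peeling time; the time change then multiplies by a constant times $x$). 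This is already visible in the discrete kernel $q_\infty(k,k-m)=2Z^{1}(m+1)\,C^{(1)}(k-m)/C^{(1)}(k)$ with $C^{(1)}(k)\sim c\,12^k\sqrt{k}$: jumps comparable to the current perimeter are damped by $\sqrt{(k-m)^+/k}$ and jumps exceeding it are impossible. Hence the compensator of $N^\alpha_\epsilon$ is not $\phi(\epsilon)\int_{r-\alpha}^{\cdot}\mathcal P^\infty_u\,\mathrm{d}u$, and the clean identity $\E[(N^\alpha_\epsilon(r)-\phi(\epsilon)A_\alpha)^2]=\phi(\epsilon)\E[A_\alpha]$ is not justified as stated --- which is precisely the step you flag as the crux. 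The argument is repairable: the true compensator is $\int_{r-\alpha}^{t}\mathcal P^\infty_u\,\phi_u(\epsilon)\,\mathrm{d}u$ with $\phi_u(\epsilon)/\phi(\epsilon)\to 1$ as $\epsilon\to0$, uniformly on $[r-\alpha,r]$ on the a.s.\ event $\inf_{[r-\alpha,r]}\mathcal P^\infty>0$, because the damping factor tends to $1$ for small jumps; with this correction your martingale bound, the Borel--Cantelli upgrade along $\phi(\epsilon_n)=n^2$, and the monotonicity sandwich go through. You would also need to actually verify, rather than assert, that the resulting tail constant matches $c_0=2^{1/4}\Gamma(4/3)$ (this requires the explicit law of the volume of a free Brownian disk of given perimeter from \cite{Hull}), since the lemma claims the limit is $\mathcal P_r$ with coefficient exactly $1$.
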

	
	\begin{proof}
		It is explained in the proof of \cite[Proposition 21]{BrowDiskandtheBrowSnake} that (\ref{PrFromJumps}) holds if $\mathcal{P}_r$ and $\mathcal{V}_s$
		are replaced by $\mathcal{P}^\infty_r$ and $\mathcal{V}^\infty_s$ respectively. It then suffices to use the absolute continuity property of Lemma \ref{AbsContinuity}. 
	\end{proof}
	\subsection{Joint convergence of hulls}
One expects that the explored sets $T_i^L$ in the peeling by layers will correspond in the limit \eqref{ConvergenceBrownianDisk}
	to the hulls $H_r$. This section aims to give a precise result in this direction. Let us start with a technical proposition giving some information about the geometry of $\mathbb{D}_{(a)}$.
	\begin{proposition} \label{Proptechni}
	For every $\delta>0$ and $s\in(0,r_*)$, let $\mathcal U_{\delta}^s$ be the set of all points $x\in \mathbb{D}_{(a)}$ such that there is a continuous path from $x$ to $\partial \mathbb{D}_{(a)}$ that stays at distance at least $s-\delta$ from $x_*$. Almost surely, for every $s$ which is not a jump of the perimeter process $(\mathcal P_r)_{r\in(0,r_*)}$ and every $\varepsilon>0$, there exists $\delta>0$ such that:
		\begin{align*}
			\mathcal U_{\delta}^s\ \ \subset \{x\in\mathbb{D}_{(a)} : D(x, \mathbb{D}_{(a)}\setminus H_s)<\varepsilon\}.
		\end{align*}
	\end{proposition}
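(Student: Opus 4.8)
The plan is to argue by contradiction and compactness. Suppose the conclusion fails. Then there is $s\in(0,r_*)$ which is not a jump time of $(\mathcal{P}_r)$, there is $\varepsilon>0$, and there are points $x_n\in \mathcal{U}^s_{\delta_n}$ with $\delta_n\downarrow 0$ such that $D(x_n,\mathbb{D}_{(a)}\setminus H_s)\geq\varepsilon$ for all $n$. By compactness of $\mathbb{D}_{(a)}$ we may pass to a subsequence along which $x_n\to x$ for some $x\in\mathbb{D}_{(a)}$, and then $D(x,\mathbb{D}_{(a)}\setminus H_s)\geq\varepsilon$, so in particular $x\in H_s$ and $x$ lies at distance at least $\varepsilon$ from the boundary $\partial H_s$; since points of $\partial H_s$ are at distance $s$ from $x_*$, this forces $D(x,x_*)\leq s-\varepsilon$, i.e. $m_x = \ell(x)-\ell(x_*)$ is at most... more precisely, using the description $H_s=\{u:m_u\le -r_*+s\}$ from Section~\ref{sec:cons}, the point $x$ satisfies $m_x\le -r_*+s$, and in fact $m_x$ is bounded away from $-r_*+s$. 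On the other hand, for each $n$ there is a continuous path $\gamma_n$ from $x_n$ to $\partial\mathbb{D}_{(a)}$ staying at distance $\ge s-\delta_n$ from $x_*$. The idea is to extract a limiting ``path'' (more precisely, a connected compact set, or a limit of the images $\gamma_n([0,1])$ in the Hausdorff metric) which connects $x$ to $\partial\mathbb{D}_{(a)}$ and stays at distance $\ge s$ from $x_*$. Such a set would then lie entirely in $\mathbb{D}_{(a)}\setminus B_r(x_*)$ for every $r<s$, hence in the connected component of $\mathbb{D}_{(a)}\setminus B_r(x_*)$ containing the boundary, i.e. in $\mathbb{D}_{(a)}\setminus H_r$, for every $r<s$; letting $r\uparrow s$ and using left-continuity one would conclude $x\notin H_s$ --- at least up to a careful treatment of the closed/open ball issue --- contradicting $x\in H_s$ and even $m_x\le -r_*+s-\varepsilon'$ for some $\varepsilon'>0$.

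The step where one must be careful, and which I expect to be the main obstacle, is the passage to the limit of the paths $\gamma_n$: a sequence of paths with uniformly bounded length would admit (after reparametrisation by arc length and using Arzel\`a--Ascoli) a uniformly convergent subsequence, but here the $\gamma_n$ are a priori not of bounded length. The right substitute is to work with the images: the sets $K_n:=\gamma_n([0,1])$ are compact connected subsets of $\mathbb{D}_{(a)}$, each meeting $\partial\mathbb{D}_{(a)}$ and each containing a point ($x_n$) converging to $x$, and each contained in the closed set $\{z: D(z,x_*)\ge s-\delta_n\}$. By Blaschke's selection theorem for the Hausdorff metric on compact subsets of the compact space $\mathbb{D}_{(a)}$, we may assume $K_n\to K$ in the Hausdorff metric; the Hausdorff limit of connected compact sets is connected and compact, $K$ meets $\partial\mathbb{D}_{(a)}$ (as $\partial\mathbb{D}_{(a)}$ is closed), $x\in K$, and $K\subset\{z:D(z,x_*)\ge s\}$ since for any fixed $\delta>0$ the sets $K_n$ eventually lie in the closed set $\{z:D(z,x_*)\ge s-\delta\}$, hence so does $K$, and then we let $\delta\downarrow 0$.

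It remains to derive the contradiction from the existence of $K$. Fix $r<s$. Then $K\cap B_r(x_*)=\varnothing$ since $K\subset\{D(\cdot,x_*)\ge s\}$ and $s>r$. Because $K$ is connected, $K$ lies in a single connected component of $\mathbb{D}_{(a)}\setminus B_r(x_*)$, and since $K$ meets the boundary this is the component containing $\partial\mathbb{D}_{(a)}$; therefore $K\subset \mathbb{D}_{(a)}\setminus H_r$, and in particular $x\notin H_r$, i.e. $m_x>-r_*+r$. Letting $r\uparrow s$ gives $m_x\ge -r_*+s$. But earlier we obtained (from $D(x,\partial H_s)\ge\varepsilon$ and $D(x_*,\partial H_s)=s$, together with $D(x,x_*)=\ell(x)-\ell(x_*)$ and the identity $m_x\le D(x,x_*)-r_*$... ) a strict upper bound $m_x\le -r_*+s-\varepsilon''$ for some $\varepsilon''>0$. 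Concretely, any point $z$ with $m_z>-r_*+s-\eta$ for small $\eta$ lies close to $\partial H_s$ in the metric $D$ --- this is exactly where one invokes that $s$ is not a jump time of $(\mathcal{P}_r)$, so that the ``shell'' $\{u: -r_*+s-\eta< m_u\le -r_*+s\}$ shrinks to $\partial H_s$ as $\eta\downarrow 0$ (if $s$ were a jump time this shell could have positive diameter). Hence for $\eta$ small enough $\{m_z>-r_*+s-\eta\}\subset\{D(z,\partial H_s)<\varepsilon\}$, contradicting $D(x,\partial H_s)\ge\varepsilon$ and $m_x\ge -r_*+s$. This contradiction proves the proposition; the one genuinely delicate point, handled via the no-jump hypothesis, is the claim that the $m$-level shells above level $-r_*+s$ shrink to $\partial H_s$, and I would establish this using the continuity of $x\mapsto m_x$ on $\mathbb{D}_{(a)}$, the description of $\partial H_s$ in terms of ancestral lines recalled in Section~\ref{sec:cons}, and left-continuity of $r\mapsto\mathcal{P}_r$ at $s$.
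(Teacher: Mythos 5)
Your compactness set-up and the Blaschke/Hausdorff-limit argument for the images of the paths are correct, and they do give a limit point $x$ with $D(x,\mathbb{D}_{(a)}\setminus H_s)\ge\varepsilon$ and $m_x\ge -r_*+s$ (this part can in fact be had more cheaply: $x_n\in\mathcal U^s_{\delta_n}$ already forces $x_n\notin H_{s-\delta'}$ for every $\delta'>\delta_n$, hence $m_{x_n}\ge -r_*+s-\delta_n$, and one concludes by continuity of $u\mapsto m_u$; the paper gets the same inequality from the cactus bound). Two side remarks: the assertion ``$D(x,x_*)\le s-\varepsilon$'' is false in general (hull points may be arbitrarily far from $x_*$, since $H_s$ contains filled-in components), and the claim ``$m_x$ is bounded away from $-r_*+s$'' is not something you have established at that stage — it is precisely what the whole proof must produce.

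The genuine gap is the last step, the only place where the hypothesis that $s$ is not a jump of $(\mathcal P_r)$ enters. Your ``shell'' claim — for every $\varepsilon>0$ there is $\eta>0$ with $\{u\in H_s:\ m_u>-r_*+s-\eta\}\subset\{D(\cdot,\partial H_s)<\varepsilon\}$ — cannot be invoked as an auxiliary fact: since $\{u:\ m_u>-r_*+s-\eta\}\subset\mathcal U^s_\eta\subset\{u:\ m_u\ge -r_*+s-\eta\}$ (ancestral lines provide the paths in one direction, the cactus bound the other), this claim is essentially a restatement of the proposition itself, so using it is circular unless you prove it; and the ingredients you propose (continuity of $m$, the description of $\partial H_s$, left-continuity of $r\mapsto\mathcal P_r$ at $s$) do not suffice, because the no-jump hypothesis on the perimeter has no direct geometric content about the level set $\{m_u=-r_*+s\}$. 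The paper supplies this missing link as follows: since $x\in H_s$ with $m_x=-r_*+s$ and $x$ is at distance $\ge\varepsilon$ from $\mathbb{D}_{(a)}\setminus H_s$, one can choose a point $x'$ on the ancestral line of $x$ with $\ell(x')>-r_*+s$ and still $m_{x'}=-r_*+s$; a cactus-type estimate then shows that $m\equiv -r_*+s$ on a whole metric neighbourhood of $x'$, hence $\mathbf V(\{u:\ m_u=-r_*+s\})>0$, i.e.\ the volume process $(\mathcal V_r)$ jumps at $s$; finally one uses the fact that the jumps of $(\mathcal V_r)$ and $(\mathcal P_r)$ almost surely coincide — a nontrivial probabilistic input coming from the Brownian-plane results of \cite{Hull} transferred via the absolute continuity of Lemma \ref{AbsContinuity} — to contradict the assumption that $s$ is not a jump of $(\mathcal P_r)$. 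Neither the neighbourhood argument nor the coincidence of jumps of the volume and perimeter processes appears in your proposal, and without them the deduction from the no-jump hypothesis is unproven.
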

	\begin{proof}
		We argue by contradiction. If the statement of the proposition fails, we can find $\varepsilon>0$ and $s\in(0,r_*)$ which is not a jump of the perimeter process, and then
		a sequence $\delta_n\downarrow 0$ and points $x_n\in \mathbb{D}_{(a)}$ such that $D( x_n, \mathbb{D}_{(a)}\setminus H_s)\geq \varepsilon$ and there is a path linking $x_n$ to $\partial \mathbb{D}_{(a)}$ and remaining at distance at least $s-\delta_n$ from $x_*$. By compactness, we may assume that the sequence $(x_n)$ converges to a point $x_\infty$, which therefore satisfies $D(x_\infty, \mathbb{D}_{(a)}\setminus H_s)\geq \varepsilon$. We have $m_{x_n}\leq -r_*+s$ since $x_n\in H_s$, and, on the other hand,
			an application of the cactus bound  \cite[Proposition 3.1]{CactusBound} gives $m_{x_n}\geq -r_*+s-\delta_n$.
		Letting $n\to\infty$ we get $m_{x_\infty}= -r_*+s$. On the ancestral line of $x_\infty$, we can find a point $x$ close to $x_\infty$ whose label is strictly greater than $-r_*+s$
		and is still such that
		$m_x=-r_*+s$ (if no such $x$ existed, this would mean that $x_\infty\in\partial H_s$, contradicting $D(x_\infty, \mathbb{D}_{(a)}\setminus H_s)\geq \varepsilon$). Then all points in a sufficiently small neighbourhood of $x$ are in $H_s$ but not in $H_{s-\delta}$ for any $\delta>0$. In other words the process $(\mathcal V_r)_{r\in(0,r_*)}$ has a jump at $s$. Since the jumps of $(\mathcal V_r)$ and $(\mathcal P_r)$ almost surely coincide (this holds for $\mathcal V^\infty$ and $\mathcal P^\infty$ by \cite{Hull} and therefore also for $\mathcal V$ and $\mathcal P$ using Lemma \ref{AbsContinuity}), we end up with a contradiction.
	\end{proof}

As in Section \ref{sec:peel}, we consider the sequences of random triangulations $(T_i^L)$ and $(U_i^L)$ obtained by applying the peeling by layers algorithm to the Boltzmann triangulation $\mathcal D^L_{(a)}$. It will be convenient to view the triangulations that we consider as geodesic spaces. To this end we just need to identify 
each edge with a copy of the interval $[0,1]$ in the way explained in \cite[Remark 1.2]{GM1}. If the vertex set of $\mathcal D^L_{(a)}$ is
replaced by the union of all edges equipped with the obvious extension of the (rescaled) graph distance, the convergence (\ref{ConvergenceBrownianDisk}) remains valid,
and this has the advantage of making $\mathcal D^L_{(a)}$ a geodesic space. 

{\bf From now on, we will always view triangulations as geodesic metric spaces} as we just explained. In particular, we can consider continuous paths in $\mathcal{D}^L_{(a)}$
as in Lemma \ref{approx-path} below, and, similarly, in the next proposition, we interpret $\partial T^L_{k}$ as the union of the edges on the boundary of $T^L_{k}$.

By Skorokhod's representation theorem, we may assume that (\ref{ConvergenceBrownianDisk}) holds almost surely. 
	From now on until the end of this section, we fix $\omega\in \Omega$ for which the (almost sure) convergence (\ref{ConvergenceBrownianDisk})  does take place. 
	
	By a straightforward extension of  \cite[Proposition 1.5]{GM0}, we may assume the metric spaces $(\mathcal D^L_{(a)}, d^L)$ and $(\mathbb{D}_{(a)}, D)$ are embedded isometrically in the same compact metric space $(E, \Delta)$ 
in such a way that $\mathcal{D}^L_{(a)}$ and $\partial \mathcal{D}^L_{(a)}$ converge to $\mathbb{D}_{(a)}$ and $\partial\mathbb{D}_{(a)}$ respectively, for the Hausdorff metric $\Delta_{\mathtt H}$, $x_*^L$ converges to $x_*$
and $\nu^L$ converges weakly to $\mathbf{V}$. 
In particular, we will consider the triangulations $T_i^L$ and $U_i^L$ as subsets of $E$ so that we can speak about the $\Delta_{\mathtt {H}}$-convergence of these objects in the following proposition.

 If $\gamma:[0, \sigma]\to E$ and $\gamma':[0, \sigma']\to E$ are two continuous paths in $E$, we will say that $\gamma'$ is $\varepsilon$-close to $\gamma$ if $\Delta(\gamma(0),\gamma'(0))\leq \varepsilon$, $\Delta(\gamma(\sigma), \gamma'(\sigma'))\leq \varepsilon$ and if
	\begin{align*}
		\sup_{t\in [0, \sigma']} \Delta(\gamma'(t), \gamma)\leq \varepsilon,
	\end{align*}
where we identify  $\gamma$ and the compact subset $\gamma([0, \sigma])\subset E$. Note that this definition is not symmetric in $\gamma $ and $\gamma'$.
We also write $\ell_\Delta(\gamma)$ for the length of the path $\gamma$ in $(E,\Delta)$.

	\begin{proposition}\label{ConvHaus}
		Let $\omega$ be fixed as above and let $s\in(0,r_*)$ such that the perimeter process $(\mathcal P_r)$ is continuous at $s$. Recall the notation $h_k^L:=\Delta(x^*_L, \partial T_k^L)$.
		For every sequence of integers $(N_L)_{L\geq 1}$ such that $(\sqrt{\frac{3}{2L}}h_{N_L}^L)_{L\geq 1}$ converges to $s$, we have the convergences:
		\begin{align}
			\label{ConvHullHaus} T_{N_L}^L&\xrightarrow[L\to\infty]{\Delta_{\mathtt H}}H_s, & \partial T_{N_L}^L \xrightarrow[L\to\infty]{\Delta_{\mathtt H}} \partial H_s,&\ \ \ \ \ \ \ \ \ \ \ \ \ \ \ \  U_{N_L}^L \xrightarrow[L\to\infty]{\Delta_{\mathtt H}} \bar C_s,
		\end{align}
		where $\bar C_s$ denotes the closure of $C_s:=\mathbb{D}_{(a)}\setminus H_s$.
	\end{proposition}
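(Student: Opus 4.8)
The plan is to argue that, along an \emph{arbitrary} subsequence of values of $L$, one may extract a further subsequence along which $T^L_{N_L}$, $\partial T^L_{N_L}$ and $U^L_{N_L}$ converge for $\Delta_{\mathtt H}$ to limits $T_\infty$, $P_\infty$, $U_\infty$, and then to show these limits are \emph{forced} to equal $H_s$, $\partial H_s$ and $\bar C_s$; since the subsequence is arbitrary, the three convergences follow. Extraction is possible because the nonempty compact subsets of $E$ form a compact space for $\Delta_{\mathtt H}$; and (for $L$ large, so that $T^L_{N_L}\neq\dagger$ and all objects are nonempty compact subsets of $E$) since $T^L_{N_L}\cup U^L_{N_L}=\mathcal D^L_{(a)}\to\mathbb D_{(a)}$ while $T^L_{N_L}\cap U^L_{N_L}=\partial T^L_{N_L}$, one gets for free that $T_\infty\cup U_\infty=\mathbb D_{(a)}$ and $P_\infty\subset T_\infty\cap U_\infty$. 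I would first record two elementary consequences of the peeling by layers property: \emph{(a)} any path in $\mathcal D^L_{(a)}$ from a vertex of $U^L_{N_L}$ to $x^L_*$ must cross $\partial T^L_{N_L}$, so every vertex of $U^L_{N_L}$ is at graph distance at least $h^L_{N_L}$ from $x^L_*$, whence $U_\infty\subset\{x:D(x,x_*)\ge s\}$; \emph{(b)} every vertex of $\partial T^L_{N_L}$ is at graph distance between $h^L_{N_L}$ and $h^L_{N_L}+1$ from $x^L_*$, whence $P_\infty\subset\{x:D(x,x_*)=s\}$ (both using $\sqrt{3/(2L)}\,h^L_{N_L}\to s$ together with the convergence of $d_L$ and $x^L_*$ to $D$ and $x_*$ in $E$).

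The core and hardest step is to prove $U_\infty\subset\bar C_s$. Given $y=\lim_m u^{L_m}$ with $u^{L_m}\in U^{L_m}_{N_{L_m}}$, I would use that $U^L_{N_L}$ is connected and contains $\partial\mathcal D^L_{(a)}$ to choose a path $\gamma^{L_m}$ in $U^{L_m}_{N_{L_m}}$ from $u^{L_m}$ to $\partial\mathcal D^{L_m}_{(a)}$; by \emph{(a)} each $\gamma^{L_m}$ stays at rescaled distance at least $\sqrt{3/(2L_m)}\,h^{L_m}_{N_{L_m}}$ from $x^{L_m}_*$. Viewing the $\gamma^{L_m}$ as compact connected subsets of $E$ and extracting once more, they converge for $\Delta_{\mathtt H}$ to a compact connected set $\Gamma\subset\mathbb D_{(a)}$ that contains $y$ and some $z\in\partial\mathbb D_{(a)}$ and satisfies $\Gamma\subset\{x:D(x,x_*)\ge s\}$. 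Now $\mathbb D_{(a)}$ is a.s. homeomorphic to the closed disk, hence locally path-connected, so for each $\delta>0$ the connected component of the open set $\{x:D(x,x_*)>s-\delta\}$ containing the connected set $\Gamma$ is path-connected; this yields a continuous path from $y$ to $z\in\partial\mathbb D_{(a)}$ staying at distance $>s-\delta$ from $x_*$, i.e. $y\in\mathcal U^s_\delta$. Since $s$ is not a jump of the perimeter process, Proposition \ref{Proptechni} then provides, for any $\ve>0$, a $\delta>0$ with $y\in\mathcal U^s_\delta\subset\{x:D(x,\mathbb D_{(a)}\setminus H_s)<\ve\}$, and letting $\ve\to0$ gives $y\in\bar C_s$. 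This is the step I expect to be the main obstacle: it is the one requiring Proposition \ref{Proptechni} (hence the hypothesis that $s$ is a continuity point of $(\mathcal P_r)$), and it requires producing a genuine limiting \emph{path}, which is why I pass through the compactness of the $\gamma^{L_m}$ for $\Delta_{\mathtt H}$ and the topological-disk structure of $\mathbb D_{(a)}$ rather than through any length bound.

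Next I would show $T_\infty\subset H_s$. If $x=\lim_m x^{L_m}\in T_\infty$ with $x^{L_m}\in T^{L_m}_{N_{L_m}}$ but $x\notin H_s$, then $x$ lies in $C_s\subset\{x':D(x',x_*)>s\}$ and can be joined to $\partial\mathbb D_{(a)}$ by a continuous path $\gamma$ staying in $C_s$; by compactness $\gamma$ stays at distance $\ge s+\eta$ from $x_*$ for some $\eta>0$. Using Lemma \ref{approx-path} (discrete approximation of continuous paths) I would approximate $\gamma$ by a continuous path in $\mathcal D^{L_m}_{(a)}$ that is $\ve$-close to $\gamma$ for a small fixed $\ve$, and, concatenating it with short $\mathcal D^{L_m}_{(a)}$-geodesics joining its endpoints to $x^{L_m}$ and to $\partial\mathcal D^{L_m}_{(a)}$ (harmless since $x^{L_m}\to x$, $\partial\mathcal D^{L_m}_{(a)}\to\partial\mathbb D_{(a)}$ and $D(x_*,\partial\mathbb D_{(a)})=r_*>s$), obtain for $m$ large a path in $\mathcal D^{L_m}_{(a)}$ from $x^{L_m}$ to $\partial\mathcal D^{L_m}_{(a)}$ staying at rescaled distance $\ge s+\eta/2$ from $x^{L_m}_*$. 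Since $x^{L_m}\in T^{L_m}_{N_{L_m}}$ while $\partial\mathcal D^{L_m}_{(a)}$, being vertex-disjoint from $\partial T^{L_m}_{N_{L_m}}$, lies in $\mathcal D^{L_m}_{(a)}\setminus T^{L_m}_{N_{L_m}}$, this path must meet $\partial T^{L_m}_{N_{L_m}}$ at some vertex $w^{L_m}$; but $w^{L_m}$ is then at rescaled distance at most $\sqrt{3/(2L_m)}(h^{L_m}_{N_{L_m}}+1)$ from $x^{L_m}_*$, a quantity that tends to $s$ and is eventually $<s+\eta/2$, contradicting the lower bound $s+\eta/2$. Hence $T_\infty\subset H_s$.

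Finally I would close the argument by point-set topology, using $\mathbb D_{(a)}=H_s\cup C_s=H_s\cup\bar C_s$ with $H_s$ closed, $C_s$ open and $H_s\cap\bar C_s=\partial H_s$. From $T_\infty\cup U_\infty=\mathbb D_{(a)}$, $T_\infty\subset H_s$ and $U_\infty\subset\bar C_s$ one gets $U_\infty\supset\mathbb D_{(a)}\setminus H_s=C_s$, hence (as $U_\infty$ is closed) $U_\infty=\bar C_s$. For the boundary, given $x\in\partial H_s\subset\bar C_s=U_\infty$, pick $u^{L_m}\in U^{L_m}_{N_{L_m}}$ with $u^{L_m}\to x$ and let $w^{L_m}\in\partial T^{L_m}_{N_{L_m}}$ be a point where a $\mathcal D^{L_m}_{(a)}$-geodesic from $u^{L_m}$ to $x^{L_m}_*$ meets $\partial T^{L_m}_{N_{L_m}}$; then $d_{L_m}(u^{L_m},w^{L_m})=d_{L_m}(u^{L_m},x^{L_m}_*)-d_{L_m}(w^{L_m},x^{L_m}_*)$ tends to $D(x,x_*)-s=0$, since points of $\partial H_s$ are at distance $s$ from $x_*$ and $d_{L_m}(w^{L_m},x^{L_m}_*)\to s$ by \emph{(b)}; hence $w^{L_m}\to x$ and $x\in P_\infty$. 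Combined with $P_\infty\subset T_\infty\cap U_\infty\subset H_s\cap\bar C_s=\partial H_s$, this gives $P_\infty=\partial H_s$. Then $H_s=(H_s\setminus\partial H_s)\cup\partial H_s\subset(\mathbb D_{(a)}\setminus\bar C_s)\cup P_\infty\subset(\mathbb D_{(a)}\setminus U_\infty)\cup T_\infty\subset T_\infty$, so $T_\infty=H_s$, completing the plan. I note that this last paragraph deliberately captures $\partial H_s$ inside $T_\infty$ via the geodesic-crossing argument, which bypasses the (true but less immediate) fact that $H_s$ coincides with the closure of its interior.
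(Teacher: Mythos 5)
Your proposal is correct, and it rests on the same two external inputs as the paper's proof — Proposition \ref{Proptechni} (this is exactly where the continuity of $(\mathcal P_r)$ at $s$ enters) and the path-approximation Lemma \ref{approx-path}, combined with the peeling-by-layers control of distances from $x^L_*$ to $\partial T^L_{N_L}$ — but the architecture is genuinely different. The paper proves the three convergences directly by two-sided $\varepsilon$-neighbourhood estimates: both inclusions for $U^L_{N_L}$ (part 1 of Lemma \ref{approx-path} for $\bar C_s\subset (U^L_{N_L})^\varepsilon$, part 2 plus Proposition \ref{Proptechni} for the converse), then $\partial T^L_{N_L}\to\partial H_s$ by sandwiching $\mathcal B^L(c_Lh^L_{N_L})\cup\partial T^L_{N_L}$ between two balls converging to $B_s$ together with a geodesic-crossing argument, and it leaves $T^L_{N_L}\to H_s$ to the reader. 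You instead extract subsequential Hausdorff limits $T_\infty,P_\infty,U_\infty$, prove only the one-sided inclusions $T_\infty\subset H_s$ and $U_\infty\subset\bar C_s$, and recover everything else by set algebra together with a geodesic-splitting argument (using that points of $\partial H_s$ are at distance exactly $s$ from $x_*$) for $\partial H_s\subset P_\infty$; this yields a cleaner endgame and an explicit proof of $T^L_{N_L}\to H_s$ that sidesteps the question of whether $H_s$ equals the closure of its interior, at the cost of some compactness bookkeeping. Two remarks. Where the paper invokes part 2 of Lemma \ref{approx-path} to turn the discrete path in $U^L_{N_L}$ into a continuous path of $\mathbb D_{(a)}$, you take a Hausdorff limit of the discrete paths and then need $\mathbb D_{(a)}$ to be locally path-connected to upgrade the connected limit set to a genuine path inside $\{x:D(x,x_*)>s-\delta\}$; you justify this via the homeomorphism of the Brownian disk with the closed unit disk, which is true but neither proved nor cited in this paper — note, however, that it is not needed, since the paper records that $\mathbb D_{(a)}$ is a geodesic space and open balls of a geodesic space are path-connected, which already gives local path-connectedness (and path-connectedness of the open connected sets $C_s$ and of the components you use). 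With that substitution, every step of your argument checks out.
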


	\begin{proof}\emph{To simplify notation, we set $c_L=\sqrt{3/2}L^{-1/2}$ and recall that $d_L=c_L\,\Delta^L$.}
		The convergences of $T_{N_L}^L$ and $U_{N_L}^L$  are proved in a way very similar to Lemma 12 in \cite{MarkovSpatial} (which deals with the case where $N_L$ is
		replaced by the hitting time of $\partial \mathcal D^L_{(a)}$ by the peeling algorithm). We only give here the main steps of the proof. We start with a simple lemma.
		
		\begin{lemma}
		\label{approx-path}
		For every $\eta>0$ and $A>0$, there exists $\delta>0$ and $L_0\geq 0 $ such that, for every $L\geq L_0$ and any choice of points $x,y\in \mathbb{D}_{(a)}$ and $x^L, y^L\in \mathcal D^L_{(a)}$ satisfying $\Delta(x, x^L)\leq \delta$ and $\Delta(y, y^L)\leq \delta$, we have:
		\begin{enumerate}
			\item For any continuous path $\gamma$ from $x$ to $y$ in $\mathbb{D}_{(a)}$, there exists a continuous path $\gamma^L$ from $x^L$ to $y^ L$ in
			$\mathcal{D}_{(a)}^L$ which is $\eta$-close to $\gamma$. If $\gamma$ has length at most $A$, one can choose $\gamma^L$ such that $\ell_\Delta(\gamma^{L})\leq \ell_\Delta(\gamma)+\eta$.
			
			\item For any continuous path $\gamma^{L}$ from $x^L$ to $y^L$ in $\mathcal{D}_{(a)}^L$, there is a continuous path $\gamma$ from $x$ to $y$ in $ \mathbb{D}_{(a)}$ which is $\eta$-close to $\gamma^L$. If $\gamma^{L}$ has length at most $A$, one can choose $\gamma$ such that $\ell_\Delta(\gamma)\leq \ell_\Delta(\gamma^L)+\eta$.
		\end{enumerate}
		\end{lemma}
		
		We omit the proof of this lemma (see \cite[Lemma 10]{MarkovSpatial}), and proceed to the proof of Proposition \ref{ConvHaus}.

		We first consider $U_{N_L}^L$. If $\varepsilon>0$ and $K\subset E$, we write $K^\varepsilon=\{x\in E, \ \Delta(x, K)\leq \varepsilon\}$
		(only in this proof and the next one). If $\varepsilon>0$ is fixed, we need to verify that,
		for $L$ large,
				\begin{align*}
			U_{N_L}^L\subset (C_s)^\varepsilon \ \ \ \ \text{and} \ \ \ \ \bar C_s \subset (U^L_{N_L})^\varepsilon.
		\end{align*} 
	Let $x\in \bar C_s\setminus \partial H_s=C_s$. Then there is a path $\gamma$ connecting $x$ to a point $y$ of $\partial \mathbb{D}_{(a)}$ that stays in $C_s$. By compactness, this path stays at distance 
	at least $\alpha>0$ from $\partial H_s$, hence at distance at least $s+\alpha$ from $x_*$. We can assume that $\alpha\leq\varepsilon$. By part 1 of Lemma \ref{approx-path}, and using the fact that $\partial \mathcal{D}_{(a)}^L$ converges towards $\partial \mathbb{D}_{(a)}$, we can find, for $L$  large enough, points $x^L\in \mathcal{D}_{(a)}^L$ and $y^L\in \partial \mathcal{D}_{(a)}^L$  and
	a path $\gamma^L$  in $\mathcal{D}_{(a)}^L$ from $x^L$ to $y^L$ that is $(\alpha/2)$-close to $\gamma$. Since $x^L_*$ converges to $x_*$
	 and $c_L h_{N_L}^L$ converges to $s$, we get (taking $L$ even larger if necessary) that all points of $\gamma^L$
	 lie at distance greater than $c_L(h^L_{N_L}+1)$ from $x^L_*$. However, by the construction of the peeling by layers, points of $\partial T_{N_L}^L$ are at a distance at most $c_L(h_{N_L}^L+1)$ from $x_*^L$ . Therefore we found a path connecting $x^L$ to a point of $\partial \mathcal{D}_{(a)}^L$ that does not visit $\partial T_{N_L}^L$, and it follows that $x^L$ is a point of $U_{N_L}^L$. Since $\Delta(x^L, x)\leq \alpha/2<\varepsilon$ we then have $x\in (U_{N_L}^L)^\varepsilon$ for large $L$. If $x\in \partial H_s$, this is also true because we can approximate $x$ by a point of $C_s$. A compactness argument finally allows us to conclude that $\bar C_s\subset (U_{N_L}^L)^\varepsilon$ for any $L$ large enough.
		
		Let us show conversely that $U_{N_L}^L\subset (C_s)^\varepsilon$ when $L$ is large. We choose $\delta\in(0,\varepsilon)$
		such that the conclusion of Proposition \ref{Proptechni} holds with $\varepsilon$ replaced by $\varepsilon/2$. Let $v^L\in U_{N_L}^L$, which implies in particular
		that $v^L$ is at $\Delta$-distance at least $c_L\,h_{N_L}^L$ from $x_*^L$. Then there is a path $\gamma^L$ in $U_{N_L}^L$ connecting $v^L$ to $\partial \mathcal{D}_{(a)}^L$. Using part 2 of Lemma \ref{approx-path} and the convergence of $\partial\mathcal{D}_{(a)}^L$ to $\partial \mathbb{D}_{(a)}$, if $L$ is large enough (independently of the choice of $v^L$), we can approximate $\gamma^L$ by a path $\gamma$ in $\mathbb{D}_{(a)}$ that 
		is $(\delta/2)$-close to $\gamma^L$ and connects a point $v\in \mathbb{D}_{(a)}$ to a point of $\partial \mathbb{D}_{(a)}$. Notice that $\Delta(v,v^L)\leq \delta/2<\varepsilon/2$. 
		Provided that $L$ has been chosen even larger if necessary (again independently of the choice of $v^L$), it follows that the path $\gamma$ contains only points at distance at least $s-\delta$ from $x_*$. By our choice of $\delta$,
		this implies that $\Delta(v,C_s)<\varepsilon/2$ and thus $\Delta(v^L,C_s)<\varepsilon$. We therefore have $v^L\in (C_s)^\varepsilon$ and we have obtained that $U^L_{N_L}\subset ( C_s)^\varepsilon$, thus completing the proof of the convergence of $U_{N_L}^L$ to $\bar C_s$. 
		
Let us now discuss the convergence of $\partial T_{N_L}^L$. We let $\mathcal B^L(c_Lh_{N_L}^L)$ and $\mathcal B^L(c_L(h_{N_L}^L+ 2))$ denote the closed balls of respective radii $c_Lh_{N_L}^L$ and $c_L(h_{N_L}^L+2)$ centered at $x_*^L$ in $(\mathcal{D}_{(a)}^ L, \Delta)$. We also write $B_s=B_s(x_*)$ for the closed ball of radius $s$ centered at $x_*$ in $(\mathbb{D}_{(a)}, \Delta)$. Since 
$\mathcal{D}^{L}_{(a)}$ and $\D_{(a)}$ are both length spaces and $\mathcal{D}^{L}_{(a)}$ converges to $\D_{(a)}$ for the Hausdorff distance on $E$, we get that $\mathcal B^L(c_Lh_{N_L}^L)$ and $\mathcal B^L(c_L(h_{N_L}^L+2))$ both converge to $B_s$ for the Hausdorff distance. 
However,
		\begin{align*}
			\mathcal B^L(c_L h_{N_L}^L)\subset \mathcal B^L(c_L h_{N_L}^L)\cup \partial T^L_{N_L} \subset \mathcal B^L(c_L(h_{N_L}^L+2)).
		\end{align*}
		It follows that $\mathcal B'_L:=\mathcal B^L(c_L h_{N_L}^L)\cup \partial T^L_{N_L}$ also converges towards $B_s$ when $L\to\infty$.
	Observe that $\partial T_{N_L}^L=\mathcal B'_L\cap U_{N_L}^L$ and $\partial H_s=  B_s\cap \bar C_s$. Let $\varepsilon>0$. Using the convergence of $\mathcal B'_L$ towards $B_s$ and the convergence of $U_{N_L}^L$ towards $\bar C_s$, we get that for  $L$ sufficiently large and for every $x\in \partial H_s$, we have $\Delta(x, \mathcal B'_L)< \varepsilon$ and $\Delta(x,U_{N_L}^L)< \varepsilon$.
Fix $x\in\partial H_s$ and let $u_1\in U_{N_L}^L$ and $u_2\in \mathcal B'_L$ such that $\Delta(u_1, x)\leq \varepsilon$ and $\Delta(u_2, x)\leq \varepsilon$. In particular, $\Delta(u_1, u_2)\leq 2\varepsilon$ and since a geodesic path between $u_1$ and $u_2$ in $\mathcal{D}_{(a)}^L$ must intersect $\partial T_{N_L}^L$, it follows that one can find $v\in \partial T_{N_L}^L$ with $\Delta(u_1, v)\leq 2\varepsilon$. This implies $\Delta(x, v)\leq 3\varepsilon$, but since this is true for any $x\in\partial H_s$, we conclude that $\partial H_s$ is contained in the $3\varepsilon$-neighbourhood of $\partial T_{N_L}^L$ as soon as $L$ is large enough. A similar argument shows that $\partial T_{N_L}^L$ is contained in the $3\ve$-neighbourhood of $\partial H_s$ when $L$ is large enough. This proves the convergence of $\partial T_{N_L}^L$ towards $\partial H_s$.

 Once we have obtained the convergence of $U^L_{N_L}$ to $\bar C_s$ and the
 convergence of $\partial T^L_{N_L}$ to $\partial H_s$, the convergence of $T_{N_L}^L$ towards $H_s$ follows from straightforward arguments, and
we leave the details to the reader.
		\end{proof}
		For every integer $k\geq 1$, we set $\sigma_k^L:=\inf\{ n\in \mathbb N : h_n^L\geq k\}$. On the event $\{\sigma^L_k<\infty\}$,
	the (discrete) hull of radius $k$ in $\mathcal{D}_{(a)}^L$ is defined by $\mathcal H^L_k:=T_{\sigma_k^L}^L\,$.
	
	Recall that $\omega$ is fixed as explained before Proposition \ref{ConvHaus}.
	
	\begin{corollaire}
		\label{coroHull} Let $s\in(0,r_*)$ such that the perimeter process $(\mathcal P_r)$ has no jump at $s$. Then the hull $\mathcal H_{\lfloor s/c_L\rfloor }^L$ converges towards $H_s$ 
		for the Hausdorff metric, and its volume $\nu^L(\mathcal H_{\lfloor s/c_L\rfloor }^L)$ converges towards $\mathcal V_s$.
	\end{corollaire}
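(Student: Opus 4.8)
The plan is to deduce the Hausdorff convergence of $\mathcal H^L_{\lfloor s/c_L\rfloor}$ towards $H_s$ from Proposition~\ref{ConvHaus}, applied to the sequence $N_L=\sigma^L_{\lfloor s/c_L\rfloor}$, and then to promote it to the convergence of volumes by combining the Portmanteau theorem with the fact that $\mathbf V(\partial H_s)=0$. Set $k_L=\lfloor s/c_L\rfloor$, so that $c_Lk_L\to s$. I would first check that $\sigma^L_{k_L}<\infty$ for $L$ large enough (so that $\mathcal H^L_{k_L}=T^L_{N_L}$ is well defined) and that $c_Lh^L_{N_L}\to s$. Both facts follow from the structure of the peeling by layers. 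By the layers property, $(h^L_n)_{n\geq 0}$ is nondecreasing with increments in $\{0,1\}$ (a vertex added to the explored boundary by one peeling step lies at distance between $h^L_n$ and $h^L_n+1$ from $x_*^L$), so $h^L_{N_L}=k_L$ exactly, whence $c_Lh^L_{N_L}=c_L\lfloor s/c_L\rfloor\to s$. Moreover, at the step where the exploration reaches $\dagger$, the vertex opposite the peeled edge lies on $\partial\mathcal D^L_{(a)}$ and is adjacent to a vertex of the current explored boundary, so that $\Delta^L(x_*^L,\partial\mathcal D^L_{(a)})\leq h^L+2$ at that step; since $d_L(x_*^L,\partial\mathcal D^L_{(a)})\to D(x_*,\partial\mathbb{D}_{(a)})=r_*>s$ along our fixed $\omega$, we have $\Delta^L(x_*^L,\partial\mathcal D^L_{(a)})>k_L+2$ for $L$ large, so $(h^L_n)$ crosses the level $k_L$ strictly before reaching $\dagger$, i.e.\ $\sigma^L_{k_L}<\infty$. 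As $s$ is not a jump of $(\mathcal P_r)$, Proposition~\ref{ConvHaus} then applies with this $N_L$ and gives $\mathcal H^L_{k_L}=T^L_{N_L}\xrightarrow[L\to\infty]{\Delta_{\mathtt H}}H_s$, along with $U^L_{N_L}\xrightarrow[L\to\infty]{\Delta_{\mathtt H}}\bar C_s$.

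For the convergence $\nu^L(\mathcal H^L_{k_L})\to\mathcal V_s=\mathbf V(H_s)$, I would argue by a squeeze, writing $K^\varepsilon$ for the closed $\varepsilon$-neighbourhood of a set $K$ in $E$. The Hausdorff convergence gives $\mathcal H^L_{k_L}\subset (H_s)^\varepsilon$ for $L$ large, so the Portmanteau theorem applied to the weak convergence $\nu^L\to\mathbf V$ of finite measures on $E$ yields $\limsup_L\nu^L(\mathcal H^L_{k_L})\leq\mathbf V((H_s)^\varepsilon)$, and letting $\varepsilon\downarrow 0$ gives $\limsup_L\nu^L(\mathcal H^L_{k_L})\leq\mathbf V(H_s)$. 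For the matching lower bound I would use $\mathcal D^L_{(a)}=\mathcal H^L_{k_L}\cup U^L_{N_L}$, so $\nu^L(\mathcal H^L_{k_L})\geq\nu^L(\mathcal D^L_{(a)})-\nu^L(U^L_{N_L})$; here $\nu^L(\mathcal D^L_{(a)})=\nu^L(E)\to\mathbf V(E)=\mathbf V(\mathbb{D}_{(a)})$ and, arguing as before, $\limsup_L\nu^L(U^L_{N_L})\leq\mathbf V((\bar C_s)^\varepsilon)\downarrow\mathbf V(\bar C_s)$, whence $\liminf_L\nu^L(\mathcal H^L_{k_L})\geq\mathbf V(\mathbb{D}_{(a)})-\mathbf V(\bar C_s)=\mathbf V(H_s)-\mathbf V(\partial H_s)$. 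To close the gap between the two bounds I need $\mathbf V(\partial H_s)=0$: the topological boundary of $H_s$ in $\mathbb{D}_{(a)}$ is contained in $\{x\in\mathbb{D}_{(a)}:m_x=s-r_*\}$ because $x\mapsto m_x$ is continuous, and since $s$ is not a jump of $(\mathcal P_r)$, hence not a jump of $(\mathcal V_r)$ (the jumps of the two processes coinciding), we have $\mathbf V(\{x\in\mathbb{D}_{(a)}:m_x=s-r_*\})=\mathcal V_s-\mathcal V_{s-}=0$. Comparing the two bounds then completes the proof.

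The step I expect to be the main obstacle is this last one: Hausdorff convergence of the sets together with weak convergence of the measures does not by itself imply convergence of the masses, so one must rule out that $\nu^L$-mass concentrates near $\partial H_s$ in the limit. This is precisely where the negligibility of the hull boundary under $\mathbf V$ — and the continuity assumption on $(\mathcal P_r)$ at $s$, which is what guarantees it — enters, together with a careful use of the decomposition $\mathcal D^L_{(a)}=\mathcal H^L_{k_L}\cup U^L_{N_L}$ and of the convergence $U^L_{N_L}\to\bar C_s$.
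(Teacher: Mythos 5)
Your proof is correct, and its skeleton is the paper's: the Hausdorff convergence is read off Proposition \ref{ConvHaus} applied with $N_L=\sigma^L_{\lfloor s/c_L\rfloor}$, and the volume convergence comes from a two-sided squeeze based on the weak convergence $\nu^L\to\mathbf V$ together with the negligibility $\mathbf V(\partial H_s)=0$. Where you genuinely diverge is in the lower bound. The paper uses the convergence $\partial T^L_{N_L}\to\partial H_s$ from \eqref{ConvHullHaus}: it picks $\delta$ with $\mathbf V((\partial H_s)^\delta)<\varepsilon$, places the discrete hull boundary inside $(\partial H_s)^{\delta/2}$, and bounds $\mathbf V(H_s)\leq \nu^L(\mathcal H^L_{\lfloor s/c_L\rfloor})+\nu^L((\partial \mathcal H^L_{\lfloor s/c_L\rfloor})^{\delta/2})+\varepsilon$. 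You instead use the decomposition $\mathcal D^L_{(a)}=\mathcal H^L_{\lfloor s/c_L\rfloor}\cup U^L_{N_L}$, convergence of total masses, and the convergence $U^L_{N_L}\to \bar C_s$ (the part of \eqref{ConvHullHaus} the paper's corollary proof does not use), which avoids any direct handling of the discrete boundary at the price of needing $\mathbf V(\bar C_s\setminus C_s)=\mathbf V(\partial H_s)=0$ --- the same negligibility the paper invokes, so the two routes are of comparable cost. Two points are to your credit: you actually justify $\mathbf V(\partial H_s)=0$ (the paper only asserts it is easy), by observing that the topological boundary of $H_s$ lies in $\{x: m_x=s-r_*\}$, whose volume is the jump $\mathcal V_s-\mathcal V_{s-}$, which vanishes because $s$ is not a jump of $(\mathcal P_r)$ and the jumps of $(\mathcal P_r)$ and $(\mathcal V_r)$ coincide almost surely (the fact already used in the proof of Proposition \ref{Proptechni}, resting on Lemma \ref{AbsContinuity}); this makes explicit where the no-jump hypothesis enters. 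You also verify that $\sigma^L_{\lfloor s/c_L\rfloor}<\infty$ for large $L$ and that $h^L_{\sigma^L_{\lfloor s/c_L\rfloor}}=\lfloor s/c_L\rfloor$ exactly, which the paper leaves implicit when it says $c_L h^L_{\sigma^L_{\lfloor s/c_L\rfloor}}\to s$. The only (harmless) caveat is that the coincidence of jumps is an almost sure statement, so the fixed $\omega$ on which the deterministic argument is run should also be taken in that full-measure event, exactly as is done implicitly in the paper.
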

	\begin{proof}
		The convergence of $\mathcal H_{\lfloor s/c_L\rfloor }^L$ towards $H_s$ is an immediate corollary of the previous proposition, since by construction
		$$c_L\,h^L_{\sigma^L_{\lfloor s/c_L\rfloor}} \longrightarrow s$$
		as $L\to\infty$.
		It remains to show that $\nu^L(\mathcal H_{\lfloor s/c_L\rfloor }^L)$ converges to $\mathcal V_s$.
		We keep the notation $K^\varepsilon=\{x\in E, \ \Delta(x, K)\leq \varepsilon\}$ introduced in the previous proof.  It is easy to verify that $\mathbf V(\partial H_s)=0$. Then, if $\varepsilon>0$ is fixed, we can find $\delta>0$ such that $\mathbf V((\partial H_s)^\delta)<\varepsilon$. 
			Since $\nu_L$ converges weakly to $\mathbf V$, we get, for $L$ large enough,
		\begin{align*}
			\nu_L(\mathcal H^L_{\lfloor s/c_L\rfloor})\leq \nu_L((H_s)^{\delta/2})\leq \mathbf V((H_s)^\delta)+\varepsilon\leq \mathbf V(H_s)+\mathbf V((\partial H_s)^\delta)+\varepsilon\leq \mathbf V(H_s)+2\varepsilon.
		\end{align*}
		On the other hand, $\partial \mathcal H_{\lfloor s/c_L\rfloor}^L\to \partial H_s$ when $L\to\infty$ (by Proposition \ref{ConvHaus}), so that 
		we have also $\partial \mathcal H_{\lfloor s/c_L\rfloor}^L\subset (\partial H_s)^{\delta/2}$ for every large enough $L$. 
	It follows that, for large enough $L$, we have 
		$\nu_L((\partial\mathcal  H_{\lfloor s/c_L \rfloor }^L)^{\delta/2}) \leq \mathbf V((\partial H_s)^\delta)+\varepsilon\leq 2\varepsilon$.
		Hence we get for $L$ large,
		\begin{align*}
			\mathbf V(H_s)\leq \nu_L((\mathcal H^L_{\lfloor s/c_L\rfloor})^{\delta/2})+\varepsilon\leq  \nu_L(\mathcal H^L_{\lfloor s/c_L\rfloor})+\nu_L((\partial \mathcal H^L_{\lfloor s/c_L\rfloor})^{\delta/2})+\varepsilon \leq \nu_L(\mathcal H^L_{\lfloor s/c_L\rfloor})+3\varepsilon.
		\end{align*}
		The desired convergence of $\nu_L(\mathcal H^L_{\lfloor s/c_L\rfloor})$ towards $\mathbf V(H_s)=\mathcal V_s$ follows from the last two displays.
	\end{proof}

\section{Limit theorems for the perimeter and the volume of the explored region}
\label{sec:peeling-asymp}
	
	In this section, we take $a=1$ for simplicity, and (as in Section \ref{sec:peel}) we write $\mathcal{D}^L$ instead of $\mathcal{D}^L_{(1)}$
	for a Boltzmann triangulation in $\mathbb{T}^{1,\bullet}(L)$.
	Recall that $(T_i^L)_{i\geq 0}$ is the sequence of (explored) triangulations we get when we apply the peeling by layers algorithm of Section \ref{sec:peel} to $\mathcal D^L$. We also set
	$S_L:=\inf\{i\geq 0 : T_i^L=\dagger\}$, which corresponds to the hitting time of $\partial \mathcal D^L$. To simplify notation, we let $P^L_k=|\partial T^L_k|$
	be the boundary size of $T^L_k$, for every $0\leq k<S_L$. Still for $0\leq k<S_L$, we also write $V^L_k$ for the number of vertices of $T^L_k$, and we recall that $h^L_k$
is the graph distance from the distinguished vertex $x^L_*$ to the boundary $\partial T^L_k$. Properties of the peeling by layers ensure
that the graph distance from $x^L_*$ to any point of $\partial T^L_k$ is equal to $h^L_k$ or $h^L_k+1$.

	Let $(T_i^\infty)_{i\geq 0}$ be the sequence of triangulations with a boundary obtained by applying the same peeling algorithm to the UIPT (we refer to \cite{ScalingUIPT} for a discussion of the peeling by layers algorithm for the UIPT). We define $P^\infty_k$, $V^\infty_k$ and $h^\infty_k$, now for every integer $k\geq 0$, by replacing $T^L_k$ with $T^\infty_k$
	in the respective definitions of $P^L_k$, $V^L_k$ and $h^L_k$.
	
	Finally, we set $\wh S_L=L^{-3/2}(S_L-1)$ if $S_L>0$, and by convention we also take $\wh S_L=0$ when $S_L=0$. Recall the notation $c_L=\sqrt{3/2}\,L^{-1/2}$. We introduce the rescaled processes
	$$\wh P^L_t=\frac{1}{L} P^L_{\lfloor L^{3/2}t\rfloor}, \quad \wh V^L_t=\frac{3}{4L^2} V^L_{\lfloor L^{3/2}t\rfloor},\quad \wh h^L_t=c_L\, h^L_{\lfloor L^{3/2}t\rfloor}.$$
	for $0\leq t\leq \wh S_L$ (by convention, $\wh P^L_0=\wh V^L_0=\wh h^L_0=0$ when $S_L=0$). We similarly define, for every $t\geq 0$,
	$$\wh P^{\infty,L}_t=\frac{1}{L} P^\infty_{\lfloor L^{3/2}t\rfloor},\quad \wh V^{\infty,L}_t=\frac{3}{4L^2} V^\infty_{\lfloor L^{3/2}t\rfloor},\quad \wh h^{\infty,L}_t=c_L\, h^\infty_{\lfloor L^{3/2}t\rfloor}.$$
	
	From \cite[Proposition 10]{ScalingUIPT} (more precisely, from the version of this result for type I triangulations, as explained in Section 6.1 of \cite{ScalingUIPT}), we have
	\begin{equation}
	\label{conv-peri}
	\Big(\wh P^{\infty,L}_t,\wh V^{\infty,L}_t,\wh h^{\infty,L}_t\Big)_{ t\geq 0}\xrightarrow[L\to\infty]{(d)} \Big({\mathcal{S}}^+_t, {\mathbb{V}}_t, 2^{-3/2}\int_0^t\frac{\dd u}{{\mathcal{S}}^+_u}\Big)_{t\geq 0},
	\end{equation}
	where the convergence holds in distribution in the sense of the Skorokhod topology. Here the limiting process $({\mathcal{S}}^+_t,t\geq 0)$
	is a stable L\'evy process with no positive jumps and  Laplace exponent $\tilde\psi(\lambda)=3^{-1/2}\lambda^{3/2}$ started at $0$ and conditioned to stay positive
	(see \cite[Section VII.3]{Bertoin} for the definition of this process), and we refer to \cite{ScalingUIPT} for the description of the conditional law of the process ${\mathbb{V}}$
	knowing ${\mathcal{S}}^+$. 
	
	The next proposition gives an analog of \eqref{conv-peri} where $\wh P^{\infty,L},\wh V^{\infty,L}$, and $ \wh h^{\infty,L}$ are replaced by $\wh P^{L},\wh V^{L},$
	and $\wh h^{L}$ respectively.
	
	\begin{proposition}
	\label{key-conv}
	We have 
\begin{equation}
	\label{key-conv1}\Big(\Big(\wh P^{L}_{t\wedge \wh S_L},\wh V^{L}_{t\wedge \wh S_L},\wh h^{L}_{t\wedge \wh S_L}\Big)_{t\geq 0},\wh S_L\Big)\xrightarrow[L\to\infty]{(d)} 
\Big(\Big(\mathscr{P}_t, \mathscr{V}_t,\mathscr{A}_t\Big)_{t\geq 0}, \Sigma_\infty\Big)_{t\geq 0},
\end{equation}
where 
$$\mathscr{A}_t=2^{-3/2}\int_0^{t\wedge\Sigma_\infty}\frac{\dd u}{\mathscr{P}_u}$$
and the distribution of $((\mathscr{P}_t, \mathscr{V}_t)_{t\geq 0},\Sigma_\infty)$ is determined by 
$$\E\Big[ G\Big((\mathscr{P}_t, \mathscr{V}_t)_{t\geq0}\Big)\,f(\Sigma_\infty)\Big]
= \frac{\sqrt{3\pi}}{4}\,\int_0^\infty \dd u\,f(u)\,\E\Big[ G(({\mathcal{S}}^+_{t\wedge u})_{t\geq 0},(\mathbb{V}_{t\wedge u})_{t\geq0})\,\frac{1}{\sqrt{{\mathcal{S}}^+_u}}\,(1+{\mathcal{S}}^+_u)^{-5/2}\Big]$$
for any measurable functions $f:\R_+\longrightarrow\R_+$, and  $G:\D(\R_+,\R_+^2)\longrightarrow \R_+$. 
\end{proposition}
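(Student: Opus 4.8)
The plan is to derive \eqref{key-conv1} from the already-established convergence \eqref{conv-peri} by identifying the law of the stopped UIPT peeling data with a disintegration of the Boltzmann peeling data over the value of the total number of peeling steps $S_L$. The starting point is the identity \eqref{DefqL}, which expresses the transition kernel $q_L$ of the Boltzmann perimeter process as a Doob $h$-transform of the UIPT kernel $q_\infty$ for the harmonic function $\mathbf{h}_L(j)=L/(L+j)$. Iterating this relation along a peeling trajectory of length $n$, all the intermediate $\mathbf{h}_L$ factors telescope, so that for any event depending on the first $n$ steps of the explored perimeter/volume process one has, on $\{n<S_L\}$,
\begin{equation*}
\E^{\mathrm{Boltz}}\big[G_n\big]=\frac{\mathbf{h}_L(P^L_n)}{\mathbf{h}_L(P^L_0)}\,\E^{\mathrm{UIPT}}\big[G_n\big]
=\frac{L+P^\infty_0}{L+P^\infty_n}\,\E^{\mathrm{UIPT}}\big[G_n\big],
\end{equation*}
where $P^\infty_0=P^L_0\in\{1,2\}$ has the same law in both models (its law being governed by the initial step $|T_0^L|$, which is the same up to $O(1/L)$ corrections and in any case contributes a factor converging to $1$ after rescaling since $P^\infty_0/L\to 0$). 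The decisive point is that $S_L$ (the hitting time of the cemetery $\dagger$) must also be incorporated: in the Boltzmann model the chain is \emph{killed} at rate $q_L(k,\dagger)$, whereas in the UIPT it is not killed at all, and the $h$-transform precisely accounts for this — the probability that the UIPT perimeter chain ``would have been killed'' at step $n+1$ corresponds, after the $h$-transform, to the Boltzmann chain actually being absorbed. Concretely, summing the Radon–Nikodym identity against the event $\{S_L=n+1\}$ and using $\sum_{m}q_L(k,k-m)+q_L(k,\dagger)=1$ together with the harmonicity of $\mathbf{h}_L$ for $q_\infty$ yields, after the telescoping, a clean formula
\begin{equation*}
\E^{\mathrm{Boltz}}\big[G\big((\wh P^L,\wh V^L,\wh h^L)\big)\,\mathbf{1}_{\{S_L=n+1\}}\big]
=\E^{\mathrm{UIPT}}\Big[G_n\cdot \frac{(L+P^\infty_0)}{L}\cdot \frac{L}{L+P^\infty_n}\cdot \P^{\mathrm{UIPT}}\!\big(\text{kill at step }n{+}1\mid \mathcal F_n\big)\Big],
\end{equation*}
and the conditional killing probability in the UIPT is, by definition of $q_\infty(k,\dagger)=1-\sum_m q_\infty(k,k-m)$ and the asymptotics of $Z^1$, asymptotically $c\,P^\infty_n/L^{3/2}$ for an explicit constant $c$; this is exactly the mechanism by which a factor $(1+\mathcal S^+_u)^{-1}$ and the ``density in $u$'' both appear in the limit. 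Tracking constants, one needs $\sum_{m\ge 1}2Z^1(m+1)\,\frac{C^{(1)}(k-m)}{C^{(1)}(k)}\to 1$ with the next-order term of size $\mathrm{const}\cdot k\cdot L^{-3/2}$ along trajectories with $k\sim \mathcal S^+_u\,L$, and one verifies the constant using the explicit $Z^1(L)=6^L(2L-5)!!/(8\sqrt3\,L!)$ and $C^{(1)}(k)=\frac{3^{k-2}}{4\sqrt{2\pi}}k\binom{2k}{k}$ so that the prefactor $\tfrac{\sqrt{3\pi}}{4}$ in the statement emerges from $\tfrac12(C^{(1)})$-type normalizations combined with the $h_L$ boundary factors (in particular $Z^1(0)=(24\sqrt3)^{-1}$ and the initial-step law feeding the power $(1+\mathcal S^+_u)^{-5/2}$ rather than $(1+\mathcal S^+_u)^{-1}$: the extra $(1+\mathcal S^+_u)^{-3/2}$ comes from the $h$-transform factor $L/(L+P^\infty_n)$ times the conditioning needed to realize $S_L\approx uL^{3/2}$, precisely as in the CSBP/Lévy computation of Lemma \ref{proba-condit}).

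With this absolute-continuity-and-disintegration identity in hand, the proof proceeds as follows. First, I would state and prove the finite-$n$ Radon–Nikodym identity above as a lemma, purely at the discrete level, using \eqref{DefqL} and the Markov property of the perimeter process. Second, I would pass to the scaling limit: apply \eqref{conv-peri} to get convergence of $(\wh P^{\infty,L},\wh V^{\infty,L},\wh h^{\infty,L})$ to $(\mathcal S^+,\mathbb V, 2^{-3/2}\int_0^\cdot \mathrm{d}u/\mathcal S^+_u)$ in the Skorokhod sense, restrict to the time interval $[0,u]$, and use the discrete identity with $n=\lfloor uL^{3/2}\rfloor$ together with uniform integrability of the weights $\frac{1}{\sqrt{\wh P^{\infty,L}_u}}(1+\wh P^{\infty,L}_u)^{-5/2}$ (which requires a small-perimeter lower bound: the Boltzmann/UIPT perimeter is unlikely to be microscopic at a macroscopic time, a standard consequence of the Lévy-process description) and the asymptotic evaluation of the killing probability. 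Summing over $n$, i.e. integrating over $u$, converts $\sum_{n}(\cdots)$ into $\int_0^\infty\mathrm du\,(\cdots)$ and produces exactly the claimed formula for $\E[G((\mathscr P_t,\mathscr V_t))\,f(\Sigma_\infty)]$. Third, I would check that the limiting quantity $\mathscr A_t=2^{-3/2}\int_0^{t\wedge\Sigma_\infty}\mathrm du/\mathscr P_u$ is the correct limit of $\wh h^L_{t\wedge\wh S_L}$: this is immediate from the form of the limit in \eqref{conv-peri} and the continuity of the map $w\mapsto \int_0^{\cdot}\mathrm du/w_u$ on paths bounded away from $0$, combined with tightness of $\wh S_L$ (which follows because $\E[\wh S_L]$-type moments are controlled by the explicit density from Lemma \ref{proba-condit}, or more robustly because $S_L<\infty$ a.s. and the identity above already pins down the limiting law of $\Sigma_\infty$). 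Finally, I would note that joint convergence of the triple of processes together with $\wh S_L$ follows by the standard argument: convergence of all finite-dimensional weighted functionals of the form $\E[G\,f(\wh S_L)]$ characterizes the joint law, and tightness in $\D(\R_+,\R_+^2)\times\R_+$ is inherited from \eqref{conv-peri} via the absolute continuity (the weights being bounded in $L^p$ for some $p>1$ on each compact $u$-interval).

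The main obstacle is the uniform control near the absorption event: one must show that the killing probability $q_L(P^L_n,\dagger)$ — equivalently, in the UIPT picture, the chance that the perimeter chain drops below $1$ or that the ``would-be'' cemetery transition is taken — behaves to leading order like $c\,P^\infty_n\,L^{-3/2}$ \emph{uniformly enough} along trajectories to justify interchanging the sum over $n$ with the limit, and that no mass escapes to $u=\infty$ or to the boundary of the Skorokhod space. This requires (i) the precise two-term asymptotics of $\sum_{m\ge1}2Z^1(m+1)C^{(1)}(k-m)/C^{(1)}(k)$ as $k\to\infty$, which one extracts from the closed forms of $Z^1$ and $C^{(1)}$ and the generating-function identities of \cite{PercOnRandMapsI}, and (ii) an a priori tail bound on $\wh S_L$ and on $\inf_{u\le t}\wh P^{\infty,L}_u$, the latter to handle the weight $(\wh P^{\infty,L}_u)^{-1/2}$; both are available from the Lévy-process representation underlying \eqref{conv-peri} (the process $\mathcal S^+$ conditioned to stay positive does not touch $0$ at positive times and has the appropriate tail), but spelling them out uniformly in $L$ is the technical heart of the argument. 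A secondary, milder point is that the law of the very first peeling step $|T_0^L|\in\{1,2,\dagger\}$ differs slightly between the Boltzmann and UIPT settings and between finite $L$ and the limit; this only affects an overall factor that converges to $1$ (since $P_0/L\to0$), but it must be acknowledged so that the constant $\tfrac{\sqrt{3\pi}}{4}$ and the initial density $(1+\mathcal S^+_u)^{-5/2}$ come out correctly, matching the normalization already fixed in Lemma \ref{proba-condit}.
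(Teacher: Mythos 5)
Your overall skeleton matches the paper's proof: the discrete $h$-transform identity obtained by telescoping \eqref{DefqL}, the disintegration over the value of $S_L$ via $\P(S_L=k+1\mid S_L>k,\mathcal F_k)=q_L(P_k,\dagger)$, the conversion of the sum over $k$ into $L^{3/2}\int_0^\infty \dd u$, and the use of \eqref{conv-peri} together with the asymptotics of $L^{3/2}q_L(\cdot,\dagger)$ (which the paper simply quotes from \cite{MarkovSpatial} rather than re-deriving from the enumeration formulas). The extension to the triple $(\wh P,\wh V,\wh h)$ is also handled as you suggest, via the Markov property of the perimeter with respect to the peeling filtration.

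However, there is a genuine gap at precisely the point you yourself flag as ``the technical heart'': the interchange of the $L\to\infty$ limit with the integral over $u$. Your plan calls for uniform integrability of the weights, uniform two-term asymptotics of the killing probability, a small-perimeter lower bound, and tail bounds on $\wh S_L$; none of these is carried out, and making them uniform in $L$ (especially near $u=0$, where $\wh P^{\infty,L}_u$ is microscopic and the weight $(\wh P^{\infty,L}_u)^{-1/2}$ blows up) is nontrivial. The paper avoids all of this with a short Scheff\'e-type argument that your proposal misses: Fatou's lemma applied to a bounded $G$ with $0\leq G\leq 1$ gives the liminf bound \eqref{conv-peri1}; then the exact computation
\begin{equation*}
\frac{\sqrt{3\pi}}{4}\int_0^\infty \dd u\,\E\Big[\frac{1}{\sqrt{\mathcal S^+_u}}(1+\mathcal S^+_u)^{-5/2}\Big]=1,
\end{equation*}
obtained from the potential kernel of $\mathcal S^+$ (scale function $\tilde W(x)=\frac{2\sqrt3}{\sqrt\pi}\sqrt x$), shows the limiting weights have total mass one, so applying the same Fatou bound to $1-G$ yields the matching limsup and hence convergence, with no uniform estimates needed. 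Identifying and proving this normalization identity is the missing idea; without it (or a full execution of your UI program) the proof is incomplete. A secondary inaccuracy: the killing probability is not asymptotically $c\,P^\infty_n L^{-3/2}$; rather $L^{3/2}q_L(k,\dagger)\sim\frac{\sqrt{3\pi}}{4}(k/L)^{-1/2}(1+k/L)^{-3/2}$, and the factor $(1+\mathcal S^+_u)^{-1}$ comes from the $h$-transform ratio $\mathbf h_L(P^\infty_{\lfloor L^{3/2}u\rfloor})$, not from the killing term or from any conditioning on $S_L\approx uL^{3/2}$; your bookkeeping of the constants would need to be corrected accordingly.
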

	
	\proof We first derive the convergence in distribution of $(\wh P^{L}_{t\wedge \wh S_L})_{t\geq 0}$ to $(\mathscr{P}_t)_{t\geq 0}$. 
	The $h$-transform relation between the Markov chains $P^L$ and $P^\infty$, which was discussed in Section \ref{sec:peel}, shows that, for every integer $k\geq 0$
	and every bounded function $F$ on $\N^{k+1}$,
	$$\E[F(P^L_0,\ldots,P^L_k)\,\mathbf{1}_{\{k<S_L\}}\mid S_L>0]= \E[F(P^\infty_0,\ldots,P^\infty_k)\,\frac{\mathbf{h}_L(P^\infty_k)}{\mathbf{h}_L(P^\infty_0)}],$$
	where we recall that $\mathbf{h}_L(j)=\frac{L}{L+j}$, and we note that $P^\infty_0=1$ if the root edge of the UIPT is a loop, and $P^\infty_0=2$ otherwise. 
	
	By the Markov property, we have $\P(S_L=k+1\mid S_L>k,P_0,P_1,\ldots,P_k)=q_L(P_k,\dagger)$. It then follows that
	\begin{equation}
	\label{h-trans}
	\E[F(P^L_0,\ldots,P^L_k)\,\mathbf{1}_{\{S_L=k+1\}}\mid S_L>0]= \E[F(P^\infty_0,\ldots,P^\infty_k)\,\frac{\mathbf{h}_L(P^\infty_k)}{\mathbf{h}_L(P^\infty_0)}\,q_L(P^\infty_k,\dagger)].
	\end{equation}

	Let $G$ be a bounded continous function on $\D(\R_+,\R_+)\times \R_+$, such that $0\leq G\leq 1$. Using \eqref{h-trans}, we have
	\begin{align*}
	&\E[G((\wh P^L_{t\wedge \wh S_L})_{t\geq 0},\wh S_L)\mid S_L>0]\\
	&\quad=\sum_{k=0}^\infty \E[\mathbf{1}_{\{S_L=k+1\}}\,G((\wh P^L_{t\wedge (L^{-3/2}k)})_{t\geq 0},L^{-3/2}k)\mid S_L>0]\\
	&\quad = \sum_{k=0}^\infty \E\Big[G((\wh P^\infty_{t\wedge (L^{-3/2}k)})_{t\geq 0},L^{-3/2}k)\frac{\mathbf{h}_L(P^\infty_k)}{\mathbf{h}_L(P^\infty_0)}\,q_L(P^\infty_k,\dagger)\Big]\\
	&\quad= L^{3/2} \int_0^\infty \dd u\,\E\Big[G((\wh P^{\infty,L}_{t\wedge (L^{-3/2}\lfloor L^{3/2} u\rfloor)})_{t\geq 0},L^{-3/2}\lfloor L^{3/2} u\rfloor)\,\frac{\mathbf{h}_L(P^\infty_{\lfloor L^{3/2} u\rfloor})}{\mathbf{h}_L(P^\infty_0)}\,q_L(P^\infty_{\lfloor L^{3/2} u\rfloor},\dagger)\Big]
	\end{align*}
	Note that $\P(S_L>0)$ tends to $1$ as $L\to\infty$. Furthermore,
	$$\mathbf{h}_L(P^\infty_{\lfloor L^{3/2} u\rfloor})=\frac{1}{1+\wh P^{\infty,L}_u}$$
	and we also know from \cite{MarkovSpatial} that
	$$L^{3/2}q_L(P^\infty_{\lfloor L^{3/2} u\rfloor},\dagger) \sim \frac{\sqrt{3\pi}}{4}\,\frac{1}{\sqrt{\wh P^{\infty,L}_u}}\,(1+\wh P^{\infty,L}_u)^{-3/2}$$
	when $L$ and $P^\infty_{\lfloor L^{3/2} u\rfloor}$ are large (see the last display before Section 3.2 in \cite{MarkovSpatial}). 
	
	Using the convergence \eqref{conv-peri} (which we may assume to hold a.s. by the Skorokhod representation theorem) and the preceding observations, we get from
	an application of Fatou's lemma that
	\begin{equation}
	\label{conv-peri1}
	\liminf_{L\to\infty} \E[G((\wh P^L_{t\wedge \wh S_L})_{t\geq 0},\wh S_L)]
	\geq \int_0^\infty \dd u\,\E\Big[ G(({\mathcal{S}}^+_{t\wedge u})_{t\geq 0},u)\,\frac{\sqrt{3\pi}}{4}\,\frac{1}{\sqrt{{\mathcal{S}}^+_u}}\,(1+{\mathcal{S}}^+_u)^{-5/2}\Big].
	\end{equation}
	At this stage, we observe that
	\begin{equation}
	\label{conv-peri2}
	\int_0^\infty \dd u\,\E\Big[\frac{\sqrt{3\pi}}{4}\,\frac{1}{\sqrt{{\mathcal{S}}^+_u}}\,(1+{\mathcal{S}}^+_u)^{-5/2}\Big]=1.
	\end{equation}
	Indeed, by the identification of the potential kernel of $\mathcal{S}^+$ in \cite[Section VII.4]{Bertoin}, we know that, for any measurable function $f:\R_+\longrightarrow \R_+$,
	$$\E\Big[\int_0^\infty \dd u\,f({\mathcal{S}}^+_u)\Big]=\int_0^\infty \dd x\,\tilde W(x)\,f(x)$$
	where the function $\tilde W$ is determined by its Laplace transform
	$$\int_0^\infty e^{-\lambda x} \tilde W(x)\,\dd x=\frac{1}{\tilde\psi(\lambda)}= 3^{1/2}\,\lambda^{-3/2}.$$
	It follows that $\tilde W(x)=\frac{2\sqrt{3}}{\sqrt{\pi}}\,\sqrt{x}$, and the left-hand side of \eqref{conv-peri2} is equal to
	$$\frac{\sqrt{3\pi}}{4}\times \frac{2\sqrt{3}}{\sqrt{\pi}} \int_0^\infty \dd x\,(1+x)^{-5/2}= 1$$
	as desired. Thanks to \eqref{conv-peri2}, we can replace $G$ by $1-G$ in \eqref{conv-peri1} to get the analog 
	of \eqref{conv-peri1} for the limsup instead of the liminf, and we conclude that
	$$\lim_{L\to\infty} \E[G((\wh P^L_{t\wedge \wh S_L})_{t\geq 0},\wh S_L)]
	= \frac{\sqrt{3\pi}}{4}\,\int_0^\infty \dd u\,\E\Big[ G(({\mathcal{S}}^+_{t\wedge u})_{t\geq 0},u)\,\frac{1}{\sqrt{{\mathcal{S}}^+_u}}\,(1+{\mathcal{S}}^+_u)^{-5/2}\Big].$$
	This gives the convergence of $((\wh P^L_{t\wedge \wh S_L})_{t\geq 0},\wh S_L)$ to the pair $((\mathscr{P}_t)_{t\geq 0},\Sigma_\infty)$
	introduced in the proposition. We can deduce the more general statement of the proposition from the convergence \eqref{conv-peri} by exactly the
	same arguments. The point is the fact that the perimeter process $(P^L_k)_{k\geq 0}$ is
	Markov with respect to the
	discrete filtration generated by the sequence $(T^L_k)$. We leave the details to the reader. \endproof
	
	Let $R_L:=h^L_{S_L-1}$ (we argue on the event where $S_L>0$). By previous observations, the graph distance 
	between $x^L_*$ and $\partial \mathcal{D}^L$ is either $R_L$ or $R_L+1$. Recall the notation 
	$$\sigma^L_k=\inf\{n\in\N:h^L_n\geq k\},$$
	so that $\sigma^L_k$ is finite
	for $1\leq k\leq R_L$. For $1\leq k\leq R_L$, we write $\mathcal{P}^L_k:=P^L_{\sigma^L_k}$ and $\mathcal{V}^L_k:=V^L_{\sigma^L_k}$, which are
	respectively the perimeter
	and the volume of the discrete hull $\mathcal{H}^L_k=T^L_{\sigma^L_k}$.
	
We also set $r^L_*=c_LR_L$, which essentially corresponds to the rescaled graph distance between $x^L_*$ and $\partial \mathcal{D}^L$.
Finally, we introduce rescaled versions of the processes $\mathcal{P}^L_k$ and $\mathcal{V}^L_k$ by setting
$$\wh{\mathcal{P}}^L_t:=\frac{1}{L} \mathcal{P}^L_{\lfloor t/c_L\rfloor}\hbox{ \ and \ } \wh{\mathcal{V}}^L_t:=\frac{3}{4L^2} \mathcal{V}^L_{\lfloor t/c_L\rfloor}$$
for $0\leq t\leq r^L_*$.
	
	\begin{corollaire}
	\label{conv-time-changed}
	Recall the processes $\mathscr{P}_t, \mathscr{V}_t,\mathscr{A}_t$ in Proposition \ref{key-conv}. 
	We have 
	\begin{equation}
	\label{key-conv2}
	\Big( \Big(\wh{\mathcal{P}}^L_{t\wedge r^L_*}, \wh{\mathcal{V}}^L_{t\wedge r^L_*}, L^{-3/2}\sigma^L_{\lfloor t/c_L\rfloor\wedge R_L}\Big)_{t\geq 0}, r^L_*\Big)
	\xrightarrow[L\to\infty]{(d)} \Big( (\wh{\mathcal{P}}^\infty_{t}, \wh{\mathcal{V}}^\infty_{t},\eta_t)_{t\geq 0}, r^\infty_*\Big)
	\end{equation}
	where $r^\infty_*=\mathscr{A}_\infty$ and, for every $t\geq0$,
	$$\wh{\mathcal{P}}^\infty_{t}=\mathscr{P}_{\eta_t},\quad \wh{\mathcal{V}}^\infty_{t}=\mathscr{V}_{\eta_t}$$
	with
	$$\eta_t=\inf\{s\geq 0: \mathscr{A}_s\geq t\wedge \mathscr{A}_\infty\}.$$
         Moreover the convergence in distribution \eqref{key-conv2} holds jointly with \eqref{key-conv1}.
	\end{corollaire}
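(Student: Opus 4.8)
The plan is to obtain \eqref{key-conv2} from \eqref{key-conv1} by expressing the time-changed processes as continuous functionals of the processes appearing in Proposition \ref{key-conv}, and then invoking the continuous mapping theorem. First I would observe that, by the definition of the peeling by layers, $\sigma^L_k$ is the first time $n$ at which $h^L_n\geq k$, so that in rescaled variables $L^{-3/2}\sigma^L_{\lfloor t/c_L\rfloor}$ is essentially the generalized inverse of the process $t\mapsto \wh h^L_t$, evaluated at $t$. Since $\wh h^L_{t\wedge\wh S_L}\to\mathscr{A}_t$ and $\mathscr{A}_t=2^{-3/2}\int_0^{t\wedge\Sigma_\infty}\mathscr{P}_u^{-1}\dd u$ is strictly increasing and continuous on $[0,\Sigma_\infty]$ (because $\mathscr{P}>0$ on that interval, being a Lévy process conditioned to stay positive, stopped at $\Sigma_\infty$), its inverse $\eta_t=\inf\{s\ge0:\mathscr{A}_s\ge t\wedge\mathscr{A}_\infty\}$ is continuous, and the convergence of a monotone process with continuous strictly increasing limit to that limit forces the convergence of the inverses, uniformly on compact sets. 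This gives $L^{-3/2}\sigma^L_{\lfloor t/c_L\rfloor\wedge R_L}\to\eta_t$ together with $r^L_*=c_L R_L=\wh h^L_{\wh S_L}\to\mathscr{A}_\infty=r^\infty_*$, jointly with \eqref{key-conv1}.

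Next I would compose: $\wh{\mathcal{P}}^L_{t\wedge r^L_*}=L^{-1}\mathcal{P}^L_{\lfloor t/c_L\rfloor\wedge R_L}=L^{-1}P^L_{\sigma^L_{\lfloor t/c_L\rfloor\wedge R_L}}=\wh P^L_{(L^{-3/2}\sigma^L_{\lfloor t/c_L\rfloor\wedge R_L})\wedge\wh S_L}$, and similarly for the volume. So $\wh{\mathcal{P}}^L_{t\wedge r^L_*}$ is the value of the càdlàg process $\wh P^L_{\cdot\wedge\wh S_L}$ at the (random) time $L^{-3/2}\sigma^L_{\lfloor t/c_L\rfloor\wedge R_L}$, which converges to $\eta_t$. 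Since the limit process $\mathscr{P}$ is a.s. continuous at the time $\eta_t$ for each fixed $t$ — indeed $\mathscr{P}$ has no positive jumps and $\eta_t$, being defined via the inverse of $\mathscr{A}$, avoids the (countably many) jump times of $\mathscr{P}$ because on a jump interval of the inverse the map $\mathscr A$ would be constant, contradicting its strict monotonicity — the standard composition lemma for Skorokhod convergence (convergence of $X_L\to X$ in $\D$ and $\theta_L\to\theta$ with $\theta$ continuous and $X$ a.s. continuous at $\theta_\cdot$) yields $\wh{\mathcal{P}}^L_{t\wedge r^L_*}\to\mathscr{P}_{\eta_t}=\wh{\mathcal{P}}^\infty_t$ in $\D(\R_+,\R_+)$, and likewise $\wh{\mathcal{V}}^L_{t\wedge r^L_*}\to\mathscr{V}_{\eta_t}=\wh{\mathcal{V}}^\infty_t$. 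Collecting the three coordinates and $r^L_*$, and using that everything is read off from the single limiting object $((\mathscr{P}_t,\mathscr{V}_t)_{t\ge0},\Sigma_\infty)$, gives \eqref{key-conv2} jointly with \eqref{key-conv1}.

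The main obstacle I anticipate is the rigorous handling of the time change near the endpoint $\Sigma_\infty$ (equivalently near $r^\infty_*=\mathscr{A}_\infty$): one must check that $\mathscr{A}$ is genuinely strictly increasing up to $\Sigma_\infty$ and does not degenerate (so that the inverse is well-behaved and $\eta_t$ for $t<r^\infty_*$ stays bounded away from $\Sigma_\infty$), and that there is no extra mass escaping in the discrete approximation as $k$ approaches $R_L$ — i.e. that $R_L$ really is of order $L^{1/2}$ and $c_L R_L$ converges to $\mathscr{A}_\infty$ without oscillation. This should follow from the fact that $(\mathscr{P}_t)_{t\le\Sigma_\infty}$ stays strictly positive and $\int_0^{\Sigma_\infty}\mathscr{P}_u^{-1}\dd u<\infty$ a.s. (the latter because $\mathscr{A}_\infty=r^\infty_*$ is a.s.\ finite, being the scaling limit of $c_L R_L$ and $R_L\le S_L<\infty$), but it deserves a careful argument; once it is in place, the rest is a routine application of the continuous mapping and composition theorems for the Skorokhod topology, and the details can be left to the reader as the statement suggests.
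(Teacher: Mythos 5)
Your proposal is correct and follows essentially the same route as the paper: identify $r^L_*=c_LR_L=\wh h^L_{\wh S_L}$, note that $L^{-3/2}\sigma^L_{\lfloor t/c_L\rfloor\wedge R_L}=\inf\{s\geq 0:\wh h^L_s\geq c_L\lfloor t/c_L\rfloor\}$ is the generalized inverse of $\wh h^L$, deduce its convergence to $\eta_t$ from \eqref{key-conv1}, and then compose with $\wh P^L,\wh V^L$ via the identity $\wh{\mathcal{P}}^L_{t\wedge r^L_*}=\wh P^L_{L^{-3/2}\sigma^L_{\lfloor t/c_L\rfloor\wedge R_L}}$ (the paper leaves exactly these inverse/composition details to the reader, pointing to Section 4.4 of \cite{ScalingUIPT}). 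The only small remark is that your justification that $\eta_t$ avoids the jump times of $\mathscr{P}$ is not quite what is needed; it is cleaner to invoke the time-change composition theorem in the form where the limiting time change $\eta$ is continuous and strictly increasing (up to $\mathscr{A}_\infty$), which requires no continuity of $\mathscr{P}$ at the composed times.
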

	
	\proof Since $c_LR_L=c_L h^L_{S_L-1}=\wh h^L_{\wh S_L}$, 
Proposition \ref{key-conv} implies the convergence in distribution of $r_*^L=c_LR_L$ towards the variable $\mathscr{A}_\infty$, and this convergence holds jointly with the one stated in Proposition \ref{key-conv}. Then, for $0\leq t\leq c_LR_L$,
$$L^{-3/2}\sigma^L_{\lfloor t/c_L\rfloor\wedge R_L}=L^{-3/2}\min\{j:h^L_j\geq \lfloor t/c_L\rfloor\}=\inf\{s\geq 0:\wh h^L_s\geq c_L\lfloor t/c_L\rfloor\}$$
Since we know from Proposition \ref{key-conv} that $(\wh h^L_{t\wedge \wh S_L})_{t\geq 0}$ converges in distribution to $(\mathscr{A}_t)_{t\geq 0}$, it is now easy to obtain that
$(L^{-3/2}\sigma^L_{\lfloor t/c_L\rfloor\wedge R_L})_{t\geq 0}$ converges in distribution  to $(\eta_t)_{t\geq 0}$,
and this convergence holds jointly with that of Proposition \ref{key-conv} (very similar arguments are
used in Section 4.4 of \cite{ScalingUIPT}). 
Then,
by our definitions,
		$$ (\wh{\mathcal{P}}^L_{t\wedge r^L_*}, \wh{\mathcal{V}}^L_{t\wedge r^L_*})_{t\geq 0}
		=\Big(\wh P^L_{L^{-3/2}\sigma^L_{\lfloor t/c_L\rfloor\wedge R_L}},\wh V^L_{L^{-3/2}\sigma^L_{\lfloor t/c_L\rfloor\wedge R_L}}\Big)_{t\geq 0}
	$$
	and we just have to use \eqref{key-conv1} together with the convergence of $(L^{-3/2}\sigma^L_{\lfloor t/c_L\rfloor\wedge R_L})_{t\geq 0}$ towards $(\eta_t)_{t\geq 0}$
	to get the desired result. 
	
		 \endproof
	
	Recall the processes $(\mathcal{P}^\infty_t,\mathcal{V}^\infty_t)_{t\geq 0}$ giving the perimeters and volumes
	of hulls in the Brownian plane. 
	Then, for every $r>0$, the distribution of the pair $(\wh{\mathcal{P}}^\infty_{t}, \wh{\mathcal{V}}^\infty_{t})_{0\leq t\leq r}$ in Corollary \ref{conv-time-changed} 
	under $\P(\cdot\mid r^\infty_*>r)$
	is absolutely continuous with respect to the distribution of $(\mathcal{P}^\infty_t,\mathcal{V}^\infty_t)_{0\leq t\leq r}$.
	This follows by observing that  $(\mathcal{P}^\infty_t,\mathcal{V}^\infty_t)_{t\geq 0}$ is obtained from 
	the pair $(S^+_t,\mathbb{V}_t)$ in \eqref{conv-peri} by the same time change as the one giving 
	$(\wh{\mathcal{P}}^\infty_{t}, \wh{\mathcal{V}}^\infty_{t})_{t\geq 0}$ from $(\mathscr{P}_t,\mathscr{V}_t)_{t\geq 0}$
(combine formula (56) in \cite{ScalingUIPT} with the description 
	of the pair $(\mathcal{P}^\infty_t,\mathcal{V}^\infty_t)_{t\geq 0}$ in \cite[Theorem 1.3]{Hull} --- some care is needed here 
	because the scaling constants in \cite{ScalingUIPT} are not the same as in the present work). 
	
	The preceding absolute continuity property implies that the approximation \eqref{PrFromJumps}
	holds when $\mathcal{P}_r$ and $\mathcal{V}_s$ are replaced by $\wh{\mathcal{P}}^\infty_r$ and $ \wh{\mathcal{V}}^\infty_{s}$ respectively, a.s.
	for every $r<r^\infty_*$. In other words, we can recover $\wh{\mathcal{P}}^\infty_r$ as a deterministic function of $( \wh{\mathcal{V}}^\infty_{s})_{s\in[0,r]}$
	which is the same as the one giving $\mathcal{P}_r$ from  $(\mathcal{V}_s)_{s\in [0,r]}$. 

\begin{theorem}
\label{conv-peri-volume}
We have 
$$\Big(\mathcal{D}^L, (\wh{\mathcal{P}}^L_{t\wedge r^L_*}, \wh{\mathcal{V}}^L_{t\wedge r^L_*})_{t\geq 0}, r^L_*\Big)
\xrightarrow[L\to\infty]{(d)} \Big(\D_{(1)}, (\mathcal{P}_{t\wedge r_*},\mathcal{V}_{t\wedge r_*})_{t\geq 0},r_*\Big),$$
where the convergence holds in distribution in $\mathbb{M}^{2,1}\times\D(\R_+,\R_+^2)\times \R_+$.
Moreover, this convergence holds jointly with \eqref{key-conv1} and \eqref{key-conv2}.
\end{theorem}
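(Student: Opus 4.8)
The plan is to obtain the joint convergence by a compactness argument, using the hull-convergence result of Corollary~\ref{coroHull} and the absolute continuity discussion preceding the statement to identify the limit. Observe first that all the random quantities occurring in the statement --- the rescaled marked triangulation $(\mathcal{D}^L,d_L)$ as an element of $\mathbb{M}^{2,1}$, the truncated processes $(\wh{\mathcal{P}}^L_{t\wedge r^L_*},\wh{\mathcal{V}}^L_{t\wedge r^L_*})_{t\geq0}$, the real number $r^L_*$, together with the objects appearing in \eqref{key-conv1} and \eqref{key-conv2} --- are measurable functions of the single random triangulation $\mathcal{D}^L$. Since by \eqref{ConvergenceBrownianDisk} and \eqref{key-conv2} (the latter already incorporating \eqref{key-conv1}) every marginal of their joint law converges, the joint law is tight, and it is enough to show that every subsequential limit coincides with the one announced. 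Fix such a subsequence; by Skorokhod's representation theorem, and using the common isometric embedding provided by a straightforward extension of \cite[Proposition~1.5]{GM0}, we may assume that, along the subsequence, all these convergences hold almost surely, with $\mathcal{D}^L$ and its limiting free pointed Brownian disk $\D_{(1)}$ embedded isometrically in a fixed compact space $(E,\Delta)$, in such a way that $\mathcal{D}^L\to\D_{(1)}$ and $\partial\mathcal{D}^L\to\partial\D_{(1)}$ for the Hausdorff distance, $x^L_*\to x_*$, and $\nu^L\to\mathbf{V}$ weakly. We write $\mathcal{P}_r$, $\mathcal{V}_r$ for the perimeter and volume of the hulls $H_r$ in $\D_{(1)}$, and $r_*=D(x_*,\partial\D_{(1)})$.

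First, I identify $r^\infty_*$. Since the graph distance between $x^L_*$ and $\partial\mathcal{D}^L$ equals $R_L$ or $R_L+1$ and $r^L_*=c_LR_L$, we have $|r^L_*-d_L(x^L_*,\partial\mathcal{D}^L)|\leq c_L\to0$; moreover $d_L(x^L_*,\partial\mathcal{D}^L)=\Delta(x^L_*,\partial\mathcal{D}^L)\to\Delta(x_*,\partial\D_{(1)})=r_*$ because $x^L_*\to x_*$ and $\partial\mathcal{D}^L\to\partial\D_{(1)}$ for the Hausdorff metric. Hence $r^L_*\to r_*$, and comparing with the convergence $r^L_*\to r^\infty_*$ from \eqref{key-conv2} we get $r^\infty_*=r_*$ almost surely. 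Next, I identify the volume process. Let $s\in(0,r_*)$ not be a jump time of $(\mathcal{P}_r)$. Corollary~\ref{coroHull} gives $\nu^L(\mathcal{H}^L_{\lfloor s/c_L\rfloor})\to\mathcal{V}_s$; but by the definitions $\nu^L(\mathcal{H}^L_{\lfloor s/c_L\rfloor})=\frac{3}{4L^2}\mathcal{V}^L_{\lfloor s/c_L\rfloor}=\wh{\mathcal{V}}^L_{s\wedge r^L_*}$ as soon as $L$ is large enough that $s<r^L_*$, which holds eventually since $r^L_*\to r_*>s$. On the other hand \eqref{key-conv2} gives $(\wh{\mathcal{V}}^L_{t\wedge r^L_*})_{t\geq0}\to(\wh{\mathcal{V}}^\infty_t)_{t\geq0}$ in the Skorokhod topology, so $\wh{\mathcal{V}}^L_{s\wedge r^L_*}\to\wh{\mathcal{V}}^\infty_s$ whenever $s$ is a continuity point of $\wh{\mathcal{V}}^\infty$. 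Since the union of the jump times of $\mathcal{P}$ and the discontinuity points of $\wh{\mathcal{V}}^\infty$ is a countable subset of $(0,r_*)$, we conclude that $\mathcal{V}_s=\wh{\mathcal{V}}^\infty_s$ on a dense set, and by right continuity of both c\`adl\`ag processes this identity holds for all $s\in[0,r_*)$; together with $r^\infty_*=r_*$ this gives $\mathcal{V}_{t\wedge r_*}=\wh{\mathcal{V}}^\infty_{t\wedge r^\infty_*}$ for all $t\geq0$ (the behaviour at the endpoint $r_*$ being handled using that $r_*$ is almost surely not a jump time of $(\mathcal{V}_r)$).

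It remains to identify the perimeter process. By the absolute continuity property of Lemma~\ref{AbsContinuity} and the representation \eqref{PrFromJumps} recalled just before the theorem, $\wh{\mathcal{P}}^\infty_r$ is, almost surely and for every $r<r^\infty_*=r_*$, the same explicit deterministic functional of $(\wh{\mathcal{V}}^\infty_s)_{s\leq r}$ as the one recovering $\mathcal{P}_r$ from $(\mathcal{V}_s)_{s\leq r}$. Since $\wh{\mathcal{V}}^\infty=\mathcal{V}$ on $[0,r_*)$ by the previous step, we deduce $\wh{\mathcal{P}}^\infty_r=\mathcal{P}_r$ for all $r<r_*$, hence $\wh{\mathcal{P}}^\infty_{t\wedge r^\infty_*}=\mathcal{P}_{t\wedge r_*}$ for all $t\geq0$. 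Consequently, along the chosen subsequence the joint limit equals $\big(\D_{(1)},(\mathcal{P}_{t\wedge r_*},\mathcal{V}_{t\wedge r_*})_{t\geq0},r_*\big)$, and it occurs jointly with the limits of \eqref{key-conv1} and \eqref{key-conv2} (with, in addition, $\wh{\mathcal{P}}^\infty=\mathcal{P}$, $\wh{\mathcal{V}}^\infty=\mathcal{V}$ and $r^\infty_*=r_*$). As this limit does not depend on the subsequence, the whole sequence converges, which is the assertion of the theorem.

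The step I expect to be the most delicate is the simultaneous realisation of all three convergences on one probability space and the ensuing passage from the fixed-$s$ identities to an equality of processes: one must check that Corollary~\ref{coroHull}, whose proof relies on the common embedding, is genuinely available on this space, and that neither the countably many exceptional values of $s$ nor the endpoint $r_*$ causes trouble. The substantive input --- convergence of the volume of the discrete hull to $\mathcal{V}_s$, and the fact that the perimeter is a universal measurable functional of the volume trajectory --- is already contained in Corollary~\ref{coroHull} and in the absolute continuity discussion, so beyond this the remaining work is essentially bookkeeping.
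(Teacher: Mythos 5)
Your proposal is correct and follows essentially the same route as the paper: tightness plus Skorokhod to get an a.s. subsequential limit in a common embedding, identification of $r^\infty_*$ with $r_*$ via the rescaled graph distance to the boundary, identification of the volume process via Corollary \ref{coroHull}, and identification of the perimeter process through the deterministic functional of the volume trajectory furnished by \eqref{PrFromJumps} and the absolute continuity discussion. Your version merely spells out the bookkeeping (dense set of non-exceptional times, right continuity, endpoint behaviour) that the paper leaves implicit.
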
 

\proof By a tightness argument using Corollary \ref{conv-time-changed}, we may assume that, along a sequence of values of $L$, the triplet
$$\Big(\mathcal{D}^L, (\wh{\mathcal{P}}^L_{t\wedge r^L_*}, \wh{\mathcal{V}}^L_{t\wedge r^L_*})_{t\geq 0}, r^L_*\Big)$$
converges in distribution to a limit which we may denote as
$$\Big(\D_{(1)}, (\wh{\mathcal{P}}^\infty_{t}, \wh{\mathcal{V}}^\infty_{t})_{t\geq 0},r^\infty_*\Big).$$
By the Skorokhod representation theorem, we may assume that this convergence holds a.s.
Since $r^L_*$ is the rescaled graph distance between $x^L_*$ and $\partial \mathcal{D}^L$ (up to an error which is $O(L^{-1/2})$),
it immediate that $r^\infty_*=r_*$. On the other hand, for $t<r^L_*$, we have $\wh{\mathcal{V}}^L_{t}= \frac{3}{4L^2} \mathcal{V}^L_{\lfloor t/c_L\rfloor}=
\nu^L(\mathcal{H}^L_{\lfloor s/c_L\rfloor})$, and Corollary \ref{coroHull} then allows us to identify 
$(\wh{\mathcal{V}}^\infty_{t})_{t\geq 0}$ with $(\mathcal{V}_{t\wedge r_*})_{t\geq 0}$. Finally, we saw that, for $r<r^\infty_*=r_*$,
$\wh{\mathcal{P}}^\infty_r$ must be given by the same deterministic function of $( \wh{\mathcal{V}}^\infty_{s})_{s\in[0,r]}$
 as the one giving $\mathcal{P}_r$ from  $(\mathcal{V}_s)_{s\in [0,r]}$, and we conclude that we have also
 $(\wh{\mathcal{P}}^\infty_{t})_{t\geq 0}=(\mathcal{P}_{t\wedge r_*})_{t\geq 0}$, which completes the proof. \endproof 
 
 We now fix $b>0$ and recall the notation $r_b=\inf\{r\in[0,r_*): \mathcal{P}_r=b\}$. For every $L\geq 1$, we also set
  $$k^L_b=\inf\{k\in\{1,\ldots, S_L-1\}: P^L_k=\lfloor bL\rfloor\},$$
  and $r^L_b=c_Lh^L_{k^L_b}$ on the event where $k^L_b<\infty$. 
 In other words, $r^L_b$ corresponds to  the (rescaled) distance between the distinguished vertex and the boundary of the
 first explored region with perimeter $\lfloor bL\rfloor$. If $k^L_b=\infty$, we take $r^L_b=\infty$.
 
 We let $\D_{(1)}^{(b)}$ be distributed as $\D_{(1)}$ conditioned on the event $\{r_b<\infty\}$
and similarly, for every $L\geq 1$, we let $\mathcal{D}^{L,(b)}$ be distributed as
$\mathcal{D}^{L}$ conditioned on $\{k^L_b<\infty\}$.

	\begin{proposition}
		\label{ConvBrownianDiskWithRadius}
		The convergence in distribution 
		\begin{align}
			\label{ConvBrownianDiskConditioned}
			(\Dd, r_b^L) \xrightarrow[L\to\infty]{(d)} (\Dc_{(1)} ,r_b).
		\end{align}
		holds in $\mathbb{M}^{2, 1}\times \mathbb R$.
	\end{proposition}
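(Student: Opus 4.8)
The plan is to deduce \eqref{ConvBrownianDiskConditioned} from the joint convergence established in Theorem~\ref{conv-peri-volume} (which holds jointly with \eqref{key-conv1} and \eqref{key-conv2}), by first realizing $r^L_b$ and $r_b$ as continuous functionals of the processes occurring there, and then removing the conditioning by a soft argument based on Lemma~\ref{proba-condit}.

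First I would record that the perimeter chain $(P^L_k)_{k\ge0}$ only increases by jumps of size $+1$ and starts from $P^L_0\in\{1,2,\dagger\}$, hence it attains the value $\lfloor bL\rfloor$ as soon as it reaches any value $\ge\lfloor bL\rfloor$. Writing $\beta_L:=\lfloor bL\rfloor/L\to b$, this gives $\{k^L_b<\infty\}=\{\sup_{0\le k<S_L}P^L_k\ge\lfloor bL\rfloor\}$ and, with the conventions $\inf\varnothing=\infty$, $\wh h^L_\infty=\infty$,
\begin{equation*}
k^L_b\,L^{-3/2}=\inf\big\{t\ge0:\wh P^L_{t\wedge\wh S_L}\ge\beta_L\big\},\qquad r^L_b=\wh h^L_{k^L_b L^{-3/2}}.
\end{equation*}
On the continuum side, Corollary~\ref{conv-time-changed} and Theorem~\ref{conv-peri-volume} give $(\mathcal P_{t\wedge r_*})_{t\ge0}=(\wh{\mathcal P}^\infty_t)_{t\ge0}=(\mathscr P_{\eta_t})_{t\ge0}$, where $\eta$ is the continuous strictly increasing inverse of $\mathscr A$ on $[0,r_*]$; since $\mathscr P$ has no positive jumps and $\mathscr P_0=0$, a change of variables identifies $r_b=\inf\{r\in[0,r_*):\mathcal P_r=b\}$ with $\mathscr A_{\tau_b}$ on $\{\tau_b<\infty\}$, where $\tau_b:=\inf\{t\ge0:\mathscr P_t=b\}$, and $r_b=\infty$ on $\{\tau_b=\infty\}$. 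I would also note that $\sup_t\mathscr P_t=\mathcal P^*$ and that, since $\{\mathcal P^*>b'\}=\bigcup_{b''>b'}\{r_{b''}<\infty\}$ (using the absence of positive jumps of $\mathcal P$), Lemma~\ref{proba-condit} yields $\P(\mathcal P^*>b')=1/(1+b')$ for all $b'>0$; as this function of $b'$ is continuous, $\P(\mathcal P^*=b)=0$.

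Next I would pass to the limit. By Theorem~\ref{conv-peri-volume} and \eqref{key-conv1}, I may use the Skorokhod representation theorem to work on one probability space where, almost surely, $\mathcal D^L\to\D_{(1)}$ in $\mathbb M^{2,1}$, $(\wh P^L_{t\wedge\wh S_L})_t\to(\mathscr P_t)_t$ and $(\wh h^L_{t\wedge\wh S_L})_t\to(\mathscr A_t)_t$ in the Skorokhod topology, and $\wh S_L\to\Sigma_\infty$; as $\mathscr A$ is continuous, the convergence $\wh h^L\to\mathscr A$ is uniform on compact sets. Because $\mathscr P$ has no positive jumps, it creeps upward and (by the previous paragraph) a.s.\ does not get stuck at level $b$, so the first-passage map $f\mapsto\inf\{t:f(t)\ge b\}$ is a.s.\ continuous at $\mathscr P$; together with $\beta_L\to b$, this gives $k^L_b L^{-3/2}\to\tau_b$ in $[0,\infty]$. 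On $\{\tau_b<\infty\}$, the uniform convergence of $\wh h^L$ and the continuity of $\mathscr A$ then yield $r^L_b=\wh h^L_{k^L_b L^{-3/2}}\to\mathscr A_{\tau_b}=r_b$, and in particular $k^L_b<\infty$ for $L$ large. On $\{\tau_b=\infty\}$ one has $k^L_b=\infty$ (hence $r^L_b=\infty=r_b$) for $L$ large, because $k^L_b\le S_L-1$ forces $k^L_b L^{-3/2}\le\wh S_L\to\Sigma_\infty<\infty$, incompatible with $k^L_b L^{-3/2}\to\infty$ unless $k^L_b$ is eventually infinite. Summarizing: almost surely, $\mathbf 1_{\{k^L_b<\infty\}}\to\mathbf 1_{\{r_b<\infty\}}$, and $(\mathcal D^L,r^L_b)\to(\D_{(1)},r_b)$ in $\mathbb M^{2,1}\times\R$ on $\{r_b<\infty\}$.

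Finally I would remove the conditioning. By Lemma~\ref{proba-condit}, $\P(r_b<\infty)=1/(1+b)>0$, so $\P(k^L_b<\infty)\to1/(1+b)$ and $\P(k^L_b<\infty)>0$ for $L$ large. For bounded continuous $F:\mathbb M^{2,1}\times\R\to\R$,
\begin{equation*}
\E\big[F(\Dd,r^L_b)\big]=\frac{\E\big[F(\mathcal D^L,r^L_b)\,\mathbf 1_{\{k^L_b<\infty\}}\big]}{\P(k^L_b<\infty)},
\end{equation*}
and on the coupling space $F(\mathcal D^L,r^L_b)\,\mathbf 1_{\{k^L_b<\infty\}}\to F(\D_{(1)},r_b)\,\mathbf 1_{\{r_b<\infty\}}$ a.s.\ (immediate on $\{r_b<\infty\}$, while on $\{r_b=\infty\}$ the indicator is eventually $0$). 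Dominated convergence and $\P(k^L_b<\infty)\to\P(r_b<\infty)$ give $\E[F(\Dd,r^L_b)]\to\E[F(\D_{(1)},r_b)\,\mathbf 1_{\{r_b<\infty\}}]/\P(r_b<\infty)=\E[F(\Dc_{(1)},r_b)]$, which is \eqref{ConvBrownianDiskConditioned}. The main obstacle here is the joint convergence $r^L_b\to r_b$: it rests on the exact compatibility $r^L_b=\wh h^L_{k^L_b L^{-3/2}}$ between the discrete data and the processes of \eqref{key-conv1}, and on the continuity of the first-passage functional at level $b$, which follows from the absence of positive jumps of $\mathscr P$ and the fact (extracted from Lemma~\ref{proba-condit}) that $\mathcal P^*$ has no atom at $b$.
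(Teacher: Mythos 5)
Your overall strategy coincides with the paper's: couple the joint convergence of Theorem~\ref{conv-peri-volume} (together with \eqref{key-conv1} and \eqref{key-conv2}) via the Skorokhod representation theorem, prove that $r^L_b\to r_b$ almost surely together with the convergence of the indicators $\mathbf 1_{\{k^L_b<\infty\}}$, and then decondition using Lemma~\ref{proba-condit} (a routine step the paper leaves implicit). The identities $k^L_b L^{-3/2}=\inf\{t\ge 0:\wh P^L_{t\wedge\wh S_L}\ge\beta_L\}$ (valid because the peeling perimeter increases by at most one per step) and $r^L_b=\wh h^L_{k^L_b L^{-3/2}}$, the identification $r_b=\mathscr A_{\tau_b}$, and the treatment of the event $\{\tau_b=\infty\}$ are all correct and are, in substance, what the paper uses, though the paper implements the a.s.\ convergence by a two-sided squeeze ($r^L_b\le c_L\xi^L_b$ and $r^L_b\ge c_L h^L_{\lfloor L^{3/2}\eta_{r_{b-\varepsilon}}\rfloor}$) rather than by continuity of a first-passage functional.

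There is, however, a genuine gap at the step you yourself identify as the crux: the a.s.\ continuity of the first-passage map at $\mathscr P$. You claim it follows from the absence of positive jumps of $\mathscr P$ together with $\P(\mathcal P^*=b)=0$; that implication is false. Two properties are actually needed. (i) On $\{\tau_b<\infty\}$, immediately after $\tau_b$ the process must take values strictly greater than $b$, i.e.\ $\inf\{t:\mathscr P_t\ge b\}=\inf\{t:\mathscr P_t>b\}$ a.s. Atomlessness of $\mathcal P^*$ does not give this: the path could hit $b$ at $\tau_b$, dip below, and only exceed $b$ at a later time $\tau_b^+$, in which case $\mathcal P^*>b$, so this scenario is not excluded; and since your levels $\beta_L=\lfloor bL\rfloor/L$ approach $b$ from below, in that scenario $k^L_bL^{-3/2}$ could converge to any point of $[\tau_b,\tau_b^+]$ and $r^L_b$ need not converge to $r_b=\mathscr A_{\tau_b}$. (ii) For the lower bound one needs $\sup_{s\le t}\mathscr P_s<b$ strictly for every $t<\tau_b$; ``no positive jumps'' does not prevent, pathwise, the running supremum from accumulating at $b$ before $\tau_b$, and if it did, the approximating paths could cross $\beta_L$ too early. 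Both (i) and (ii) are true, but they require probabilistic input on $\mathscr P$ --- upward regularity and quasi-left-continuity (creeping) of the spectrally negative $3/2$-stable process conditioned to stay positive, transferred to $\mathscr P$ through the absolute-continuity relation in Proposition~\ref{key-conv}. This is precisely the point the paper flags when it notes that ``immediately after time $r_b$ the process takes values greater than $b$'', deducing it from the analogous property of $\mathscr P$. Your proof becomes complete once the appeal to $\P(\mathcal P^*=b)=0$ is replaced by an argument for (i), and (ii) is justified as well; the atomlessness of $\mathcal P^*$ remains useful only for the edge cases where the overall supremum equals $b$.
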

	
	\proof By the Skorokhod representation theorem, we may assume that the convergence of Theorem \ref{conv-peri-volume}
holds almost surely, as well as the convergences \eqref{key-conv1} and \eqref{key-conv2}. Proposition \ref{ConvBrownianDiskWithRadius}
will follow if we can verify that $r^L_b \longrightarrow r_b$ a.s. as $L\to\infty$ (in particular $\mathbf{1}_{\{r^L_b<\infty\}}  \longrightarrow \mathbf{1}_{\{r_b<\infty\}}$).
Set
$$\xi^L_b:=\inf\{j\in\{0,1,\ldots,R_L\}: \mathcal{P}^L_j\geq \lfloor bL\rfloor\}.$$
From the (a.s.) convergence of $(\wh{\mathcal{P}}^L_{t\wedge r^L_*})_{t\geq 0}$ to $(\mathcal{P}_{t\wedge r_*})_{t\geq 0}$, one
infers that $c_L\xi^L_b$ converges a.s. to $r_b$ as $L\to\infty$, on the event $\{r_b<\infty\}$. To be precise, we need to know that immediately 
after time $r_b$, the process $\mathcal{P}_t$ takes values greater than $b$, but this follows (via a time change argument) from the analogous property 
satisfied by the process $\mathscr{P}_t$ in Proposition \ref{key-conv}. 

Argue on the event $\{r_b<\infty\}$. Then, for $L$ large we have $\xi^L_b<\infty$ and $\sigma^L_{\xi^L_b}\geq k^L_b$ (because $P^L_{\sigma^L_{\xi^L_b}}=\mathcal{P}^L_{\xi^L_b}\geq \lfloor bL\rfloor\}$). Hence,
$$c_L\xi^L_b=c_Lh^L_{\sigma^L_{\xi^L_b}}\geq c_Lh^L_{k^L_b}=r^L_b$$
and, since $c_L\xi^L_b$ converges to $r_b$,
$$\limsup_{L\to\infty} r^L_b \leq r_b. $$

To get the analogous result for the liminf, fix $\ve\in(0,b)$ and argue on the event
where $r_{b-\ve}<\infty$. Since $\mathcal{P}_r=\mathscr{P}_{\eta_r}$ for $0<r<r_*$, we have
$$\sup_{s\leq \eta_{r_{b-\ve}}}\mathscr{P}_s=\sup_{t\leq r_{b-\ve}}\mathscr{P}_{\eta_t}=\sup_{r\leq r_{b-\ve}}\mathcal{P}_r\leq b-\ve.$$
Using the (a.s.) convergence \eqref{key-conv1}, we thus get that for $L$ large,
$$\sup_{s\leq \eta_{r_{b-\ve}}}\wh P^L_{s\wedge \wh S_L}<b-\frac{\ve}{2},$$
or equivalently
$$\frac{1}{L} \sup_{j\leq L^{3/2} \eta_{r_{b-\ve}}} P^L_{j\wedge S_L}<b-\frac{\ve}{2},$$
which implies $k^L_b\geq L^{3/2} \eta_{r_{b-\ve}}$. Finally,
$$r^L_b=c_L h^L_{k^L_b}\geq c_L\,h^L_{\lfloor L^{3/2}\eta_{r_{b-\ve}}\rfloor}$$
and the right-hand side converges as $L\to\infty$ to $\mathscr{A}_{\eta_{r_{b-\ve}}}=r_{b-\ve}$. We conclude that
$$\liminf_{L\to\infty} r^L_b \geq r_{b-\ve}$$
on the event where $r_{b-\ve}<\infty$. Since this holds for any $\ve>0$, the proof is complete. \endproof

\section{Convergence to the Brownian annulus}
\label{sec:conv-annulus}
	
	\subsection{Statement of the result}
	\label{sta-res}
We no longer assume that  $a=1$. The definitions of $\Dd$ and $\Dc_{(1)}$ given before Proposition \ref{ConvBrownianDiskWithRadius}
can then be extended. In particular, we write $\Dd_{(a)}$ for a Boltzmann triangulation in $\mathbb{T}^{1,\bullet}(\lfloor aL\rfloor)$
conditioned on the event $\{k^L_b<\infty\}$, where $k^L_b$ is the first time at which the perimeter of the
explored region in the peeling algorithm is equal to $\lfloor bL\rfloor$. We keep the notation $d_L$ for the (rescaled) distance on $\Dd_{(a)}$
and $r^b_L$ for the $d_L$-distance between the 
distinguished vertex and the boundary of the explored region at time $k^L_b$. Similarly, $\D^{(b)}_{(a)}$ is distributed as $\D_{(a)}$ conditioned on the event
that the process of hull perimeters hits $b$, and $r_b$ is the corresponding hitting radius. We keep the notation $D$ for the distance on $\D^{(b)}_{(a)}$.  The convergence 
\eqref{ConvBrownianDiskConditioned} is then immediately extended to give
\begin{align}
			\label{ConvBrownianDiskConditioned2}
			(\Dd_{(a)}, r_b^L) \xrightarrow[L\to\infty]{(d)} (\Dc_{(a)} ,r_b).
		\end{align}
in distribution in $\mathbb{M}^{2,1}\times \R$. 

Recall that the metric space $\mathbb C_{(a,b)}$
is defined as the complement of the (interior of the) hull $H_{r_b}$ in $\Dc_{(a)}$, and is equipped with the (extension of the)
intrinsic metric $d^\circ$.  
The two boundaries of $\C_{(a,b)}$ are $\partial_0\C_{(a,b)}=\partial \D_{(a)}^{(b)}$, and
	$\partial_1\C_{(a,b)}=\partial H_{r_b}$. To simplify notation, we write $\C$ instead of $\C_{(a,b)}$ in this section and the next one.

We also let $\Cd$ be the unexplored triangulation at time $k_b^L$ in the peeling algorithm applied to $\Dd_{(a)}$.  We equip $\Cd$ with the graph distance 
scaled by the factor $\sqrt{3/2}\,L^{-1/2}$, which we denote by $d_L^\circ$. Recall from Section \ref{sec:peel} the definition of the outer boundary $\partial_0\Cd=\partial \Dd_{(a)}$
and the inner boundary $\partial_1\Cd$.

Our goal in this section is to prove the following theorem. Recall the Gromov-Hausdorff space $(\mathbb{M},d_{\mathtt{GH}})$ introduced
in Section \ref{sec:conv}.

	\begin{theorem}
		\label{PropPrinc}
		The random metric spaces $(\Cd, d_L^\circ)$ converge in distribution towards $(\Cc, d^\circ)$ in 
		$(\mathbb{M},d_{\mathtt{GH}})$. 
	\end{theorem}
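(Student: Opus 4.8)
The plan is to derive Theorem~\ref{PropPrinc} from the convergence \eqref{ConvBrownianDiskConditioned2}, from the Hausdorff convergence of explored regions and hulls furnished by (the conditioned analogue of) Proposition~\ref{ConvHaus}, and from the technical Lemma~\ref{avoiding-path}; the device that copes with the metric distortion near the inner boundary is a tubular-neighborhood approximation.

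\emph{Setup.} Work throughout under $\P(\cdot\mid k^L_b<\infty)$, resp.\ $\P(\cdot\mid r_b<\infty)$. By Skorokhod's representation theorem we may assume that \eqref{ConvBrownianDiskConditioned2} holds almost surely, and, as in Section~\ref{sec:preli-conv}, that all the spaces $\Dd_{(a)}$ and $\Dc_{(a)}$ are embedded isometrically into one compact space $(E,\Delta)$ so that $\Dd_{(a)}\to\Dc_{(a)}$, $\partial\Dd_{(a)}\to\partial\Dc_{(a)}$, $x^L_*\to x_*$, $\nu^L\to\mathbf V$ for the relevant ($\Delta_{\mathtt H}$- and weak) topologies, and $r^L_b\to r_b$. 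We couple $\Cd=U^L_{k^L_b}$ with the peeling by layers of $\Dd_{(a)}$, and, using the $[0,1]$-edge convention, view $\Cd$ as a geodesic subspace of $\Dd_{(a)}\subset E$. Since $(\mathcal P_r)$ has no positive jumps, $r_b$ is a.s.\ not one of its jump times; hence the conditioned version of Proposition~\ref{ConvHaus} applies with $s=r_b$ and $N_L=k^L_b$ (because $c_Lh^L_{k^L_b}=r^L_b\to r_b$), yielding the $\Delta_{\mathtt H}$-convergences $T^L_{k^L_b}\to H_{r_b}$, $\FCdb=\partial T^L_{k^L_b}\to\partial H_{r_b}=\FCcb$, and $\Cd\to\bar C_{r_b}=\Cc$.

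\emph{Reduction and the easy bound.} It suffices to produce, for every $\varepsilon>0$ and all large $L$, a correspondence between $(\Cd,d^\circ_L)$ and $(\Cc,d^\circ)$ of distortion at most $\varepsilon$; one takes $R_L=\{(v,z)\in\Cd\times\Cc:\Delta(v,z)\le\alpha_L\}$ with $\alpha_L\to0$ supplied by the Hausdorff convergence above, so the task becomes to show that $d^\circ_L(v^L,v'^L)$ is within $\varepsilon$ of $d^\circ(z,z')$ whenever $\Delta(v^L,z),\Delta(v'^L,z')\le\alpha_L$, uniformly for $z,z'$ in a fixed finite $\varepsilon$-net of $\Cc$. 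The upper bound $\limsup_L d^\circ_L(v^L,v'^L)\le d^\circ(z,z')$ is the easy half for $z,z'$ in the interior $\Cc\setminus(\FCca\cup\FCcb)$: take a path $\gamma$ in $\Cc$ from $z$ to $z'$ of length at most $d^\circ(z,z')+\eta$, use Lemma~\ref{avoiding-path} to push it (at the cost of a further $\eta$) into $\Cc\setminus\FCcb=\mathbb D_{(a)}\setminus H_{r_b}$, observe that it then lies at positive $\Delta$-distance from $H_{r_b}=\lim_L T^L_{k^L_b}$, and lift it by part~1 of Lemma~\ref{approx-path} to a path in $\Dd_{(a)}$; for $L$ large this lift avoids $T^L_{k^L_b}$, hence stays in $\Cd$, and joins $v^L$ to $v'^L$ after appending two vanishing geodesics.

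\emph{The lower bound, tubular neighborhoods, and the main obstacle.} The bound $\liminf_L d^\circ_L(v^L,v'^L)\ge d^\circ(z,z')$ is where the real work lies: lifting a discrete $d^\circ_L$-geodesic by part~2 of Lemma~\ref{approx-path} gives a continuous path in $\mathbb D_{(a)}$ which, since $\Cd\to\Cc$ in $\Delta_{\mathtt H}$, is asymptotically contained in $\Cc$, but near $\FCcb$ such lifts need not lie in $\mathbb D_{(a)}\setminus H_{r_b}$, so the continuum intrinsic metric cannot be invoked directly; moreover the uniformity over pairs and the case $z,z'\in\FCcb$ require a ``radial'' estimate. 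To circumvent this I would fix a generic $\delta>0$ (with $r_b+\delta<r_*$ and $r_b+\delta$ not a jump of $(\mathcal P_r)$) and introduce the approximating spaces $\Cdd$, the subgraph of $\Cd$ spanned by the vertices at $d_L$-distance $>r^L_b+\delta$ from $x^L_*$, with its own intrinsic metric $d^\circ_{L,\delta}$, and $\Ccd$, the closure of $\mathbb D_{(a)}\setminus H_{r_b+\delta}$ with the continuous extension $d^\circ_\delta$ of its intrinsic metric, which exists by the argument of Theorem~\ref{Br-Annulus} (or Proposition~\ref{BrDisk-BrSphere}). One then proves, in this order: (i) $(\Cdd,d^\circ_{L,\delta})\to(\Ccd,d^\circ_\delta)$ in $d_{\mathtt{GH}}$ for each such $\delta$ --- the arguments of the previous paragraph and the lower-bound lifting now go through with constants depending only on $\delta$, because every relevant point lies at $D$-distance $\ge\delta-o(1)$ from $\partial H_{r_b}$, hence genuinely inside $\mathbb D_{(a)}\setminus H_{r_b}$ where $d^\circ_\delta$ is the honest intrinsic metric, and Proposition~\ref{Proptechni} together with Corollary~\ref{coroHull} control how $\Cdd$ sits inside $E$; (ii) $d_{\mathtt{GH}}\big((\Ccd,d^\circ_\delta),(\Cc,d^\circ)\big)\to0$ as $\delta\to0$, using \eqref{dist-cyl} (every point of $\Cc$ lies within $\delta$ of $\Ccd$) and Lemma~\ref{avoiding-path} applied inside $\Ccd$ (so that $d^\circ_\delta\to d^\circ$ on $\Ccd$); and (iii) $\sup_L d_{\mathtt{GH}}\big((\Cdd,d^\circ_{L,\delta}),(\Cd,d^\circ_L)\big)\to0$ as $\delta\to0$. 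Granting (i)--(iii), one concludes by the triangle inequality for $d_{\mathtt{GH}}$: choose $\delta$ making (ii) and the supremum in (iii) small, then let $L\to\infty$. The hardest point is (iii): it asks that from any vertex of $\Cd$ one can reach $\Cdd$ along a path in $\Cd$ of $d^\circ_L$-length $O(\delta)+o(1)$, and that $d^\circ_{L,\delta}$ and $d^\circ_L$ differ by $O(\delta)$ on $\Cdd$ --- this is exactly the control of distances near the inner boundary flagged in the introduction, and it relies on the discrete counterpart of \eqref{dist-cyl} (the distance in $\Cd$ from a vertex to $\FCdb$ is close to $d_L(x^L_*,\cdot)-r^L_b$) together with the fact, inherited from the geometry of $\mathbb D_{(a)}$ via the convergence of the first step, that inside the unexplored region one can move ``outward'' at linear cost.
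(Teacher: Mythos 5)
Your overall architecture matches the paper's: remove a tubular neighborhood of the inner boundary, prove Gromov--Hausdorff convergence of the approximating spaces for each fixed $\delta$, and control the two error terms as $\delta\to 0$ (this is exactly the decomposition \eqref{MajoPrinc}, with your (i), (ii), (iii) corresponding to Lemmas \ref{PropPrinc3}, \ref{PropPrinc1} and \ref{PropPrinc2}). The upper-bound half via Lemma \ref{avoiding-path} and part 1 of Lemma \ref{approx-path} is also the paper's argument. But your step (iii) — the uniform estimate $\sup_{x\in\mathcal C^L} d^\circ_L(x,\mathcal C^L_\delta)=O(\delta)+o(1)$ — is where the proof has a genuine gap. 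The justification you offer (the discrete analogue of \eqref{dist-cyl} plus the assertion that one can ``move outward at linear cost'', inherited from the convergence of $\Dd_{(a)}$) does not work: the layers property only bounds the distance \emph{from} a vertex \emph{to} $\partial_1\mathcal C^L$, not the cost of escaping the $\delta$-neighborhood of $\partial_1\mathcal C^L$ while staying inside the unexplored region; and the convergence of $\Dd_{(a)}$ from the $a$-side gives Hausdorff/ambient control only, which says nothing uniform about intrinsic $d^\circ_L$-distances near the inner boundary (ambient closeness of a vertex to a point of $\mathbb C_\delta$ is useless there, since short ambient paths may cut through the explored hull). This is precisely the difficulty the theorem must overcome, and a pointwise-plus-net argument based on Lemma \ref{avoiding-path} fails for lack of uniformity in the choice of $L$.

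The paper's resolution, absent from your proposal, is a re-gluing/symmetry trick: since $\mathcal C^L$ is Boltzmann on $\mathbb T^2(\lfloor aL\rfloor,\lfloor bL\rfloor)$, in which $a$ and $b$ play symmetric roles, one glues an independent explored hull $H^L_0$ onto $\partial_0\mathcal C^L$ so as to realize $\mathcal C^L$ inside a conditioned Boltzmann triangulation $\tilde{\mathcal D}^{L,(a)}_{(b)}$ whose \emph{outer} boundary is $\partial_1\mathcal C^L$; the scaling limit \eqref{Convba} from the $b$-side (Proposition \ref{ConvBrownianDiskWithRadius} with $a$ and $b$ interchanged) together with Lemma \ref{dist-bdry-compar} then shows that the $\delta$-neighborhood of $\partial_1\mathcal C^L$ has uniformly small intrinsic width, which is exactly Lemma \ref{PropPrinc2}. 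Without this input (or a substitute of comparable strength) your step (iii) is unproven. Two smaller points: in step (i) the discrete geodesic between points of $\mathcal C^L_\delta$ may itself approach $\partial_1\mathcal C^L$, so the lower-bound lifting needs a statement like Lemma \ref{CheminLoinDuBord} (proved by a compactness/contradiction argument on top of Lemmas \ref{avoiding-path} and \ref{approx-path}), not just the fact that the endpoints are $\delta$-deep; and your choice of equipping the approximating spaces with their own intrinsic metrics forces you to prove the extra comparison ``$d^\circ_{L,\delta}-d^\circ_L=O(\delta)$ on $\mathcal C^L_\delta$'' uniformly in $L$, which is itself nontrivial — the paper avoids it by using the restrictions of $d^\circ_L$ and $d^\circ$ to the approximating sets.
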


Before we proceed to the proof of Theorem \ref{PropPrinc}, we start with some preliminaries.
By the Skorokhod representation theorem, we may assume that the convergence \eqref{ConvBrownianDiskConditioned2} holds almost surely,
	\begin{align}
		\label{ConvBrownianDiskConditionedPS}
		(\Dd_{(a)}, r_b^L) \overset{a.s.}{\underset{L\to \infty}{\longrightarrow}}(\Dc_{(a)}, r_b).
	\end{align}
In the following, it will be useful to argue  on a fixed value of $\omega$ for which \eqref{ConvBrownianDiskConditionedPS} holds. In fact, we will 
need more. We observe that the triangulation $\Cd$ is Boltzmann distributed on the set $\mathbb{T}^2(\lfloor aL\rfloor,\lfloor bL\rfloor)$ of all triangulations with
two boundaries of sizes $\lfloor aL\rfloor$ and $\lfloor bL\rfloor$, and therefore $a$ and $b$ play a symmetric role in the distribution of $\Cd$. 
For any $L\geq 0$, we may introduce a random triangulation $H_0^L$, with a boundary, which is independent of $\mathcal C^L$ 
and distributed as the triangulation discovered by the peeling algorithm applied to a Boltzmann triangulation in $\mathbb{T}^{1,\bullet}(\lfloor bL \rfloor)$ at the first time when the
perimeter of the explored region hits the value $\lfloor aL\rfloor$ (conditionally on the event that this hitting time is finite).  
Let $\tDd_{(b)}$ be the triangulation obtained by gluing $H^L_0$ onto $\Cd$ along the boundary $\FCda$ (thus identifying $\partial H^L_0$ and $\FCda$ and their distinguished boundary edges). 
See Fig.~3 for an illustration. 
By construction, $\tDd_{(b)}$ is a Boltzmann triangulation in $\mathbb{T}^{1,\bullet}(\lfloor bL \rfloor)$ conditioned on the event that the perimeter process (associated with the peeling algorithm) hits
 $\lfloor aL\rfloor$. Hence, by Proposition \ref{ConvBrownianDiskWithRadius},
	 \begin{align}
	 	\label{Convba}
	 	\tDd_{(b)} \xrightarrow[L\to \infty]{(d)} \tDc_{(b)},
	 \end{align}
 where it is implicit that distances on the spaces $\tDd_{(b)}$ are scaled by $\sqrt{3/2}L^{-1/2}$ and $(\tDc_{(b)},\tilde D)$ is a (free pointed) Brownian disk 
 with perimeter $b$ conditioned on the 
 event that the process of hull perimeters hits $a$. 
 
 \begin{figure}[h!]
	\centering
	\includegraphics[width=\textwidth]{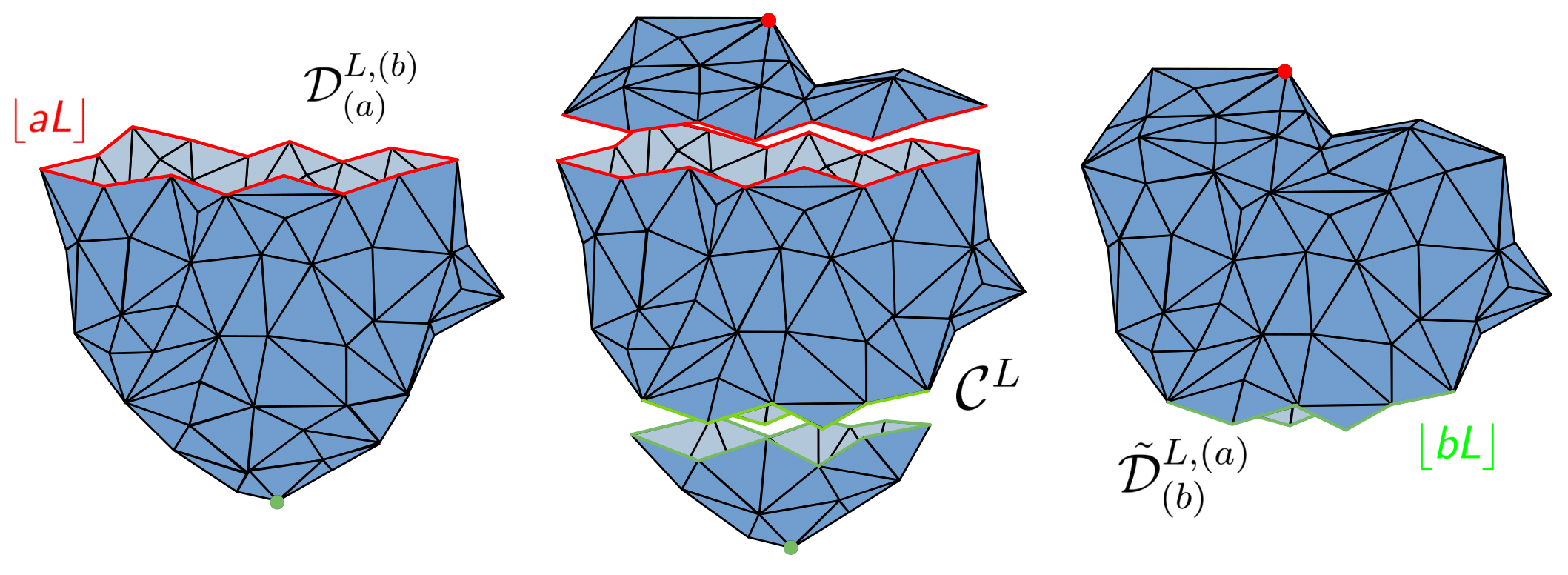}\label{cutting-gluing}
	\caption{Starting from the (conditioned) Boltzmann triangulation $\mathcal{D}^{L,(b)}_{(a)}$ with boundary size $\lfloor a L \rfloor$ (left), one removes a hull 
	of boundary size $\lfloor bL \rfloor$ centered at the distinguished vertex and then glues a hull of boundary size $\lfloor a L \rfloor$ on the boundary $\partial\mathcal{D}^{L,(b)}_{(a)}$
	(middle) to get a (conditioned) Boltzmann triangulation $\tilde{\mathcal{D}}^{L,(a)}_{(b)}$ of boundary size $\lfloor b L \rfloor$ (right)}
\end{figure}

By a tightness argument, we may assume that \eqref{Convba} holds jointly with \eqref{ConvBrownianDiskConditioned} along a subsequence
of values of $L$, and, from now on, we restrict our attention to this subsequence. By the Skorokhod representation theorem, we may assume that
we have both the almost sure convergences \eqref{ConvBrownianDiskConditionedPS} and $\tDd_{(b)} \longrightarrow \tDc_{(b)}$. From now on until the end of Section  
\ref{sec:conv-annulus}, we
fix $\omega$ such that both these convergences hold. For this value of $\omega$, we will prove that
$(\Cd, d_L^\circ)$ converges to $(\Cc, d^\circ)$ in $\mathbb{M}$.

\subsection{Reduction to approximating spaces}
\label{reduc-approx}

Since \eqref{ConvBrownianDiskConditionedPS} holds for the value $\omega$ that we have fixed, we may and will 
assume that $\Dc_{(a)}$ and the spaces $\Dd_{(a)}$ are embedded isometrically in the same compact metric space $(E, \Delta)$, in such a way that 
we have
	\begin{align}
	\label{Prop330}
		&\Dd_{(a)} \underset{L\to \infty}{\xrightarrow{\Delta_{\mathtt H}}} \Dc_{(a)}, \ \ \ \
		x_*^L\underset{L\to \infty}{\xrightarrow{ \ \  \Delta \ \ }} x_*, \ \ \ \ \FDd_{(a)} \xrightarrow[L\to \infty]{\Delta_{\mathtt H}} \FDc_{(a)}.
	\end{align}
Note that the restriction of $\Delta$ to $\Dd_{(a)}$ is the distance $d_L$ and the restriction of $\Delta$
to $\Dc_{(a)}$ is the distance $D$. As a first important remark, we observe  that  (since $r_b$ is not a jump point of the perimeter process $(\mathcal P_r)$), Proposition \ref{ConvHaus} and the fact that $r^L_b\longrightarrow r_b$  give
	\begin{align}
		&\Cd \xrightarrow[L\to \infty]{\Delta_{\mathtt H}} \Cc, & \FCdb \xrightarrow[L\to\infty]{\Delta_{\mathtt H}} \FCcb, \label{Prop512}
	\end{align}

A difficulty in the proof of Theorem \ref{PropPrinc} comes from the fact that the behaviour of the spaces $\Cd$ near the boundaries $\FCdb$
is not easily controlled. We will deal with this problem by first considering approximating subspaces 
which are obtained from $\Cd$, resp. from $\Cc$, by removing a neighborhood of the boundary $\partial_1\mathcal{C}^L$, resp. of $\partial_1\C$, and then showing that the 
convergence in Theorem \ref{PropPrinc} can be reduced to that of the approximating subspaces. For $\delta>0$,  we introduce the space
	\begin{align}
		\label{Cdd}
		\Cdd=\{x\in \Cd : d_L(x, \FCdb)\geq\delta\}=\{x\in \Cd : \Delta(x, \FCdb)\geq\delta\},
	\end{align}
	which is equipped with the restriction of the distance $d_L^\circ$, and its continuous counterpart
	\begin{align}
		\label{Ccd}
		\Ccd=\{x\in \Cc : D(x,\partial_1\C)\geq\delta\}=\{x\in \Cc :\Delta(x, \FCcb)\geq\delta\},
	\end{align}
	which is equipped with the restriction of the distance $d^\circ$. In what follows, we always assume that $\delta$
	is small enough so that $\Ccd$ is not empty and even contains points $x$ such that $d^\circ(x,\partial_1\C)>\delta$. Then, (from \eqref{Prop512}) it follows that 
	$\Cdd$ is not empty at least when $L$ is large.

\begin{lemma}
	\label{ConvCddCcd}
	If $\delta>0$ is not a local maximum of the function $x\mapsto \Delta(x, \partial_1\mathbb C)$ on $\mathbb C$, then:
		\begin{align}
		\label{Hausdo}
		\Delta_{\mathtt H}(\Cdd, \Ccd)\xrightarrow[L\to\infty]{} 0. 
	\end{align}
\end{lemma}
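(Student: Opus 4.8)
The plan is to establish the two one-sided Hausdorff estimates in $(E,\Delta)$: for every $\varepsilon>0$ and all $L$ large, (i) $\Cdd$ is contained in the $\varepsilon$-neighbourhood of $\Ccd$, and (ii) $\Ccd$ is contained in the $\varepsilon$-neighbourhood of $\Cdd$. Throughout I would write $g(x)=\Delta(x,\FCcb)$ for $x\in\Cc$ and $g^L(x)=\Delta(x,\FCdb)$ for $x\in\Cd$, so that $\Ccd=\{g\ge\delta\}$ and $\Cdd=\{g^L\ge\delta\}$, and I would use repeatedly the convergences $\Cd\xrightarrow{\Delta_{\mathtt H}}\Cc$ and $\FCdb\xrightarrow{\Delta_{\mathtt H}}\FCcb$ from \eqref{Prop512}. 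The basic bridge between the discrete and continuous distance-to-boundary functions is the elementary inequality $|g^L(x)-g(y)|\le\Delta(x,y)+\Delta_{\mathtt H}(\FCdb,\FCcb)$, valid whenever $x\in\Cd$ and $y\in\Cc$, which follows from the triangle inequality and the fact that $\Delta_{\mathtt H}$ dominates the oscillation of the distance to a set.

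For (i), I would note that $\Ccd$ is nonempty (as assumed) and that the compact sets $\{g\ge\delta-\alpha\}$, $\alpha>0$, decrease to $\Ccd$ as $\alpha\downarrow 0$ by continuity of $g$; a routine compactness argument then yields, for each $\varepsilon'>0$, some $\alpha>0$ such that $\{g\ge\delta-\alpha\}$ lies in the $\varepsilon'$-neighbourhood of $\Ccd$. Given $x\in\Cdd$, I would pick $y\in\Cc$ with $\Delta(x,y)\le\Delta_{\mathtt H}(\Cd,\Cc)$; the bridging inequality gives $g(y)\ge\delta-\Delta_{\mathtt H}(\Cd,\Cc)-\Delta_{\mathtt H}(\FCdb,\FCcb)\ge\delta-\alpha$ once $L$ is large, hence $\Delta(x,\Ccd)\le\Delta(x,y)+\varepsilon'\le\varepsilon$ for $L$ large. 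Note that this half does not use the hypothesis on $\delta$.

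For (ii), the assumption that $\delta$ is not a local maximum of $g$ is essential, and this is the main point of the proof. It forces $\{g>\delta\}$ to be dense in $\Ccd$ — a point of $\Ccd$ with $g=\delta$ admitting a neighbourhood on which $g\le\delta$ would be a local maximum — and in particular $\max_{\Cc}g>\delta$, so that $\{g\ge\delta+\gamma\}\ne\varnothing$ for all small $\gamma>0$. I would then upgrade this density statement, again by compactness, to: for each $\varepsilon>0$ there is $\gamma>0$ with $\Ccd$ contained in the $(\varepsilon/2)$-neighbourhood of $\{g\ge\delta+\gamma\}$ (if this failed, a sequence $y_n\in\Ccd$ at distance $>\varepsilon/2$ from $\{g\ge\delta+\gamma_n\}$ with $\gamma_n\downarrow 0$ would subconverge to some $y\in\Ccd$, and a point $z$ near $y$ with $g(z)>\delta$ would lie in $\{g\ge\delta+\gamma_n\}$ for $n$ large, a contradiction). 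Finally, given $y\in\Ccd$, I would take $y'\in\Cc$ with $g(y')\ge\delta+\gamma$ and $\Delta(y,y')\le\varepsilon/2$, then $x\in\Cd$ with $\Delta(x,y')\le\Delta_{\mathtt H}(\Cd,\Cc)$; the bridging inequality gives $g^L(x)\ge\delta+\gamma-\Delta_{\mathtt H}(\Cd,\Cc)-\Delta_{\mathtt H}(\FCdb,\FCcb)\ge\delta$ for $L$ large, so $x\in\Cdd$, while $\Delta(y,x)\le\Delta(y,y')+\Delta(y',x)\le\varepsilon/2+\Delta_{\mathtt H}(\Cd,\Cc)\le\varepsilon$. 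Combining (i) and (ii) gives \eqref{Hausdo}.

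The hard part is entirely concentrated in step (ii): the superlevel set $\{g\ge\delta\}$ may be unstable at the critical value $\delta$ (a ``plateau'' of $g$ at height $\delta$ would not be seen by the discrete spaces $\Cdd$), and the ``no local maximum'' hypothesis, through the density of $\{g>\delta\}$ in $\Ccd$ and its quantitative refinement, is exactly what removes this instability. Everything else is a matter of transporting the two Hausdorff convergences of \eqref{Prop512} across the bridging inequality.
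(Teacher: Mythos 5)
Your proposal is correct and follows essentially the same route as the paper's proof: both transfer the distance-to-$\partial_1$ function across the Hausdorff convergences of \eqref{Prop512} via the triangle inequality, use the ``no local maximum'' hypothesis to approximate points of $\Ccd$ at distance exactly $\delta$ from $\FCcb$ by points strictly further away, and invoke compactness to make the estimates uniform. Your write-up merely makes explicit (via the quantitative level-set refinements) the compactness step and the reverse inclusion that the paper handles with ``a similar argument''.
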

\begin{proof}
	Let us fix $\varepsilon>0$ and $\eta>0$. By (\ref{Prop512}), for every large enough $L$, we have  $\Delta_{\mathtt H}(\FCdb, \FCcb)< \eta/2$. 
If $x\in \Ccd$ is such that $\Delta(x, \FCcb)\geq \delta+\eta$, then (by (\ref{Prop512}) again), we can find  a point $x_L\in \Cd$ such that $\Delta(x_L, x)\leq \varepsilon \wedge\eta/2$, and it follows that:
	\begin{align*}
		\Delta(x_L, \FCdb)\geq \Delta(x, \FCcb)-\Delta(x, x_L)-\Delta_{\mathtt H}(\FCdb, \FCcb)\geq \delta,
	\end{align*}
so that $x_L\in \Cdd$ and $\Delta(x, \Cdd)\leq \Delta(x, x_L)\leq \varepsilon$. If $x$ is at distance exactly $\delta$ from $\FCcb$, we 
can approximate $x$ by a point $x'$ such that $\Delta(x', \FCcb)>\delta$ (we use our assumption that $\delta$ is not a local maximum
of $y\mapsto \Delta(y, \partial_1\mathbb C)$), and, for $L$ large enough, the same argument allows us to find a point $x_L\in \Cdd$ such that $\Delta(x, x_L)\leq \varepsilon$.
A compactness argument then gives $\sup_{x\in \Ccd} \Delta(x, \Cdd)\leq \varepsilon$ when $L$ is large. Since $\varepsilon$ was arbitrary, we have proved
that $\sup_{x\in \Ccd} \Delta(x, \Cdd)\to0$ as $L\to\infty$. A similar argument yields $
\sup_{x\in \Cdd}\Delta(x, \Ccd)\to 0$, which completes the proof.
\end{proof}

\begin{lemma}
\label{dist-bdry-compar}
Set 
$$\mathbb{D}^{(b)}_{(a),\delta}=\{x\in \mathbb{D}^{(b)}_{(a)}: \Delta(x,\partial\mathbb{D}^{(b)}_{(a)})\geq \delta\},\quad\mathcal{D}^{L,(b)}_{(a),\delta}=\{x\in\mathcal{D}^{L,(b)}_{(a)}:
\Delta(x,\partial\mathcal{D}^{L,(b)}_{(a)}) \geq \delta\}.$$
Then, for every $\delta>0$ that is not a local maximum of the function $x\mapsto \Delta(x, \partial\mathbb{D}^{(b)}_{(a)})$ on $\mathbb{D}^{(b)}_{(a)}$, we have
$$\lim_{L\to\infty} \Delta_{\mathtt{H}}(\mathcal{D}^{L,(b)}_{(a),\delta},\mathbb{D}^{(b)}_{(a),\delta})=0,$$
and consequently
$$\limsup_{L\to\infty} \Bigg( \sup_{x\in \mathcal{D}^{L,(b)}_{(a)}} \Delta(x,\mathcal{D}^{L,(b)}_{(a),\delta})\Bigg) 
\leq \sup_{x\in \mathbb{D}^{(b)}_{(a)}} \Delta(x,\mathbb{D}^{(b)}_{(a),\delta}).$$
\end{lemma}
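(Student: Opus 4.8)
The plan is to obtain the first convergence by essentially repeating the proof of Lemma \ref{ConvCddCcd}, and then to derive the ``consequently'' inequality from it by a short triangle-inequality argument. For the first assertion, I would run the proof of Lemma \ref{ConvCddCcd} after replacing $\mathbb{C}$, $\partial_1\mathbb{C}$, $\mathcal{C}^L$, $\partial_1\mathcal{C}^L$ by $\mathbb{D}^{(b)}_{(a)}$, $\partial\mathbb{D}^{(b)}_{(a)}$, $\mathcal{D}^{L,(b)}_{(a)}$, $\partial\mathcal{D}^{L,(b)}_{(a)}$ respectively, and using the almost sure convergences \eqref{Prop330} in place of \eqref{Prop512}. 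Concretely: fix $\varepsilon,\eta>0$; by \eqref{Prop330}, for $L$ large we have $\Delta_{\mathtt H}(\mathcal{D}^{L,(b)}_{(a)},\mathbb{D}^{(b)}_{(a)})\le\varepsilon\wedge(\eta/2)$ and $\Delta_{\mathtt H}(\partial\mathcal{D}^{L,(b)}_{(a)},\partial\mathbb{D}^{(b)}_{(a)})\le\eta/2$. A point $x\in\mathbb{D}^{(b)}_{(a),\delta}$ with $\Delta(x,\partial\mathbb{D}^{(b)}_{(a)})\ge\delta+\eta$ is within $\varepsilon$ of some $x_L\in\mathcal{D}^{L,(b)}_{(a)}$, which by the triangle inequality satisfies $\Delta(x_L,\partial\mathcal{D}^{L,(b)}_{(a)})\ge\delta$, so $x_L\in\mathcal{D}^{L,(b)}_{(a),\delta}$; a point at distance exactly $\delta$ from $\partial\mathbb{D}^{(b)}_{(a)}$ is first approximated, within $\varepsilon$, by a point $x'$ with $\Delta(x',\partial\mathbb{D}^{(b)}_{(a)})>\delta$ (this is the only place where the hypothesis that $\delta$ is not a local maximum of $y\mapsto\Delta(y,\partial\mathbb{D}^{(b)}_{(a)})$ enters), and the previous step is applied to $x'$. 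A compactness argument then gives $\sup_{x\in\mathbb{D}^{(b)}_{(a),\delta}}\Delta(x,\mathcal{D}^{L,(b)}_{(a),\delta})\to0$. The reverse estimate is easier and requires no extra hypothesis: any subsequential limit $x_\infty$ of a sequence $x_L\in\mathcal{D}^{L,(b)}_{(a),\delta}$ lies in $\mathbb{D}^{(b)}_{(a)}$ and satisfies $\Delta(x_\infty,\partial\mathbb{D}^{(b)}_{(a)})=\lim_{L}\Delta(x_L,\partial\mathbb{D}^{(b)}_{(a)})\ge\delta$ by \eqref{Prop330}, hence $x_\infty\in\mathbb{D}^{(b)}_{(a),\delta}$, and a contradiction/compactness argument gives $\sup_{x\in\mathcal{D}^{L,(b)}_{(a),\delta}}\Delta(x,\mathbb{D}^{(b)}_{(a),\delta})\to0$. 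Combining the two bounds yields $\Delta_{\mathtt H}(\mathcal{D}^{L,(b)}_{(a),\delta},\mathbb{D}^{(b)}_{(a),\delta})\to0$.

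For the ``consequently'' part, I would set $M=\sup_{y\in\mathbb{D}^{(b)}_{(a)}}\Delta(y,\mathbb{D}^{(b)}_{(a),\delta})$, which is finite since $\mathbb{D}^{(b)}_{(a)}$ is compact and $\mathbb{D}^{(b)}_{(a),\delta}$ is a nonempty closed subset (we keep the standing assumption that $\delta$ is small enough for this, which also forces $\mathcal{D}^{L,(b)}_{(a),\delta}\neq\varnothing$ for $L$ large, by the first part). Writing $\alpha_L=\Delta_{\mathtt H}(\mathcal{D}^{L,(b)}_{(a)},\mathbb{D}^{(b)}_{(a)})$ and $\beta_L=\Delta_{\mathtt H}(\mathcal{D}^{L,(b)}_{(a),\delta},\mathbb{D}^{(b)}_{(a),\delta})$, both of which go to $0$, I would take $x\in\mathcal{D}^{L,(b)}_{(a)}$, pick $y\in\mathbb{D}^{(b)}_{(a)}$ with $\Delta(x,y)\le\alpha_L$, then $z\in\mathbb{D}^{(b)}_{(a),\delta}$ with $\Delta(y,z)\le M$, then $w\in\mathcal{D}^{L,(b)}_{(a),\delta}$ with $\Delta(z,w)\le\beta_L$, so that $\Delta(x,\mathcal{D}^{L,(b)}_{(a),\delta})\le\Delta(x,w)\le\alpha_L+M+\beta_L$. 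Taking the supremum over $x$ and then $\limsup_{L\to\infty}$ gives exactly the stated inequality.

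I do not expect a real obstacle: the substance is the Hausdorff convergence of the $\delta$-interior sets, whose only delicate point — handling the points lying at distance exactly $\delta$ from the boundary, for which one uses the ``no local maximum'' assumption — is already carried out in Lemma \ref{ConvCddCcd}, so it only has to be transcribed with the new notation. The second half is a routine three-step triangle inequality, the sole mild precaution being to ensure $\mathbb{D}^{(b)}_{(a),\delta}\neq\varnothing$ so that $M<\infty$.
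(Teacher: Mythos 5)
Your proposal is correct and follows exactly the paper's route: the paper proves the first assertion by the same transcription of the argument of Lemma \ref{ConvCddCcd} (using \eqref{Prop330} in place of \eqref{Prop512}), and obtains the second as an easy consequence of the first together with $\Delta_{\mathtt{H}}(\mathcal{D}^{L,(b)}_{(a)},\mathbb{D}^{(b)}_{(a)})\to 0$, which is precisely your three-step triangle-inequality argument.
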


The first assertion of the lemma is proved by arguments similar to the proof of Lemma \ref{ConvCddCcd}, and we omit the details. The second
assertion is an easy consequence of the first one and the fact that $\Delta_{\mathtt{H}}(\mathcal{D}^{L,(b)}_{(a)},\mathbb{D}^{(b)}_{(a)})$
tends to $0$ (cf.~\eqref{Prop330}). 

\medskip
\noindent{\bf Remark}. The first assertion of Lemma \ref{dist-bdry-compar} obviously requires our particular embedding of the spaces 
$\mathcal{D}^{L,(b)}_{(a)}$ and $\mathbb{D}^{(b)}_{(a)}$ in $(E,\Delta)$, but the second one holds independently
of this embedding provided we replace $\Delta$ by $d_L$ in the left-hand side and by $D$ in the right-hand side.


\medskip
	 Let us turn to the proof of Theorem \ref{PropPrinc}. For every $\delta>0$, we have
\begin{align}
	\label{MajoPrinc}
	d_{\mathtt {GH}}(\mathcal C^{L}, \mathbb C)\leq \ \ \ \underbrace{d_{\mathtt {GH}}(\mathcal C^{L}, \mathcal C^{L}_{\delta})}_{(A_{L,\delta})} \ \ + \ \ \underbrace{d_{\mathtt {GH}}(\mathcal C^{L}_{\delta}, \Ccd)}_{(A'_{L,\delta})} \ \ + \ \ \underbrace{d_{\mathtt {GH}}(\Ccd, \Cc)}_{(A''_\delta)}.
\end{align}
where we recall that $\Cd$ and $\Cc$ are equipped with the distances $d_L^\circ$ and $d^\circ$ respectively, and $\Cdd$ and $\Ccd$ are equipped with the restrictions of  these distances.

Our goal is to prove that $d_{\mathtt {GH}}(\mathcal C^{L}, \mathbb C)$ tends to $0$ as $L$ tends to infinity. To this end, we will deal separately with 
each of the terms $A_{L,\delta}$, $A'_{L,\delta}$ and $A''_{\delta}$. Let us start with $A''_\delta$.

\begin{lemma}
\label{PropPrinc1}
We have
$$\lim_{\delta\to 0}d_{\mathtt {GH}}(\Ccd, \Cc)=0.$$
\end{lemma}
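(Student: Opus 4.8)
The plan is to reduce the Gromov--Hausdorff distance to a one-sided Hausdorff distance inside $(\Cc,d^\circ)$ and then to make the resulting pointwise estimate uniform by Dini's theorem.

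First I would observe that $\Ccd=\{x\in\Cc:D(x,\partial_1\Cc)\ge\delta\}$ is a $\Delta$-closed subset of $\Cc$ (the map $x\mapsto D(x,\partial_1\Cc)$ being $1$-Lipschitz for $\Delta$), hence a compact subspace of $(\Cc,d^\circ)$, and it is nonempty for the small values of $\delta$ under consideration. Using $(\Cc,d^\circ)$ itself as the common ambient space, with the inclusion $\Ccd\hookrightarrow\Cc$ and the identity of $\Cc$ as the two isometric embeddings, one gets $d_{\mathtt{GH}}(\Ccd,\Cc)\le\sup_{x\in\Cc}d^\circ(x,\Ccd)$. So it suffices to show that the right-hand side tends to $0$ as $\delta\to 0$.

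Next I would record the topological facts I need. By Theorem \ref{Br-Annulus}, $d^\circ$ is the continuous extension, for the metric $\Delta$, of the intrinsic metric on $\Cc\setminus\partial_1\Cc=\Dc_{(a)}\setminus H_{r_b}$; in particular $d^\circ$ is jointly $\Delta$-continuous on $\Cc\times\Cc$, the identity $(\Cc,\Delta)\to(\Cc,d^\circ)$ is continuous, and therefore $(\Cc,d^\circ)$ is compact with the same topology as $(\Cc,\Delta)$. Moreover $\Cc\setminus\partial_1\Cc=\Dc_{(a)}\setminus H_{r_b}$, being the set whose closure is $\Cc$, is $\Delta$-dense in $\Cc$, and by joint continuity of $d^\circ$ it is then $d^\circ$-dense in $\Cc$ as well. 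Consequently, for each fixed $x\in\Cc$ and each $\eta>0$, I can pick $y\in\Cc\setminus\partial_1\Cc$ with $d^\circ(x,y)<\eta$; since $D(y,\partial_1\Cc)>0$, this $y$ lies in $\Ccd$ for all sufficiently small $\delta$, so $d^\circ(x,\Ccd)\le d^\circ(x,y)<\eta$ for such $\delta$. Since $\Ccd$ increases as $\delta$ decreases, $\delta\mapsto d^\circ(x,\Ccd)$ is nonincreasing as $\delta\downarrow 0$, and we conclude that $d^\circ(x,\Ccd)\downarrow 0$ for every $x\in\Cc$.

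Finally I would upgrade this to uniform convergence: for each fixed $\delta$, the function $x\mapsto d^\circ(x,\Ccd)$ is $1$-Lipschitz, hence continuous, on the compact space $(\Cc,d^\circ)$, and along a sequence $\delta_n\downarrow 0$ these functions decrease pointwise to the continuous function $0$; Dini's theorem then forces uniform convergence, that is, $\sup_{x\in\Cc}d^\circ(x,\mathbb C_{\delta_n})\to 0$. As $\delta\mapsto\sup_{x\in\Cc}d^\circ(x,\Ccd)$ is monotone in $\delta$, the full limit as $\delta\to 0$ follows, and combining with the first step yields $d_{\mathtt{GH}}(\Ccd,\Cc)\to 0$. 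The argument is essentially soft; the one point deserving care is the interplay of the two metrics $\Delta$ and $d^\circ$ on $\Cc$ --- one must invoke the continuous extension of Theorem \ref{Br-Annulus} to know that they define the same topology, so that $(\Cc,d^\circ)$ is compact and $d^\circ$-continuous functions are admissible in Dini's theorem, and that $\Cc\setminus\partial_1\Cc$ is $d^\circ$-dense in $\Cc$.
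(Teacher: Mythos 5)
Your proof is correct and takes essentially the same route as the paper: both reduce the statement to showing $\sup_{x\in\Cc}d^\circ(x,\Ccd)\to 0$ and then exploit compactness of $(\Cc,d^\circ)$ together with the $d^\circ$-density of $\Cc\setminus\partial_1\Cc=\bigcup_{\delta>0}\Ccd$. The only difference is mechanical: the paper gets the uniform statement by a contradiction/subsequence-extraction argument, whereas you package the same compactness input via Dini's theorem applied to the monotone family $x\mapsto d^\circ(x,\Ccd)$.
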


\proof
It is enough to verify that
\begin{align}
	\label{PropPrinc1mainresult}\sup_{x\in\mathbb{C}} d^\circ(x,\mathbb{C}_\delta)\xrightarrow[\delta\to 0]{} 0.
\end{align}
If this does not hold, we can find $\alpha>0$ and sequences $x_n\in\mathbb{C}$, $\delta_n\longrightarrow 0$, such that
$d^\circ(x_n,\mathbb{C}_{\delta_n})\geq \alpha$. By compactness we can assume that $x_n\longrightarrow x_\infty\in\mathbb{C}$,
and we get that $d^\circ(x_\infty,\mathbb{C}_\delta)\geq \alpha/2$ for every $\delta>0$, which is absurd because we know that $x_\infty$
must be the limit (with respect to $d^\circ$) of a sequence of points in $\C\setminus \partial_1\C=\cup_{\delta>0}\C_\delta$. 
\endproof

Let us now discuss $A_{L,\delta}$.

\begin{lemma}
\label{PropPrinc2}
We have
$$\lim_{\delta\to 0}\Big( \limsup_{L\to\infty} d_{\mathtt {GH}}(\mathcal C^{L}, \mathcal C^{L}_{\delta})\Big)=0.$$
\end{lemma}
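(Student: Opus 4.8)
The plan is to bound $d_{\mathtt{GH}}(\mathcal C^L,\mathcal C^L_\delta)$ by a Hausdorff-type quantity and then transfer control from the discrete spaces to the Brownian annulus. Concretely, since $\mathcal C^L_\delta\subset\mathcal C^L$ with the restricted intrinsic metric, the first step is the elementary observation
$$d_{\mathtt{GH}}(\mathcal C^L,\mathcal C^L_\delta)\leq \sup_{x\in\mathcal C^L} d_L^\circ(x,\mathcal C^L_\delta),$$
so it suffices to prove that the right-hand side can be made small uniformly in $L$ by taking $\delta$ small. Note carefully that this is a statement about the \emph{intrinsic} distance $d_L^\circ$ in $\mathcal C^L$, not the ambient distance $\Delta$ or $d_L$ in $\mathcal D^{L,(b)}_{(a)}$, which is exactly where the difficulty lies: a point $x$ within ambient distance $\delta$ of $\partial_1\mathcal C^L$ could a priori be very far from $\mathcal C^L_\delta$ in the intrinsic metric if the unexplored triangulation has a thin ``tentacle'' near its inner boundary.

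The key step is therefore to rule out such tentacles, and here I would exploit the symmetry of $\mathcal C^L$ in its two boundaries together with the glued space $\tilde{\mathcal D}^{L,(a)}_{(b)}$ introduced in Section~\ref{sta-res}. The idea is that controlling the intrinsic geometry of $\mathcal C^L$ near $\partial_1\mathcal C^L=\partial_0\mathcal C^L$ is the same, after gluing on the independent hull $H_0^L$, as controlling the geometry of the Boltzmann triangulation $\tilde{\mathcal D}^{L,(a)}_{(b)}$ near $\partial\tilde{\mathcal D}^{L,(a)}_{(b)}$. Points of $\mathcal C^L$ at ambient distance $\geq\delta$ from $\partial_0\mathcal C^L$ are points of $\tilde{\mathcal D}^{L,(a)}_{(b),\delta}$ (in the notation of Lemma~\ref{dist-bdry-compar}, with $a,b$ interchanged), and the intrinsic distance $d_L^\circ$ in $\mathcal C^L$ dominates the graph distance in $\tilde{\mathcal D}^{L,(a)}_{(b)}$. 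Hence
$$\sup_{x\in\mathcal C^L}d_L^\circ(x,\mathcal C^L_\delta)\leq \sup_{x\in\mathcal C^L}d_L^\circ(x,\{y\in\mathcal C^L:\Delta(y,\partial_0\mathcal C^L)\geq\delta\})$$
can be handled by reducing to distances in $\tilde{\mathcal D}^{L,(a)}_{(b)}$; more precisely, for $x\in\mathcal C^L$ one connects $x$ to a geodesic (in $\tilde{\mathcal D}^{L,(a)}_{(b)}$) toward the distinguished vertex, and follows it until it reaches ambient distance $\delta$ from the boundary, which produces an intrinsic path in $\mathcal C^L$ of length close to the ambient distance from $x$ to that level set. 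This reduces the problem to the second assertion of Lemma~\ref{dist-bdry-compar}, applied to $\tilde{\mathcal D}^{L,(a)}_{(b)}$ rather than $\mathcal D^{L,(b)}_{(a)}$: choosing $\delta$ not a local maximum of $x\mapsto\tilde D(x,\partial\tilde{\mathbb D}^{(a)}_{(b)})$, we get
$$\limsup_{L\to\infty}\Bigl(\sup_{x\in\tilde{\mathcal D}^{L,(a)}_{(b)}}d_L(x,\tilde{\mathcal D}^{L,(a)}_{(b),\delta})\Bigr)\leq \sup_{x\in\tilde{\mathbb D}^{(a)}_{(b)}}\tilde D(x,\tilde{\mathbb D}^{(a)}_{(b),\delta}),$$
and the right-hand side tends to $0$ as $\delta\to 0$ by the same compactness argument as in Lemma~\ref{PropPrinc1}, since every point of the Brownian disk $\tilde{\mathbb D}^{(a)}_{(b)}$ is a limit of interior points, i.e.\ of points at positive distance from the boundary. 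Finally one lets $\delta$ range over a sequence of admissible values (avoiding the countably many local maxima and, by Lemma~\ref{avoiding-path}, ensuring the intrinsic metric on $\mathcal C^L_\delta$ is well-behaved) decreasing to $0$.

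The main obstacle I anticipate is making rigorous the passage from the ambient distance $\Delta$ (or $d_L$) to the intrinsic distance $d_L^\circ$ in $\mathcal C^L$: one must check that a geodesic in $\tilde{\mathcal D}^{L,(a)}_{(b)}$ emanating from a point $x\in\mathcal C^L$ and heading away from the inner boundary does not wander back into $H_0^L$ before reaching ambient distance $\delta$ from $\partial_0\mathcal C^L$, so that the relevant initial segment of the geodesic genuinely lies in $\mathcal C^L$ and provides a valid bound on $d_L^\circ$. This is plausible because such a geodesic is heading toward the distinguished vertex of $\tilde{\mathcal D}^{L,(a)}_{(b)}$, which lies on the $H_0^L$ side, so it should cross $\partial_0\mathcal C^L$ only once; but one may instead need to truncate the geodesic at its first exit from $\mathcal C^L$ and argue that this happens at ambient distance at least $\delta$ from $\partial_0\mathcal C^L$ — or, more cleanly, bypass the geodesic argument entirely by invoking Lemma~\ref{avoiding-path} (applied with $a$ and $b$ exchanged) to replace ambient paths that touch the inner boundary by nearby interior paths, at negligible cost in length. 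With that in hand, the three ingredients — the elementary Hausdorff bound, the reduction to $\tilde{\mathcal D}^{L,(a)}_{(b)}$ via gluing and symmetry, and Lemma~\ref{dist-bdry-compar} together with the compactness argument of Lemma~\ref{PropPrinc1} — combine to give the claimed double limit.
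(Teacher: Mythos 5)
Your overall strategy is the same as the paper's — bound $d_{\mathtt{GH}}(\Cd,\Cdd)$ by $\sup_{x\in\Cd}d^\circ_L(x,\Cdd)$, transfer this to the glued triangulation $\tDd_{(b)}$, invoke the second assertion of Lemma \ref{dist-bdry-compar} with $a$ and $b$ interchanged, and let $\delta\to 0$ — but the crucial reduction step is garbled and, as written, fails. The boundary of $\tDd_{(b)}$ is $\FCdb$ (of size $\lfloor bL\rfloor$): the hull $H^L_0$ is glued along $\FCda$, which therefore is no longer a boundary. Hence $\tDd_{(b),\delta}$ consists of the points at distance $\geq\delta$ from $\FCdb$, and your claim that ``points of $\Cd$ at ambient distance $\geq\delta$ from $\FCda$ are points of $\tDd_{(b),\delta}$'' is false (for $L$ large, the points of $\FCdb$ themselves are at distance $>\delta$ from $\FCda$ but at distance $0$ from $\partial\tDd_{(b)}$). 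For the same reason your displayed inequality $\sup_x d_L^\circ(x,\Cdd)\leq\sup_x d_L^\circ(x,\{y\in\Cd:\Delta(y,\FCda)\geq\delta\})$ is incorrect: once the two boundaries are more than $\delta$ apart, the set on the right contains $\FCdb$ and is much larger than $\Cdd$, so distances to it are smaller, not larger — exactly the ``tentacle'' points you worry about lie close to it. Moreover, your devices for passing from ambient to intrinsic distances do not work: the observation that $d^\circ_L$ dominates $\tilde d_L$ goes the wrong way (you need an upper bound on $d^\circ_L(x,\Cdd)$); the initial segment of a geodesic towards the distinguished vertex, truncated at its first entry into $\tDd_{(b),\delta}$, carries no a priori length bound in terms of $\sup_y\tilde d_L(y,\tDd_{(b),\delta})$; and Lemma \ref{avoiding-path} is a statement about the continuum annulus $\Cc$, so it cannot be used to reroute paths in the discrete triangulations.

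The correct mechanism, which is the one the paper uses, is simpler: since $\FCda\to\FCca$ and $\FCdb\to\FCcb$, which lie at positive distance from each other, for $\delta$ small and $L$ large every point of $\FCda$ — and hence every point of $H^L_0$ — is at $\tilde d_L$-distance greater than $\delta$ from $\partial\tDd_{(b)}=\FCdb$. Consequently $\tDd_{(b)}\setminus\tDd_{(b),\delta}=\Cd\setminus\Cdd$ and $\tDd_{(b),\delta}\cap\Cd=\Cdd$, and a $\tilde d_L$-shortest path from any $x\in\Cd$ to the set $\tDd_{(b),\delta}$ stays in $\Cd$ and terminates at a point of $\Cdd$, giving $\sup_{x\in\Cd}d^\circ_L(x,\Cdd)\leq\sup_{x\in\tDd_{(b)}}\tilde d_L(x,\tDd_{(b),\delta})$ (the paper records this as an equality). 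Once this identification is in place, your appeal to Lemma \ref{dist-bdry-compar} applied to $\tDd_{(b)}$ (for all but countably many $\delta$), together with the fact that $\sup_{x\in\tDc_{(b)}}\tilde D(x,\tDc_{(b),\delta})\to 0$ as $\delta\to 0$, does complete the argument exactly as in the paper.
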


\proof We need to verify that
\begin{align}
\label{PropPrinc2mainresult}	\lim_{\delta\to 0}\Big( \limsup_{L\to\infty} \Big(\sup_{x\in \mathcal{C}^L} d^\circ_L(x,\mathcal C^{L}_{\delta})\Big)\Big) = 0.
\end{align}
Here it is convenient to view $\mathcal{C}^L$ as a subset of the triangulation $\tDd_{(b)}$ introduced in Section \ref{sta-res}. We denote
the rescaled distance on $\tDd_{(b)}$ by $\tilde d_L$, and, for every $\delta>0$, we set
$$\tDd_{(b),\delta}=\{x\in\tDd_{(b)}: \tilde d_L(x,\partial\tDd_{(b)}) \geq \delta\}.$$
We then claim that, for $\delta>0$ small enough, for every sufficiently large $L$, we have
$$\sup_{x\in \mathcal{C}^L} d^\circ_L(x,\mathcal C^{L}_{\delta})
=\sup_{x\in \tDd_{(b)}}\tilde d_L(x,\tDd_{(b),\delta}).$$
Indeed, the properties $ \FCdb \longrightarrow \FCcb$ and $\FCda=\partial \mathcal{D}^{L,(b)}_{(a)} \longrightarrow \FCca$
ensure that for $\delta>0$ small, for every sufficiently large $L$, 
all points of $\partial_0\mathcal{C}^L$ are at distance greater than $\delta$ from $\partial_1\mathcal{C}^L$, and it follows
that $\mathcal{C}^L \setminus\mathcal C^{L}_{\delta}$ is identified with $\tDd_{(b)}\setminus \tDd_{(b),\delta}$. Our claim 
easily follows. 

At this stage, we can use Lemma \ref{dist-bdry-compar} (with the roles of $a$ and $b$ interchanged) and the subsequent remark : except possibly for countably many values of $\delta$, we have
$$\limsup_{L\to\infty} \Big(\sup_{x\in \tDd_{(b)}} \tilde d_L(x,\tDd_{(b),\delta})
\Big)\leq \sup_{x\in\tilde{\mathbb{D}}^{(a)}_{(b)} }\tilde{D}(x,\tilde{\mathbb{D}}^{(a)}_{(b),\delta})$$
where $\tilde{\mathbb{D}}^{(a)}_{(b),\delta}=\{x\in \tilde{\mathbb{D}}^{(a)}_{(b)} : \tilde{D}(x,\partial\tilde{\mathbb{D}}^{(a)}_{(b)})\geq \delta\}$. 

It follows from the preceding considerations that, except possibly for countably many values of $\delta$,
$$\limsup_{L\to\infty} \Big(\sup_{x\in \mathcal{C}^L} d^\circ_L(x,\mathcal C^{L}_{\delta})
\Big)\leq \sup_{x\in\tilde{\mathbb{D}}^{(a)}_{(b)}} \tilde{D}(x,\tilde{\mathbb{D}}^{(a)}_{(b),\delta}).$$
The right-hand side tends to $0$ as $\delta\to 0$, which completes the proof. \endproof

It remains to study the terms $A'_{L,\delta}$.

\begin{lemma}
\label{PropPrinc3}
If $\delta>0$ is not a local maximum of the function $x\mapsto \Delta(x, \partial_1\mathbb C)$ on $\mathbb{C}$, we have
$$\lim_{L\to\infty} d_{\mathtt {GH}}(\mathcal C^{L}_{\delta}, \Ccd)=0.$$
\end{lemma}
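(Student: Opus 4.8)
The plan is to establish the Gromov-Hausdorff convergence of the approximating spaces $\mathcal C^L_\delta$ (equipped with the restriction of the intrinsic distance $d^\circ_L$) to $\mathbb C_\delta$ (equipped with the restriction of $d^\circ$) by exhibiting suitable correspondences. The key point is that, away from the inner boundary $\partial_1\mathcal C^L$, the intrinsic distance $d^\circ_L$ on $\mathcal C^L$ can be controlled by the ambient (rescaled) graph distance $d_L$ on $\mathcal D^{L,(b)}_{(a)}$, and symmetrically $d^\circ$ on $\mathbb C$ by the distance $D$ on $\mathbb D^{(b)}_{(a)}$. More precisely, I would first record the two-sided comparison: for $x,y\in\mathcal C^L_{\delta}$ one has $d_L(x,y)\le d^\circ_L(x,y)$ trivially, while conversely any $d_L$-geodesic between $x$ and $y$ that happens to stay in $\mathcal C^L$ (i.e.\ does not enter the hull $H^L_{r^L_b}$) already realizes $d^\circ_L(x,y)$, and a geodesic that does enter the hull can be ``pushed back'' along $\partial_1\mathcal C^L$ at a controlled cost. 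Using the peeling by layers, points of $\partial_1\mathcal C^L=\partial T^L_{k^L_b}$ are all within graph distance $O(1)$ of each other along the boundary, hence the detour cost is $o(1)$ after rescaling; this gives that on $\mathcal C^L_\delta$ the distances $d^\circ_L$ and $d_L$ differ by $o(1)$ uniformly. The analogous statement for $d^\circ$ and $D$ on $\mathbb C_\delta$ follows from Lemma \ref{avoiding-path} together with the fact that $\partial_1\mathbb C$ is at distance $r_b$ from $x_*$ and is a ``nice'' boundary, so geodesics in $\mathbb D^{(b)}_{(a)}$ between points of $\mathbb C_\delta$ that cross $H_{r_b}$ can be replaced by paths staying in $\mathbb C\setminus\partial_1\mathbb C$ at cost $o(1)$.

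Granting these two comparisons, the proof reduces to showing $d_{\mathtt{GH}}\big((\mathcal C^L_\delta,d_L),(\mathbb C_\delta,D)\big)\to 0$, where both spaces carry the \emph{ambient} metrics. But that is exactly what the common embedding in $(E,\Delta)$ and Lemma \ref{ConvCddCcd} provide: $\Delta_{\mathtt H}(\mathcal C^L_\delta,\mathbb C_\delta)\to 0$, and the restrictions of $\Delta$ to $\mathcal C^L_\delta$ and to $\mathbb C_\delta$ are precisely $d_L$ and $D$ respectively, so the Hausdorff convergence upgrades to Gromov-Hausdorff convergence for the ambient metrics. Combining with the $o(1)$ comparisons between intrinsic and ambient metrics on each side (which change the Gromov-Hausdorff distance by at most the sup of $|d^\circ_L-d_L|$, resp.\ $|d^\circ-D|$, on the relevant space), we obtain $d_{\mathtt{GH}}(\mathcal C^L_\delta,\mathbb C_\delta)\to 0$ as $L\to\infty$, which is the claim.

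Concretely, the steps in order: (i) prove the uniform estimate $\sup_{x,y\in\mathcal C^L_\delta}|d^\circ_L(x,y)-d_L(x,y)|\to 0$ using the peeling-by-layers structure of $\partial_1\mathcal C^L$ to bound boundary detours, together with the fact that $\Delta(x,\partial_1\mathcal C^L)\ge\delta$ keeps us away from the problematic region; (ii) prove the corresponding continuous estimate $\sup_{x,y\in\mathbb C_\delta}|d^\circ(x,y)-D(x,y)|=o(1)$ is not needed with $L$, but rather we just need $d^\circ$ and $D$ to be comparable on $\mathbb C_\delta$ up to an error going to $0$ with $\delta$ along a further truncation, and here Lemma \ref{avoiding-path} (applied with $\eta$ small) combined with \eqref{dist-cyl} does the job, because a $D$-geodesic leaving $\mathbb C$ enters $H_{r_b}$ and can be rerouted; (iii) invoke Lemma \ref{ConvCddCcd} to get $\Delta_{\mathtt H}(\mathcal C^L_\delta,\mathbb C_\delta)\to 0$, hence $d_{\mathtt{GH}}((\mathcal C^L_\delta,\Delta),(\mathbb C_\delta,\Delta))\to 0$; (iv) assemble: since the GH distance is monotone in metric distortion, combining (i)--(iii) gives $d_{\mathtt{GH}}((\mathcal C^L_\delta,d^\circ_L),(\mathbb C_\delta,d^\circ))\to 0$.

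The main obstacle I expect is step (i): controlling the intrinsic distance $d^\circ_L$ near the inner boundary $\partial_1\mathcal C^L$ from below by the ambient distance, uniformly in $L$. The issue is that a priori a pair of points in $\mathcal C^L_\delta$ could be close in $\mathcal D^{L,(b)}_{(a)}$ only via a path passing deep into the removed hull $H^L_{r^L_b}$, and one must argue that such a path can be replaced, at asymptotically negligible cost, by one remaining in $\mathcal C^L$ (equivalently in $\mathcal C^L_{\delta'}$ for a slightly smaller $\delta'$). This is where the peeling by layers is essential: because the boundary $\partial_1\mathcal C^L$ consists of vertices all at graph distance $h^L_{k^L_b}$ or $h^L_{k^L_b}+1$ from $x^L_*$, and because one can travel along this boundary cheaply, any excursion of a geodesic into the hull that starts and ends on (or near) $\partial_1\mathcal C^L$ can be short-circuited along the boundary; the length saved or lost is controlled by the diameter of $\partial_1\mathcal C^L$ measured within $\mathcal C^L$, which one shows is $O(L^{1/2})$ in graph distance, i.e.\ $O(1)$ after rescaling — but in fact the detour only needs to connect the two endpoints of the excursion, whose \emph{separation along the boundary} can be made $o(L^{1/2})$ by choosing the entry/exit points appropriately, using that $r_b$ is a continuity point of the perimeter process so that $\partial_1\mathcal C^L$ is ``well-behaved''. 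Making this rerouting argument rigorous and uniform in $L$ is the technical heart of the lemma; once it is in place, the rest is bookkeeping with Lemma \ref{ConvCddCcd}.
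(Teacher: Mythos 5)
Your steps (iii)--(iv) are harmless bookkeeping, but the whole weight of the argument rests on your step (i) (and its continuum counterpart (ii)), and there the proposal has a genuine gap. The estimate $\sup_{x,y\in\mathcal C^L_\delta}|d^\circ_L(x,y)-d_L(x,y)|\to 0$ is not a technical refinement to be expected: it fails in general, and no correct proof can be based on it. The removed hull is macroscopic (its boundary has length $\lfloor bL\rfloor$ and its radius is of order $r_b\sqrt L$), so a $d_L$-geodesic between two points of $\mathcal C^L_\delta$ lying on opposite sides of the hull may cut through it and be macroscopically shorter than any path forced to remain in $\mathcal C^L$; the entry and exit points of such an excursion are then macroscopically far apart along $\partial_1\mathcal C^L$, and the proposed short-circuit along the boundary costs order $\sqrt L$ in graph distance, i.e.\ order $1$ after rescaling, not $o(1)$. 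The parenthetical claim that the separation of the entry/exit points along the boundary can be made $o(L^{1/2})$ ``by choosing them appropriately'' is exactly what fails when the hull provides a shortcut, and continuity of the perimeter process at $r_b$ has no bearing on it. In the limit, $(\mathbb C,d^\circ)$ and $(\mathbb C,D|_{\mathbb C})$ are genuinely different metric spaces --- this is the very reason the annulus carries the extended intrinsic metric --- so restricting to the $\delta$-truncation cannot make intrinsic and ambient metrics uniformly close. Relatedly, your appeal to Lemma \ref{avoiding-path} in step (ii) is not legitimate: that lemma reroutes paths that already stay in $\mathbb C$ (possibly touching $\partial_1\mathbb C$) into paths avoiding $\partial_1\mathbb C$ at cost $\eta$; it says nothing about $D$-geodesics that enter the hull $H_{r_b}$, so it cannot be used to compare $d^\circ$ with $D$.

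The paper's proof never compares intrinsic with ambient metrics. It takes the correspondence $\mathcal R_L=\{(x_L,x')\in\mathcal C^L_\delta\times\mathbb C_\delta:\Delta(x_L,x')\le\Delta_{\mathtt H}(\mathcal C^L_\delta,\mathbb C_\delta)\}$ given by Lemma \ref{ConvCddCcd} (as you do) and bounds its distortion for the intrinsic metrics in two separate steps: to bound $d^\circ_L$ above, a continuum near-geodesic for $d^\circ$ is first pushed off $\partial_1\mathbb C$ by Lemma \ref{avoiding-path} and then transferred to a discrete path avoiding $\partial_1\mathcal C^L$ via part 1 of Lemma \ref{approx-path}; to bound $d^\circ$ above, one first proves (Lemma \ref{CheminLoinDuBord}, by a compactness/contradiction argument extracting limits of discrete geodesics and again invoking Lemmas \ref{avoiding-path} and \ref{approx-path}) that discrete near-geodesics in $\mathcal C^L$ between points of $\mathcal C^L_\delta$ can be chosen to stay at distance at least $\varepsilon$ from $\partial_1\mathcal C^L$ uniformly in $L$, and then transfers them to continuum paths avoiding $\partial_1\mathbb C$ via part 2 of Lemma \ref{approx-path}. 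If you want to salvage your plan, this path-transfer scheme between the two intrinsic metrics is the missing ingredient; the comparison with $d_L$ and $D$ should be abandoned.
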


Let us postpone the proof of Lemma \ref{PropPrinc3} to the next section, and recall 
the bound \eqref{MajoPrinc}. By letting first $L$ tend to infinity and then $\delta$ tend to $0$, using Lemmas
\ref{PropPrinc1}, \ref{PropPrinc2} and \ref{PropPrinc3}, we get
$$\limsup_{L\to\infty} d_{\mathtt {GH}}(\mathcal C^{L}, \mathbb C)=0$$
which completes the proof of Theorem \ref{PropPrinc}. Therefore, it only remains to
prove Lemma \ref{PropPrinc3}.

\subsection{Proof of the key lemma}
\label{sec:key-lemma}

In this section, we prove Lemma \ref{PropPrinc3}. We let $\delta>0$ such that $\delta$
 is not a local maximum of the function $x\mapsto \Delta(x, \partial_1\mathbb C)$.
Recalling Lemma \ref{ConvCddCcd},
we define a correspondence between $\Cdd$ and $\Ccd$ by setting
$$\mathcal{R}_L=\{(x_L,x')\in \Cdd\times\mathbb{C}_\delta:\Delta(x_L,x')\leq \Delta_{\mathtt H}(\Cdd, \Ccd)\}.$$
By the classical result expressing the Gromov-Hausdorff distance 
in terms of distortions of correspondences \cite[Chapter 7]{BBY}, the statement of Lemma \ref{PropPrinc3} will follow if we can prove that the distorsion 
of $\mathcal{R}_L$ tends to $0$ as $L\to\infty$, or
equivalently
\begin{equation}
\label{disto1}
\sup_{(x_L,x')\in \mathcal{R}_L,(y_L,y')\in \mathcal{R}_L} \Big|d^\circ_L(x_L,y_L)-d^\circ(x',y')\Big| \xrightarrow[L\to\infty]{} 0.
\end{equation}

We first verify that
\begin{equation}
\label{disto2}
\sup_{(x_L,x')\in \mathcal{R}_L,(y_L,y')\in \mathcal{R}_L} \Big(d^\circ_L(x_L,y_L)-d^\circ(x',y')\Big) \xrightarrow[L\to\infty]{} 0.
\end{equation}
To this end, we argue by contradiction. If \eqref{disto2} does not hold, we can find $\eta>0$
and sequences $L_k\uparrow\infty$, and $(x_{L_k},x'_k)$, $(y_{L_k},y'_k)$ in $\mathcal{R}_{L_k}$ such that
$$d^\circ_{L_k}(x_{L_k},y_{L_k})>d^\circ(x'_k,y'_k) + \eta.$$
We may assume that $x'_k\longrightarrow x'_\infty$ and $y'_k\longrightarrow y'_\infty$
where $x'_\infty,y'_\infty\in \mathbb{C}_\delta$, and for $k$ large we have also
\begin{equation}
\label{disto20}
d^\circ_{L_k}(x_{L_k},y_{L_k})>d^\circ(x'_\infty,y'_\infty) + \frac{\eta}{2}.
\end{equation}
From \eqref{Hausdo} and the definition of the correspondence $\mathcal{R}_L$, we also get that
$\Delta(x_{L_k},x'_\infty)\longrightarrow 0$ and $\Delta(y_{L_k},y'_\infty)\longrightarrow 0$. 

Since $d^\circ$ coincides with the (extension of the) intrinsic distance on $\mathbb{C}\setminus \partial_1\C$,
we can find a path $\gamma$ from $x'_\infty$ to $y'_\infty$ in $\mathbb{C}$ that does not hit the 
boundary $\partial_1\mathbb{C}$ and whose length is bounded above by $d^\circ(x'_\infty,y'_\infty)+\eta/4$. 
From part 1 of Lemma \ref{approx-path}, if $k$ is large, we can approximate
$\gamma$ by a path $\gamma_{L_k}$ going from $x_{L_k}$ to $y_{L_k}$ in $\mathcal{C}^{L_k}$, whose length is bounded
above   by $d^\circ(x'_\infty,y'_\infty)+3\eta/8$, such that $\gamma_{L_k}$
will not hit $\partial_1\mathcal{C}^{L_k}$ (we use the convergence of $\partial_1\mathcal{C}^{L_k}$
to $\partial_1\mathbb{C}$) and therefore stays in $\mathcal{C}^{L_k}$. It follows that 
$d^\circ_{L_k}(x_{L_k},y_{L_k})$ is bounded above by the length of $\gamma_{L_k}$
giving a contradiction with \eqref{disto20}. This completes 
the proof of \eqref{disto2}. 

In order to complete the proof of \eqref{disto1}, we still need to verify that
\begin{equation}
\label{disto3}
\sup_{(x_L,x')\in \mathcal{R}_L,(y_L,y')\in \mathcal{R}_L} \Big(d^\circ(x',y') -d^\circ_L(x_L,y_L)\Big) \xrightarrow[L\to\infty]{} 0.
\end{equation}
This is slightly more delicate than the proof of \eqref{disto2}, and we will need the following lemma.

	\begin{lemma}[Paths remaining far from the boundary]\label{CheminLoinDuBord} Let $\eta>0$.
		There exist $\varepsilon>0$ and $L_0\geq 0$ such that, for any choice of $x^L, y^L\in \Cdd$ with $L\geq L_0$, there is a path between $x^ L$ and $y^L$ in $\Cd$ which stays at distance at least $\varepsilon$ from $\FCdb$ and whose length is bounded by $d_{L}^\circ(x^L, y^ L)+\eta$.
	\end{lemma}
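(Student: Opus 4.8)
The plan is to argue by contradiction, exploiting the continuous counterpart of the statement, namely Lemma \ref{avoiding-path}, together with the discrete-to-continuous path transfer provided by Lemma \ref{approx-path} and the Hausdorff convergences \eqref{Prop512} and \eqref{Hausdo}. So suppose the lemma fails for some $\eta>0$: then there are sequences $L_k\uparrow\infty$, $\varepsilon_k\downarrow 0$, and points $x^{L_k},y^{L_k}\in\Cdd$ such that every path in $\Cd$ from $x^{L_k}$ to $y^{L_k}$ staying at distance at least $\varepsilon_k$ from $\FCdb$ has length exceeding $d^\circ_{L_k}(x^{L_k},y^{L_k})+\eta$. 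By compactness of $(E,\Delta)$ we may assume $x^{L_k}\to x_\infty$ and $y^{L_k}\to y_\infty$; by \eqref{Hausdo} (and the definition of $\Cdd$, $\Ccd$) the limits lie in $\Ccd$, in particular in $\C\setminus\partial_1\C$. We may also assume, thanks to the bound \eqref{disto2} already established (applied through the correspondence $\mathcal R_L$, or more simply to the nearest points in $\Ccd$), that $d^\circ_{L_k}(x^{L_k},y^{L_k})\to d^\circ(x_\infty,y_\infty)$; alternatively one records a slightly weaker one-sided bound, which suffices.

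Next I would produce, in the Brownian annulus $\C$, a good path from $x_\infty$ to $y_\infty$. Since $d^\circ$ extends the intrinsic metric on $\C\setminus\partial_1\C$, there is a path $\gamma$ in $\C$ from $x_\infty$ to $y_\infty$ not hitting $\partial_1\C$ with $L(\gamma)\le d^\circ(x_\infty,y_\infty)+\eta/4$. By compactness $\gamma$ stays at distance at least $2\rho$ from $\partial_1\C$ for some $\rho>0$. Now transfer $\gamma$ to the discrete level: by part 1 of Lemma \ref{approx-path}, for $k$ large there is a path $\gamma_{L_k}$ in $\Dd_{(a)}$ from $x^{L_k}$ to $y^{L_k}$ which is $(\rho/2)$-close to $\gamma$ and has length at most $L(\gamma)+\eta/4\le d^\circ(x_\infty,y_\infty)+\eta/2$. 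Because $\gamma$ stays $\Delta$-distance $\ge 2\rho$ from $\FCcb$ and $\FCdb\to\FCcb$ for $\Delta_{\mathtt H}$ by \eqref{Prop512}, the path $\gamma_{L_k}$ stays $\Delta$-distance $\ge\rho/2$ from $\FCdb$ for $k$ large; in particular it stays inside $\Cd$ (a point of $\Dd_{(a)}$ at positive $\Delta$-distance from $\partial_1\mathcal C^L$, and on the $\mathcal C^L$ side, lies in $\mathcal C^L$ — here one uses that $\gamma_{L_k}$, being close to $\gamma$ which lies in $\C$, cannot cross to the hull side). Hence $\gamma_{L_k}$ is admissible for the comparison defining $d^\circ_{L_k}$, and therefore $d^\circ_{L_k}(x^{L_k},y^{L_k})\le L(\gamma_{L_k})\le d^\circ(x_\infty,y_\infty)+\eta/2$.

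Combining this with $d^\circ_{L_k}(x^{L_k},y^{L_k})\to d^\circ(x_\infty,y_\infty)$, we get that for $k$ large the path $\gamma_{L_k}$ has length at most $d^\circ_{L_k}(x^{L_k},y^{L_k})+\eta$, and it stays at distance $\ge\rho/2\ge\varepsilon_k$ from $\FCdb$ (for $k$ large, since $\varepsilon_k\to 0$), contradicting the choice of $x^{L_k},y^{L_k}$. This proves the lemma. The main obstacle I anticipate is the bookkeeping ensuring that the approximating discrete path $\gamma_{L_k}$ genuinely stays on the $\mathcal C^L$ side of $\partial_1\mathcal C^L$ (and not inside the discrete hull $T^L_{k^L_b}$): this needs the fact that $\gamma$ avoids $\partial_1\C$ by a definite margin, the two-sided Hausdorff convergence $\FCdb\to\FCcb$ from \eqref{Prop512}, and the convergence $T^L_{k^L_b}\to H_{r_b}$ of the complementary hulls (Proposition \ref{ConvHaus}), so that a path remaining $\Delta$-close to $\gamma\subset\C\setminus\partial_1\C$ and at distance $\ge\rho/2$ from $\FCdb$ cannot meet the interior of the discrete hull. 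One also has to note that the bound \eqref{disto2}, proved just above in the text, is available for use here even though Lemma \ref{CheminLoinDuBord} is itself invoked in the proof of \eqref{disto3} and not of \eqref{disto2}, so there is no circularity.
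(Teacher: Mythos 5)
There is a genuine gap, and it sits exactly at the step you flag as harmless: the claim that ``thanks to \eqref{disto2} we may assume $d^\circ_{L_k}(x^{L_k},y^{L_k})\to d^\circ(x_\infty,y_\infty)$''. The bound \eqref{disto2} is one-sided: it only controls $d^\circ_{L}(x_L,y_L)-d^\circ(x',y')$ from above, i.e.\ it gives $\limsup_k d^\circ_{L_k}(x^{L_k},y^{L_k})\leq d^\circ(x_\infty,y_\infty)$. But in your argument the discrete path $\gamma_{L_k}$ has length at most $d^\circ(x_\infty,y_\infty)+\eta/2$, and to contradict the choice of $x^{L_k},y^{L_k}$ you must show this is at most $d^\circ_{L_k}(x^{L_k},y^{L_k})+\eta$; that requires $d^\circ(x_\infty,y_\infty)\leq d^\circ_{L_k}(x^{L_k},y^{L_k})+\eta/2$, i.e.\ the \emph{other} one-sided bound, which is precisely the content of \eqref{disto3}. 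Since the paper proves \eqref{disto3} \emph{using} Lemma \ref{CheminLoinDuBord}, invoking it here (or the two-sided convergence it would yield) is circular, contrary to your closing remark; the ``slightly weaker one-sided bound which suffices'' is exactly the one that is not yet available. The rest of your construction (the continuum path avoiding $\partial_1\C$ by a margin $2\rho$, the transfer by part 1 of Lemma \ref{approx-path}, and the bookkeeping keeping $\gamma_{L_k}$ inside $\Cd$ via \eqref{Prop512} and the hull convergence) is sound, but the length comparison collapses without the missing inequality: nothing so far rules out that $d^\circ_{L_k}(x^{L_k},y^{L_k})$ is much smaller than $d^\circ(x_\infty,y_\infty)$, in which case your path is too long.

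The repair is the route the paper takes, which reverses the direction of the limit: instead of building the comparison path in $\C$ from a near-geodesic for $d^\circ(x_\infty,y_\infty)$, take for each $k$ a $d^\circ_{L_k}$-geodesic $\gamma_k$ from $x^{L_k}$ to $y^{L_k}$ in $\Cd$ (after arranging, via \eqref{disto2} and the finiteness of the diameter of $\C$, that $\ell_k:=d^\circ_{L_k}(x^{L_k},y^{L_k})$ converges to some $\ell_\infty$), and extract by Arzel\`a--Ascoli a uniform subsequential limit $\gamma_\infty$: a $1$-Lipschitz path in $\C$ from $x_\infty$ to $y_\infty$ of length at most $\ell_\infty+\eta/3$. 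Its length is thus controlled by the \emph{discrete} distances, which is what the final comparison needs. This limit path may touch $\partial_1\C$, and this is where Lemma \ref{avoiding-path} is genuinely required (in your scheme it was not actually used): it yields a path of length at most $\ell_\infty+2\eta/3$ avoiding $\partial_1\C$, hence staying at some distance $\alpha>0$ from it, and transferring back with Lemma \ref{approx-path} produces, for large $k$, a path from $x^{L_k}$ to $y^{L_k}$ of length at most $\ell_k+\eta$ staying at distance at least $\alpha/2$ from $\FCdb$, giving the contradiction once $\varepsilon_k<\alpha/2$.
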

	
	\proof[Proof of Lemma \ref{CheminLoinDuBord}] 
Let us argue by contradiction. If the desired property does not hold, we can find sequences
$\varepsilon_n\longrightarrow 0$, $L_n\longrightarrow\infty$, $x_n,y_n\in \mathcal{C}^{L_n}_\delta$
such that any path from $x_n$ to $y_n$ that stays at distance at least $\varepsilon_n$ from 
$\partial_1\mathcal{C}^L$ has length greater than $d_{L_n}^\circ(x_n, y_n)+\eta$. 
By compactness, we may assume that $x_n\longrightarrow x_\infty$ and 
$y_n\longrightarrow y_\infty$ in $(E,\Delta)$ and, by \eqref{Hausdo}, we have $x_\infty,y_\infty\in\C_\delta$. Additionally, since the 
diameters of $(\mathcal{C}^{L_n}_\delta,d^\circ_{L_n})$ are bounded (this follows from \eqref{disto2} since
the diameter of $\mathbb{C}$ is finite), we can assume that $\ell_n:=d^\circ_{L_n}(x_n,y_n)$ 
converges to some real $\ell_\infty\geq 0$. For every $n$, let $\gamma_n$
be a geodesic from $x_n$ to $y_n$. By a standard argument, we can extract from the sequence
$(\gamma_n(t\wedge \ell_n),t\in[0,\ell_\infty+\frac{\eta}{3}])$ a subsequence that converges uniformly (for the metric $\Delta$) to a path $\gamma_\infty
=(\gamma_\infty(t),t\in[0,\ell_\infty+\frac{\eta}{3}])$ that
connects $x_\infty$ to $y_\infty$ in $\mathbb{D}_{(a)}^{(b)}$. By \eqref{Prop512}, $\gamma_\infty$ takes values in $\mathbb{C}$. Moreover, from 
the analogous property for the discrete paths $\gamma_n$, we get that $\gamma_\infty$ is $1$-Lipschitz, meaning that
$\Delta(\gamma_\infty(s),\gamma_\infty(t))\leq |t-s|$ for every $s,t$. It follows in particular that the length of 
$\gamma_\infty$ is at most $\ell_\infty+\frac{\eta}{3}$.

The path $\gamma_\infty$ may hit $\partial_1\mathbb{C}$.  
Using Lemma \ref{avoiding-path}, we can however find another path $\gamma'_\infty$ connecting $x_\infty$
to $y_\infty$ in $\mathbb{C}$, which does not hit $\partial_1\mathbb{C}$ and has length
at most $\ell_\infty+\frac{2\eta}{3}$. The path $\gamma'_\infty$ stays at positive distance $\alpha$
from $\partial_1\mathbb{C}$. Using part 2 of Lemma \ref{approx-path} (and the
fact that $\partial_1\mathcal{C}^L$ converges to $\partial_1\mathbb{C}$
for the $\Delta$-Hausdorff measure, by \eqref{Prop512}), we can then, for $n$ large enough, find a path 
$\gamma'_n$ connecting $x_n$ to $y_n$ in $\mathcal{C}^{L_n}$,
with length smaller that $d^\circ_{L_n}(x_n,y_n)+\eta$,  that will stay at distance
at least $\alpha/2$ from $\partial_1\mathcal{C}^{L_n}$. This is a contradiction as soon
as $\varepsilon_n<\alpha/2$. \endproof

	Let us complete the proof of \eqref{disto3}. We again argue by contradiction.
	 If \eqref{disto3} does not hold, we can find $\eta>0$
and sequences $L_k\uparrow\infty$, and $(x_{L_k},x'_k)$, $(y_{L_k},y'_k)$ in $\mathcal{R}_{L_k}$ such that
$$d^\circ(x'_k,y'_k) >d^\circ_{L_k}(x_{L_k},y_{L_k})+ \eta.$$
We may assume that $x'_k\longrightarrow x'_\infty$ and $y'_k\longrightarrow y'_\infty$ 
where $x'_\infty,y'_\infty\in \mathbb{C}_\delta$. By Lemma \ref{CheminLoinDuBord}, we can find $\varepsilon>0$ such that,
for every large enough $k$, there is a path $\gamma_{L_k}$ from $x_{L_k}$ to $y_{L_k}$
in $\mathcal{C}^{L_k}$ that stays at distance at least $\varepsilon$ from $\partial_1\mathcal{C}^{L_k}$ and whose
length is bounded by $d^\circ_{L_k}(x_{L_k},y_{L_k})+\eta/2$.

We have $\Delta(x_{L_k},x'_\infty)\longrightarrow 0$ and $\Delta(y_{L_k},y'_\infty)\longrightarrow 0$, and, by part 2 of Lemma \ref{approx-path}, we can (for $k$ large) find a path $\gamma'_k$ from $x'_\infty$
to $y'_\infty$ in $\mathbb{D}_{(a)}^{(b)}$ that stays at distance at least $\varepsilon/2$ from $\partial_1\mathbb{C}$ (we again use the
convergence of $\partial_1\mathcal{C}^{L_k}$
to $\partial_1\mathbb{C}$) and has length smaller than $d^\circ_{L_k}(x_{L_k},y_{L_k})+ 3\eta/4$. Hence 
$d^\circ(x'_\infty,y'_\infty)<d^\circ_{L_k}(x_{L_k},y_{L_k})+ 3\eta/4$, and also, for $k$ large,
$d^\circ(x'_k,y'_k)<d^\circ_{L_k}(x_{L_k},y_{L_k})+ 7\eta/8$.
We get a contradiction, which completes the proof of 
\eqref{disto1} and of Theorem \ref{PropPrinc}. \hfill$\square$

\section{Convergence of boundaries and volume measures}
\label{sec:conv-bound-vol}

In the last section, we showed that the sequence of metric spaces $(\Cd, d^\circ_L)$ converges in law towards $(\Cc, d^\circ)$ for the Gromov-Hausdorff topology. We 
will now explain how to extend this result to the setting of marked measure metric spaces. We write $\mu_L$ for the restriction to $\Cd$ of the (scaled) counting measure $\nu^L$, and $\mu$ for the restriction to $\Cc$ of the volume measure $\mathbf V$.

\begin{theorem}
\label{main-extend}
The random marked measure metric spaces
$$\mathcal X^L:=((\Cd, d^\circ_L), (\partial_0\Cd, \partial_1\Cd),  \mu_L),$$ 
converge  towards $\mathcal Y:=((\Cc, d^\circ), (\FCca, \FCcb),\mu)$ in distribution
 in the space $\mathbb M^{2, 1}$.
 \end{theorem}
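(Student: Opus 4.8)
\emph{Proof sketch.} The plan is to upgrade the Gromov--Hausdorff convergence of Theorem~\ref{PropPrinc} by building, for each $L$ (along the subsequence fixed in Section~\ref{sta-res}, on which \eqref{ConvBrownianDiskConditionedPS} and $\tDd_{(b)}\to\tDc_{(b)}$ hold a.s.), a common compact metric space $(\mathcal Z_L,\Delta_L)$ into which $(\Cd,d^\circ_L)$ and $(\Cc,d^\circ)$ embed isometrically and along which the two boundaries and the volume measures also converge; the conclusion then follows from the definition of $d^{2,1}_{\mathtt{GHP}}$, together with the usual subsequence argument promoting a.s.\ convergence to convergence in distribution of the whole sequence. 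Two preliminary observations will be used repeatedly. First, for either boundary $\partial_j$ ($j=0,1$) the intrinsic distance $d^\circ$ (resp.\ $d^\circ_L$, up to an $O(L^{-1/2})$ error) from a point to $\partial_j\Cc$ (resp.\ $\partial_j\Cd$) coincides with the ambient distance $\Delta$ to that boundary: a $\Delta$-geodesic from a point of $\Cc$ (resp.\ $\Cd$) to its nearest point of $\partial_j$ never enters the interior of the removed hull, since entering it would force the geodesic to meet $\partial_1$ strictly earlier, so this geodesic is an admissible path in the annulus (this is essentially \eqref{dist-cyl} for $j=1$; the case $j=0$ is analogous). Second, $\mu_L\to\mu$ weakly as finite measures on $E$: this follows from $\nu^L\to\mathbf V$, the Hausdorff convergences \eqref{Prop330} and \eqref{Prop512}, and the facts that $\mathbf V(\partial_0\Cc)=\mathbf V(\partial_1\Cc)=0$ and $\nu^L(\Cd)\to\mathbf V(\Cc)$ (the last point by Corollary~\ref{coroHull}, $r^L_b\to r_b$, and $r_b$ being a continuity point of the perimeter, hence of the volume, process of hulls). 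In particular $\limsup_{L}\mu_L(\{x\in\Cd:\Delta(x,\FCdb)<\delta\})\to0$ as $\delta\to0$, and similarly on the continuous side.

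Fix a sequence $\delta_L\downarrow0$ tending to $0$ slowly enough that all error terms below vanish. Recall from the proof of Lemma~\ref{PropPrinc3} the correspondence $\mathcal R^\delta_L=\{(x_L,x')\in\Cdd\times\Ccd:\Delta(x_L,x')\le\Delta_{\mathtt H}(\Cdd,\Ccd)\}$, whose distortion with respect to $(d^\circ_L,d^\circ)$ tends to $0$ for each fixed $\delta$ (this is \eqref{disto1}). Extend $\mathcal R^{\delta_L}_L$ to a correspondence $\mathcal R_L$ between all of $\Cd$ and all of $\Cc$ by pairing a point of $\Cd\setminus\mathcal C^{L}_{\delta_L}$ (resp.\ of $\Cc\setminus\mathbb C_{\delta_L}$) with the $\mathcal R^{\delta_L}_L$-partners of one of its $d^\circ_L$-nearest (resp.\ $d^\circ$-nearest) points of $\mathcal C^{L}_{\delta_L}$ (resp.\ $\mathbb C_{\delta_L}$). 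By Lemmas~\ref{PropPrinc1} and \ref{PropPrinc2} these projections move points by an amount going to $0$, so the distortion $\eta_L:=\operatorname{dis}_{(d^\circ_L,d^\circ)}(\mathcal R_L)$ still tends to $0$. Let $(\mathcal Z_L,\Delta_L)$ be the disjoint union $\Cd\sqcup\Cc$ equipped with the metric extending $d^\circ_L$ and $d^\circ$ by $\Delta_L(x_L,x'):=\inf\{d^\circ_L(x_L,y_L)+\tfrac{\eta_L}{2}+d^\circ(y',x'):(y_L,y')\in\mathcal R_L\}$, with $\iota_L,\iota$ the canonical inclusions (after the obvious quotient if $\eta_L=0$).

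It remains to check the three convergences in $(\mathcal Z_L,\Delta_L)$. The bound $\Delta_{\mathtt H}^{\mathcal Z_L}(\iota_L(\Cd),\iota(\Cc))\le\eta_L/2\to0$ is immediate since $\mathcal R_L$ is a correspondence. For $\partial_j$, if $x_L\in\partial_j\Cd$ and $(x_L,x')\in\mathcal R_L$, then combining the $\Delta$-Hausdorff convergence $\partial_j\Cd\to\partial_j\Cc$ (from \eqref{Prop330}, \eqref{Prop512}), the estimate $\Delta(x_L,x')\to0$ built into $\mathcal R_L$ (via $\Delta_{\mathtt H}(\mathcal C^L_{\delta_L},\mathbb C_{\delta_L})\to0$ and the vanishing projection errors), and the first preliminary observation, one gets $d^\circ(x',\partial_j\Cc)=\Delta(x',\partial_j\Cc)\to0$, whence $\Delta_L(x_L,\iota(\partial_j\Cc))\le\eta_L/2+d^\circ(x',\partial_j\Cc)\to0$; the symmetric statement is identical, so $\Delta_{\mathtt H}^{\mathcal Z_L}(\iota_L(\partial_j\Cd),\iota(\partial_j\Cc))\to0$ for $j=0,1$. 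For the measures, split $\mu_L=\mu_L\mathbf 1_{\mathcal C^{L}_{\delta_L}}+\mu_L\mathbf 1_{\Cd\setminus\mathcal C^{L}_{\delta_L}}$ and likewise for $\mu$; the collar pieces have total mass $\to0$ by the second preliminary observation, so they only cost a vanishing Prohorov error. For the bulk pieces (after discarding a further piece of mass $\to0$ concentrated near $\partial_0$), one transports a coupling of $\mu_L\mathbf 1_{\mathcal C^{L}_{\delta_L}}$ and $\mu\mathbf 1_{\mathbb C_{\delta_L}}$ concentrated near the diagonal of $(E,\Delta)$ (available since $\mu_L\to\mu$ weakly) onto a coupling concentrated on $\mathcal R^{\delta_L}_L$, using the key fact that two points of $\mathbb C_{\delta_L}$, resp.\ of $\mathcal C^{L}_{\delta_L}$, both far from $\partial_0$ and at mutual $\Delta$-distance less than $\delta_L$ are joined by a $\Delta$-geodesic avoiding the hull and both boundaries, so their $\Delta$-distance equals their intrinsic distance — this controls the $\Delta_L$-displacement of the transported coupling. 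Hence $\Delta_{\mathtt P}^{\mathcal Z_L}((\iota_L)_*\mu_L,\iota_*\mu)\to0$, and combining the three bounds yields $d^{2,1}_{\mathtt{GHP}}(\mathcal X^L,\mathcal Y)\to0$ almost surely along the fixed subsequence, hence in distribution for the full sequence.

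The main obstacle, and the reason for the $\delta_L$-truncation throughout, is the measure step: points that are close for the ambient disk metric $\Delta$ may be far apart for the intrinsic annulus metrics $d^\circ_L,d^\circ$ when they lie on opposite sides of the thin part of the annulus near $\partial_1\Cc$, so the $(E,\Delta)$-coupling of $\nu^L$ and $\mathbf V$ cannot be used directly; one must first confine to the region where $\Delta$ and the intrinsic distance agree — exactly where points are uniformly bounded away from $\partial_1$ and from $\partial_0$ — and absorb the remaining thin collar into a vanishing mass correction. \hfill$\square$
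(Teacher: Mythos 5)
Your proposal is correct and follows essentially the same strategy as the paper's proof: truncate a $\delta$-collar around $\partial_1$, take the vanishing-distortion correspondence between the truncated spaces and extend it to the full spaces by projecting collar points, build a metric on the disjoint union from this correspondence, and control the boundaries and the measures using the vanishing mass of the collar together with the local coincidence of ambient and intrinsic distances away from $\partial_1$ (this is the paper's estimate \eqref{tech-second}); your diagonal choice of $\delta_L$ and the coupling formulation of the Prokhorov bound are only cosmetic variants of the paper's fixed-$\eta$-then-$\delta$ scheme. The one imprecision is your justification of the ``preliminary observation'' for $j=0$: for a geodesic aimed at the outer boundary, meeting $\partial_1$ earlier yields no contradiction, so the global identity of intrinsic and ambient distance to $\partial_0$ is not established by your argument --- but in the only place you use it (points $\Delta$-close to $\partial_0$) it holds anyway, because such a short geodesic has length smaller than the distance to $\partial_1$ (the two boundaries being at positive distance) and therefore stays in the annulus.
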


\proof As in the previous section, we may restrict our attention to a sequence of values of $L$
such that the convergences (\ref{ConvBrownianDiskConditionedPS}) and (\ref{Convba})  hold almost surely.
Fixing $\omega$ in the underlying probability space, we can assume that $\Dc_{(a)}$ and the spaces $\Dd_{(a)}$ are embedded isometrically in the same compact metric space $(E, \Delta)$, in such a way that 
the convergences \eqref{Prop330} hold for the Hausdorff distance associated with $\Delta$, and moreover the measures $\nu_L$ converge weakly to $\mathbf{V}$. As explained at the beginning of
Section \ref{reduc-approx}, we can also assume that \eqref{Prop512} holds. We recall the definition of $\Cdd$ and $\Ccd$
in \eqref{Cdd} and \eqref{Ccd}, and we also set
	\begin{align*}
		&\partial_1\Cdd = \{x\in \Cdd :\Delta(x, \FCdb)=\delta\} \ \ \ \text{ and } \ \ \ \partial_1\Ccd = \{x\in \Ccd : \Delta(x, \FCcb)=\delta\}.
	\end{align*}
	In what follows, we always assume that $\delta>0$ is small enough so that $\Delta(\partial_0\mathbb{C},\partial_1\mathbb{C})>\delta$, and in particular $\Ccd $ is not empty. 

\begin{lemma}
		\label{ConvApproxBoundaries}
	If $\delta>0$ is not a local maximum of the function $x\mapsto \Delta(x, \partial_1\mathbb C)$ on $\mathbb C$  we have
	\begin{align*}
		\Delta_{\mathtt H}(\partial_1 \mathcal  C^L_\delta, \partial_1 \mathbb C_\delta)\xrightarrow[L\to\infty]{} 0.
	\end{align*}
	Moreover,
	\begin{equation}
	\label{small-measure}
	\lim_{\delta\to 0}\Big( \limsup_{L\to\infty} \mu_L(\mathcal{C}^L\setminus\Cdd)\Big) =0.
	\end{equation}
	\end{lemma}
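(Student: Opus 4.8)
The plan is to establish the two assertions separately, working throughout with the $1$-Lipschitz functions $f^L(x):=\Delta(x,\FCdb)$ and $f(x):=\Delta(x,\FCcb)$ on $E$: since $|f^L(x)-f(x)|\le\Delta_{\mathtt H}(\FCdb,\FCcb)$ for every $x$, the convergence \eqref{Prop512} gives $f^L\to f$ uniformly on $E$. Note that $\partial_1\Cdd=\{x\in\Cd:f^L(x)=\delta\}$ and $\partial_1\Ccd=\{x\in\Cc:f(x)=\delta\}$, and that these sets are nonempty (for $L$ large in the discrete case, cf.\ the discussion before Lemma~\ref{ConvCddCcd}) by applying the intermediate value theorem to $f^L$, resp.\ $f$, on the connected space $\Cd$, resp.\ $\Cc$, which vanishes on $\FCdb$, resp.\ $\FCcb$, and takes a value $\ge\delta$ somewhere. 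For the first assertion, one inclusion is easy: if $\sup_{x_L\in\partial_1\Cdd}\Delta(x_L,\partial_1\Ccd)\not\to 0$, then along a subsequence there are $x_{L_k}\in\partial_1\mathcal C^{L_k}_\delta$ with $\Delta(x_{L_k},\partial_1\Ccd)\ge\alpha>0$ and $x_{L_k}\to x_\infty$; by \eqref{Prop512}, $x_\infty\in\Cc$, and $f(x_\infty)=\lim_k f^{L_k}(x_{L_k})=\delta$ by uniform convergence, so $x_\infty\in\partial_1\Ccd$, a contradiction. (This direction does not use the hypothesis on $\delta$.)

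The reverse inclusion $\sup_{x'\in\partial_1\Ccd}\Delta(x',\partial_1\Cdd)\to 0$ is the crux, and I would again argue by contradiction, getting $x'_{L_k}\in\partial_1\Ccd$ with $\Delta(x'_{L_k},\partial_1\mathcal C^{L_k}_\delta)\ge\alpha>0$ and, after extraction, $x'_{L_k}\to x'_\infty\in\partial_1\Ccd$, so that $\Delta(x'_\infty,\partial_1\mathcal C^{L_k}_\delta)\ge\alpha/2$ for large $k$. Here is where the hypothesis enters: since $f(x'_\infty)=\delta$ and $\delta$ is not a local maximum of $f$ on $\Cc$, the point $x'_\infty$ is not a local maximum of $f$, so, using the continuity of $f$, I can fix $v\in\Cc$ with $f(v)>\delta$ and with $\Delta(v,x'_\infty)$ and $f(v)-\delta$ both as small as desired. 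Choosing $v^L\in\Cd$ with $\Delta(v^L,v)\to 0$, one has $f^L(v^L)\to f(v)$, so for $L$ large $f^L(v^L)>\delta$ with $f^L(v^L)-\delta$ small. Since $\Dd_{(a)}$ is a geodesic space, I would follow a geodesic $g$ from $v^L$ to a nearest point of $\FCdb$: along $g$ the function $f^L$ decreases at unit speed from $f^L(v^L)$ to $0$, so it equals $\delta$ at a point $z^L$ on $g$ with $\Delta(z^L,v^L)=f^L(v^L)-\delta$; moreover $z^L\in\Cd$, because $g$ cannot meet $\FCdb$ before its endpoint (that would make the distance from $v^L$ to $\FCdb$ strictly smaller than $f^L(v^L)$) and hence the portion of $g$ before $z^L$ stays in the component $\Cd\setminus\FCdb$ of $\Dd_{(a)}\setminus\FCdb$. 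Thus $z^L\in\partial_1\Cdd$ and $\Delta(z^L,x'_\infty)\le(f^L(v^L)-\delta)+\Delta(v^L,v)+\Delta(v,x'_\infty)$ can be made $<\alpha/2$, contradicting $\Delta(x'_\infty,\partial_1\mathcal C^{L_k}_\delta)\ge\alpha/2$. Together with the easy inclusion this gives $\Delta_{\mathtt H}(\partial_1\Cdd,\partial_1\Ccd)\to 0$.

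For the second assertion, I would bound $\mu_L(\Cd\setminus\Cdd)=\nu^L(\{x\in\Cd:f^L(x)<\delta\})\le\nu^L(\{x\in E:f^L(x)<\delta\})$; when $L$ is large enough that $\Delta_{\mathtt H}(\FCdb,\FCcb)<\delta$, this last set sits inside the closed set $F_{2\delta}:=\{x\in E:f(x)\le 2\delta\}$, so, since $\nu^L\to\mathbf V$ in the Prohorov sense (hence $\limsup_L\nu^L(F)\le\mathbf V(F)$ for every closed $F\subset E$, by the Portmanteau theorem), we get $\limsup_{L\to\infty}\mu_L(\Cd\setminus\Cdd)\le\mathbf V(F_{2\delta})$. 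As $\delta\downarrow 0$, $F_{2\delta}$ decreases to $\{x\in E:f(x)=0\}=\FCcb=\partial H_{r_b}$, and $\mathbf V(\partial H_{r_b})\le\Delta\mathcal V_{r_b}=0$, because $\partial H_{r_b}$ is disjoint from $\bigcup_{r<r_b}H_r$ and $r_b$ is not a jump time of the volume process $(\mathcal V_r)$ --- the jumps of $(\mathcal V_r)$ coinciding a.s.\ with those of $(\mathcal P_r)$ (as recalled after Lemma~\ref{AbsContinuity}), and $r_b$ not being a jump of $(\mathcal P_r)$. Hence $\mathbf V(F_{2\delta})\downarrow 0$, which is \eqref{small-measure}. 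The main obstacle is the reverse inclusion in the first assertion: producing discrete points at distance \emph{exactly} $\delta$ from $\FCdb$ near a prescribed continuum point forces us to combine the ``no local maximum'' hypothesis (to find nearby points with $f>\delta$) with the geodesic structure of the triangulations (to descend continuously to the level $\delta$ while staying in $\Cd$); the remaining steps are routine given \eqref{Prop330}, \eqref{Prop512}, the convergence $\nu^L\to\mathbf V$, and compactness.
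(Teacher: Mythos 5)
Your proof is correct and follows exactly the route the paper indicates for this lemma (which it only sketches): the first assertion by the compactness/contradiction scheme of Lemma \ref{ConvCddCcd} combined with the no-local-maximum hypothesis to find nearby points strictly above level $\delta$, and the second from the weak convergence $\nu^L\to\mathbf V$ together with $\mathbf V(\partial_1\mathbb C)=\mathbf V(\partial H_{r_b})=0$ (your justification via the vanishing jump of $(\mathcal V_r)$ at $r_b$ is a legitimate way to get this). The geodesic-descent step you add to land on the level set $\partial_1\mathcal C^L_\delta$ exactly is the same device the paper uses in the first step of the proof of Theorem \ref{main-extend} (``by considering a geodesic \ldots we can even assume that $x'_L\in\partial_1\mathcal C^L_\delta$''), so the approaches coincide.
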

	
	The first part of the lemma is derived by arguments similar to the proof of Lemma \ref{ConvCddCcd}. The second part follows
	from the weak convergence of $\nu_L$ to $\mathbf{V}$ and the fact that $\mathbf{V}$ puts no mass on $\partial_1\mathbb{C}$. 
	We leave the details to the reader. 

We then set
$$\theta_L(\delta)= \max\Big(\Delta_{\mathtt H}(\Cdd,\Ccd),\Delta_{\mathtt H}(\partial_1 \mathcal  C^L_\delta, \partial_1 \mathbb C_\delta),\Delta_{\mathtt H}(\partial_0 \mathcal  C^L, \partial_0 \mathbb C)\Big).$$
By \eqref{Prop330}, \eqref{Hausdo} and Lemma \ref{ConvApproxBoundaries}, we have 
$$\lim_{L\to\infty} \theta_L(\delta) = 0$$
except possibly for countably many values of $\delta$. We then slightly modify the definition of the correspondence $\mathcal{R}_L$
by setting
$$\mathcal{R}'_L=\{(x_L,x')\in \Cdd\times\mathbb{C}_\delta:\Delta(x_L,x')\leq \theta_L(\delta)\}.$$
The very same arguments as in Section \ref{sec:key-lemma} show that
the distortion of $\mathcal{R}'_L$ tends to $0$ as $L\to\infty$ (again except possibly for countably many values of $\delta$).

We will now extend $\mathcal{R}'_L$ to a correspondence between $\Cd$ and $\Cc$. We start by fixing $\eta>0$, and we set
$$\alpha_L(\delta):=\sup_{x\in \mathcal{C}^L} d^\circ_L(x,\mathcal C^{L}_{\delta})\;,\quad \alpha(\delta):=
\sup_{x\in\mathbb{C}} d^\circ(x,\mathbb{C}_\delta).$$
By \eqref{PropPrinc2mainresult} and \eqref{PropPrinc1mainresult}, we can choose $\delta\in(0,\eta)$
small enough so that we have both $\alpha(\delta)\leq \eta$ and $\alpha_L(\delta)\leq \eta$ for every sufficiently large $L$. 
Additionally, the second assertion of the lemma allows us to assume that $\mu_L(\Cd\setminus\mathcal{C}^L_{2\delta})<\eta$ for $L$ large. 
In what follows, we fix $\delta\in(0,\eta)$ so that the preceding properties hold (and both $\delta$ and $2\delta$ do not belong 
to the countable set that was excluded above). To
simplify notation, we write $\alpha_L=\alpha_L(\delta)$ and $\alpha=\alpha(\delta)$.

We define a correspondence between $\Cd$ and $\Cc$ by setting
\begin{align*}
\mathcal{R}^*_L:=\{(x_L,x)\in\Cd\!\times\! \Cc: \exists \tilde x_L\in \Cdd, \tilde x\in \Ccd\hbox{ s.t.}\,(\tilde x_L,\tilde x)\in \mathcal{R}'_L,\,
d^\circ_L(x_L,\tilde x_L)\leq \alpha_L, \,d^\circ(x,\tilde x)\leq \alpha\}. 
\end{align*}
Then, we can easily bound the distortion $\mathrm{dis}(\mathcal{R}^*_L)$ of $\mathcal{R}^*_L$ in terms of the
the distortion $\mathrm{dis}(\mathcal{R}'_L)$ of $\mathcal{R}'_L$: if $(x_L,x),(y_L,y)\in \mathcal{R}^*_L$, we can find 
$(\tilde x_L,\tilde x),(\tilde y_L,\tilde y)\in \mathcal{R}'_L$ such that
$$|d^\circ_L(x_L,y_L)-d^\circ(x,y)|\leq 2\alpha_L+|d^\circ_L(\tilde x_L,\tilde y_L)-d^\circ(\tilde x,\tilde y)| + 2\alpha,$$
and it follows that
$$\mathrm{dis}(\mathcal{R}^*_L)\leq 2(\alpha+\alpha_L)+ \mathrm{dis}(\mathcal{R}'_L)\leq 4\eta + \mathrm{dis}(\mathcal{R}'_L).$$

To prove the desired convergence of $\mathcal X^L$ towards $\mathcal{Y}$ in $\mathbb{M}^{2,1}$, we will use the definition of the
Gromov-Hausdorff-Prokhorov distance $d^{2,1}_{\mathtt{GHP}}$. By a classical argument (cf. \cite[Section 7.3]{BBY}), we can
define a distance $\Delta^{L,*}$ on the disjoint union $\mathcal C^L\sqcup \mathbb C$, such that
the restriction of  $\Delta^{L,*}$ to $\Cd$ is  $d^\circ_L$, the restriction of $\Delta^{L,*}$ to $\Cc$ is $d^\circ$, and, for every $x_L\in \mathcal{C}^L$ and $x\in \mathbb{C}$:
	\begin{align*}
		\Delta^{L,*}(x_L,x)=
		\frac{1}{2}\mathrm{dis}(\mathcal{R}^*_L)+\underset{(y_L,y)\in \mathcal R_L^*}{\inf}\Big(d_L^\circ(x_L,y_L)+d^\circ(x,y)\Big).
	\end{align*}
	Since $(\mathcal C^L, d^\circ_L)$ and $(\mathbb C, d^\circ)$ are embedded isometrically in $(\mathcal C^L\sqcup \mathbb C, \Delta^{L,*})$, we can then use the
	definition of the
Gromov-Hausdorff-Prokhorov distance to bound $d^{2,1}_{\mathtt{GHP}}(\mathcal X^L,\mathcal{Y})$. We need to bound each of the four terms 
appearing in the infimum of the definition. We again use the notation $\Delta^{L,*}_{\mathtt{H}}$, resp. $\Delta^{L,*}_{\mathtt{P}}$, for the Hausdorff distance,
resp.~the Prokhorov distance, associated with $\Delta^{L,*}$. 

\smallskip
\noindent{\it First step.} We verify that
\begin{equation}
\label{first-step}
\max\Big(\Delta^{L,*}_{\mathtt{H}}(\Cd,\Cc),\Delta^{L,*}_{\mathtt{H}}(\partial_1\Cd,\partial_1\Cc),\Delta^{L,*}_{\mathtt{H}}(\partial_0\Cd,\partial_0\Cc)\Big)\leq\frac{1}{2}\mathrm{dis}(\mathcal{R}^*_L)
+\max(\alpha,\alpha_L)+\delta.
\end{equation}
First, it is immediate from the definition of $\Delta^{L,*}$ that $\Delta^{L,*}_{\mathtt{H}}(\Cd,\Cc)\leq \frac{1}{2}\mathrm{dis}(\mathcal{R}^*_L)$. Similarly, the fact that
$\Delta_{\mathtt H}(\partial_0 \mathcal  C^L, \partial_0 \mathbb C)\leq \theta_L(\delta)$ and the definition of $\mathcal{R}'_L$ give 
$\Delta^{L,*}_{\mathtt{H}}(\partial_0\Cd,\partial_0\Cc)\leq \frac{1}{2}\mathrm{dis}(\mathcal{R}^*_L)$. 

Let us bound $\Delta^{L,*}_{\mathtt{H}}(\partial_1\Cd,\partial_1\Cc)$. Let $x_L\in \partial_1\Cd$. From the definition of $\alpha_L$, we can find $x'_L\in \Cdd$ such that 
$d^\circ_L(x_L,x'_L)\leq \alpha_L$. By considering a geodesic from $x'_L$ to $x_L$, we can even assume that $x'_L\in \partial_1\Cdd$. It follows 
that there exists $x'\in\partial_1\Ccd$ such that $\Delta(x',x'_L)\leq \Delta_{\mathtt{H}}(\partial_1\Ccd,\partial_1\Cdd)$, hence $(x'_L,x')\in \mathcal{R}'_L\subset \mathcal{R}^*_L$. 
From the definition of $\Delta^{L,*}$, we get
$$\Delta^{L,*}(x_L,x')\leq \frac{1}{2}\mathrm{dis}(\mathcal{R}^*_L) + d^\circ_L(x_L,x'_L)\leq \frac{1}{2}\mathrm{dis}(\mathcal{R}^*_L)+\alpha_L.$$
Finally, since $x'\in \partial_1\Ccd$, we can find $x''\in \partial_1\Cc$ such that $\Delta^{L,*}(x',x'')=\delta$, and we get
$$\Delta^{L,*}(x_L,x'')\leq \frac{1}{2}\mathrm{dis}(\mathcal{R}^*_L)+\alpha_L+\delta.$$
In a symmetric manner, we can verify that, for any $y\in \partial_1\Cc$, we can find $y_L\in \partial_1\mathcal{C}^L$ such that
$$\Delta^{L,*}(y_L,y)\leq \frac{1}{2}\mathrm{dis}(\mathcal{R}^*_L)+\alpha+\delta.$$
This gives the desired bound for $\Delta^{L,*}_{\mathtt{H}}(\partial_1\Cd,\partial_1\Cc)$, thus completing the proof of \eqref{first-step}. As an immediate consequence,
using also our estimate for $\mathrm{dis}(\mathcal{R}^*_L)$, we get
\begin{equation}
\label{tech-first}
\limsup_{L\to\infty}\Big(\max\Big(\Delta^{L,*}_{\mathtt{H}}(\Cd,\Cc),\Delta^{L,*}_{\mathtt{H}}(\partial_1\Cd,\partial_1\Cc),\Delta^{L,*}_{\mathtt{H}}(\partial_0\Cd,\partial_0\Cc)\Big)\Big)\leq 4\eta.
\end{equation}

\smallskip
\noindent{\it Second step.} We now want to bound $\Delta^{L,*}_{\mathtt{P}}(\mu_L,\mu)$. We start by observing that, if $L$ is large enough,  if $x\in \Ccd$ and $x_L\in\Cdd$
are such that $\Delta(x_L,x)<\delta/2$, we have
\begin{equation}
\label{tech-second}
\Delta^{L,*}(x_L,x)\leq \Delta(x_L,x)+ \frac{1}{2}\mathrm{dis}(\mathcal{R}^*_L)+\theta_L(\delta).
\end{equation}
Indeed, we can find $x'\in\Ccd$ such that $\Delta(x_L,x')\leq \theta_L(\delta)$ (and in particular $(x_L,x')\in\mathcal{R}'_L$), then $\Delta(x,x')\leq \Delta(x,x_L)+\theta_L(\delta)<\delta$
provided that $L$ is large enough so that $\theta_L(\delta)<\delta/2$. Since $x$ and $x'$ both belong to $\Ccd$ and $\Delta(x,x')<\delta$,
we must have $\Delta(x,x')=d^\circ(x,x')$, and 
$$\Delta^{L,*}(x_L,x)\leq \frac{1}{2}\mathrm{dis}(\mathcal{R}^*_L)+d^\circ(x,x')=\frac{1}{2}\mathrm{dis}(\mathcal{R}^*_L)+\Delta(x,x')\leq \frac{1}{2}\mathrm{dis}(\mathcal{R}^*_L)+\theta_L(\delta)+\Delta(x_L,x),$$
which gives our claim \eqref{tech-second}. 

Let $A$ be a measurable subset of $\Cd$. We have
$\mu_L(A)\leq \mu_L(A\cap \mathcal{C}^L_{2\delta})+ \mu_L(\Cd\setminus \mathcal{C}^L_{2\delta})$
and we know that $\mu_L(\Cd\setminus \mathcal{C}^L_{2\delta})<\eta$ when $L$ is large. On the other hand,
by the weak convergence of $\nu_L$ to $\mathbf{V}$, we have also for $L$ large,
$$\mu_L(A\cap \mathcal{C}^L_{2\delta})=\nu_L(A\cap \mathcal{C}^L_{2\delta})\leq \mathbf{V}(\{x\in\mathbb{D}^{(b)}_{(a)}:\Delta(x,A\cap \mathcal{C}^L_{2\delta})<\frac{\delta}{2}\})+\frac{\delta}{2}.$$
Since $\Delta_{\mathtt{H}}(\mathcal{C}^L_{2\delta},\mathbb{C}_{2\delta})$ tends to $0$ as $L\to\infty$, the properties $x\in \mathbb{D}^{(b)}_{(a)}$ and  $\Delta(x,\mathcal{C}^L_{2\delta})<\frac{\delta}{2}$
imply (for $L$ large) that $x\in \Ccd$, and in particular we can replace $\mathbb{D}^{(b)}_{(a)}$ by $\Cc$ and $\mathbf{V}$
by $\mu$ in the last display. But then we can use \eqref{tech-second} to get that, for $x\in \Cc_\delta$,
$$\Delta(x,A\cap \mathcal{C}^L_{2\delta})<\frac{\delta}{2}\quad\Rightarrow\quad \Delta^{L,*}(x,A\cap\mathcal{C}^L_{2\delta})<\frac{\delta}{2} + \frac{1}{2}\mathrm{dis}(\mathcal{R}^*_L)+\theta_L(\delta).$$
Finally, we have, for $L$ large,
\begin{align*}
\mu_L(A)\leq \mu_L(A\cap \mathcal{C}^L_{2\delta}) +\eta&\leq \mu(\{x\in\Cc:\Delta(x,A\cap \mathcal{C}^L_{2\delta})<\frac{\delta}{2}\})+\frac{\delta}{2}+\eta\\
&\leq \mu(\{x\in\Cc:\Delta^{L,*}(x,A)<\frac{\delta}{2}+\frac{1}{2}\mathrm{dis}(\mathcal{R}^*_L)+\theta_L(\delta)\}) +\frac{\delta}{2}+\eta\\
&\leq \mu(\{x\in\Cc:\Delta^{L,*}(x,A)<3\eta\}) + 2\eta.
\end{align*}
A symmetric argument (left to the reader) shows that for $L$ large, for any measurable subset $A$ of $\Cc$, we have
$$\mu(A)\leq \mu_L(\{x\in\Cd:\Delta^{L,*}(x_L,A)<3\eta\}) + 2\eta.$$
This proves that $\Delta_{\mathtt{P}}(\mu_L,\mu)\leq 3\eta$ when $L$ is large. Since $\eta$ was arbitrary, we can combine this with \eqref{tech-first} to get
the desired convergence of $d^{2,1}_{\mathtt{GHP}}(\mathcal{X}^L,\mathcal{Y})$ to $0$. \endproof

%

\section{The complement of two hulls in the Brownian sphere}
\label{sec:compl2hulls}

 In this section, we  fix $r, r'>0$. Recall that $B_r^\bullet(\bx_*)$ is the hull of radius $r$ centered at $\bx_*$ in the free Brownian sphere $\bm_\infty$ (this hull is defined 
 on the event $\{\mathbf{D}(\bx_*,\bx_0)>r\}$).
 It is shown in \cite[Theorem 8]{Stars} that  the intrinsic metric on $B^\circ_r(\bx_*)= B^\bullet_r(\bx_*)\setminus \partial B^\bullet_r(\bx_*)$ 
 has a.s.~a continuous extension to its closure $B^\bullet_r(\bx_*)$. In the following, we implicitly endow $B^\bullet_r(\bx_*)$ with this extended intrinsic metric and we equip it with the restriction of the volume measure on $\bm_\infty$, the distinguished point $\bx_*$ and the boundary $\partial B^\bullet_r(\bx_*)$, so that we can consider $B_r^\bullet(\bx_*)$ as a random variable in $\mathbb M^{2, 1}$. 
 Since $\bx_*$ and $\bx_0$ play symmetric roles in the Brownian sphere \cite[Proposition 3]{Stars}, we can similarly consider, on the event  $\{\mathbf{D}(\bx_*,\bx_0)>r'\}$, the hull of radius $r'$ centered at $\bx_0$ in $\bm_\infty$,
 which we denote by $B_{r'}^\bullet(\bx_0)$ (this is defined as the 
  complement of the connected component of $\bm_\infty\setminus B^\infty_{r'}(\bx_0)$ that contains
  $\bx_*$). We can endow this space with its (extended) intrinsic metric as we did for $B_r^\bullet(\bx_*)$ and consider $B_{r'}^\bullet(\bx_0)$ as a random variable in $\mathbb M^{2, 1}$ by equipping it with the restriction of the volume measure on $\bm_\infty$, the distinguished point $\bx_0$ and the boundary $\partial B^\bullet_{r'}(\bx_0)$. 
  We also consider the perimeter of these hulls. The perimeter of $B_r^\bullet(\bx_*)$ is $\mathcal Z_r^{\bx_*}:=\mathbf P_r$ as given by formula (\ref{approx-exit})
  and symmetrically the perimeter $\mathcal Z_{r'}^{\bx_0}$ of $B_{r'}^\bullet(\bx_0)$ may be defined by the analog of (\ref{approx-exit}) where $\bx_*$
  is replaced by $\bx_0$:
 \begin{equation*}
 	\label{approx-exit2}
 	\mathcal Z_{r'}^{\bx_0}:=\lim_{\varepsilon\to 0} \frac{1}{\varepsilon^2} \mathrm{Vol}(\{x\in \bm_\infty\setminus B^\bullet_{r'}(\bx_0):\mathbf{D}(x,B^\bullet_{r'}(\bx_0))<\varepsilon\}).
 \end{equation*}
On the event where $\mathbf D(\bx_*, \bx_0)>r+r'$, the hulls  $B^\bullet_r(\bx_*)$ and $B^\bullet_{r'}(\bx_0)$ are disjoint, and we consider the subspace
 \begin{align*}
 	\mathcal	C^{\bx_*, \bx_0}_{r,r'}:= \text{Closure} \left(\bm_\infty \setminus \left(B_r^\bullet(\bx_*)\cup B_{r'}^\bullet(\bx_0)\right)\right).
 \end{align*}
It is shown in \cite[Corollary 9]{Stars} that, a.s.~on the event $\{\mathbf D(\bx_*, \bx_0)>r+r'\}$, the intrinsic metric on $\bm_\infty \setminus \left(B_r^\bullet(\bx_*)\cup B_{r'}^\bullet(\bx_0)\right)$ has a continuous extension on $\mathcal C^{\bx_*, \bx_0}_{r,r'}$, which is a metric on this space
(to be precise, \cite[Corollary 9]{Stars} considers only the case $r=r'$, but the argument is the same without this condition). So we can view $\mathcal C^{\bx_*, \bx_0}_{r,r'}$ as a random variable in $\mathbb{M}^{2,1}$ by equipping 
this space with the restriction of the volume measure of $\bm_\infty$ and with the ``boundaries'' $\partial B^\bullet_r(\bx_*)$ and $\partial B^\bullet_{r'}(\bx_0)$.

\begin{figure}[h!]
	\centering
	\includegraphics[width=0.6\textwidth]{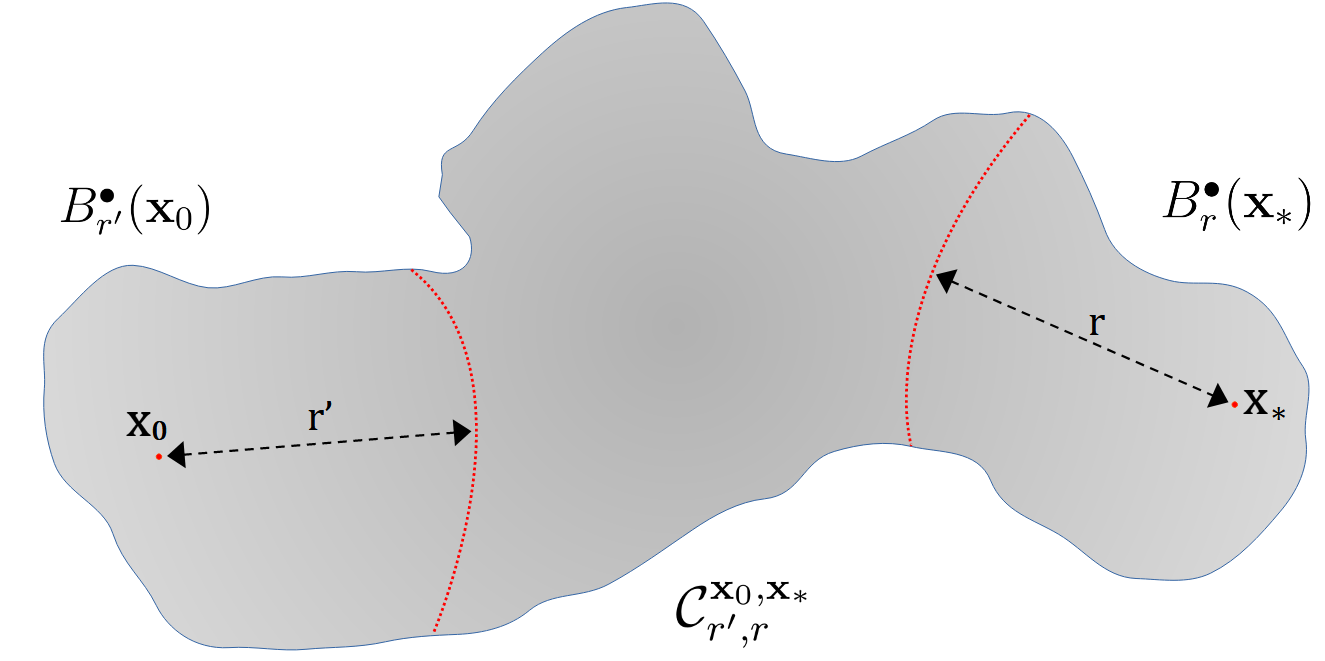}
	\caption{Cutting out two hulls centered at the points $\bx_0$ and $\bx_*$ in the Brownian sphere yields three subsets: the two hulls $B_{r'}^\bullet(\bx_0)$ and $B_r^\bullet(\bx_*)$ and an intermediate part $\mathcal C_{r,r'}^{\bx_*, \bx_0}$. When endowed with their intrinsic metrics, and conditionally on the values taken by the perimeters of the hulls, these metric spaces are independent and distributed as two standard hulls and a Brownian annulus.}
\end{figure}

We finally recall the notion of a standard hull with radius $r$ and perimeter $z>0$, as defined in \cite[Section 3.1]{Stars}.

\begin{theorem}
\label{3-pieces}
Under the probability measure $\N_0(\cdot\,|\,\, \mathbf D(\bx_*, \bx_0)>r+r')$,
the three spaces  $B_r^\bullet(\bx_*)$,  $B_{r'}^\bullet(\bx_0)$ and $\mathcal C^{\bx_*, \bx_0}_{r, r'}$
are conditionally independent given the pair $(\mathcal Z_{r}^{\bx_*},\mathcal Z_{r'}^{\bx_0})$, and their
conditional distribution can be described as follows. The spaces $B_r^\bullet(\bx_*)$ and  $B_{r'}^\bullet(\bx_0)$
are standard hulls of respective radii $r$ and $r'$ and of respective perimeters $\mathcal Z_{r}^{\bx_*}$ and $\mathcal Z_{r'}^{\bx_0}$.
The space $\mathcal	C^{\bx_*, \bx_0}_{r,r'}$ is a Brownian annulus with perimeters $\mathcal Z_{r}^{\bx_*}$ and $\mathcal Z_{r'}^{\bx_0}$.
\end{theorem}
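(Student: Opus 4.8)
The plan is to obtain Theorem \ref{3-pieces} by peeling off the two hulls one after the other and reducing everything to the one-hull results already established. Since $\mathbf D(\bx_*,\bx_0)=\br_*$, the conditioning event is $\{\br_*>r+r'\}$, and in particular $\{\br_*>r\}$ holds. So I would first invoke \cite[Theorem 8]{Stars}: under $\N_0(\cdot\mid\br_*>r)$, the space $\check B^\bullet_r(\bx_*)$ equipped with the extended intrinsic metric $\mathbf d^\circ$, the restriction of the volume measure of $\bm_\infty$, and the distinguished point $\bx_0$, is a free pointed Brownian disk $\mathbb D$ of perimeter $A:=\z_r^{\bx_*}$, which, conditionally on $A$, is independent of $B^\bullet_r(\bx_*)$ — itself a standard hull of radius $r$ and perimeter $A$. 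Considering a geodesic from $\bx_0$ to $\bx_*$ and using the triangle inequality, one checks that $D(\bx_0,\partial\mathbb D)=\br_*-r$, where $D$ denotes the metric of $\mathbb D$. Hence conditioning $\N_0(\cdot\mid\br_*>r)$ further on $\{\br_*>r+r'\}$ amounts to conditioning the Brownian disk $\mathbb D$ on $\{r_*>r'\}$ (writing $r_*=D(\bx_0,\partial\mathbb D)$), and since $\{r_*>r'\}$ is a function of $\mathbb D$, this preserves the conditional independence of $B^\bullet_r(\bx_*)$ and $\mathbb D$ given $A$, as well as the conditional law of $B^\bullet_r(\bx_*)$ given $A$.

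Next I would remove the second hull inside $\mathbb D$. Exactly as in the proof of Theorem \ref{Br-Annulus} (with the radius $\br_a$ there replaced by the deterministic $r$), the hull $H_{r'}$ of radius $r'$ centered at the distinguished point $\bx_0$ of $\mathbb D$ coincides with $B^\bullet_{r'}(\bx_0)$ in $\bm_\infty$ whenever $r'<D(\bx_0,\partial\mathbb D)=\br_*-r$. On $\{\br_*>r+r'\}$ a cactus bound gives $\mathbf D(B^\bullet_r(\bx_*),B^\bullet_{r'}(\bx_0))\geq\br_*-r-r'>0$, so $H_{r'}$ stays at positive distance from $\partial\mathbb D$; consequently, for small $\varepsilon$, the $\varepsilon$-neighbourhood of $\partial H_{r'}$ and the relevant distances to $H_{r'}$ are the same whether computed in $\mathbb D$ or in $\bm_\infty$, and likewise short paths avoiding $H_{r'}$ have the same $\mathbf d^\circ$- and $\mathbf D$-length. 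From this I extract two identifications: the perimeter $\mathcal P_{r'}$ of $H_{r'}$ in $\mathbb D$ (defined by the formula analogous to \eqref{approx-exit}) equals $B':=\z_{r'}^{\bx_0}$ almost surely; and, since $\mathbb D\setminus H_{r'}$ minus its boundary coincides with $\bm_\infty\setminus(B^\bullet_r(\bx_*)\cup B^\bullet_{r'}(\bx_0))$ and the two intrinsic metrics agree there, the closure of $\mathbb D\setminus H_{r'}$ with its extended intrinsic metric is exactly $\mathcal C^{\bx_*,\bx_0}_{r,r'}$, while $H_{r'}$ with its intrinsic metric is $B^\bullet_{r'}(\bx_0)$ with its intrinsic metric. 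Now I apply Proposition \ref{key-result} to the free pointed Brownian disk $\mathbb D$ of perimeter $a$ conditioned on $\{r_*>r'\}$ (i.e.\ conditionally on $A=a$): conditionally on $\{\mathcal P_{r'}=b\}$, the closure of $\mathbb D\setminus H_{r'}$ is a Brownian annulus $\C_{(a,b)}$ and is independent of $H_{r'}$, which is a standard hull of radius $r'$ and perimeter $b$.

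Finally I would assemble the conditional independence. Working under $\N_0(\cdot\mid\br_*>r+r')$: conditionally on $A$, $B^\bullet_r(\bx_*)$ is independent of $\mathbb D$, hence of $(B',H_{r'},\mathcal C^{\bx_*,\bx_0}_{r,r'})$, which are all functions of $\mathbb D$; so conditionally on $(A,B')$, $B^\bullet_r(\bx_*)$ is independent of $(H_{r'},\mathcal C^{\bx_*,\bx_0}_{r,r'})$. Combined with the conditional independence of $H_{r'}$ and $\mathcal C^{\bx_*,\bx_0}_{r,r'}$ given $(A,B')$ from the previous step, the three spaces $B^\bullet_r(\bx_*)$, $B^\bullet_{r'}(\bx_0)=H_{r'}$ and $\mathcal C^{\bx_*,\bx_0}_{r,r'}$ are conditionally independent given $(A,B')=(\z_r^{\bx_*},\z_{r'}^{\bx_0})$; and by the previous steps their conditional laws are, respectively, a standard hull of radius $r$ and perimeter $\z_r^{\bx_*}$, a standard hull of radius $r'$ and perimeter $\z_{r'}^{\bx_0}$, and a Brownian annulus with perimeters $\z_r^{\bx_*}$ and $\z_{r'}^{\bx_0}$, which is the assertion of the theorem.

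The main obstacle I anticipate is the batch of metric identifications in the second step: verifying that the intrinsic metric of $\bm_\infty$ on $\bm_\infty\setminus(B^\bullet_r(\bx_*)\cup B^\bullet_{r'}(\bx_0))$ really coincides with the intrinsic metric of the intermediate Brownian disk $\mathbb D$ on $\mathbb D^\circ\setminus H_{r'}$ (so that the two extended metrics, hence the two descriptions of $\mathcal C^{\bx_*,\bx_0}_{r,r'}$, genuinely match), and that $H_{r'}$ inside $\mathbb D$ is $B^\bullet_{r'}(\bx_0)$ with the right perimeter and the right intrinsic metric — together with careful bookkeeping of the two successive conditionings (first on $A=\z_r^{\bx_*}$, then on $B'=\z_{r'}^{\bx_0}$) needed to make the mutual conditional independence come out cleanly. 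A secondary point is to make sure the version of \cite[Theorem 8]{Stars} we use includes the conditional independence of the removed hull and the Brownian disk given the perimeter, together with the identification of the removed hull as a standard hull; if the cited statement only gives the Brownian-disk part, this independence can be recovered along the same lines as Proposition \ref{key-result}.
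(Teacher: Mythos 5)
Your proposal is correct and follows essentially the same route as the paper: view $\check B^\bullet_r(\bx_*)$ under $\N_0(\cdot\mid \br_*>r)$ as a free pointed Brownian disk with distinguished point $\bx_0$ (via \cite[Theorem 8]{Stars}), note that the further conditioning on $\{\br_*>r+r'\}$ amounts to requiring the distinguished point of that disk to lie at distance greater than $r'$ from its boundary, and then apply Proposition \ref{key-result} with $r$ replaced by $r'$, using the same identifications $H_{r'}=B^\bullet_{r'}(\bx_0)$, $\mathcal{P}_{r'}=\mathcal Z^{\bx_0}_{r'}$ and $\mathcal C^{\bx_*,\bx_0}_{r,r'}=\check B^\bullet_r(\bx_*)\setminus B^\circ_{r'}(\bx_0)$ that the paper also asserts. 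The only difference is that the paper quotes \cite[Corollary 9]{Stars} for the three-fold conditional independence and the laws of the two hulls, whereas you re-derive these by the two-step conditioning argument (Theorem 8 of \cite{Stars} plus Proposition \ref{key-result}), a harmless and slightly more self-contained variant.
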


This theorem is closely related to \cite[Corollary 9]{Stars} (see also \cite[Lemma 6.3]{ang2022moduli}). In fact,
\cite[Corollary 9]{Stars} (stated for $r=r'$ but easily extended) already gives the 
conditional independence of $B_r^\bullet(\bx_*)$,  $B_{r'}^\bullet(\bx_0)$ and $\mathcal C^{\bx_*, \bx_0}_{r,r'}$ given $(\mathcal Z_{r}^{\bx_*},\mathcal Z_{r'}^{\bx_0})$,
and identifies the conditional distribution of the hulls $B_r^\bullet(\bx_*)$ and  $B_{r'}^\bullet(\bx_0)$. In order to complete the proof 
of Theorem \ref{3-pieces}, it only remains to identify the conditional distribution of $\mathcal C^{\bx_*, \bx_0}_{r,r'}$. To do so, we will first 
state and prove a proposition, which may be viewed as a variant of our definition of the Brownian annulus. This proposition also corresponds to 
Definition 1.1 in \cite{ang2022moduli}). 
%
%

We consider now the (free pointed) Brownian disk $\mathbb D_{(a)}$. Recall the notation $H_r$ for the hull of radius $r$ centered at the distinguished point $x_*$ of $\mathbb{D}_a$, which
is defined on the event $\{r<r_*\}$.
We also let $C_r$ be the closure of $\mathbb{D}_{(a)}\setminus H_r$. In a way similar to the results recalled at the beginning of this section, one proves that the
intrinsic metric on $H_r\setminus \partial H_r$ (resp. on $\mathbb{D}_{(a)}\setminus H_r$) has a continuous extension to $H_r$ (resp. to $C_r$) which is a 
metric on this space. The shortest way to verify these properties is to view the Brownian disk $\mathbb{D}_{(a)}$ as embedded in the Brownian sphere, as in Proposition \ref{BrDisk-BrSphere} above,
and then to use the analogous properties in the Brownian sphere recalled at the beginning of this section (we omit the details). In the next proposition, we thus 
view $H_r$ (resp. $C_r$) equipped with the extended intrinsic metric, with the marked subsets $\{x_*\}$ and $\partial H_r$ (resp. with the boundaries $\partial \mathbb D_{(a)}$ and $\partial H_r$) and
with the restriction of the volume measure on $\mathbb{D}_{(a)}$, as a random variable in $\mathbb{M}^{2,1}$. Recall the notation $\mathcal{P}_r$ for the boundary size of $H_r$.

\begin{proposition}
	\label{key-result} Under $\P(\cdot \mid r<r_*)$, $C_r$ and $H_r$ are conditionally independent given $\mathcal{P}_r$,
	$H_r$ is distributed as a standard hull with radius $r$ and perimeter $\mathcal{P}_r$ and 
	$C_r$ is distributed as a free Brownian annulus with perimeters $a$ and $\mathcal{P}_r$.
\end{proposition}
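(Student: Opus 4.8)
The plan is to read the statement off the free Brownian sphere, combining Proposition~\ref{BrDisk-BrSphere} with the results of \cite{Stars} on hulls in the Brownian sphere. As in the proof of Theorem~\ref{Br-Annulus}, I would first realize $\mathbb{D}_{(a)}$ as the subset $\check B^\bullet_{\br_a}(\bx_*)$ of $\bm_\infty$ under $\N_0(\cdot\mid \br_a<\infty)$, with distinguished point $x_*=\bx_0$. Then $r_*=\br_*-\br_a$, the event $\{r<r_*\}$ is the event $\{\br_*>\br_a+r\}$ (which is contained in $\{\br_a<\infty\}$, so that $\P(\cdot\mid r<r_*)$ corresponds to $\N_0(\cdot\mid\br_*>\br_a+r)$), and on this event one has $H_r=B^\bullet_r(\bx_0)$, $\mathcal P_r=\mathcal Z^{\bx_0}_r$, and $C_r=\overline{\bm_\infty\setminus(B^\bullet_r(\bx_0)\cup B^\bullet_{\br_a}(\bx_*))}=\mathcal C^{\bx_*,\bx_0}_{\br_a,r}$. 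So it suffices to describe, under $\N_0(\cdot\mid\br_*>\br_a+r)$, the conditional law of the pair $(B^\bullet_r(\bx_0),\mathcal C^{\bx_*,\bx_0}_{\br_a,r})$ given $\mathcal Z^{\bx_0}_r$.

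The key step is to remove the two hulls in the reverse order. Working under $\N_0(\cdot\mid\br_*>r)$, the symmetry of $\bx_*$ and $\bx_0$ (\cite[Proposition~3]{Stars}) together with \cite[Theorem~8]{Stars} shows that $\mathbb{D}':=\check B^\bullet_r(\bx_0)$, equipped with its extended intrinsic metric, the restriction of the volume measure on $\bm_\infty$, the distinguished point $\bx_*$ and the boundary $\partial B^\bullet_r(\bx_0)$, is a free pointed Brownian disk of (random) perimeter $a':=\mathcal Z^{\bx_0}_r$, and that, conditionally on $a'$, it is independent of $B^\bullet_r(\bx_0)$, which is a standard hull of radius $r$ and perimeter $a'$. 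I then need the compatibility statement that, for every $s<\br_*-r$, the hull $H'_s$ of radius $s$ centered at $\bx_*$ computed inside $\mathbb{D}'$ coincides with $B^\bullet_s(\bx_*)$ and has perimeter $\mathcal Z^{\bx_*}_s$ (the perimeter being defined inside $\mathbb{D}'$ by the analog of~\eqref{def-peri-hull}); this is proved exactly as the identification of $H_r$ with $B^\bullet_r(\bx_0)$ used in the proof of Theorem~\ref{Br-Annulus}, the point being that a ball of radius $<\br_*-r$ around $\bx_*$ does not meet the removed hull, so that the relevant metrics and volume measures agree there. Consequently, on the event $\{\br_*>\br_a+r\}$, the first radius at which the hull perimeter process of $\mathbb{D}'$ reaches the level $a$ equals $\br_a$; this event is measurable with respect to $\mathbb{D}'$ and is precisely the event on which the Brownian annulus with perimeters $a'$ and $a$ is defined inside $\mathbb{D}'$ according to Theorem~\ref{Br-Annulus}, and that annulus equals $\overline{\mathbb{D}'\setminus H'_{\br_a}}=\mathcal C^{\bx_*,\bx_0}_{\br_a,r}=C_r$ as a marked measure metric space, the boundary of perimeter $a'$ being $\partial B^\bullet_r(\bx_0)=\partial H_r$ and the boundary of perimeter $a$ being $\partial B^\bullet_{\br_a}(\bx_*)=\partial\mathbb{D}_{(a)}$.

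It then remains to assemble the conclusion. Under $\P(\cdot\mid r<r_*)=\N_0(\cdot\mid\br_*>\br_a+r)$, the conditioning event $\{\br_*>\br_a+r\}$ is $\sigma(\mathbb{D}')$-measurable, so conditioning on it alters neither the conditional law of $B^\bullet_r(\bx_0)$ given $a'$ nor the conditional independence of $B^\bullet_r(\bx_0)$ and $\mathbb{D}'$ given $a'$. Hence, conditionally on $\mathcal P_r=a'$, the hull $H_r=B^\bullet_r(\bx_0)$ is a standard hull of radius $r$ and perimeter $a'$, it is independent of $C_r$ (a measurable function of $\mathbb{D}'$), and $C_r$ is, by Theorem~\ref{Br-Annulus} applied inside $\mathbb{D}'$, a free Brownian annulus with perimeters $a'$ and $a$. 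Finally, exchanging the two labeled boundaries of $C_r$ is legitimate because the free Brownian annulus is symmetric under exchange of its two boundaries — this follows from Theorem~\ref{main-extend} and the symmetry of the Boltzmann model of triangulations with two boundaries in the two boundary lengths, visible in~\eqref{EnumT2} — and this gives that $C_r$ is a free Brownian annulus with perimeters $a$ and $\mathcal P_r$, with $\partial_0=\partial\mathbb{D}_{(a)}$ and $\partial_1=\partial H_r$, which is the assertion of the proposition.

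The step I expect to be the main obstacle is the compatibility statement together with the associated identification of metric spaces: one must check carefully that removing a hull and passing to the extended intrinsic metric, performed twice in succession (first removing $B^\bullet_r(\bx_0)$ from $\bm_\infty$, then removing $H'_{\br_a}$ from $\mathbb{D}'$), produces the same marked measure metric space as removing the union of the two hulls from $\bm_\infty$ and taking the extension of the intrinsic metric inherited from $\bm_\infty$, and likewise that the perimeter functionals and the associated hitting times are unchanged. The bookkeeping of the several conditionings of $\N_0$ (namely $\{\br_a<\infty\}$, $\{\br_*>r\}$ and $\{\br_*>\br_a+r\}$) also requires attention, but is routine.
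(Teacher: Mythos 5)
Your argument is correct and is essentially the paper's own proof: realize $\D_{(a)}$ as $\check B^\bullet_{\br_a}(\bx_*)$ via Proposition \ref{BrDisk-BrSphere}, remove the two hulls in the reverse order, apply \cite[Theorem 8]{Stars} with the roles of $\bx_*$ and $\bx_0$ interchanged to get the conditional law of $B^\bullet_r(\bx_0)$ and the conditional independence, note that the conditioning event $\{\br_*>\br_a+r\}$ is measurable with respect to $\check B^\bullet_r(\bx_0)$, and identify $C_r$ by applying the definition of the Brownian annulus (Theorem \ref{Br-Annulus}) inside that disk. The two points you make explicit --- the compatibility of hulls and perimeters under successive removals, and the exchange of the two boundary labels via the symmetry of the annulus --- are treated implicitly in the paper, and your handling of them is sound.
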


\begin{proof} By Proposition \ref{BrDisk-BrSphere}, we may and will assume that the Brownian disk $\mathbb{D}_{(a)}$ is constructed as the subspace $\check B^\bullet_{\br_a}(\bx_*)$ of the free Brownian sphere $\bm_\infty$ under
	$\N_0(\cdot\mid \br_a<\infty)$, where
	$$\br_a=\inf\{r\in(0,\br_*):\z_{r-\br_*}=a\},$$
	and we recall that $\check B^\bullet_{\br_a}(\bx_*)$ is the closure of $\bm_\infty\setminus B^\bullet_{\br_a}(\bx_*)$.
	The distance between the distinguished point $x_*=\bx_0$ and the
	boundary of $\D_{(a)}$ is then $r_*=\br_*-\br_a$, where $\br_*= \mathbf D(\bx_0, \bx_*)$. Furthermore, conditioning $\mathbb D_{(a)}$ on the event $\{r<r_*\}$ 
	is then equivalent to arguing under $\N_0(\cdot\mid r+\br_a<\br_*)$.
On the event $\{r+\br_a<\br_*\}$, the hull $H_r$ is identified to the hull $B^\bullet_r(\bx_0)$ and ${C}_r$ is identified to $\check B^\bullet_{\br_a}(\bx_*)\setminus B^\circ_r(\bx_0)$
where $B^\circ_r(\bx_0)$ denotes the interior of $B^\bullet_r(\bx_0)$.
In particular, the perimeter $\mathcal{P}_r$ of $H_r$ is identified with the boundary size $\mathcal{Z}^{\bx_0}_r$ of $B^\bullet_r(\bx_0)$.

As explained at the beginning of this section, we view $B^\bullet_r(\bx_0)$ as a random variable in $\mathbb{M}^{2,1}$. Similarly \cite[Theorem 8]{Stars}
(with the roles of $\bx_*$ and $\bx_0$ interchanged)
allows us to view $\check B^\bullet_r(\bx_0):=\bm_\infty\setminus B^\circ_r(\bx_0)$, equipped with the extended intrinsic metric, as a random variable in $\mathbb{M}^{2,1}$
(the marked subsets are $\{\bx_*\}$ and $\partial B^\bullet_r(\bx_0)$). 

\medskip
\noindent{\bf Fact.} Under $\N_0(\cdot\,|\,\br_*>r)$, $B^\bullet_r(\bx_0)$ and $\check B^\bullet_r(x_0)$ are independent conditionally
given the perimeter $\mathcal{Z}^{\bx_0}_r$, $B^\bullet_r(\bx_0)$ is distributed as a standard hull of radius $r$ and perimeter $\mathcal{Z}^{\bx_0}_r$, and
$\check B^\bullet_r(x_0)$ is distributed as a free Brownian disk of perimeter $\mathcal{Z}^{\bx_0}_r$.

\medskip
This follows from \cite[Theorem 8]{Stars}, up to the interchange of $\bx_*$ and $\bx_0$. 
We then  note that the event $\{r+\br_a<\br_*\}$ is measurable with respect
to $\check B^\bullet_r(\bx_0)$, and that, on this event, $\check B^\bullet_{\br_a}(\bx_*)\setminus B^\circ_r(\bx_0)$ is a function
of $\check B^\bullet_r(\bx_0)$ (indeed  $\check B^\bullet_{\br_a}(\bx_*)\setminus B^\circ_r(\bx_0)$ is obtained 
from $\check B^\bullet_r(\bx_0)$ by ``removing'' the hull of radius $\br_a$ centered at $\bx_*$). It
follows from these observations and the preceding Fact that, under $\N_0(\cdot \mid r+\br_a<\br_*)$, $B^\bullet_r(\bx_0)$
and $\check B^\bullet_{\br_a}(\bx_*)\setminus B^\circ_r(\bx_0)$ are independent conditionally on $\mathcal{Z}^{\bx_0}_r$.

To get the statement of the proposition, it only remains to determine the conditional distribution of $\check B^\bullet_{\br_a}(\bx_*)\setminus B^\circ_r(\bx_0)$ knowing $\mathcal{Z}^{\bx_0}_r$,
under $\N_0(\cdot \mid r+\br_a<\br_*)$. To this end, we observe that, by construction, on the event $\{r+\br_a<\br_*\}$,
$$\check B^\bullet_{\br_a}(\bx_*)\setminus B^\circ_r(\bx_0)=\check B^\bullet_{r}(\bx_0)\setminus B^\circ_{\br_a}(\bx_*),$$
By the preceding Fact, $\check B^\bullet_{r}(\bx_0)$ is distributed under $\N_0(\cdot\,|\,\br_*>r)$ as a free pointed Brownian disk 
with perimeter $\mathcal{Z}^{\bx_0}_r$ (whose distinguished point is $\bx_*$). Under $\N_0(\cdot \mid r+\br_a<\br_*)$, this 
Brownian disk is further conditioned on the event that there is a hull of perimeter $a$ centered at the distinguished point $\bx_*$, and $\br_a$
is the first radius at which this occurs. By our definition of the Brownian annulus, this means that, under $\N_0(\cdot \mid r+\br_a<\br_*)$ and conditionally
on $\mathcal{Z}^{\bx_0}_r$,  $\check B^\bullet_{r}(\bx_0)\setminus B^\circ_{\br_a}(\bx_*)$ is a Brownian annulus with
perimeters $\mathcal{Z}^{\bx_0}_r$ and $a$. This completes the proof. 
\end{proof}

\proof[Proof of Theorem \ref{3-pieces}] As in the preceding proof (interchanging again the roles of $\bx_*$ and $\bx_0$), we know that, under $\N_0(\cdot\,|\,\br_*>r)$ and conditionally on $\z^{(\bx_*)}_r$,
$\check B^\bullet_r(\bx_*)$ is a (free pointed) Brownian disk with perimeter  $\z^{(\bx_*)}_r$, whose distinguished 
point is $\bx_0$. Under $\N_0(\cdot\,|\,\br_*>r+r')$, this Brownian disk is conditioned on the event that 
the distinguished point is at distance greater than $r'$ from the boundary. We can thus apply 
Proposition \ref{key-result}, with $r$ replaced by $r'$, to this Brownian disk, and it follows that, under $\N_0(\cdot\,|\,\br_*>r+r')$,
conditionally on the pair $(\z^{(\bx_*)}_r,\z^{(\bx_0)}_{r'})$, the space $\check B^\bullet_r(\bx_*)\setminus B^\circ_{r'}(\bx_0)$ is
a Brownian annulus with perimeters $\z^{(\bx_*)}_r$ and $\z^{(\bx_0)}_{r'}$. This completes the proof since
$\mathcal	C^{\bx_*, \bx_0}_{r,r'}= \check B^\bullet_r(\bx_*)\setminus B^\circ_{r'}(\bx_0)$ by construction. 
\endproof

\section{Explicit computations for the length of the annulus}
\label{sec:expli-comp}

Recall the setting of Section \ref{sec:def-annulus}. We define the \emph{length} $\mathcal L_{(a, b)}$ of the annulus $\C_{(a,b)}$ as the
distance between the two boundaries $\partial_1\mathbb C_{(a,b)}$ and $\partial_0\mathbb C_{(a,b)}$.
Our goal in this section is to discuss the distribution of $\mathcal L_{(a, b)}$. From formula \eqref{dist-cyl}, we get that
$\mathcal L_{(a, b)}$ is given under the probability measure $\P(\cdot\mid r_b<\infty)$ by the formula
$$\mathcal L_{(a, b)}= r_*-r_b.$$
From the discussion in the proof of Lemma \ref{proba-condit}, we see that the distribution of $\mathcal L_{(a, b)}$ is
the law of the last hitting time of $b$ for a continuous-state branching process with branching mechanism
$\psi(\lambda)=\sqrt{8/3}\,\lambda^{3/2}$ with initial distribution $\frac{3}{2}a^{3/2}\,(a+z)^{-5/2}$, conditionally
on the fact that this process visits $b$. Unfortunately, we were not able to use this interpretation to 
derive an explicit analytic expression for the law of $\mathcal L_{(a, b)}$, but the following proposition
still gives some useful information. 

\begin{proposition}
\label{first-mom}
The first moment of $\mathcal L_{(a, b)}$ is
	\begin{align*}
		\sqrt{\frac{3\pi}{2}}(a+b)\left(\sqrt{ a^{-1}}+\sqrt{b^{-1}}-\sqrt{a^{-1}+b^{-1}}\right).
	\end{align*}
Furthermore, the probability of the event $\{\mathcal L_{a, b}>u\}$ is asymptotic to $3(a+b)u^{-2}$ when $u\to\infty$.
\end{proposition}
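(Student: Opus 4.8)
The plan is to start from the description of $\mathcal L_{(a,b)}$ already recorded at the beginning of Section~\ref{sec:expli-comp}: under $\P(\cdot\mid r_b<\infty)$ we have $\mathcal L_{(a,b)}=r_*-r_b$, and if we set (as in the proof of Lemma~\ref{proba-condit}) $\check{\mathcal P}_s=\mathcal P_{r_*-s}$ and extend $\check{\mathcal P}$ by $0$ beyond its extinction time $\tau_0:=r_*$, then $(\check{\mathcal P}_s)_{s\geq 0}$ is a continuous-state branching process with branching mechanism $\psi(\lambda)=\sqrt{8/3}\,\lambda^{3/2}$, started from $\check{\mathcal P}_0$ with density $\rho(z)=\tfrac32 a^{3/2}(a+z)^{-5/2}$. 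Because $\mathcal P$ has no positive jumps and $\mathcal P_0=0$, the quantity $r_*-r_b$ is exactly the last passage time $G_b:=\sup\{s\geq 0:\check{\mathcal P}_s\geq b\}$ (with $\sup\varnothing=0$), and $\{r_b<\infty\}=\{G_b>0\}$. Since $\P(G_b>0)=a/(a+b)$ by Lemma~\ref{proba-condit}, the simple Markov property of $\check{\mathcal P}$ at time $u$ gives
\begin{equation}\label{plan-reduction}
\P(\mathcal L_{(a,b)}>u)=\frac{a+b}{a}\,\P(G_b>u)=\frac{a+b}{a}\,\E_\rho\big[\varphi(\check{\mathcal P}_u)\big],\qquad u\ge 0,
\end{equation}
where $\varphi(y):=\P_y(\exists\,s>0:Z_s\geq b)$ for a CSBP $Z$ issued from $y$, and $\E_\rho$ denotes expectation when $\check{\mathcal P}_0$ has density $\rho$. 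Exactly as in the proof of Lemma~\ref{proba-condit} one gets $\varphi(y)=1-\sqrt{(b-y)^+/b}$: via the Lamperti time change $\varphi(y)$ is the probability that the spectrally positive stable L\'evy process with Laplace exponent $\psi$, started at $y$, reaches $[b,\infty)$ before hitting $0$, and this is read off from the two-sided exit problem (cf.\ \cite[Chapter~VII]{Bertoin}).

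For the first moment, integrating \eqref{plan-reduction} in $u$ and using $\varphi(0)=0$, I would write $\E[\mathcal L_{(a,b)}]=\frac{a+b}{a}\int_0^\infty\E_\rho[\varphi(\check{\mathcal P}_u)]\,\dd u=\frac{a+b}{a}\int_{(0,\infty)}\varphi(z)\,m(\dd z)$, where $m(\dd z)=\E_\rho\big[\int_0^{\tau_0}\mathbf 1_{\{\check{\mathcal P}_t\in\dd z\}}\,\dd t\big]$ is the expected total occupation measure of $\check{\mathcal P}$ before extinction. The key step is to compute $m$. Writing $m(\dd z)=\int_0^\infty\rho(x)\,G(x,\dd z)\,\dd x$ with $G(x,\cdot)$ the Green measure of a CSBP issued from $x$, the Lamperti transform yields $G(x,\dd z)=z^{-1}R(x,z)\,\dd z$, where $R(x,z)$ is the potential density at $z>0$ of the spectrally positive stable L\'evy process (Laplace exponent $\psi$) started at $x$ and killed at its first hitting time of $0$; standard fluctuation theory (see \cite[Section~VII.4]{Bertoin}) gives $R(x,z)=W(z)-W(z-x)$, with $W(w)=0$ for $w<0$ and $W$ the scale function determined by $\int_0^\infty e^{-\lambda w}W(w)\,\dd w=1/\psi(\lambda)=\sqrt{3/8}\,\lambda^{-3/2}$, i.e.\ $W(w)=\sqrt{3/(2\pi)}\,\sqrt w$. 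Hence, using $\int_0^\infty\rho=1$,
$$m(z)=\frac1z\,\sqrt{\tfrac{3}{2\pi}}\,\Big(\sqrt z-\int_0^z\rho(x)\sqrt{z-x}\,\dd x\Big).$$
The integral $\int_0^z\rho(x)\sqrt{z-x}\,\dd x$ is precisely the one evaluated (with $z$ in the role of $b$) inside the proof of Lemma~\ref{proba-condit}, and equals $z^{3/2}/(a+z)$; therefore $m(z)=\sqrt{3/(2\pi)}\,a\,z^{-1/2}(a+z)^{-1}$. It then remains to evaluate $\int_0^\infty\varphi(z)m(z)\,\dd z$, which splits into $\int_0^\infty\frac{\dd z}{\sqrt z(a+z)}=\pi/\sqrt a$ and $\frac1{\sqrt b}\int_0^b\frac{\sqrt{b-z}}{\sqrt z(a+z)}\,\dd z$; the latter is computed by the substitution $z=b\sin^2\theta$ together with the classical identity $\int_0^{\pi/2}(a+b\sin^2\theta)^{-1}\dd\theta=\pi/(2\sqrt{a(a+b)})$, and equals $\pi(\sqrt{a+b}-\sqrt a)/\sqrt{ab}$. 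Assembling, and using $\sqrt{3/(2\pi)}\,\pi=\sqrt{3\pi/2}$, gives
$$\E[\mathcal L_{(a,b)}]=\sqrt{\tfrac{3\pi}{2}}\,\frac{(a+b)(\sqrt a+\sqrt b-\sqrt{a+b})}{\sqrt{ab}}=\sqrt{\tfrac{3\pi}{2}}\,(a+b)\Big(\sqrt{a^{-1}}+\sqrt{b^{-1}}-\sqrt{a^{-1}+b^{-1}}\Big),$$
as claimed.

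For the tail, by \eqref{plan-reduction} it suffices to prove $\E_\rho[\varphi(\check{\mathcal P}_u)]\sim 3a\,u^{-2}$ as $u\to\infty$. Since $\mathbf 1_{\{y\geq b\}}\le\varphi(y)\le\mathbf 1_{\{y>0\}}$, one has $\P_\rho(\check{\mathcal P}_u\ge b)\le\E_\rho[\varphi(\check{\mathcal P}_u)]\le\P_\rho(\check{\mathcal P}_u>0)=\P_\rho(\tau_0>u)$, so I would first pin down the extinction probability: solving $v'(t)=-\psi(v(t))$ with $v(0+)=\infty$ gives $v(t)=3/(2t^2)$, hence $\P_x(\tau_0\le u)=e^{-x v(u)}=e^{-3x/(2u^2)}$; since $\tfrac{2u^2}{3}\big(1-e^{-3x/(2u^2)}\big)\uparrow x$ as $u\to\infty$, dominated convergence yields $u^2\,\P_\rho(\tau_0>u)\to\tfrac32\,\E_\rho[\check{\mathcal P}_0]=3a$ (with $\E_\rho[\check{\mathcal P}_0]=2a$ by a direct computation from $\rho$). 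To see that the lower bound has the same rate, I would invoke the scaling property of the stable CSBP: conditionally on $\{\tau_0>u\}$, $u^{-2}\check{\mathcal P}_u$ converges in law to an a.s.\ positive random variable, so $\P_\rho(0<\check{\mathcal P}_u<b)=o(\P_\rho(\tau_0>u))=o(u^{-2})$, whence $\P_\rho(\check{\mathcal P}_u\ge b)\sim 3a\,u^{-2}$ as well. Together with \eqref{plan-reduction} this gives $\P(\mathcal L_{(a,b)}>u)\sim 3(a+b)\,u^{-2}$.

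The main obstacle is the occupation-measure computation: one must get the potential density $R(x,z)$ of the (spectrally positive) stable L\'evy process right — being careful about which of $X$ and $-X$ is spectrally negative, the creeping of $X$ at $0$, and the normalisation of the scale function — and then recognise that the resulting convolution $\int_0^z\rho(x)\sqrt{z-x}\,\dd x$ is exactly the integral already evaluated in Lemma~\ref{proba-condit}, which is what makes $m$ collapse to the simple closed form $\sqrt{3/(2\pi)}\,a\,z^{-1/2}(a+z)^{-1}$; after that everything reduces to elementary integrals. In the tail part, the only point requiring some care is the estimate $\P_\rho(0<\check{\mathcal P}_u<b)=o(u^{-2})$, for which the scaling of the stable CSBP is the natural tool.
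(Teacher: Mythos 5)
Your proposal is correct and follows essentially the same route as the paper: identify $\mathcal L_{(a,b)}$ with the last passage time at $b$ of the CSBP with branching mechanism $\psi$ started from the density $\frac32 a^{3/2}(a+z)^{-5/2}$, combine the Markov property with the non-hitting probability $\sqrt{(b-y)^+/b}$, and compute the expected occupation measure via the Lamperti transform and the potential kernel of the killed spectrally positive L\'evy process with scale function $W(u)=\sqrt{3u/(2\pi)}$, which is exactly the paper's formula \eqref{calcul-tech}; likewise the tail estimate rests on the extinction probability $e^{-3x/(2u^2)}$ and the same sandwich. The only differences are cosmetic: you work with general $a$ instead of scaling from $a=1$, evaluate the final integral by a trigonometric substitution rather than the residue theorem, and dispose of $\P(0<\check{\mathcal P}_u<b)$ by the Yaglom-type scaling limit of the conditioned stable CSBP (giving $o(u^{-2})$, which suffices) where the paper gets $O(u^{-3})$ directly from the Laplace transform.
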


\proof
To simplify notation, we consider first the case $a=1$ and we write $\mathcal L_b=\mathcal L_{1, b}$.
For every $x\geq 0$, we write $(Z_t)_{t\geq 0}$ for a continuous-state branching process with branching mechanism
$\psi$ that starts from $x$ under the probability measure $\P_x$. Similarly, we write
$(X_t)_{t\geq 0}$ for a spectrally positive L\'evy process with Laplace exponent $\psi$ starting from $x$ under $\P_x$,
and we also set $T_0=\inf\{t\geq 0:X_t=0\}$. By the Lamperti transformation, we have
for every measurable function $f:\mathbb R_+\to \mathbb R_+$ such that $f(0)=0$,
	\begin{align*}
		\mathbb{E}_x\left[\int_0^\infty f(Z_t)\mathrm \dd t\right]= \mathbb{E}_x\left[\int_{0}^{T_0} f(X_t)\frac{\mathrm{d}t}{X_t}\right]
	\end{align*}
	On the other hand, the potential kernel of the L\'evy process $X$ killed upon hitting $0$ is computed in the proof 
	of Theorem VII.18 in \cite{Bertoin}: for every measurable function $g:\mathbb R_+\to \mathbb R_+$,
	\begin{align}
			\mathbb E_x\left[\int_0^{T_0}g(X_t)\mathrm{d}t\right]= \int_0^\infty g(y)( W(y)-\mathbf 1_{\{x<y\}}W(y-x))\mathrm dy,
	\end{align}
where $ W(u)$ is the scale function of the L\'evy process $-X$, which is given here by $W(u)=\sqrt{(3/2\pi)u}$. Suppose then
that $Z$ starts with initial density $\frac{3}{2}\,(1+x)^{-5/2}$ under the probability measure $\P$. It follows from
the preceding two displays that
\begin{align}
\label{calcul-tech}
		\mathbb E\left[\int_0^\infty f(Z_t)\mathrm{d}t\right]&= \frac{3}{2}\sqrt{\frac{3}{2\pi}}\int_0^{\infty}\frac{\mathrm dx}{(1+x)^{5/2}}\int_0^\infty \frac{f(y)}{y}(\sqrt y-\mathbf 1_{\{x<y\}}\sqrt{y-x})\mathrm dy\nonumber\\
		&= \sqrt{\frac{3}{2\pi}}\int_0^\infty \frac{f(y)}{y} \left(\sqrt y- \frac{3}{2}\int_0^{y}\frac{\sqrt{y-x}}{(1+x)^{5/2}}\,\mathrm dx\right)\mathrm dy\nonumber\\
		&=\sqrt{\frac{3}{2\pi}} \int_0^\infty \frac{f(y)}{y}\left(\sqrt y-\frac{y^{3/2}}{1+y}\right)\mathrm dy\nonumber\\
		&=  \sqrt{\frac{3}{2\pi}}\int_0^\infty \frac{f(y)}{\sqrt y (1+y)}\mathrm dy.
\end{align}

Next let $L_b:=\sup\{t\geq 0: Z_t=b\}$, with the convention $\sup\varnothing=0$. For $u>0$, the conditional probability 
that $L_b>u$ given $Z_u$ is the probability that $Z$ started from $Z_u$ visits $b$, and it was already noticed in the proof of Lemma 
\ref{proba-condit} that this probability is equal to $1-\sqrt{(b-Z_u)^+/b}$. Hence, we get
\begin{align*}
		\mathbb P(L_b>u)=\mathbb E\left[1-\sqrt{\frac{(b-Z_u)_+}{b}}\right],
\end{align*}
and we integrate with respect to $u$, using \eqref{calcul-tech}, to get
$$\mathbb E\left[L_b\right]= \sqrt{\frac{3}{2\pi}} \int_{0}^\infty\Big(1-\sqrt{\frac{(b-y)^+}{b}}\Big)\,\frac{\mathrm dy}{\sqrt y(1+y)}.$$
After some straightforward changes of variables, we arrive at
$$\mathbb E\left[L_b\right]= \sqrt{\frac{3\pi}{2}} \left(1-\frac{\sqrt b}{\pi}  \int_{\mathbb R}\frac{x^2}{(1+b+x^2)(1+x^2)}\mathrm dx\right).$$
The integral in the right-hand side is computed via a standard application of the residue theorem, and we get 
$$\mathbb E\left[L_b\right]=\sqrt{\frac{3\pi}{2}}\left(1-\frac{\sqrt{1+b}-1}{\sqrt b}\right).$$
As discussed at the beginning of the section, the first moment of $\mathcal{L}_{1,b}$ is equal to $\E[L_b\mid L_b>0]$, and 
we know from Lemma \ref{proba-condit} that $\P(L_b>0)=\P(r_b<\infty)=(1+b)^{-1}$. Hence 
 the first moment of $\mathcal{L}_{1,b}$ is $(1+b)\,\E[L_b]$, and we get the first assertion of the proposition when $a=1$. 
 In the general case, we just have to use a scaling argument, noting that 
$\mathcal L_{(a, b)}$ has the same law as $\sqrt a \mathcal L_{1, b/a}$. 

Let us turn to the second assertion. Again, by scaling, it suffices to consider the case $a=1$. We use the fact that
$$\P_x(Z_u=0)=\exp(-\frac{3x}{2u^2}),$$
which follows from the explicit form of the Laplace transform of $Z_u$ (see e.g. formula (1) in \cite{Hull}). Then
$$\P(Z_u>0)= \frac{3}{2} \int_0^\infty (1-\exp(-\frac{3x}{2u^2}))\, \frac{\mathrm{d}x}{(1+x)^{5/2}}= \frac{9}{4u^2} \int_0^\infty \frac{x\,\mathrm{d}x}{(1+x)^{5/2}} + O(u^{-3})
=3u^{-2}+O(u^{-3}),$$
as $u\to\infty$.
Again using the Laplace transform of $Z_u$, it is straightforward to verify that $\P(Z_u\in(0,b])=O(u^{-3})$ as $u\to\infty$. 
Since $\mathbb P(Z_u>b)\leq \mathbb P(L_b>u)\leq \mathbb P(Z_u>0)$, we get that $ \mathbb P(L_b>u)=3u^{-2}+O(u^{-3})$ as $u\to\infty$.
Finally, the probability that $\mathcal{L}_{(a,b)}>u$ is equal to $(1+b)\mathbb P(L_b>u)$, which gives the desired asymptotics. \endproof

\section*{Appendix}
In this appendix, we prove Proposition \ref{law-perimeter-hull}. Recall the notation in formula \eqref{def-peri-hull}, and also set
$$\mathcal{Y}_s=\sum_{i\in I} \mathcal{Z}_s(\omega^i),$$
for every $s\leq 0$, in such a way that $\mathcal{P}_r=\mathcal{Y}_{r-r_*}$ for $r\in (0,r_*]$. It is easy to adapt the arguments of 
\cite[Section 5]{BrowDiskandtheBrowSnake} (see in particular formula (34) in this reference) to get the formula
\begin{equation}
\label{formu-appen}
\E[\mathbf{1}_{\{r_*>r\}}\exp(-\lambda \mathcal{Y}_{r-r_*})]=3r^{-3}\,\int_{-\infty}^0 \mathrm{d}s\,\E\Big[\mathcal{Y}_s\,\exp\Big(-(\lambda+\frac{3}{2r^2})\,\mathcal{Y}_s\Big)\Big].
\end{equation}
We already noticed in the proof of Lemma \ref{proba-condit} that $\mathcal{Y}_0$ has density $\frac{3}{2}a^{3/2}\,(a+z)^{-5/2}$. Using this and the special Markov property of the
Brownian snake (see e.g.~\cite[Proposition 2.2]{Hull}), we get, for every $\mu>0$ and $s<0$,
$$\E[\exp(-\mu \mathcal{Y}_s)] =\int_0^\infty \dd z\,\frac{3}{2}\,a^{3/2}(a+z)^{-5/2} \exp\Big(-z\N_0(1-\exp(-\mu \z_s))\Big).$$
According to formula (6) in \cite{Hull}, 
$$\N_0(1-\exp(-\mu \z_s))=\Big(\mu^{-1/2} +\sqrt{\frac{2}{3}}|s|\Big)^{-2}.$$
If we substitute this in the previous display, and then differentiate with respect to $\mu$, we arrive at
$$\E[\mathcal{Y}_s\,\exp(-\mu \mathcal{Y}_s)]=\frac{3}{2}\,a^{3/2}\int_0^\infty \dd z\,\frac{z}{(a+z)^{-5/2}} \Big(1+|s|\sqrt{\frac{2\mu}{3}}\Big)^{-3}
\exp\Big(-z\Big(\mu^{-1/2} +\sqrt{\frac{2}{3}}|s|\Big)^{-2}\Big).$$
We take $\mu=\lambda +\frac{3}{2r^2}$ and use formula \eqref{formu-appen} to obtain
\begin{align*}
&\E[\mathbf{1}_{\{r_*>r\}}\exp(-\lambda \mathcal{Y}_{r-r_*})]\\
&\qquad=\frac{9}{2}\,r^{-3}a^{3/2}\int_0^\infty\dd z\, \frac{z}{(a+z)^{5/2}} \int_0^\infty \dd s\,
\Big(1+s\sqrt{\frac{2\mu}{3}}\Big)^{-3}
\exp\Big(-z\Big(\mu^{-1/2} +\sqrt{\frac{2}{3}}s\Big)^{-2}\Big)\\
&\qquad=\frac{9}{4} \sqrt{\frac{3}{2}}\,r^{-3}a^{3/2}\,\mu^{-3/2}\int_0^\infty \frac{\dd z}{(a+z)^{5/2}}\,(1-e^{-\mu z})\\
&\qquad=\frac{3}{2} \sqrt{\frac{3}{2}}\,r^{-3}a^{3/2}\,\mu^{-1/2}\int_0^\infty \frac{\dd z}{(a+z)^{3/2}}\,e^{-\mu z}
\end{align*}
Writing
$$\mu^{-1/2} = \frac{1}{\sqrt{\pi}}\,\int_0^\infty \frac{\dd x}{\sqrt{x}}\,e^{-\mu x},$$
we arrive at
$$\E[\mathbf{1}_{\{r_*>r\}}\exp(-\lambda \mathcal{Y}_{r-r_*})]
=\frac{3}{2}\,\sqrt{\frac{3}{2\pi}}\,r^{-3}\,a^{3/2}\int_0^\infty \dd y \,e^{-\mu y} 
\int_0^y \frac{\dd z}{(a+z)^{3/2} (y-z)^{1/2}}.$$
Finally, a straightforward calculation gives for $y>0$,
$$\int_0^y \frac{\dd z}{(a+z)^{3/2} (y-z)^{1/2}}= 2\frac{\sqrt{y}}{\sqrt{a}\,(a+y)},$$
so that recalling $\mu=\lambda +\frac{3}{2r^2}$, we have
$$\E[\mathbf{1}_{\{r_*>r\}}\exp(-\lambda \mathcal{Y}_{r-r_*})]
=3\,\sqrt{\frac{3}{2\pi}}\,r^{-3}\int_0^\infty \dd y \,e^{-\lambda y}\,\sqrt{y}\,\frac{a}{a+y}\,e^{-3y/(2r^2)}.$$
This completes the proof.

\end{document}